\newtheorem{thm}{Theorem}[section]
\newtheorem{prop}{Proposition}[section]
\newtheorem{rmk}{Remark}[section]
\newtheorem{lem}{Lemma}[section]
\newtheorem{cor}{Corollary}[section]\label{key}
\newcommand{\rom}[1]{\expandafter\@slowromancap\romannumeral #1@}
\newcommand{\ur}{u_{r}}
\newcommand{\uthe}{u_{\theta}}
\newcommand{\uphi}{u_{\phi}}
\newcommand{\dive}{\mbox{div}}
\newcommand{\ctthe}{\cot\theta}
\newcommand{\sthe}{\sin\theta}
\newcommand{\cthe}{\cos\theta}
\newcommand{\er}{e_r}
\newcommand{\ethe}{e_{\theta}}
\newcommand{\ephi}{e_{\phi}}
\begin{document}

\title{Homogeneous solutions of stationary Navier-Stokes equations with isolated singularities on the unit sphere. I. One singularity}
\author{Li Li\footnote{Department of Mathematics, Harbin Institute of Technology, Harbin 150080, China. Email: lilihit@126.com}, 
YanYan Li\footnote{Department of Mathematics, Rutgers University, 110 Frelinghuysen Road, Piscataway, NJ 08854, USA. Email: yyli@math.rutgers.edu}, 
Xukai Yan\footnote{Department of Mathematics, Rutgers University, 110 Frelinghuysen Road, Piscataway, NJ 08854, USA. Email: xkyan@math.rutgers.edu}}
\date{}
\maketitle

\abstract{We classify all $(-1)-$homogeneous axisymmetric no swirl solutions of incompressible stationary Navier-Stokes equations in three dimension which are smooth  on the unit sphere minus the south pole, parameterize them as a two dimensional surface with boundary, and analyze their pressure profiles near  the north pole. Then we prove that there is a curve of $(-1)-$homogeneous axisymmetric solutions with nonzero swirl, having the same smoothness property, emanating from every point of the interior and one part of the boundary of the solution surface. Moreover we prove that  there is no such curve of solutions for any point on the other part of the boundary.   We also  establish asymptotic expansions for every  (-1)-homogeneous axisymmetric solutions  in a neighborhood of the singular  point on the unit sphere. }

\tableofcontents

\setcounter{section}{0}

\section{Introduction} 
Consider $(-1)$-homogeneous solutions of incompressible stationary Navier-Stokes Equations (NSE) in $\mathbb{R}^3$:   
\begin{equation}\label{NS}
\left\{
\begin{split}
	& -\triangle u + u\cdot \nabla u +\nabla p = 0, \\
	& \dive\textrm{ } u=0.
\end{split}
\right. 
\end{equation}

The NSE is invariant under the scaling $u(x)\rightarrow \lambda u(\lambda x)$. It is natural to study $(-1)$-homogeneous solutions, namely, solutions which are invariant under this scaling. 

In 1944, L.D. Landau discovered a 3-parameter family of explicit $(-1)$-homogeneous
solutions of stationary NSE in $C^\infty(\mathbb{R}^3\setminus\{0\})$. They are axisymmetric with no-swirl.
He arrived at these solutions, now called  {\it Landau solutions},
using the following ansatz: looking for solutions which are 
axisymmetric, no-swirl, and with two vanishing diagonal components of the tensor of momentum
flow density. Tian and Xin proved in \cite{TianXin} that all $(-1)$-homogeneous, axisymmetric nonzero solutions 
of the stationary NSE (\ref{NS}) in $C^2(\mathbb{R}^3\setminus\{0\})$
are Landau solutions.
\bigskip
\v{S}ver\'{a}k established the following result in 2006:\\
\textbf{Theorem A (\cite{Sverak})} \emph{All (-1)-homogeneous nonzero solutions of (\ref{NS}) in $C^2(\mathbb{R}^3\setminus\{0\})$ are Landau solutions.}\\

He also proved in the same paper that there is no nonzero (-1)-homogeneous solution of the stationary NSE in $C^{2}(\mathbb{R}^n\setminus\{0\})$ for $n\ge 4$. In dimension $n=2$, he characterized all such solutions satisfying a zero flux condition.\\
 
Starting from this paper, we analyze (-1)-homogeneous solutions in $\mathbb{R}^n$ with finite singularities on $\mathbb{S}^{n-1}$, as well as (-1)-homogeneous solutions in half space $\mathbb{R}^n_+$ with finite singularities on $\mathbb{S}^n_+$ and zero velocity on $\partial \mathbb{R}^n_+$. In this paper we focus on axisymmetric solutions of the problem in $\mathbb{R}^3$ which have exactly one singularity on the unit sphere $\mathbb{S}^2$.

In polar coordinates $(r,\theta,\phi)$, where $r$ is the radial distance from the origin, $\theta$ is the angle between the radius vector and the positive $x_3$-axis, and $\phi$ is the meridian angle about the $x_3$-axis. A vector field $u$ can be written as
\begin{equation}\label{u_polar}
	u = u_r \er + u_\theta \ethe + u_\phi \ephi, 
\end{equation}
where 
\begin{equation*}
	\er = \left(
	\begin{matrix}
		\sthe\cos\phi \\
		\sthe\sin\phi \\
		\cthe
	\end{matrix} \right),  \hspace{0.5cm}
	\ethe = \left(
	\begin{matrix}
		\cthe\cos\phi  \\
		\cthe\sin\phi   \\
		-\sthe	
	\end{matrix} \right), \hspace{0.5cm}
	\ephi = \left(
	\begin{matrix}
		-\sin\phi \\  \cos\phi \\ 0
	\end{matrix} \right).  
\end{equation*}

A vector field $u$ is called axisymmetric if $u_r$, $u_{\theta}$ and $u_{\phi}$ depend only on $r$ and $\theta$, and is called {\it no-swirl} if $u_{\phi}=0$.

 If $u$ is (-1)-homogeneous and $p$ is (-2)-homogeneous, system (1) is a system of partial differential equations of $u|_{\mathbb{S}^2}$ and $p|_{\mathbb{S}^2}$ on $\mathbb{S}^2$. For a (-1)-homogeneous and axisymmetric solution, $(u,p)$ depends only on $\theta$ in polar coordinates, and the system on $\mathbb{S}^2$ takes the form
\begin{equation}\label{eq_homo}
	\left\{
	\begin{split}
		& \frac{d^2 \ur}{d\theta^2} + (\ctthe - \uthe) \frac{d \ur}{d\theta} + \ur^2 +\uthe^2 +\uphi^2 +2p = 0; \\
		& \frac{d}{d\theta} (\frac{1}{2}\uthe^2 - \ur + p) =  \cot\theta \uphi^2; \\
		& \frac{d}{d \theta}(\frac{d \uphi}{d \theta} + \cot\theta \uphi) - \uthe (\frac{d \uphi}{d \theta} + \cot\theta \uphi) =0; \\
		& \ur + \frac{d \uthe}{d \theta} + \ctthe \uthe = 0. \quad \quad \mbox{(divergence free condition)}\\    
	\end{split}
	\right.
\end{equation}
Since $p$ is determined by $u$ and its derivatives up to second order, in view of the first line of (\ref{eq_homo}), we often say that $u$ is a solution of (\ref{NS}) without mentioning $p$.

 By the divergence free condition in (\ref{eq_homo}), the radial component $u_r$ of the velocity $u$ is determined by $u_{\theta}$ and its first derivative.

Our first result classifies all (-1)-homogeneous axisymmetric no-swirl solutions $u$ of (\ref{NS}) in $C^2(\mathbb{S}^2\setminus\{S\})$, where $S$ denotes the south pole of $\mathbb{S}^2$. In this case $u_{\phi}=0$, and $u_{r}$, $p$ can be determined by $u_{\theta}$ and its derivatives. So we only need to solve $u_{\theta}$. 

We introduce the following subsets of $\mathbb{R}^2$
\begin{equation}\label{eqJ}
  \begin{split}
  &  J_1:= \{(\tau,\sigma)\mid \tau<2, \sigma <\frac{1}{4}(4-\tau)\},\\
   & J_2:=\{(\tau,\sigma)\mid \tau=2, \sigma< \frac{1}{2}\},\\
   & J_3:=\{(\tau,\sigma)\mid \tau\ge  2, \sigma =\frac{\tau}{4}\},
  \end{split}
\end{equation}
and $J:=J_1\cup J_2\cup J_3$.

\begin{thm}\label{thm1_1}
    For every $ (\tau, \sigma)\in J$, there exists a unique $u_{\theta}:=(u_{\theta})_{\tau,\sigma}\in C^{\infty}\left(\mathbb{S}^2\setminus\{S\}\right )$ such that the corresponding $(u,p)$ satisfies (\ref{NS}) on $\mathbb{S}^2\setminus\{S\}$, and
\begin{equation}\label{eq_thm1_1}
   \lim_{\theta\to \pi^-} u_{\theta}\sin\theta=\tau,\quad  \lim_{\theta\to 0^+}\frac{u_{\theta}}{\sin\theta}=\sigma.
\end{equation}
Moreover, these are all the axisymmetric no-swirl solutions in $C^2(\mathbb{S}^2\setminus\{S\})$.

The solutions $u_{\theta}$ are explicitly given by, with $b := |1-\frac{\tau}{2}|$,
\begin{equation}\label{eq_temp11}
	u_\theta = 
	\left\{
	\begin{array}{ll}
		 \frac{\displaystyle 1-\cos\theta}{\displaystyle \sin\theta}\left(1- b - \frac{\displaystyle 2 b(1-2\sigma-b)}{\displaystyle(1-2\sigma+b) (\frac{1+\cos\theta}{2})^{-b} + 2\sigma-1 + b} \right), & (\tau, \sigma)\in J_1; \\
		\frac{\displaystyle 1-\cos\theta}{\displaystyle \sin\theta} \left( 1+ \frac{\displaystyle 2 (1-2\sigma)  }{ \displaystyle (1-2\sigma) \ln \frac{1+\cos\theta}{2}  - 2 } \right), & (\tau, \sigma)\in J_2; \\
            \frac{\displaystyle (1+b)(1-\cos\theta)}{\displaystyle \sin\theta}, & (\tau, \sigma)\in J_3. 
	\end{array}
	\right.  
	\end{equation}

\end{thm}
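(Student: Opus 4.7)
The plan is to reduce the system (\ref{eq_homo}) with $u_\phi \equiv 0$ to a first-order Riccati ODE, linearize it, and solve in closed form. The divergence equation gives $u_r = -u_\theta' - \cot\theta\, u_\theta$, while the second equation of (\ref{eq_homo}) integrates to $p = u_r - \tfrac{1}{2} u_\theta^2 + C_1$ for some constant $C_1$. Substituting both into the first equation produces a third-order ODE for $u_\theta$. Under the change of variables $x = \cos\theta$ and $V(x) := u_\theta \sin\theta$ (which makes $u_r = V'(x)$), this ODE becomes
\[
(1-x^2)\,V''' - (2x - V)\,V'' + (V')^2 + 2V' + 2C_1 = 0,
\]
whose left side is an exact total derivative in $x$; integrating twice yields the first-order Riccati equation
\[
(1-x^2)\,V' + \tfrac{1}{2} V^2 + 2xV = -C_1 x^2 + C_2 x + C_3,
\]
with two further integration constants $C_2, C_3$.

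Next I impose smoothness at the north pole $N$ (i.e.\ at $x = 1$). The condition $u_\theta/\sin\theta \to \sigma$ amounts to $V = (1-x^2) W$ with $W$ smooth at $x = 1$ and $W(1) = \sigma$. Evaluating the Riccati equation and its first derivative at $x = 1$ forces $C_2 = 2C_1$ and $C_3 = -C_1$, whereupon the equation for $W$ simplifies to
\[
W' + \tfrac{1}{2} W^2 = -\frac{C_1}{(1+x)^2}.
\]
The substitution $W = 2\psi'/\psi$ linearizes this into the Euler equation $\psi'' + \tfrac{C_1}{2(1+x)^2}\,\psi = 0$, whose indicial exponents at $x = -1$ are $m_\pm = \tfrac{1}{2}(1 \pm b)$ with $b := \sqrt{1 - 2 C_1}$. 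The fundamental system is $(1+x)^{m_\pm}$ when $b > 0$, with the usual logarithmic modification $\{(1+x)^{1/2}, (1+x)^{1/2}\ln(1+x)\}$ when $b = 0$.

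I then match the limit $V(-1) = \tau$ at the south pole $S$. A direct computation gives $V(-1) = 2(1-b)$ when the $m_-$ mode is present ($B \neq 0$) and $V(-1) = 2(1+b)$ when only the $m_+$ mode is present ($B = 0$). This separates into three cases: (i) $\tau < 2$ forces $B \neq 0$ and $b = 1 - \tau/2$, giving $J_1$; (ii) $\tau > 2$ forces $B = 0$ and thus $\sigma = m_+ = \tau/4$, giving $J_3$; (iii) $\tau = 2$ employs the logarithmic fundamental system (with $V(-1) = 2 = \tau$ automatic), giving $J_2$. In every case $C_1 = \tau(4-\tau)/8$ and $b = |1 - \tau/2|$, matching the theorem's notation. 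The ratio $B/A$ is then determined uniquely by $W(1) = \sigma$; substituting back through $u_\theta = W\sin\theta = 2\sin\theta\,\psi'/\psi$ and simplifying with the identity $\sin\theta/(1+\cos\theta) = (1-\cos\theta)/\sin\theta$ produces the three closed-form expressions in (\ref{eq_temp11}).

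The main obstacle is to check that the constructed $u_\theta$ is actually smooth on all of $\mathbb{S}^2 \setminus \{S\}$, i.e.\ that $\psi$ has no zero on $(-1, 1]$. Such a zero exists iff $A$ and $B$ have opposite signs with $|B/A| \leq 2^b$; expressing $B/A$ through $\sigma$ via $W(1) = \sigma$ then shows blow-up occurs precisely when $\sigma > m_+ = (4-\tau)/4$ in the $J_1$ regime, and when $\sigma > 1/2$ in the $J_2$ regime. Combining the no-blow-up condition with the fact that $\sigma = m_+$ (for $\tau < 2$) would give $V(-1) = 4 - \tau \neq \tau$ yields exactly the strict inequalities $\sigma < (4-\tau)/4$ and $\sigma < 1/2$ defining $J_1$ and $J_2$. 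Uniqueness is then immediate from the uniqueness of the Riccati initial-value problem with $W(1) = \sigma$ and $C_1$ fixed by $\tau$.
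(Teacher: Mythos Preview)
Your argument is correct. Both you and the paper reduce to the same first-order Riccati equation $(1-x^2)V'+2xV+\tfrac12 V^2=\mu(1-x)^2$ after imposing smoothness at the north pole, but you diverge in how you solve it: the paper guesses the particular solution $a(1-x)$ (with $\tfrac12 a^2-a=\mu$) and uses an integrating factor on the remaining Bernoulli equation, whereas you pass to $W=V/(1-x^2)$, linearize the resulting Riccati via $W=2\psi'/\psi$, and solve the Euler equation $\psi''+\tfrac{C_1}{2(1+x)^2}\psi=0$ explicitly. Your route has the advantage that the existence region is read off directly as the zero-free condition on $\psi$, which is a clean one-line calculation; the paper instead computes the blow-up point $\delta^*(\mu,\gamma)$ from the closed-form $U_\theta$ and checks when $\delta^*=-1$. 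The paper also works primarily in the parameters $(\mu,\gamma)=(\tfrac18\tau^2-\tfrac12\tau,\,-2\sigma)$ and translates to $(\tau,\sigma)$ at the end, while you stay with $(\tau,\sigma)$ throughout, which makes the identification $b=|1-\tau/2|$ and the boundary $\sigma=(4-\tau)/4$ appear more naturally. Both approaches are equally elementary and yield identical formulas.
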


By saying $(u,p)$ corresponding to $u_{\theta}$ we mean that on $\mathbb{S}^2$, $u_r$ is determined by $u_{\theta}$ through the divergence free condition in (\ref{eq_homo}), and $p$ is determined by the first line in (\ref{eq_homo}) using $u_{\phi}=0$, and then $u$ and $p$ are extended respectively to (-1)-homogeneous and (-2)-homogeneous functions. Also, we often simply say that $u$ satisfies (\ref{NS}) instead of saying that $(u,p)$ satisfies (\ref{NS}).\\

We use $(u_{\tau, \sigma}, p_{\tau, \sigma})$ to denote the vector-valued function corresponding to $(u_{\theta})_{\tau, \sigma}$.\\

In (\ref{eq_temp11}), $\{(u_{\theta})_{\tau,\sigma}| \tau=0, \sigma\in (-\infty,0)\cup (0,1)\}$ are Landau solutions. They can also be rewritten as 
\[
    u_{\theta}=\frac{2\sin\theta}{\lambda+\cos\theta}, \quad |\lambda|>1.
\]

$\{(u_{\theta})_{\tau,\sigma}|(\tau,\sigma)\in J\}$ is a 2-parameter family of axisymmetric no swirl solutions of (\ref{NS}) in $C^{2}(\mathbb{S}^2\setminus\{S\})$. In the following theorem, we prove the existence of a curve of axisymmetric solutions with nonzero swirl in $C^{2}(\mathbb{S}^2\setminus\{S\})$ emanating from $(u_{\theta})_{\tau,\sigma}$ for each $(\tau,\sigma)\in J_1\cup J_2\cup \{J_3\cap\{2\le \tau<3\}\}$. We also prove the nonexistence of such solutions for $(\tau,\sigma)\in J_3\cap\{\tau>3\}$.

Define
\begin{equation*}
   a_{\tau,\sigma}(\theta)=-\int_{\frac{\pi}{2}}^{\theta}(2\cot t+(u_{\theta})_{\tau,\sigma} )dt, \quad     b_{\tau,\sigma}(\theta)=-\int_{\frac{\pi}{2}}^{\theta}(u_{\theta})_{\tau,\sigma}dt,
\end{equation*}
and
\begin{equation*}
	v_{\tau,\sigma}^{1} = \left(
	\begin{matrix}
		\frac{1}{\sin\theta}e^{-a_{\tau,\sigma}(\theta)}\frac{d a_{\tau,\sigma}(\theta)}{d\theta} \\ \frac{1}{\sin\theta}e^{-a_{\tau,\sigma}(\theta)}   \\  0
	\end{matrix}\right),
	\quad 
	v_{\tau,\sigma}^{2} = \left(
	\begin{matrix}
		0\\0  \\ \frac{1}{\sin\theta}\int_{0}^{\theta}e^{-b_{\tau,\sigma}(t)}\sin t dt 
	\end{matrix}\right), 
      \quad
       v_{\tau,\sigma}^3=\left(
	\begin{matrix}
		0\\0  \\ \frac{1}{\sin\theta}
	\end{matrix}\right). 
\end{equation*}

\begin{thm}\label{thm1_2}
Let $K$ be a compact subset of one of the four sets $J_1$, $J_2$, $J_3\cap\{2< \tau<3\}$ and $J_3\cap \{\tau=2\}$, then there exist  $\delta=\delta(K)>0$, and $(u,p)\in C^{\infty}(K\times (-\delta,\delta)\times (\mathbb{S}^2\setminus\{S\}))$ such that for every $(\tau,\sigma, \beta)\in K\times(-\delta,\delta)$, $(u,p)(\tau,\sigma,\beta; \cdot)\in C^{\infty}(\mathbb{S}^2\setminus\{S\})$ satisfies (\ref{NS}) in $\mathbb{R}^3\setminus\{(0,0,x_3)|x_3\le 0\}$, with nonzero swirl if $\beta\ne 0$, and $||\left(\sin\frac{\theta+\pi}{2}\right)(u(\tau, \sigma,\beta)-u_{\tau, \sigma})||_{L^{\infty}(\mathbb{S}^2\setminus\{S\})}\to 0$ as $\beta\to 0$. Moreover, $ \frac{\partial }{\partial \beta}u(\tau, \sigma,\beta)|_{\beta=0}=v_{\tau,\sigma}^2 $.

 On the other hand, for $(\tau,\sigma)\in J_3\cap\{\tau>3\}$, there does not exist any sequence of solutions $\{u^i\}$ of (\ref{NS}) in $C^{\infty}(\mathbb{S}^2\setminus\{S\})$, with nonzero swirl, such that $||\left(\sin\frac{\theta+\pi}{2}\right)(u^i-u_{\tau, \sigma})||_{L^{\infty}(\mathbb{S}^2\setminus\{S\})}\to 0$ as $i\to \infty$. 
\end{thm}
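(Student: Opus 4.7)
The plan is to apply the implicit function theorem to a suitable reformulation of the axisymmetric system \eqref{eq_homo}. The key observation is that the third equation of \eqref{eq_homo} is linear and homogeneous in $u_\phi$ once $u_\theta$ is fixed. Freezing $u_\theta=(u_\theta)_{\tau,\sigma}$, its two-dimensional solution space is spanned by the $\phi$-component of $v^2_{\tau,\sigma}$ (regular at the north pole $\theta=0$) and that of $v^3_{\tau,\sigma}=(0,0,1/\sin\theta)$ (singular there). Smoothness on $\mathbb{S}^2\setminus\{S\}$ therefore forces the infinitesimal swirl direction to be a multiple of $v^2_{\tau,\sigma}$, which is consistent with the prescribed value $\partial_\beta u|_{\beta=0}=v^2_{\tau,\sigma}$.

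For the existence portion I would use the ansatz $u_\phi=\beta(v^2_{\tau,\sigma,\phi}+\eta)$ and $u_\theta=(u_\theta)_{\tau,\sigma}+\beta^2\xi$, the quadratic scaling being dictated by the fact that $u_\phi$ enters the first two lines of \eqref{eq_homo} only through $u_\phi^2$. Substituting into \eqref{eq_homo} and dividing out powers of $\beta$ produces a smooth map $F(\tau,\sigma,\beta,\eta,\xi)=0$ with $F(\tau,\sigma,0,0,0)=0$. The Fréchet derivative $D_{(\eta,\xi)}F|_{\beta=0}$ is block-upper-triangular. Its $\eta$-block is the linearized swirl operator at $(u_\theta)_{\tau,\sigma}$, whose one-dimensional kernel $\mathrm{span}(v^2_{\tau,\sigma,\phi})$ is removed by a normalization on $\eta$. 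Its $\xi$-block is the linearization of the no-swirl $u_\theta$ equation at $(u_\theta)_{\tau,\sigma}$; by Theorem \ref{thm1_1}, this linearization has a two-dimensional kernel of tangent directions to the solution surface $\{(u_\theta)_{\tau,\sigma}:(\tau,\sigma)\in J\}$, which is killed by imposing the boundary conditions $\lim_{\theta\to 0^+}\xi/\sin\theta=0$ and $\lim_{\theta\to\pi^-}\xi\sin\theta=0$ so that the limiting values in \eqref{eq_thm1_1} remain fixed at $(\tau,\sigma)$. On appropriately weighted Banach spaces the derivative becomes an isomorphism and the implicit function theorem yields the desired smooth curve.

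The principal obstacle is the choice of weighted spaces near the south pole $\theta=\pi$, and this is also the source of the four-region dichotomy. There $(u_\theta)_{\tau,\sigma}\sim\tau/(\pi-\theta)$, forcing $e^{-b_{\tau,\sigma}(\theta)}$ to blow up like $(\pi-\theta)^{-\tau}$ and hence $v^2_{\tau,\sigma,\phi}\sim(\pi-\theta)^{1-\tau}$ for $\tau>1$. The quadratic forcing from $u_\phi^2$ in the $u_\theta$-equation is then of order $(\pi-\theta)^{2-2\tau}$, and an indicial analysis at $\theta=\pi$ for the linearized $u_\theta$ operator shows that an admissible $\xi$-correction in the $J_3$ regime exists exactly when $\tau<3$. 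The split of the statement into the four compact subregions $J_1$, $J_2$, $J_3\cap\{2<\tau<3\}$ and $J_3\cap\{\tau=2\}$ reflects qualitatively different asymptotic behaviors of $(u_\theta)_{\tau,\sigma}$ at $\theta=\pi$, each of which requires its own weighted-space setup before the common implicit function theorem argument above can be applied.

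For the nonexistence statement on $J_3\cap\{\tau>3\}$, I would argue by contradiction. Given a hypothetical sequence $u^i$ of nonzero-swirl solutions converging to $u_{\tau,\sigma}$ in the weighted norm stated, normalize $u^i_\phi$ by its weighted norm and extract a nonzero limit $\eta_\infty$; by the regularity requirement at the north pole together with the linearized swirl equation, $\eta_\infty$ must be a nonzero multiple of $v^2_{\tau,\sigma,\phi}$. Isolating the leading-order contribution of $(u^i_\phi)^2$ to the $u_\theta$-equation near $\theta=\pi$ and comparing against the admissible asymptotic behaviors of $u^i_\theta-(u_\theta)_{\tau,\sigma}$ produced by the indicial analysis above then yields a growth mismatch: for $\tau>3$ the forcing is strictly more singular at $\theta=\pi$ than any correction compatible with $(u^i_\theta-(u_\theta)_{\tau,\sigma})\sin\theta\to 0$ can absorb, giving the desired contradiction.
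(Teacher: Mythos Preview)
Your existence outline matches the paper's: an implicit function theorem in weighted Banach spaces, with weights tailored to the south-pole asymptotics of $(u_\theta)_{\tau,\sigma}$ in each of the four regions. Your rescaled ansatz $u_\phi=\beta(v^2_\phi+\eta)$, $u_\theta=(u_\theta)_{\tau,\sigma}+\beta^2\xi$ is a reparametrization of the paper's decomposition $\tilde U=\beta_2 V^2_{\mu,\gamma}+V$ with $V=O(\beta_2^2)$; the linearization at $\beta=0$ is the same operator $L_0^{\mu,\gamma}$. (Minor differences: the paper kills the kernel by normalizing at the interior point $\theta=\pi/2$ via the functionals $l_1,l_2$ rather than at the endpoints, and the kernel of the $\theta$-block is one-dimensional, spanned by $V^1_{\mu,\gamma}$, not two-dimensional.)

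For the nonexistence you have identified the correct obstruction---the quadratic forcing from $u_\phi^2$ is too singular at the south pole when $\tau>3$---but your perturbative route to it is both more delicate than necessary and has gaps. Extracting a limit $\eta_\infty$ from the normalized $u^i_\phi$ requires compactness (derivative bounds) that the weighted-$L^\infty$ hypothesis alone does not supply, and your growth-mismatch step presupposes precise south-pole asymptotics of each $u^i_\theta$, which amounts to proving something like Theorem~\ref{thm1_5} anyway. The paper instead proves a direct pointwise obstruction (Theorem~\ref{thm1_6}(iii), established as Lemma~\ref{lem5_9}): \emph{every} axisymmetric solution with $U_\theta(-1)\ge 3$ has $U_\phi$ constant. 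The argument is that a nonconstant $U_\phi$ satisfies $U_\phi'=Ce^{-\int U_\theta/(1-s^2)\,ds}$ with $C\ne 0$, which forces $|U_\phi|$ to blow up at least like $(1+x)^{-1/2}$ near $x=-1$, making the triple integral in the first line of \eqref{eq5_2} diverge and contradicting the a priori boundedness of $U_\theta$ from Theorem~\ref{thm1_5}(i). Given this, the nonexistence in Theorem~\ref{thm1_2} is immediate: $L^\infty$ convergence forces $U^i_\theta(-1)>3$ for large $i$, hence $U^i_\phi\equiv U^i_\phi(1)=0$. No sequences, normalizations, or linearizations are needed, and the conclusion is strictly stronger than a perturbative one.
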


In the above theorem, $(u,p)\in C^{\infty}(\mathbb{S}^2\setminus\{S\})$ is understood to have been extended to $\mathbb{R}^3\setminus\{(0,0,x_3|x_3 \leq 0)\}$ so that $u$ is (-1)-homogeneous and $p$ is (-2)-homogeneous. We use this convention throughout the paper unless otherwise stated.\\

\begin{rmk}
  As far as we know, all previously known (-1)-homogeneous solutions $u\in C^{\infty}(\mathbb{S}^2\setminus\{S\})\setminus C^{\infty}(\mathbb{S}^2)$ 
  of (\ref{NS}) satisfying $\limsup_{y\to S} \emph{dist} (y,S)^Nu(y)<\infty$ for some $N>0$ are axisymmetric with no swirl. The existence of such solutions with nonzero swirl are given by Theorem \ref{thm1_2}. A more detailed and stronger version of Theorem \ref{thm1_2}, including a uniqueness result, is given by Theorem \ref{thm4_1}, Theorem \ref{thm4_2} and Theorem \ref{thm4_3} in Section 4.  
\end{rmk}

In this paper we work with new functions and a different variable:
\begin{equation}\label{Va}
	x:=\cthe, \quad U_r := u_r \sthe, \quad U_\theta := u_\theta \sthe, \quad U_\phi:= u_\phi \sthe. 
\end{equation}

In the variable $x$, $x=1$ and $-1$ correspond to the north and south pole $N$ and $S$ of $\mathbb{S}^2$ respectively, while $-1<x<1$ corresponds to $\mathbb{S}^2\setminus\{S,N\}$. We will use " $'$ " to denote differentiation in $x$.\\

Our next two theorems are on the asymptotic behavior of a solution $u$ in a punctured ball $B_{\delta}(S)\setminus\{S\}$ of $\mathbb{S}^2$,  $\delta>0$. 

In the next two theorems, we will state that $U=(U_{\theta}, U_{\phi})$ is a solution of (\ref{NS}),  meaning  that the $u$ determined by $U$ through (\ref{Va}) and (\ref{u_polar}), extended as a (-1)-homogeneous function, satisfies (\ref{NS}).

\begin{thm}\label{thm1_5}
   For $\delta>0$, let $U_{\theta}\in C^1(-1,-1+\delta]$, $U_{\phi}\in C^2(-1,-1+\delta]$, and $U=(U_{\theta}, U_{\phi})$ be an axisymmetric solution of  (\ref{NS}). Then\\
(i)  $U_{\theta}(-1):=\lim_{x\to -1^+}U_{\theta}(x)$ exists and is finite.\\
(ii) $\lim_{x\to -1^+}(1+x)U'_{\theta}(x)=0$.\\
(iii) If $U_{\theta}(-1)<2$ and $U_{\theta}(-1)\ne 0$, denote $\alpha_0=1-\frac{U_{\theta}(-1)}{2}$, then there exist some constants $a_1,a_2$ such that for every $\epsilon>0$, 
\[
   U_{\theta}(x)=U_{\theta}(-1)+a_1(1+x)^{\alpha_0}+a_2(1+x)+O((1+x)^{2\alpha_0-\epsilon})+O((1+x)^{2-\epsilon}).
\] 
If $U_{\theta}(-1)=0$, then there exist some constants $a_1,a_2$ such that  for every $\epsilon>0$, 
\begin{equation*}
  U_{\theta}(x)=a_1(1+x)\ln (1+x)+a_2(1+x)+O((1+x)^{2-\epsilon}).
\end{equation*}
If $U_{\theta}(-1)=2$, then, for every $\epsilon>0$,  either
\begin{equation*}
   U_{\theta}(x)=2+\frac{4}{\ln(1+x)}+O((\ln (1+x))^{-2+\epsilon}),
\end{equation*}
or 
\begin{equation*}
   U_{\theta}(x)=2+O((1+x)^{1-\epsilon}).
\end{equation*}
If $2<U_{\theta}(-1)<3$,  then there exist constants $a_1, a_2$ such that for every $\epsilon>0$, 
\begin{equation*}
   U_{\theta}(x)=U_{\theta}(-1)+a_1(1+x)^{3-U_{\theta}(-1)}+a_2(1+x)+O((1+x)^{2(3-U_{\theta}(-1))-\epsilon}). 
\end{equation*}
\end{thm}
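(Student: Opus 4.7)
The plan is to reduce (\ref{NS}) to a coupled ODE system in $(U_\theta, U_\phi)$ as a function of $x = \cos\theta$, identify $x=-1$ as a regular singular point, and then extract the asymptotic expansion by a Frobenius-style analysis combined with nonlinear Duhamel iteration. Using (\ref{Va}), the divergence-free condition in (\ref{eq_homo}) simplifies to $u_r = U_\theta'$. The swirl equation becomes
\[
(1-x^2) U_\phi'' + U_\theta U_\phi' = 0,
\]
which integrates to $U_\phi'(x) = k\exp\bigl(-\int U_\theta(t)/(1-t^2)\,dt\bigr)$. Using the second equation of (\ref{eq_homo}) to eliminate $p$ and substituting into the first, then integrating once, one obtains a second-order ODE of the schematic form
\[
(1-x^2) U_\theta'' + U_\theta U_\theta' + 2U_\theta + 2C_1 x - C_2 = \mathcal{R}[U_\phi](x),
\]
where $C_1, C_2$ are integration constants and $\mathcal{R}$ is a nonlocal expression quadratic in $U_\phi$ that vanishes in the no-swirl case. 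Setting $s := 1 + x$ and $v(s) := U_\theta(x) - U_\theta(-1)$, the linear part near $s = 0$ is $L_0 := 2s\,\partial_s^2 + c\,\partial_s + 2$ with $c := U_\theta(-1)$, whose indicial exponents are $r = 0$ and $r = \alpha_0 := 1 - c/2$.

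For (i) and (ii), the explicit integral for $U_\phi'$ together with $U_\phi \in C^2$ controls $U_\phi$ and $\mathcal{R}$ near $s = 0$. Then the integrated conservation law for $U_\theta$ has a bounded right-hand side, and a Gronwall/monotonicity argument on the combined expression $(1-x^2)U_\theta'' + U_\theta U_\theta' + 2U_\theta$ shows $U_\theta$ admits a finite limit $c$ at $x = -1$, proving (i). For (ii), multiplying by $(1+x)$ and exploiting $(1-x^2)U_\theta''(1+x) = 2s^2 U_\theta''$, an integration-by-parts estimate shows that any singular contribution from $U_\theta''$ is killed by the prefactor, so $(1+x)U_\theta'(x) = o(1)$.

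For (iii), I expand $v(s)$ inductively. The nonzero constant residue $2c - 2C_1 - C_2$ forces a particular solution containing a linear term $a_2 s$, with $a_2 = -(2c-2C_1-C_2)/c$ pinned by the algebraic constraint $c a_2 + (2c-2C_1-C_2) = 0$. In the non-resonant cases $c < 2$, $c \ne 0$, the second Frobenius mode $a_1 s^{\alpha_0}(1 + O(s))$ is free, and iterating the nonlinearity $v v_s$ and the $U_\phi^2$-source through the Green's function of $L_0$ produces corrections at orders $s^{2\alpha_0}$ and $s^2$, with an $\varepsilon$-loss absorbing near-resonance logarithms. For $c = 0$, the root $\alpha_0 = 1$ collides with the forced linear mode, and reduction of order yields the $a_1 s\ln s$ term. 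For $c = 2$, the root $\alpha_0 = 0$ collides with the constant mode, and the resulting logistic-type equation for the leading coefficient produces the generic $4/\ln s$ branch, while the degenerate $O(s^{1-\varepsilon})$ branch arises when the corresponding coefficient vanishes. For $2 < c < 3$, $\alpha_0 < 0$ is excluded by $C^1$ regularity, so only the $a_2 s$ mode is present at leading order; the nonlinear self-interaction $v v_s$ together with $\mathcal{R}$ then excites a mode at the exponent $3 - c = 1 + 2\alpha_0$, producing the stated $a_1 s^{3-c}$ correction.

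The main obstacle will be the treatment of the resonant cases $c \in \{0, 2\}$ and the borderline $c = 3$, where indicial roots collide and one must introduce logarithmic ansatzes and build the second linearly independent homogeneous solution via reduction of order. The remainder of the argument is a contraction-mapping/Duhamel iteration in a weighted H\"older space whose weights encode the expected asymptotic exponents; the iteration preserves the function class precisely because consecutive exponents $\alpha_0, 1, 2\alpha_0, 2, \ldots$ satisfy uniform gap conditions away from the enumerated resonances.
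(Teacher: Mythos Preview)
Your proposal has a genuine gap in the proof of parts (i) and (ii), and it diverges from the paper's strategy in a way that makes the gap hard to close.

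\textbf{The gap in (i)--(ii).} You write that ``the explicit integral for $U_\phi'$ together with $U_\phi\in C^2$ controls $U_\phi$ and $\mathcal{R}$ near $s=0$,'' and then conclude boundedness of $U_\theta$ from a bounded right-hand side. But this is circular: the hypothesis is only $U_\phi\in C^2(-1,-1+\delta]$, so $U_\phi$ may blow up at $x=-1$, and the formula $U_\phi'=k\exp\bigl(-\int U_\theta/(1-t^2)\,dt\bigr)$ gives no control on $U_\phi'$ until you already know $U_\theta$ is bounded. The paper breaks this circularity by a structural observation you are missing. It works with the \emph{first-order} Riccati form
\[
(1-x^2)U_\theta'+2xU_\theta+\tfrac12 U_\theta^2=H(x),\qquad H(x)=-I(x)+\text{(quadratic polynomial)},
\]
where $I$ is the triple integral of $2U_\phi U_\phi'/(1-s^2)$. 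After one integration by parts, $I(x)$ splits as a bounded term plus $I_1(x)=-\int_{x_0}^x U_\phi^2(s)(s-x)(1-sx)/(1-s^2)^2\,ds$, and a direct sign check gives $I_1'(x)<0$ for $-1<x<x_0$. Hence $-I_1$ is monotone increasing on $(-1,x_0)$, so $H$ is bounded \emph{above} without any assumption on the size of $U_\phi$. For a Riccati equation $a g'+bg+\tfrac12 g^2=H$ with $a>0$, $\int 1/a=\infty$, $b$ bounded, a one-sided bound $H^+\in L^\infty$ is already enough to force $g\in L^\infty$ (this is the paper's Proposition~\ref{lemA_1}). Your second-order reformulation throws away exactly the Riccati structure that makes this one-sided argument work, and your Gronwall/monotonicity sketch does not supply a substitute.

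\textbf{Comparison for (iii).} Once (i)--(ii) are in hand, your Frobenius/indicial picture is morally compatible with the paper's method, but the paper again stays at first order: it writes $V=U_\theta-U_\theta(-1)$, so that $V'+BV=\tilde H$ with $(1+x)B(x)\to -\alpha_0$, and reads off the expansions from explicit Duhamel formulas (Lemmas~\ref{lemA_3}--\ref{lemA_7}). Your explanation of the $2<c<3$ case is also off: you say the $s^{3-c}$ mode is ``excited'' by the self-interaction $vv_s$ after the linear mode $a_2 s$, but in fact $s^{3-c}$ is the \emph{leading} correction (since $0<3-c<1$) and it is produced by the $U_\phi$-source, which satisfies $I(x)-I(-1)=O((1+x)^{3-c-\varepsilon})$ once the preliminary bound $U_\theta-c=O((1+x)^{3-c-\varepsilon})$ feeds back into $U_\phi'$. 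The indicial root $\alpha_0<0$ is not excluded by $C^1$ regularity per se; rather, the condition $V(x)e^{w(x)}\to 0$ (equivalently $V=o((1+x)^{\alpha_0})$) selects the bounded branch in the variation-of-constants formula.
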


Recall that we denote $\alpha_0=1-\frac{U_{\theta}(-1)}{2}$.
\begin{thm}\label{thm1_6}
 For $\delta>0$, let $U_{\theta}\in C^1(-1,-1+\delta)$,  $U_{\phi}\in C^2(-1,-1+\delta)$, and $U=(U_{\theta}, U_{\phi})$ be an axisymmetric solution of  (\ref{NS}). Then \\
(i) If  $U_{\theta}(-1)<2$, then $U_{\phi}(-1)$ exists and is finite, and there exist some constants $b_1,b_2,b_3$ such that 
\[
   U_{\phi}(x)=\left\{
     \begin{split}
           & U_{\phi}(-1)+b_1(1+x)^{\alpha_0}+b_2(1+x)^{2\alpha_0}+b_3(1+x)^{1+\alpha_0}\\
             &  \quad +O((1+x)^{\alpha_0+2-\epsilon}) +O((1+x)^{3\alpha_0-\epsilon}), \hspace{1cm} \textrm{ if } U_{\theta}(-1)\ne 0; \\
           & U_{\phi}(-1)+b_1(1+x)+b_2(1+x)^{2}\ln(1+x)+b_3(1+x)^{2} \\
             & \quad + O((1+x)^{3-\epsilon}), \hspace{4.65cm} \textrm{ if } U_{\theta}(-1)=0. 
        \end{split}
        \right.
\]
(ii)  If  $2<U_{\theta}(-1)<3$, then there exist some constants $b_1,b_2,b_3,b_4$ such that
\[
\begin{split}
   U_{\phi}(x) = & b_1(1+x)^{1-\frac{U_{\theta}(-1)}{2}}+b_2+b_1b_3(1+x)^{4-\frac{3U_{\theta}(-1)}{2}}+b_1b_4(1+x)^{2-\frac{U_{\theta}(-1)}{2}} \\
	& + b_1O((1+x)^{7-\frac{5U_{\theta}(-1)}{2}-\epsilon}) .
\end{split}
\]
In particular, $U_{\phi}$ is either a constant or an unbounded function in $(-1,-1+\delta)$.\\
(iii) If $U_{\theta}(-1)\ge 3$, then $U_{\phi}$ must be a constant in $(-1,-1+\delta)$.\\
(iv) If $U_{\theta}(-1)=2$, then $\eta:=\lim_{x\to -1^+}(U_{\theta}-2)\ln(1+x)$ exists and is $0$ or $4$. If $\eta=0$, then $U_{\phi}$ is either constant or unbounded, and there exist some constants $b_1,b_2$ such that
\[
  U_{\phi}= b_1\ln(1+x)+b_2+b_1O((1+x)^{1-\epsilon}). 
\]
If $\eta=4$, then $U_{\phi}$ is in $L^{\infty}(-1,-1+\delta)$, and there exists some constant $b$ such that
\[
   U_{\phi}=U_{\phi}(-1)+\frac{b}{\ln(1+x)}+O((\ln (1+x))^{-2+\epsilon}).
\]
\end{thm}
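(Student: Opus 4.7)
The plan is to exploit the fact that the third equation of (\ref{eq_homo}) decouples into a linear ODE for $U_\phi$ once $U_\theta$ is prescribed, solve it by one integration, and then substitute the fine asymptotics of $U_\theta$ supplied by Theorem \ref{thm1_5}.

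First I would rewrite the swirl equation in the $x$-variable. Using $(du_\phi/d\theta) + (\cot\theta)u_\phi = (\sin\theta)^{-1}\,dU_\phi/d\theta$, the third equation of (\ref{eq_homo}) becomes
\begin{equation*}
(1-x^2)U_\phi''(x) + U_\theta(x)U_\phi'(x) = 0,
\end{equation*}
which is first-order linear in $w := U_\phi'$. Hence for any $c \in (-1,-1+\delta)$ there is a constant $b_1 \in \mathbb{R}$ with
\begin{equation*}
U_\phi'(x) = b_1\exp\!\left(-\int_{c}^{x}\frac{U_\theta(t)}{1-t^2}\,dt\right), \qquad U_\phi(x) = U_\phi(c) + \int_{c}^{x}U_\phi'(s)\,ds.
\end{equation*}

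The heart of the argument is an asymptotic expansion of this integrating factor. I would use the decomposition
\begin{equation*}
\frac{U_\theta(t)}{1-t^2} = \frac{U_\theta(-1)}{2(1+t)} + \frac{U_\theta(t)-U_\theta(-1)}{(1-t)(1+t)} + \frac{U_\theta(-1)}{2(1-t)}
\end{equation*}
to isolate the principal singularity at $t=-1$, which contributes the factor $(1+x)^{-U_\theta(-1)/2} = (1+x)^{\alpha_0 - 1}$ in the exponential, and then control the remainder by inserting the expansion of $U_\theta - U_\theta(-1)$ from Theorem \ref{thm1_5}(iii) and integrating term by term. Taylor expanding the exponential of the integrated remainder and integrating $U_\phi'$ once more in $x$ then produces the claimed expansions case by case: when $U_\theta(-1) < 2$ and $\ne 0$ the remainder is integrable of order $(1+t)^{\alpha_0 - 1}$, yielding the first line of (i); when $U_\theta(-1) = 0$ ($\alpha_0 = 1$) the $(1+t)\ln(1+t)$ correction in $U_\theta$ is absorbed, yielding the second line of (i); when $2 < U_\theta(-1) < 3$ the correction in $U_\theta$ has exponent $3 - U_\theta(-1) = 2\alpha_0 + 1 \in (0,1)$ and produces the four-term expansion of (ii); and in case $U_\theta(-1) = 2$, the sub-case $\eta = 0$ is integrable and generates a straight $\ln(1+x)$ in $U_\phi$, while for $\eta = 4$ the non-integrable piece $\int 4/[(1+t)\ln(1+t)]\,dt = 2\ln|\ln(1+x)| + O(1)$ generates an extra $(\ln(1+x))^{-2}$ factor in $U_\phi'$, and one more integration yields the $b/\ln(1+x)$ behavior.

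The main obstacle will be part (iii), when $U_\theta(-1) \ge 3$. The reduction above shows only that if $b_1 \ne 0$ then $U_\phi \sim C(1+x)^{\alpha_0}$ with $\alpha_0 \le -1/2$, so $U_\phi$ is unbounded; the swirl ODE by itself does not force $b_1 = 0$. To exclude non-constant $U_\phi$ one must couple back to the remaining equations of (\ref{eq_homo}): combining lines one, two, and four of (\ref{eq_homo}) with $u_r = U_\theta'$ from the divergence-free condition yields an equation whose only $U_\phi$-dependence is through $U_\phi^2$ and its primitive $\int \cot\theta\, u_\phi^2\, d\theta$. If $b_1 \ne 0$ these source terms blow up faster than Theorem \ref{thm1_5}(i)--(ii) — finiteness of $U_\theta(-1)$ and $(1+x)U_\theta'(x)\to 0$ — can accommodate, forcing $b_1 = 0$ and hence $U_\phi$ to be constant. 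Verifying this incompatibility carefully, in particular ruling out cancellations with the $u_r^2$, $u_\theta^2$, and pressure terms, is the technical core of the proof; the rest is bookkeeping around the explicit integration formula above.
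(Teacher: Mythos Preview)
Your approach for parts (i), (ii), and (iv) is essentially the paper's: integrate the swirl equation once to get $U_\phi' = b_1\exp\!\bigl(-\int U_\theta/(1-s^2)\,ds\bigr)$, insert the expansion of $U_\theta$ from Theorem~\ref{thm1_5}(iii), expand the exponential, and integrate once more. The paper carries this out in Lemmas~\ref{lem5_7}, \ref{lem5_8}, and \ref{lem5_8_1}, with the same decomposition of $U_\theta/(1-t^2)$ you describe.

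For part (iii) your outline is also the paper's idea (Lemma~\ref{lem5_9}): if $U_\phi$ were non-constant, the triple integral $I(x)=\int\!\!\int\!\!\int 2U_\phi U_\phi'/(1-s^2)$ in the first equation of (\ref{eq5_2}) would be unbounded, contradicting the boundedness of $U_\theta$ and $(1-x^2)U_\theta'$ from Theorem~\ref{thm1_5}(i)--(ii). However, you have a genuine gap at the borderline $U_\theta(-1)=3$. From Theorem~\ref{thm1_5}(i) alone you only get $U_\phi'\sim (1+x)^{-3/2+o(1)}$, so $U_\phi U_\phi'/(1-s^2)\sim (1+s)^{-3+o(1)}$; after three integrations this is $(1+x)^{o(1)}$, which is \emph{not} forced to blow up. The paper closes this by first proving a \emph{one-sided} refinement $U_\theta(x)-3\ge -C(1+x)^{1-\epsilon}$, obtained by observing that the triple integral has a definite sign (via (\ref{eqI_3}), $I_1'<0$) and feeding this into a one-sided comparison lemma (Lemma~\ref{lemA_4}). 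That sharpens $U_\phi'\ge c(1+x)^{-3/2}$ with no $o(1)$ loss, whence $-I'''\ge C(1+x)^{-3}$ and $I\ge C|\ln(1+x)|\to\infty$. You should flag this step explicitly; the sign structure of $I$ is what saves the borderline case, not a generic blow-up rate.
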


\begin{rmk}
   We have obtained near $x=-1$ much more detailed expansions of $U_{\theta}$ and $U_{\phi}$  than those in Theorem \ref{thm1_5} and Theorem \ref{thm1_6}. In particular,  if $U_{\theta}(-1)\ge 3$, $U_{\theta}$ can be expressed as a power series of $(1+x)$ near $x=-1$. These will be presented  in a later paper.
\end{rmk}

A consequence of Theorem \ref{thm1_2} and Theorem \ref{thm1_6} is 
\begin{cor}
  For every $\tau<3$, there exists an axisymmetric solution   $(U_{\theta}, U_{\phi})$ with nonzero swirl of (\ref{NS})   in $C^{\infty}(\mathbb{S}^2\setminus\{S\})$ such that $U_{\theta}(-1)=\tau$. On the other hand, every axisymmetric solution $(U_{\theta}, U_{\phi})$ of (\ref{NS}) in $C^{\infty}(\mathbb{S}^2\setminus\{S\})$ with $U_{\theta}(-1)\ge 3$ necessarily has zero swirl, i.e. $U_{\phi}\equiv 0$.
\end{cor}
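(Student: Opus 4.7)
I plan to deduce the two halves from Theorem \ref{thm1_2} and Theorem \ref{thm1_6} respectively, plus a short unique-continuation step in the second.

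For the existence assertion, given $\tau<3$ I would select a point $(\tau,\sigma)$ lying in one of the four families of Theorem \ref{thm1_2}: for $\tau<2$ take any $(\tau,\sigma)\in J_1$; for $\tau=2$ take $(2,0)\in J_2$; for $2<\tau<3$ take $(\tau,\tau/4)\in J_3\cap\{2<\tau<3\}$. Applying Theorem \ref{thm1_2} with the singleton $K=\{(\tau,\sigma)\}$ produces a one-parameter family $(u,p)(\tau,\sigma,\beta)$, $\beta\in(-\delta,\delta)$, of axisymmetric solutions in $C^\infty(\mathbb{S}^2\setminus\{S\})$ with nonzero swirl whenever $\beta\ne 0$. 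The identity $U_\theta(-1)=\tau$ is built into the parameterization---this is the content of the sharper statements in Theorem \ref{thm4_1}, Theorem \ref{thm4_2}, Theorem \ref{thm4_3}, and can alternatively be obtained from smooth $\beta$-dependence of $u$ together with Theorem \ref{thm1_5}(i). Choosing any $\beta\ne 0$ in $(-\delta,\delta)$ then yields the desired solution.

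For the non-existence of swirl when $U_\theta(-1)\ge 3$, Theorem \ref{thm1_6}(iii) (whose hypotheses are satisfied because $u\in C^\infty(\mathbb{S}^2\setminus\{S\})$) already forces $U_\phi$ to be constant on some interval $(-1,-1+\delta)$. To propagate this to all of $(-1,1)$ I would use the third line of (\ref{eq_homo}). Setting
\[
 g := \frac{d u_\phi}{d\theta} + \cot\theta\, u_\phi = \frac{1}{\sin\theta}\,\frac{d U_\phi}{d\theta},
\]
that equation reads $\frac{dg}{d\theta}=u_\theta g$, a linear first-order ODE on $(0,\pi)$ with smooth coefficient. Since $U_\phi$ is constant near $\theta=\pi$, $g$ vanishes on the corresponding subinterval, so uniqueness forces $g\equiv 0$ on $(0,\pi)$, and hence $U_\phi$ is constant on $(-1,1)$. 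Smoothness of the vector field $u$ at the north pole---where the azimuthal coordinate is singular---requires $u_\phi=O(\sin\theta)$ as $\theta\to 0^+$, so $U_\phi=u_\phi\sin\theta\to 0$; this pins the constant to zero, giving $U_\phi\equiv 0$.

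The only delicate point I foresee is the preservation of $U_\theta(-1)=\tau$ along the perturbation curve in the existence part, which genuinely uses more than Theorem \ref{thm1_2} as stated and leans on the finer constructions of Section 4; both the unique-continuation argument for $U_\phi$ and the north-pole regularity step are elementary by comparison.
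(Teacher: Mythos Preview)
Your proposal is correct and follows the same outline the paper intends: the existence half comes from Theorem~\ref{thm1_2} (via the sharper Theorems~\ref{thm4_1}--\ref{thm4_3}, where the function spaces force $\tilde U_\theta(-1)=0$ and hence $U_\theta(-1)=\bar U_\theta(-1)=\tau$), and the zero-swirl half from Theorem~\ref{thm1_6}(iii) together with the north-pole condition $U_\phi(1)=0$. The only minor difference is your unique-continuation step: the paper would simply note that $U'_\phi=Ce^{-\int U_\theta/(1-s^2)\,ds}$ (equation~(\ref{eq5_10})), so once the constant $C$ vanishes---which is exactly what Lemma~\ref{lem5_9} shows, and it applies on any interval $(-1,-1+\delta)$ with $\delta<2$---one has $U'_\phi\equiv 0$ on all of $(-1,1)$ directly; your route through the first-order ODE for $g$ is an equivalent but slightly longer way to reach the same conclusion.
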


Landau interpreted the solutions he found (Landau solutions)  as a jet discharged from a point. Experimentally, a pingpong ball can float and be stable in a jet of air (such as when we blow into a straw upwards). However, as pointed out by \v{S}ver\'{a}k, the pressure in the center of the Landau jet is higher than the pressure nearby, and therefore the exact  Landau jets solutions are unlikely to support a pingpong ball in a stable way. The real-life jets are turbulent and this plays an important role. The Landau solutions could still be relevant when one thinks in terms of averaging, turbulent viscosity, Reynolds stress, etc.  Still, the pressure profiles are of interest and in Section 6, we identify all axisymmetric no-swirl solutions in a neighborhood of the north pole of $\mathbb{S}^2$, which describe fluid jets with lower pressure in the center. It would be interesting to compare some of these solutions to real-life jets.

There have been some other papers on (-1)-homogeneous axisymmetric solutions of the stationary NSE (\ref{NS}), see  \cite{Goldshtik}, \cite{PAULL1}, \cite{PAULL2}, \cite{PAULL3}, \cite{Serrin}, \cite{Slezkin}, \cite{Squire}, \cite{WangSteady} and \cite{Y1950}. In the no-swirl case, the equations were converted to an equation of Riccati type in \cite{Slezkin}, see also \cite{Y1950} where various exact solutions on annulus regions of $\mathbb{S}^2$ were given.

The organization of the paper is as follows. 
In Section 2, we reduce the NSE in the framework of spherical coordinates.  We also give an alternative proof of the above mentioned result in \cite{TianXin} in the framework. 
In Section 3, we classify all (-1)-homogeneous axisymmetric no-swirl solutions of the stationary NSE (\ref{NS}) on $\mathbb{S}^2\setminus\{S\}$.  
The existence part of
Theorem \ref{thm1_2} is established in Section 4. 
It is proved by using implicit function theorems in suitably chosen
weighted norm Banach spaces.    Three different sets of spaces are used
according to which of the three
parts of $J$, $J_1$, $J_2$ or $J_3\cap 
\{2 \le \tau<3\}$,  $(\tau, \sigma)$ belongs to.
They are as Theorem \ref{thm4_1}, \ref{thm4_2} and \ref{thm4_3}, proved respectively
in Section 4.2, 4.3 and 4.4. 
Asymptotic behavior of solutions in a punctured ball $B_{\delta}(S)\setminus\{S\}$ of $\mathbb{S}^2$ is studied in Section 5. Theorem \ref{thm1_5}, \ref{thm1_6} and the nonexistence part, therefore the completion of Theorem \ref{thm1_2} are established in this section.
Several results on  first order ordinary differential equations used in Section 5 are given in Section 7.

\noindent
{\bf Acknowledgment}. The authors thank L. Nirenberg and V. \v{S}ver\'{a}k for stimulating and encouraging conversations. The work of the first named author is carried out during visits to Rutgers University. The hospitality of the department is warmly acknowledged. Her work is partially supported by NSFC (grants No.11001066 and No.11371113). The work of the second named author is partially supported by NSF grants DMS-1065971 and DMS-1501004.

\section{Reduction of equations}

Our first attempt in proving Theorem \ref{thm1_2} is to work with $(u_{\theta}, u_{\phi})$ and to find some spaces with appropriate weights on $u_{\theta}$ and $u_{\phi}$ together with their derivatives near the south pole $S$. However, we encounter difficulties of loss of derivatives when trying to apply implicit function theorems.  As mentioned earlier, we work with new functions $U_r$, $U_{\theta}$ and $U_{\phi}$, and a new variable $x$ as defined in (\ref{Va}). Both formulations, with $u$ and $\theta$ or with $U$ and $x$ are widely used in literature.

For any $-1\leq \delta_1<\delta_2\leq 1$, system (\ref{eq_homo}) in the range $\delta_1<x<\delta_2$ can be reformulated into the following third order ODE system of $U_{\theta}, U_{\phi}$ and $p$: 
\begin{equation}\label{eq_NSwithpresure}
\left\{
\begin{split}
	& -(1-x^2) U_\theta''' +2x U_\theta''   - {U_\theta'}^2 - U_\theta U_\theta''  - \frac{U_\theta^2}{1-x^2}  - \frac{U_\phi^2}{1-x^2} - 2p = 0, \\
	& (1-x^2) U_\theta''  -  U_\theta U_\theta' - \frac{x}{1-x^2} U_\theta^2 - \frac{x}{1-x^2} U_\phi^2 - (1-x^2) p' =0,  \\
	& -(1-x^2) U_\phi'' - U_\theta U_\phi' = 0. 
\end{split}
\right. 
\end{equation}
with the divergence free condition 
\begin{equation}\label{div}
  U_r = U_\theta'\sthe. 
\end{equation} 
Differentiating the first line of (\ref{eq_NSwithpresure}) in $x$, then subtracting $\frac{2}{1-x^2}$ times the second line, we have the following fourth order ODE system of $U_{\theta}$ and $U_{\phi}$
\begin{equation}\label{eq_NS_x1}
\left\{
\begin{split}
	& -(1-x^2) U_\theta'''' +4x U_\theta'''  - 3U_\theta' U_\theta'' - U_\theta U_\theta''' - \frac{2 U_\phi U_\phi'}{1-x^2} = 0, \\
	& -(1-x^2) U_\phi'' - U_\theta U_\phi' = 0. 
\end{split}
\right. 
\end{equation} 
Since   
\begin{equation*}
	-(1-x^2) U_\theta'''' +4x U_\theta'''  - 3U_\theta' U_\theta'' - U_\theta U_\theta''' = -\left( (1-x^2) U_\theta' + 2x U_\theta +\frac{1}{2} U_\theta^2  \right)''',
\end{equation*}
system (\ref{eq_NS_x1}) can be converted into
\begin{equation}\label{eq_NSx2}
\left\{
\begin{split}
	& (1-x^2) U_\theta' + 2x U_\theta +\frac{1}{2} U_\theta^2 + \int \int \int \frac{2 U_\phi(s) U_\phi'(s)}{1-s^2} ds dt dl = c_1 x^2 + c_2 x + c_3, \\
	& (1-x^2) U_\phi'' + U_\theta U_\phi' = 0,
\end{split}
\right. 
\end{equation}
for some constants $c_1, c_2, c_3$. By (\ref{div}),  $U_r\in C((\delta_1, \delta_2), \mathbb{R})$ is well-defined if $U_\theta\in C^1((\delta_1, \delta_2), \mathbb{R})$, and $U_r = O(1)\sthe$ if $U_\theta'$ is bounded. The original Navier-Stokes system (\ref{NS}) is equivalent to (\ref{eq_NS_x1}) and (\ref{div}).

If there exist some constants $c_1,c_2,c_3$ and $U_{\theta}\in C^1(\delta_1,\delta_2)$, $U_{\phi}\in C^2(\delta_1,\delta_2)$ such that $(U_\theta, U_\phi)$ is a solution of (\ref{eq_NSx2}) in $(\delta_1,\delta_2)$, then the (-1)-homogeneous $u = (u_r, u_\theta, u_\phi)$ given in the corresponding domain on $\mathbb{S}^2$ by
$$
	u_r = U_\theta', \quad u_\theta = \frac{U_\theta}{\sin\theta}, \quad \quad u_\phi = \frac{U_\phi}{\sin\theta} ,
$$
satisfies the stationary NSE (\ref{NS}). We will use $U=(U_{\theta}, U_{\phi})$ to denote solutions of the stationary Navier-Stokes equations (\ref{NS}), with the meaning that $u$ determined by $U$ as above is a solution to (\ref{NS}).

With the above set up, we give an alternative proof of the following theorem:\\
\textbf{Theorem B (\cite{TianXin})}\label{thm_tianxin}
	\emph{All (-1)-homogeneous nonzero axisymmetric solutions of (\ref{NS}) in $C^2(\mathbb{R}^3\setminus\{0\})$ are Landau solutions. }
\begin{proof}
	Since the solution $u$ is smooth in $\mathbb{R}^3\setminus\{0\}$, the components $U_r$, $U_\theta$, $U_\phi$ and their derivatives are well-defined on $\mathbb{S}^2$. $U_\theta$ and $U_\phi$ vanish at $x=\pm 1$, $U_\theta = O(1)(1-x^2)$, $U_\theta'$, $U_\theta''$ are bounded in $[-1,1]$.  

	From the second line of (\ref{eq_NSx2}), we have
	\begin{equation*}
		U'_\phi = c e^{-\int \frac{U_\theta}{1-s^2}ds}, 
	\end{equation*}
	for some constant $c$, so $U_\phi$ is monotone for $x\in [-1,1]$. Since $U_\phi(1)=U_\phi(-1) = 0$, we must have $U_\phi \equiv 0$, i.e. the solution does not have a swirling components.
	
	Let $x$ go to 1 in the first line of (\ref{eq_NSx2}). Notice that $U_\theta = O(1)(1-x^2)$, and $U_\theta'$ is bounded,  we obtain
	\begin{equation*}
		c_1+c_2+c_3=\lim_{x\rightarrow 1} \left( (1-x^2) U_\theta' + 2x U_\theta +\frac{1}{2} U_\theta^2 \right) = 0, 
	\end{equation*}
	Differentiate the first line of (\ref{eq_NSx2}) with respect to $x$, then send $x\rightarrow 1$, we have
	\begin{equation*}
		2c_1+c_2=\lim_{x\rightarrow 1} \left( (1-x^2) U_\theta'' + 2 U_\theta + U_\theta U_\theta' \right) = 0. 
	\end{equation*}
	It follows that
	$$
		c_1 x^2 + c_2 x + c_3 = c_1(1-x)^2. 
	$$
	Repeat the above analysis similarly as $x$ goes to $-1$, we have
	$$
		c_1 x^2 + c_2 x + c_3 = c_1(1+x)^2. 
	$$
	Therefore, we must have $c_1=c_2=c_3=0$, $U_\phi = 0$. It is now easy to see that $u$ is a Landau solution, $u=\frac{2\sin\theta}{\lambda+\cos\theta}$ with $|\lambda|>1$.
\end{proof}

\section{Classification of axisymmetric no-swirl solutions on $\mathbb{S}^2\setminus\{S\}$}

In this section, we will prove Theorem \ref{thm1_1}, which classifies all (-1)-homogeneous axisymmetric no-swirl $C^{\infty}(\mathbb{S}^2\setminus\{S\})$ solutions of (\ref{NS}). More generally,  we study axisymmetric no-swirl solutions of (\ref{NS}) which are smooth in a neighborhood of the north pole.

By arguments used in Section 2, $u$ is a solution of (\ref{NS}) in $\mathbb{S}^2\setminus\{N,S\}$  if and only if $U$ defined by (\ref{Va}) satisfies  (\ref{eq_NSx2}) in $(-1,1)$ for some constants $c_1,c_2$ and $c_3$. When the solution has no swirling component, (\ref{eq_NSx2}) becomes 
\begin{equation}\label{eq_NStemp}
	(1-x^2) U_\theta' + 2x U_\theta +\frac{1}{2} U_\theta^2 = c_1 x^2 + c_2 x + c_3. 
\end{equation}

Let $u$ be a solution which is smooth in a neighborhood of the north pole, the proof of Theorem B in Section 2 actually shows that the polynomial on the right hand side of (\ref{eq_NStemp}) must be $\mu(1-x)^2$ for some constant $\mu$. Therefore, the NSE is
\begin{equation}\label{eq_smthN}
	(1-x^2) U_\theta' + 2x U_\theta +\frac{1}{2} U_\theta^2 = \mu(1-x)^2. 
\end{equation}

\begin{lem}\label{lem3_1}
	Let $\mu, \gamma\in \mathbb{R}$ and $\delta\in[-1,1)$, equation (\ref{eq_smthN}) has at most one solution $U_\theta\in C^1(\delta,1)$ satisfying 
	\begin{equation}\label{eq3_1a}
		\lim_{x\to 1^-}U_{\theta}(x)=0, \textrm{ and }\lim_{x\rightarrow 1^-} U_\theta'(x) = \gamma. 
	\end{equation}
\end{lem}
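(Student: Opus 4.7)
The plan is to suppose that $U_1$ and $U_2$ are both $C^1$ solutions of (\ref{eq_smthN}) on $(\delta, 1)$ meeting (\ref{eq3_1a}), and show their difference $W := U_1 - U_2$ vanishes identically. Subtracting the two copies of (\ref{eq_smthN}) and factoring $\tfrac{1}{2}(U_1^2 - U_2^2) = \tfrac{1}{2}(U_1 + U_2)\, W$, one sees that $W$ satisfies the first-order linear homogeneous equation
\begin{equation*}
	(1 - x^2)\, W' + \Bigl( 2x + \tfrac{1}{2}(U_1 + U_2) \Bigr) W = 0 \qquad \text{on } (\delta, 1).
\end{equation*}
On every compact subinterval of $(\delta, 1)$ the coefficients are continuous, so ordinary Cauchy--Lipschitz uniqueness holds there; the entire difficulty lies at the regular-singular endpoint $x = 1$, where $(1-x^2)$ vanishes. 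The task therefore reduces to propagating enough information from $x = 1$ to conclude $W \equiv 0$.

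The first substantive step is to quantify the decay of $W$ at $x = 1$. From the hypotheses $\lim_{x \to 1^-} U_i(x) = 0$ and $\lim_{x \to 1^-} U_i'(x) = \gamma$, the fundamental theorem of calculus gives $U_i(x) = -\int_x^{1} U_i'(s)\, ds = -\gamma(1-x) + o(1-x)$ as $x \to 1^-$. In particular $W(x) = o(1-x)$, and the bound $(U_1 + U_2)(s) = O(1-s)$ makes the quotient $(U_1 + U_2)(s)/(1 - s^2)$ bounded near $s = 1$.

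The second step is to integrate the linear ODE for $W$ explicitly. For $\delta < x < x_0 < 1$, using the primitive $\int \tfrac{2s}{1-s^2}\, ds = -\ln(1 - s^2)$, one obtains
\begin{equation*}
	W(x) = W(x_0)\, \frac{1 - x^2}{1 - x_0^2}\, \exp\!\left( \int_x^{x_0} \frac{(U_1 + U_2)(s)}{2(1 - s^2)}\, ds \right).
\end{equation*}
Because the remaining integrand is bounded near $s = 1$, the exponential factor stays bounded as $x_0 \to 1^-$ with $x$ held fixed, so
\begin{equation*}
	|W(x)| \le C(x)\, \frac{|W(x_0)|}{1 - x_0}\, (1 - x)
\end{equation*}
for some constant $C(x)$ independent of $x_0$. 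Letting $x_0 \to 1^-$ and invoking $W(x_0) = o(1 - x_0)$ forces $W(x) = 0$ for every $x \in (\delta, 1)$, proving uniqueness.

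The main obstacle is exactly this matching at the singular point: standard uniqueness cannot be applied across $x = 1$ because the leading coefficient degenerates, and one must quantitatively compare the decay $W(x) = o(1-x)$ inherited from the prescribed boundary data (\ref{eq3_1a}) against the universal rate $W(x) \asymp (1-x)$ dictated by the indicial behavior of the singular linear ODE. Everything else is routine ODE manipulation.
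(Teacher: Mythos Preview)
Your proof is correct. The paper's argument is the same idea in a different dress: it introduces the substitution $g_i(x) = (1-x^2)^{-1} U_\theta^{(i)}(x)$, which converts (\ref{eq_smthN}) into the Riccati equation $g_i' + \tfrac{1}{2}g_i^2 = \mu/(1+x)^2$ that is \emph{regular} at $x=1$; L'H\^opital's rule gives $g_i(1) = -\gamma/2$ for both $i$, and then $g_1 - g_2$ satisfies a first-order linear homogeneous ODE with continuous coefficients on $(\delta,1]$ and vanishes at $x=1$, so it is identically zero by standard uniqueness. Your integrating-factor computation is exactly this substitution carried out implicitly: the factor $(1-x^2)/(1-x_0^2)$ in your formula for $W$ is precisely what appears when one writes $W = (1-x^2)(g_1 - g_2)$, and your boundedness of the remaining exponential is equivalent to the continuity of $g_1+g_2$ at $x=1$. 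The paper's packaging has the advantage of reducing to a textbook uniqueness statement once the singularity is removed; yours has the advantage of exposing directly the competition between the indicial rate $1-x$ of the singular linear problem and the strictly faster decay $W(x)=o(1-x)$ forced by the boundary data.
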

\begin{proof}
	Let $U_\theta^{(i)}$ ($i=$1, 2) be two such solutions. Then $g_i(x):= (1-x^2)^{-1}U_\theta^{(i)}$ satisfies
	$$
		g'_i(x) + \frac{1}{2} g_i^2(x) = \frac{\mu}{(1+x)^2}, \quad \delta<x<1, \quad i=1,2. 
	$$
	Using (\ref{eq3_1a}) and the L'Hospital's rule, 
	$$
		\lim_{x\rightarrow 1^-}g_i(x)  = -\frac{\gamma}{2}, \quad i=1,2. 
	$$
	So $g_i(x)$ can be extended as functions in $C^0(\delta,1]$, $g_1(1) = g_2(1)$, and $g_1-g_2$ satisfies $(g_1-g_2)'+\frac{1}{2}(g_1+g_2)(g_1-g_2)=0$ in $(\delta,1)$ with $(g_1-g_2)(1)=0$. It follows that $g_1\equiv g_2$ in $(\delta,1)$, so $U^{(1)}_{\theta}\equiv U^{(2)}_{\theta}$ in $(\delta,1)$. 
\end{proof}

Let $b := \sqrt{|1+2\mu|}$,  $\delta^*\in C(\mathbb{R}^2, [-1,1))$ be given by
\begin{equation}\label{eq_deltastar}
	\delta^* := \delta^*(\mu, \gamma) := 
	\left\{ 
	\begin{matrix}
		& -1, & \mu\geq -\frac{1}{2}, \gamma \geq - (1+\sqrt{1+2\mu}); \\
		& -1+ 2\left( \frac{\gamma+1 - b}{\gamma+1+ b}\right)^{-1/b}, & \mu> -\frac{1}{2}, \gamma < - (1+\sqrt{1+2\mu}); \\
		& -1+ 2 e^{\frac{2}{\gamma+1}}, & \mu = -\frac{1}{2}, \gamma<-1; \\
		& -1+ 2 \exp\left( \frac{2}{b}(\arctan \frac{b}{\gamma+1} - \pi)\right), & \mu < -\frac{1}{2}, \gamma > -1; \\	
		& -1+ 2 \exp\left( -\frac{\pi}{b} \right), & \mu < -\frac{1}{2}, \gamma = -1; \\	
		& -1+ 2 \exp\left( \frac{2}{b}\arctan \frac{b}{\gamma+1} \right), & \mu < -\frac{1}{2}, \gamma < -1. \\
	\end{matrix}
	\right. 
\end{equation}

\begin{thm}[Exact form of axisymmetric no-swirl solutions]\label{thm_smooth_north}
	For every $(\mu,\gamma)\in\mathbb{R}^2$, there exists a unique $U_\theta := U_\theta(\mu,\gamma;\cdot)\in C^\infty(\delta^*,1)$ satisfying  (\ref{eq_smthN}) in $(\delta^*,1)$ and 
	\begin{equation}\label{eq_Uthelim}
		\lim_{x\rightarrow 1^-} U_\theta'(x) = \gamma. 
	\end{equation}
	The interval $(\delta^*,1)$ is the maximal interval of existence for $U_\theta$, and in particular, 
	\begin{equation}\label{eq_Utheblowup}
		\lim_{x\rightarrow \delta^{*+}} |U_\theta(x)| = \infty, \textrm{ if $\delta^*>-1$. }
	\end{equation}
	Moreover, $U_\theta$ is explicitly given by 
	\begin{equation}\label{eq_temp110}
	U_\theta(x) = 
	\left\{
	\begin{matrix}
		& (1-x)\left(1- b - \frac{2 b(\gamma+1-b)}{(\gamma+1+b) (\frac{1+x}{2})^{-b} - \gamma-1 + b} \right), & \mu > - \frac{1}{2}, \\
		& (1-x)  \left( 1+ \frac{2 (\gamma+1)  }{  (\gamma+1) \ln \frac{1+x}{2}  - 2 } \right), & \mu = - \frac{1}{2}, \\
		& (1-x) \left( 1  + \frac{ b ( b \tan\frac{\beta(x)}{2} + \gamma+1) }{(\gamma+1) \tan\frac{\beta(x)}{2} - b } \right), & \mu < - \frac{1}{2}, \\
	\end{matrix}
	\right.  
	\end{equation}
	where $b := \sqrt{|1+2\mu|}$,  and $\beta(x):= b \ln \frac{1+x}{2}$. 
\end{thm}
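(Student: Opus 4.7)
The plan is to convert the Riccati-type ODE (\ref{eq_smthN}) into a separable first-order ODE via the substitution $U_\theta = (1-x)V$, and then solve it explicitly in closed form.

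First I would substitute $U_\theta = (1-x)V$ into (\ref{eq_smthN}); after dividing twice by $(1-x)$ the equation collapses to the separable first-order ODE
\[
(1+x)V' = \mu + V - \tfrac{1}{2}V^2.
\]
The condition $\lim_{x\to 1^-}U_\theta'(x) = \gamma$, combined with the factor $(1-x)$ in the substitution, gives $V(1^-) = -\gamma$ as an initial condition; the factor $(1-x)$ simultaneously enforces $U_\theta(1) = 0$, matching the hypothesis of Lemma \ref{lem3_1}.

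Next, I would integrate the separable ODE in three cases according to the sign of $1+2\mu$. When $\mu > -\tfrac{1}{2}$, set $b=\sqrt{1+2\mu}$, so the right-hand side factors as $-\tfrac{1}{2}(V-1-b)(V-1+b)$; partial fractions yield $\frac{V-1+b}{V-1-b} = A(1+x)^b$, and the condition $V(1)=-\gamma$ fixes $A = \frac{\gamma+1-b}{2^b(\gamma+1+b)}$. Solving for $V$ and multiplying by $(1-x)$ gives the first case of (\ref{eq_temp110}). When $\mu = -\tfrac{1}{2}$, the right-hand side equals $-\tfrac{1}{2}(V-1)^2$ and direct integration produces $\tfrac{1}{V-1} = \tfrac{1}{2}\ln\tfrac{1+x}{2} - \tfrac{1}{\gamma+1}$, yielding the middle case. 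When $\mu < -\tfrac{1}{2}$, with $b=\sqrt{-1-2\mu}$ the right-hand side is $-\tfrac{1}{2}[(V-1)^2+b^2]$, and integration gives $\arctan\frac{V-1}{b} = -\tfrac{\beta(x)}{2}+\arctan\frac{-\gamma-1}{b}$ with $\beta(x) = b\ln\tfrac{1+x}{2}$; the tangent subtraction formula rearranges this into the third case.

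To identify the maximal interval of existence $(\delta^*, 1)$, I would locate the first zero of the denominator of each explicit formula as $x$ decreases from $1$. For $\mu > -\tfrac{1}{2}$, the denominator $(\gamma+1+b)(\tfrac{1+x}{2})^{-b} + (b-\gamma-1)$ admits a root in $(-1,1)$ only when $\gamma < -(1+b)$, yielding the first two lines of (\ref{eq_deltastar}); for $\mu = -\tfrac{1}{2}$ the denominator $(\gamma+1)\ln\tfrac{1+x}{2}-2$ has a root in $(-1,1)$ iff $\gamma < -1$; and for $\mu < -\tfrac{1}{2}$ the blow-up of $V$ occurs at the largest $x < 1$ for which $(\gamma+1)\tan\tfrac{\beta(x)}{2} = b$, producing the last three subcases of (\ref{eq_deltastar}). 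In each case one inverts the resulting equation to obtain the stated formula for $\delta^*$; the blow-up assertion (\ref{eq_Utheblowup}) is then immediate, smoothness of $U_\theta$ on $(\delta^*,1)$ follows from the explicit closed form, and uniqueness follows from Lemma \ref{lem3_1}.

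The main obstacle I anticipate is the careful bookkeeping in the oscillatory case $\mu < -\tfrac{1}{2}$: first, unpacking $\arctan\tfrac{V-1}{b} = -\tfrac{\beta(x)}{2}+\arctan\tfrac{-\gamma-1}{b}$ into the closed-form tangent expression appearing in (\ref{eq_temp110}); second, correctly determining which branch of the tangent produces the first blow-up as $x$ decreases from $1$. This branch selection is precisely what splits that regime into the three distinct subcases $\gamma > -1$, $\gamma = -1$, $\gamma < -1$ in (\ref{eq_deltastar}), with an extra $\pi$-shift in the first of these coming from choosing the correct inverse-tangent branch.
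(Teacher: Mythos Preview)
Your proposal is correct and complete, but it takes a genuinely different route from the paper's proof. The paper first observes that $h(x)=a(1-x)$ with $\tfrac{1}{2}a^2-a=\mu$ is a particular solution of (\ref{eq_smthN}), then writes $g=U_\theta-h$ so that the inhomogeneity disappears, and finally uses the integrating factor $(1+x)^{a-1}(1-x)^{-1}$ to reduce to the Bernoulli equation $\tilde g'+\tfrac12(1+x)^{-a}\tilde g^2=0$; for $\mu<-\tfrac12$ this is carried out with a complex root $a=1+ib$ and the real part is extracted at the end. Your substitution $U_\theta=(1-x)V$ instead collapses the equation in one step to the separable ODE $(1+x)V'=\mu+V-\tfrac12V^2$, which is then integrated by elementary partial fractions in the three sign cases of $1+2\mu$. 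Your route is more elementary---it avoids both the complex-root device and the integrating-factor reduction---and makes the trichotomy on $\mu$ transparent from the quadratic $V-\tfrac12V^2+\mu$; the paper's route, on the other hand, is a direct instance of the classical fact that a Riccati equation linearizes once a particular solution is known. Both lead to the same closed forms, and your identification of $\delta^*$ via the first denominator zero and your appeal to Lemma~\ref{lem3_1} for uniqueness match the paper's treatment.
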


We will also use $U^{\mu,\gamma}$ to denote the axisymmetric no-swirl solution $(U_{\theta}(\mu,\gamma; \cdot),0)$ in the above theorem.

Let $u=u(\mu,\gamma)$ be the solution generated by $(U_\theta(\mu,\gamma),0)$, then $\{u(0,\gamma)\mid \gamma > - 2, \gamma\ne 0\}$ are Landau solutions. In particular, $U_{\theta}(x)=\frac{2(1-x^2)}{x+\lambda}$ with $|\lambda|>1$, and $\delta^*(0,\gamma) = -1$ for any $\gamma >-2$, $\gamma\not=0$. 

It is easy to see that $U_\theta(\mu,\gamma) \neq U_\theta(\mu',\gamma')$ if $(\mu,\gamma) \neq(\mu',\gamma')$. Let $I$ be defined by
\begin{equation*}
	I := \{(\mu,\gamma)\mid \mu\ge -\frac{1}{2}, \gamma \ge -1-\sqrt{1+2\mu}\}, \quad 
	I^c = \mathbb{R}^2\setminus I.  
\end{equation*}
Then $\delta^*(\mu,\gamma) = -1$ if and only if $(\mu,\gamma)\in I$. 
Consequently, $u(\mu,\gamma)\in C^\infty(\mathbb{S}^2\setminus\{S\}) \setminus C^1(\mathbb{S}^2)$ if and only if for all $(\mu,\gamma)\in I \setminus \{(0,\gamma)\mid \gamma>-2\}$. Also, it is not hard to see
\begin{equation*}
	\lim_{\gamma\rightarrow -\infty} \delta^* (\mu,\gamma) = 1, \quad \quad \lim_{\mu\rightarrow -\infty} \delta^* (\mu,\gamma) = 1. 
\end{equation*}
\begin{equation*}
	\frac{\partial \delta^*(\mu,\gamma)}{\partial \mu} < 0, \quad \frac{\partial \delta^*(\mu,\gamma)}{\partial \gamma} < 0, \quad \mbox{for } (\mu,\gamma)\in I^c.  
\end{equation*} \\

\noindent \emph{Proof of Theorem \ref{thm_smooth_north}:}
	For every $(\mu,\gamma)\in\mathbb{R}^2$,  let $a$ be a root of $\frac{1}{2}a^2-a=\mu$ (real or complex) then $h(x) := a(1-x)$ is a solution of (\ref{eq_smthN}). If $U_{\theta}$ is a solution of (\ref{eq_smthN}), denote $g := U_\theta - h$, then $g$ satisfies
	\begin{equation*}
		(1-x^2) g' + 2x g + hg+\frac{1}{2} g^2 = 0.  
	\end{equation*}
	
	Multiplying both sides by the integrating factor $(1+x)^{a-1}(1-x)^{-1}$, we have
	\begin{equation*}
		\tilde{g}' +\frac{1}{2} (1+x)^{-a}\tilde{g}^2 = 0, 
	\end{equation*}
	where $\tilde{g}(x):=(1+x)^{a-1}(1-x)^{-1} g$. Solving this equation directly we have 
	\begin{equation*}
		\tilde{g} = \frac{2(1-a)}{(1+x)^{1-a}+c'}, \quad \mbox{if } a\not=1. 
	\end{equation*}
	Then
	\begin{equation}\label{eq3_1U}
		U_\theta = a(1-x) + \frac{2(1-a)(1-x)(\frac{1+x}{2})^{1-a}}{(\frac{1+x}{2})^{1-a}+c} 
	\end{equation}
	where $c$ is a (real or complex) constant.
	
	Let $b := \sqrt{|1+2\mu|}$. When $\mu>-\frac{1}{2}$, we can take $a = 1+b$, $c  = \frac{-\gamma-2+a}{\gamma+a}$. Then $U_\theta$ is the function in the first line of (\ref{eq_temp110}) which satisfies (\ref{eq_Uthelim}) and (\ref{eq_smthN}) in $(\delta^*,1)$ where $\delta^*$ is given in (\ref{eq_deltastar}), and by Lemma \ref{lem3_1} it is the only solution satisfying (\ref{eq_smthN}) and (\ref{eq_Uthelim}). Property (\ref{eq_Utheblowup}) follows from standard ODE theory. $U_\theta$ are Landau solutions when $\mu=0$ and $\gamma > -2$, $\gamma\ne 0$,
	\begin{equation*}
		U_\theta(x) = \frac{2(1-x^2)}{ x +\lambda} 
	\end{equation*} 
	where $\lambda =- \frac{\gamma+4}{\gamma}$. It can be seen that when $\gamma > -2$, $\gamma\ne 0$, there is $|\lambda|>1$.

	When $\mu< - \frac{1}{2}$, we can take $a = 1+i b$, $c = \frac{-\gamma-2+a}{\gamma+a}$. Then the real part of (\ref{eq3_1U}) can be rewritten as the function in the third line of (\ref{eq_temp110}), which satisfies the required properties. 
	
	When $\mu=-1/2$, we have $a=1$ and
	\begin{equation*}
		\tilde{g}' +\frac{1}{2} (1+x)^{-1}\tilde{g}^2 = 0 
	\end{equation*}
	where $\tilde{g}(x):=(1-x)^{-1} g$. Thus, 
	\begin{equation*}
		U_\theta = (1-x) + \frac{1-x}{\frac{1}{2}\ln(1+x)+c}. 
	\end{equation*}
	
	Choosing  $c = \frac{-1}{\gamma+1} - \frac{1}{2}\ln 2$,   then $U_\theta$ is the function in the second line of (\ref{eq_temp110}) which satisfies all the required properties. 	
\qed

\begin{figure}[h]
	\label{fig_plane}
	\centering
	\includegraphics[height=6cm]{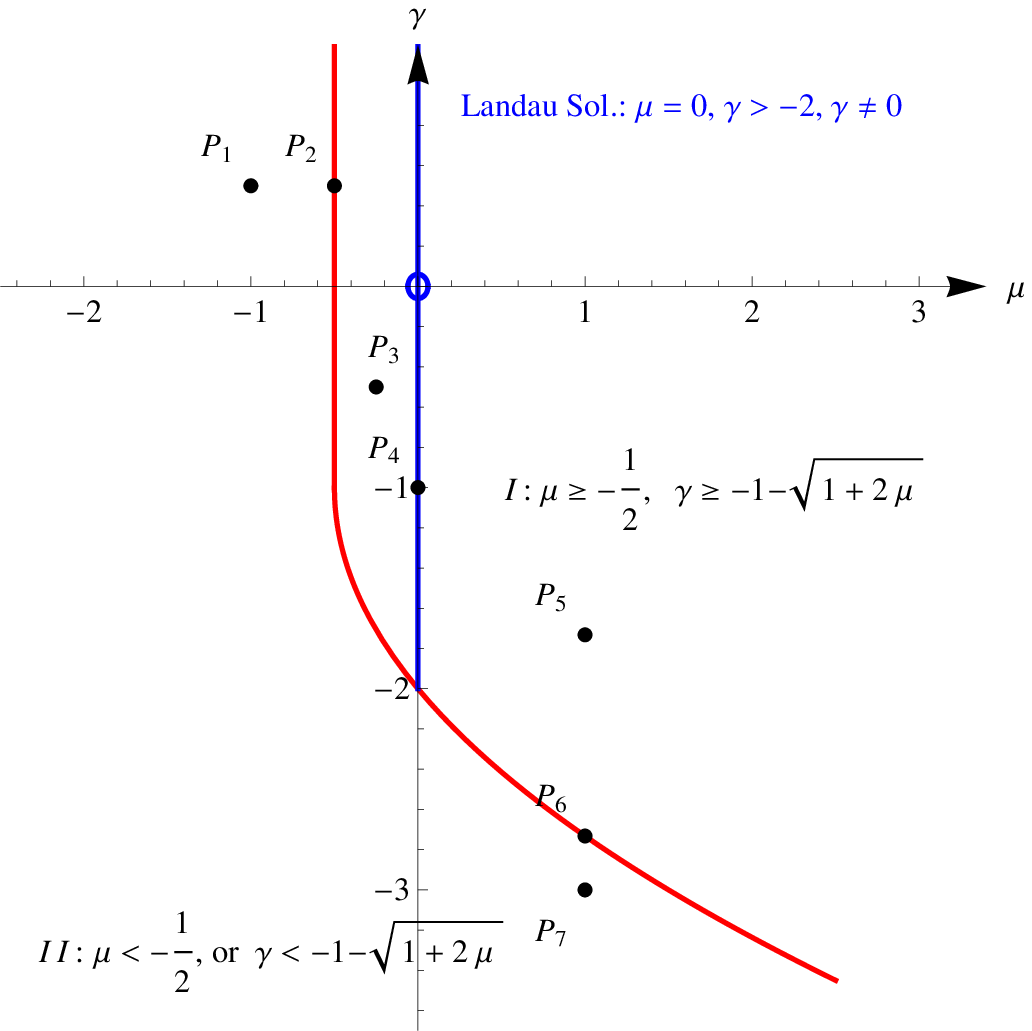}
	\includegraphics[height=5cm]{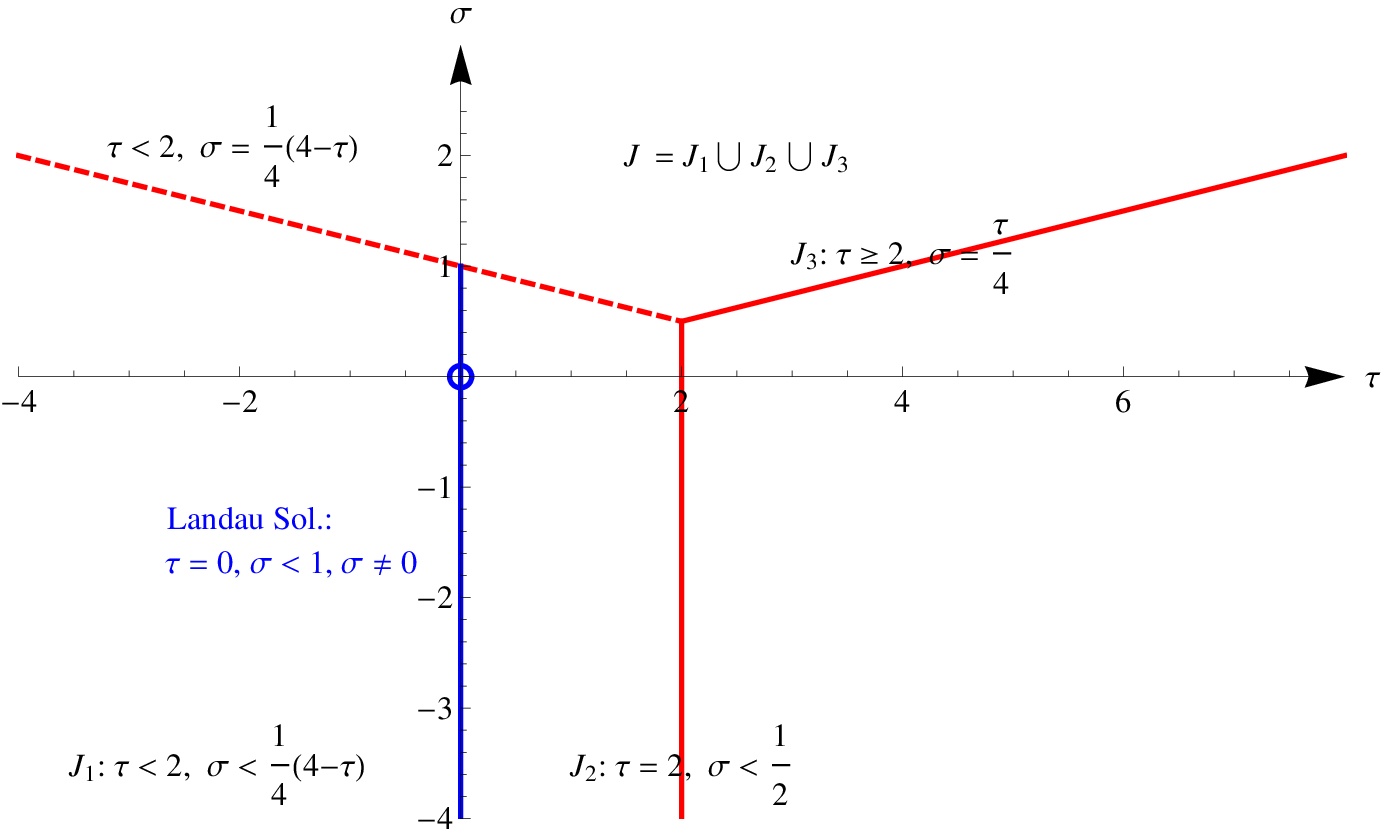}
	\caption{Dependence on parameters ($\mu, \gamma$) or ($\tau, \sigma$) of the maximal existence domains of the solutions to NSE}\label{figure3_1}
\end{figure}

	Figure 1 shows the dependence of the maximal existence domains on parameters ($\mu, \gamma$) or ($\tau, \sigma$). When the parameters $(\mu, \gamma) \in I$, or equivalently $(\tau, \sigma) \in J$, the solution is smooth on $\mathbb{S}^2\setminus\{S\}$; When the parameters $(\mu, \gamma) \not\in I$, or equivalently $(\tau, \sigma) \not\in J$, the solution exists and is smooth in a neighborhood of the north pole $\{N\}$ but not on the entire $\mathbb{S}^2\setminus\{S\}$. Some typical points are chosen in the $(\mu,\gamma)$ plane, (i.e. left part of Figure 1). The graph and stream lines at these points are presented in Section \ref{sec_fig}.

Here is an immediate consequence of Theorem \ref{thm_smooth_north}: 
\begin{cor}\label{cor3_1}
	Suppose $U$ is an axisymmetric, no-swirl solution of Navier-Stokes equation and is smooth on $\mathbb{S}^2\setminus \{S\}$, then $U_\theta(x)$ is given by a two-parameter-family $(\mu, \gamma)$ with $\mu\geq -\frac{1}{2}$, $\gamma \geq -1-\sqrt{1+2\mu}$: 
	\begin{equation}\label{eqcor3_1_1}
		U_\theta(x) =\left\{
		\begin{array}{ll}
		   (1-x)\left(1- b - \frac{2 b(\gamma+1-b)}{(\gamma+1+b) (\frac{1+x}{2})^{-b} - \gamma-1 + b} \right), & \mu > -\frac{1}{2}, \gamma>-1-\sqrt{1+2\mu},\\
		   (1-x)  \left( 1+ \frac{2(\gamma+1)  }{  (\gamma+1) \ln \frac{1+x}{2}  - 2 } \right) & \mu =  -\frac{1}{2}, \gamma>-1,\\
		    (1+b)(1-x), & \mu \ge -\frac{1}{2}, \gamma=-1-\sqrt{1+2\mu}. 
		   \end{array}
		   \right.
	\end{equation}
\end{cor}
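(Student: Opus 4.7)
The plan is to reduce to Theorem \ref{thm_smooth_north} and then identify the admissible parameter region. First, since $U$ is smooth on $\mathbb{S}^2 \setminus \{S\}$, it is in particular smooth in a neighborhood of the north pole. The arguments used in the proof of Theorem B (Section 2) apply verbatim at $x=1$: sending $x\to 1$ in (\ref{eq_NStemp}) and then in its derivative, and using that $U_\theta$ vanishes to second order at $x=1$ with bounded derivatives, forces $c_1+c_2+c_3=0$ and $2c_1+c_2=0$. Hence the right-hand side of (\ref{eq_NStemp}) must be of the form $\mu(1-x)^2$ for some $\mu \in \mathbb{R}$, so that $U_\theta$ satisfies (\ref{eq_smthN}). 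I set $\gamma := \lim_{x\to 1^-} U_\theta'(x)$, which exists and is finite by the assumed smoothness at the north pole.

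Next, I would invoke Theorem \ref{thm_smooth_north}: the pair $(\mu,\gamma)$ uniquely determines $U_\theta$ via the explicit formula (\ref{eq_temp110}), and the maximal interval of existence is $(\delta^*(\mu,\gamma), 1)$. For $U$ to extend smoothly to all of $\mathbb{S}^2 \setminus \{S\}$, i.e.\ to the full interval $x\in(-1,1)$, the blow-up statement (\ref{eq_Utheblowup}) forces $\delta^*(\mu,\gamma) = -1$. Inspecting the piecewise definition (\ref{eq_deltastar}) of $\delta^*$, this happens exactly on $I := \{\mu \geq -\tfrac{1}{2},\ \gamma \geq -1-\sqrt{1+2\mu}\}$, the first case of the definition. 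Thus the admissible parameters are precisely those in $I$, giving the two-parameter family asserted in the corollary.

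It remains to read off the three explicit forms in (\ref{eqcor3_1_1}) from (\ref{eq_temp110}) restricted to $I$. On the interior stratum $\mu > -\tfrac{1}{2}$, $\gamma > -1-\sqrt{1+2\mu}$, the first line of (\ref{eq_temp110}) directly gives the first formula of (\ref{eqcor3_1_1}). On the stratum $\mu = -\tfrac{1}{2}$, $\gamma > -1$, the second line of (\ref{eq_temp110}) gives the second formula. The only non-trivial bookkeeping is the boundary stratum $\gamma = -1-\sqrt{1+2\mu}$, where $\gamma+1+b=0$ and $\gamma+1-b=-2b$: substituting into the first line of (\ref{eq_temp110}) causes the $(1+x)^{-b}$ term in the denominator to drop out, and the fraction simplifies to $2b(-2b)/(2b) = -2b$, yielding $U_\theta = (1-x)(1-b+2b) = (1+b)(1-x)$, which is the third formula of (\ref{eqcor3_1_1}) (and covers $\mu=-\tfrac{1}{2},\gamma=-1$ with $b=0$ as a limit). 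I expect this boundary simplification to be the only point requiring careful attention; the rest is a direct appeal to Theorem \ref{thm_smooth_north}.
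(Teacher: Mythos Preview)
Your proposal is correct and follows exactly the reasoning the paper has in mind: the paper simply declares the corollary ``an immediate consequence of Theorem~\ref{thm_smooth_north}'', having already noted in the text after that theorem that $\delta^*(\mu,\gamma)=-1$ if and only if $(\mu,\gamma)\in I$. Your write-up just fills in the details the paper leaves implicit, including the boundary-stratum simplification, which you carried out correctly.
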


Since $U_{\theta}(x)= u_{\theta}\sin\theta$, $x=\cos\theta$ and (\ref{eq_thm1_1}), $\tau=\lim_{\theta\to \pi^-} u_{\theta}(x)\sin\theta=\lim_{x\to-1^+}U_{\theta}(x)$, $\gamma=\lim_{x\to 1}U'_{\theta}(x)=-2\lim_{\theta \to 0}\frac{u_{\theta}}{\sin\theta}=-2\sigma$, and $\mu=\lim_{x\to -1^+}\frac{1}{4}(\frac{1}{2}U^2_{\theta}(x)-2U_{\theta}(x))=\frac{1}{8}\tau^2-\frac{1}{2}\tau$. The relation 
\[
   \mu=\frac{1}{8}\tau^2-\frac{1}{2}\tau, \quad \gamma=-2\sigma
\]
gives a one-one correspondence between $\{u_{\tau,\sigma}|,(\tau, \sigma)\in J\}$ and $\{U^{\mu,\gamma}| (\mu,\gamma)\in I\}$ with $ u_{\tau,\sigma}\sin\theta=U^{\mu,\gamma}$. Moreover,  region $J_1$ corresponds to 
\[
	I_1:=I^0=\{(\mu,\gamma)\in I|,\mu>-\frac{1}{2},  \gamma>-1-\sqrt{1+2\mu}\},
\]
boundary $J_2$ corresponds to 
\[
	I_2:=\{(\mu,\gamma)\in I|\mu=-\frac{1}{2}, \gamma>-1\}, 
\]
$J_3$ corresponds to 
\[
	I_3:=\{(\mu,\gamma)\in I| \mu>-\frac{1}{2}, \gamma=-1-\sqrt{1+2\mu}\}. 
\]
Also, $J_3\cap\{2\le \tau<3\}$ corresponds to $I_3\cap \{-\frac{1}{2}\le \mu<-\frac{3}{8}\}$ and $J_3\cap\{\tau>3\}$ corresponds to $I_3\cap \{\mu>-\frac{3}{8}\}$.

Theorem \ref{thm1_1} follows from the above corollary. 

\begin{rmk}\label{rmk3_1}
From Theorem \ref{thm_smooth_north} and Corollary \ref{cor3_1} we can see that $U_{\theta}(\mu,\gamma)$ exists on all $(-1,1]$ if and only if $(\mu,\gamma)\in I$, which is shown in the first graph of Figure 1, and the behavior of $U_{\theta}$ near the south pole is different when $(\mu,\gamma)\in I_1$, the interior of $I$,  and when $(\mu,\gamma)\in \partial I$.

When $(\mu,\gamma)\in I_1$, $\mu>-1/2$, we have for $-1<x<1$, $i, j \in \mathbb{Z}$
 \begin{equation}\label{eqcor3_1_2}
 \begin{split}
  & U^{\mu,\gamma}_{\theta}(x)=(1-x)(1-\sqrt{1+2\mu}+O(1)(1+x)^b), \\
   & \partial^i_{\mu}U^{\mu,\gamma}_{\theta}(x)=(1-x)\left(-\frac{d^i}{d\mu^i}\sqrt{1+2\mu}+O(1)(1+x)^b\left|\ln\left(\frac{1+x}{2}\right)\right|^i\right),\quad i \geq 1, \\
   & \left|\partial^i_{\mu}\partial^j_{\gamma}U^{\mu,\gamma}_{\theta}(x)\right|=O(1)(1-x)(1+x)^b\left|\ln\left(\frac{1+x}{2}\right)\right|^i, \quad i\ge 0, j\ge 1.
   \end{split}
 \end{equation}

When $(\mu,\gamma)\in I_2$, we have
\begin{equation}\label{eqcor3_1_3}
   \begin{split}
      & U^{\mu,\gamma}_{\theta}(x)=(1-x)\left(1+2\left(\ln\frac{1+x}{3}\right)^{-1}+O(1)\left(\ln\frac{1+x}{3}\right)^{-2} \right), \\
      &  \partial^i_{\gamma}U^{\mu,\gamma}_{\theta}(x)=O(1)(1-x)\left(\ln\frac{1+x}{3}\right)^{-2},\quad  i\ge 1.
   \end{split}
\end{equation}

When $(\mu,\gamma)\in I_3$, $U_{\theta}(x)=(1+b)(1-x)$, which is a linear function, and
\begin{equation}\label{eqcor3_1_4}
  \begin{split}
      & U^{\mu,\gamma}_{\theta}(x)=(1-x)(1+\sqrt{1+2\mu}), \\
      & \partial^i_{\mu}U^{\mu,\gamma}_{\theta}(x)=\frac{\partial^i}{\partial \mu^i}\sqrt{1+2\mu}(1-x), \quad  i\ge 1.
  \end{split}
\end{equation}
\end{rmk}

\section{Existence of axisymmetric solutions with nonzero swirl on $\mathbb{S}^2\setminus\{S\}$}

\subsection{Framework of proofs}

The set of all axisymmetric no swirl solutions of the NSE (1) in $C^\infty(\mathbb{S}^2 \setminus \{S\})$ is classified in Section 3 as the two dimensional surface of solutions
$ \displaystyle{
\big \{ U^{\mu, \gamma}= (U^{\mu, \gamma}_\theta, 0)\ \ \big|\ \
(\mu, \gamma)\in I \big\} }$.
In this section, we will use implicit function theorems in suitably chosen weighted normed spaces to prove the existence of a curve of axisymmetric solutions with non-zero swirl emanating from each
$U^{\mu, \gamma}$ for $(\mu, \gamma) \in I\setminus ( I_3\cap \{\mu\ge -\frac{3}{8}\})$.
 
Since $U_{\theta}(-1)$ affects the behavior of $U_{\theta}$ and $U_{\phi}$ near the singularity $x=-1$, we will need to use
different function spaces according to the values of $U_{\theta}(-1)$.
It is easy to check that
$U^{\mu, \gamma}(-1)\in (-\infty,2)$ for $(\mu, \gamma)\in I_1$,
$U^{\mu, \gamma}(-1)=2$  for $(\mu, \gamma)\in I_2$,
$U^{\mu, \gamma}(-1)\in [2,3)$   for $(\mu, \gamma)\in I_3\cap\{-\frac{1}{2}\le \mu<-\frac{3}{8}\}$.
We will use three different sets of weighted normed spaces based on which of the three sets,
$I_1,I_2$, and $I_3\cap\{-\frac{1}{2}\le \mu<-\frac{3}{8}\}$, $(\mu, \gamma)$ belongs to.

On the other hand, $U^{\mu, \gamma}(-1)>3$ for $(\mu,\gamma)\in I_3\cap \{\mu> -\frac{3}{8}\}$. It will be proved in Section 5 that for every $(\mu,\gamma)\in I_3\cap \{\mu> -\frac{3}{8}\}$, there exists no sequence of axisymmetric solution with nonzero swirl in $C^{\infty}(\mathbb{S}^2\setminus\{S\})$ which converge to $U^{\mu,\gamma}$ in $L^{\infty}(-1,1)$.

For convenience, let us use $\bar{U}$ to denote axisymmetric no-swirl solutions of the stationary NSE.

The equations of axisymmetric solutions in $C^{\infty}(\mathbb{S}^2\setminus\{S\})$ are of the form 
\begin{equation}\label{eq4_0}
\left\{
\begin{split}
	& (1-x^2) U_\theta' + 2x U_\theta +\frac{1}{2} U_\theta^2 - \int _{x}^{1}\int_{l}^{1} \int_{t}^{1} \frac{2 U_\phi(s) U_\phi'(s)}{1-s^2} ds dt dl =\hat{\mu} (1-x)^2, \\
	& (1-x^2) U_\phi'' + U_\theta U_\phi' = 0,
\end{split}
\right. 
\end{equation}
where $\hat{\mu}$ is a constant.

We first introduce the implicit function theorem (IFT) which we use:\\
\textbf{Theorem C (\cite{Nirenberg})} (Implicit Function Theorem) \emph{Let $\mathbf{X},\mathbf{Y},\mathbf{Z}$ be Banach spaces and $f$ a continuous mapping of an open set $U\subset \mathbf{X}\times \mathbf{Y}\to \mathbf{Z}$. Assume that $f$ has a Fr\'{e}chet derivative with respect to $x$, $f_{x}(x,y)$ which is continuous in $U$. Let $(x_0,y_0)\in U$ and $f(x_0,y_0)=0$. If $A=f_{x}(x_0,y_0)$ is an isomorphism of $\mathbf{X}$ onto $\mathbf{Z}$ then}\\
\emph{(1) There is a ball $\{y:||y-y_0||<r\}=B_r(y_0)$ and a unique continuous map $u:B_r(y_0)\to \mathbf{X}$ such that $u(y_0)=x_0$ and $f(u(y),y)\equiv 0$.}\\
\emph{(2) If $f$ is of class $C^1$ then $u(y)$ is of class $C^1$ and $u_y(y)=-(f_{x}(u(y),y))^{-1}\circ f_{y}(u(y),y)$.}\\
\emph{(3) $u_{y}(y)$ belongs to $C^k$ if $f$ is in $C^k$, $k>1$}.

We will work with $\tilde{U}:=U-\bar{U}$,  a calculation gives
\begin{equation*}
    (1-x^2)U'_{\theta}+2xU_{\theta}+\frac{1}{2}U^2_{\theta}-\mu(1-x)^2=(1-x^2)\tilde{U}_{\theta}'+(2x+\bar{U}_{\theta})\tilde{U}_{\theta}+\frac{1}{2}\tilde{U}_{\theta}^2,
\end{equation*}
where $\tilde{U}_{\phi}=U_{\phi}$. Denote
\begin{equation}\label{eqpsi4_1}
   \psi[\tilde{U}_{\phi}](x):=\int_{x}^{1}\int_{l}^{1}\int_{t}^{1}\frac{2\tilde{U}_{\phi}\tilde{U}'_{\phi}}{1-s^2}dsdtdl.
\end{equation}
 Define a map $G$ on $(\mu,\gamma, \tilde{U})$ by
\begin{equation}\label{G}
\begin{split}
   G(\mu,\gamma, \tilde{U}) & =
      \left(
     \begin{matrix}
        (1-x^2)\tilde{U}'_{\theta}+(2x+\bar{U}_{\theta})\tilde{U}_{\theta}+\frac{1}{2}\tilde{U}^2_{\theta}-\psi[\tilde{U}_{\phi}](x)+\frac{1}{4}\psi[\tilde{U}_{\phi}](-1)(1-x)^2\\
        (1-x^2)\tilde{U}''_{\phi}+(\tilde{U}_{\theta}+\bar{U}_{\theta})\tilde{U}'_{\phi}
     \end{matrix}
     \right) \\
     & =:
    \left(
     \begin{matrix}
        \xi_{\theta}\\
        \xi_{\phi}
     \end{matrix}
     \right). 
\end{split}
\end{equation}
If $(\mu,\gamma, \tilde{U})$ satisfies $G(\mu,\gamma, \tilde{U})=0$, then $U=\tilde{U}+\bar{U}$ gives a solution of (\ref{eq4_0})with $\hat{\mu}=\mu-\frac{1}{4}\psi[\tilde{U}_{\phi}](-1)$ satisfying $U_{\theta}(-1)=\bar{U}_{\theta}(-1)$.

Let $A$ and $Q$ be maps of  the form
\begin{equation}\label{eqP4_2}
     A(\mu,\gamma, \tilde{U}) =\left(
	\begin{matrix}
		A_\theta\\
		A_\phi
	\end{matrix}
	\right) 	 :=
	\left(
	\begin{matrix}
		(1-x^2)\tilde{U}'_{\theta}+(2x+\bar{U}_{\theta})\tilde{U}_{\theta}\\
		(1-x^2)\tilde{U}''_\phi+\bar{U}_{\theta}\tilde{U}'_{\phi}
	\end{matrix}
	\right) ,
\end{equation}
and 
 \begin{equation}\label{eqP4_3}
\begin{split}
     & Q(\tilde{U},\tilde{V}) = \left(
	\begin{matrix}
		Q_\theta\\
		Q_\phi
	\end{matrix}
	\right) 	 \\
	& := 
	\left(
	\begin{matrix}
		\frac{1}{2}\tilde{U}_{\theta}\tilde{V}_{\theta}-\int_x^1 \int_l^1 \int_t^1 \frac{2 \tilde{U}_\phi(s) \tilde{V}_\phi'(s)}{1-s^2} ds dt dl+\frac{(1-x)^2}{4}\int_{-1}^1 \int_l^1 \int_t^1 \frac{2 \tilde{U}_\phi(s) \tilde{V}_\phi'(s)}{1-s^2} ds dt dl\\
		\tilde{U}_{\theta}\tilde{V}'_{\phi}
	\end{matrix}
	\right) .	
\end{split}
\end{equation}
Then $G(\mu,\gamma, \tilde{U})=A(\mu,\gamma, \tilde{U})+Q(\tilde{U}, \tilde{U})$.

By computation, the linearized operator of $G$ with respect to $\tilde{U}$ at $(\mu,\gamma,\tilde{U})$ is given by
\begin{equation}\label{eq4_Linear}
   L_{\tilde{U}} ^{\mu,\gamma}\tilde{V}
       :=
     \left(
     \begin{matrix}
        (1-x^2) \tilde{V}'_{\theta}+(2x+\bar{U}_{\theta}) \tilde{V}_{\theta}+\tilde{U}_{\theta} \tilde{V}_{\theta}-\Psi_{\tilde{U}_{\phi}}[\tilde{V}_{\phi}](x)+\frac{1}{4}\Psi_{\tilde{U}_{\phi}}[\tilde{V}_{\phi}](-1)(1-x)^2\\
        (1-x^2) \tilde{V}''_{\phi}+(\tilde{U}_{\theta}+\bar{U}_{\theta}) \tilde{V}'_{\phi}+ \tilde{V}_{\theta} \tilde{U}'_{\phi}
     \end{matrix}
     \right) 
\end{equation}
where 
\[
   \Psi_{\tilde{U}_{\phi}}[\tilde{V}_{\phi}](x):=\int_{x}^{1}\int_{l}^{1}\int_{t}^{1}\frac{\displaystyle 2(\tilde{U}_{\phi}(s) \tilde{V}'_{\phi}(s)+ \tilde{V}_{\phi}(s)\tilde{U}'_{\phi}(s))}{\displaystyle 1-s^2}dsdtdl.
\]
   
In particular, at $\tilde{U}=0$, the linearized operator of $G$ with respect to $\tilde{U}$ is
\begin{equation}\label{eq_LinearAtZero}
   L_{0} ^{\mu,\gamma} \tilde{V}=
		\left(
		\begin{matrix}
			(1-x^2) \tilde{V}_\theta' +( 2x +\bar{U}_\theta) \tilde{V}_\theta \\
			(1-x^2) \tilde{V}_\phi'' + \bar{U}_\theta \tilde{V}_\phi'
		\end{matrix}
		\right). 
		\end{equation}
Let
\begin{equation}\label{eq_ab}
    a_{\mu,\gamma}(x):=\int_{0}^{x}\frac{\displaystyle 2s+\bar{U}_{\theta}}{\displaystyle 1-s^2}ds,\quad
    b_{\mu,\gamma}(x):=\int_{0}^{x}\frac{\displaystyle \bar{U}_{\theta}}{\displaystyle 1-s^2}ds,\quad -1<x<1.
\end{equation}

By Corollary \ref{cor3_1}, for all $(\mu,\gamma)\in I$, $\bar{U}_{\theta}$ is smooth in $(-1,1]$ and $\bar{U}_{\theta}(x)=O(1-x)$. So $a_{\mu,\gamma}\in C^{\infty}(-1,1)$ and $b_{\mu,\gamma}\in C^{\infty}(-1,1]$.

 Note that this definition of $a(x)$ and $b(x)$ are consistent with the definition of $a(\theta)$ and $b(\theta)$ in Section 1, and $a(x)=-\ln(1-x^2)+b(x)$.  A calculation gives
\begin{equation}\label{eq4_W_a}
   a_{\mu,\gamma}'(x)=\frac{2x+\bar{U}_{\theta}(x)}{1-x^2}, \quad a_{\mu,\gamma}''(x)=\frac{2+\bar{U}'_{\theta}(x)}{1-x^2}+\frac{4x^2+2x\bar{U}_{\theta}(x)}{(1-x^2)^2}.
\end{equation}

Consider the following system of ordinary differential equations in $(-1,1)$:
\[
   \left\{
		\begin{matrix}
			(1-x^2) V_\theta' + 2x V_\theta + \bar{U}_\theta V_\theta=0, \\
			(1-x^2) V_\phi'' + \bar{U}_\theta V_\phi' =0.
		\end{matrix}
		\right.
\]
All solutions $V\in C^1((-1,1),\mathbb{R}^2)$   are given by
\begin{equation}\label{eq4_3_ker}
   V=c_1V_{\mu,\gamma}^1+c_2V_{\mu,\gamma}^2+c_3V_{\mu,\gamma}^3
\end{equation}
where $c_1,c_2,c_3\in\mathbb{R}$, and
\begin{equation}\label{eq4_2_ker}
	V_{\mu,\gamma}^{1} := \left(
	\begin{matrix}
		e^{-a_{\mu,\gamma}(x)}  \\  0
	\end{matrix}\right),
	\quad
	V_{\mu,\gamma}^{2} := \left(
	\begin{matrix}
		0  \\  \int_{x}^{1}e^{-b_{\mu,\gamma}(t)}dt
	\end{matrix}\right),
      \quad
       V_{\mu,\gamma}^{3} := \left(
	\begin{matrix}
		0  \\  1
	\end{matrix}\right).
\end{equation}

Next, by computation $(V_{\mu,\gamma}^1)_{\theta}(0)=1$, $(V_{\mu,\gamma}^1)_{\phi}=0$, $(V_{\mu,\gamma}^2)_{\theta}=0$ and 
$$
(V_{\mu,\gamma}^2)_{\phi}(0)=\int_{0}^{1}e^{-b_{\mu,\gamma}(t)}dt>0
$$ 
finite. Introduce the linear functionals $l_1,l_2$ acting on vector functions $V(x)=(V_{\theta}(x), V_{\phi}(x))$ by
\begin{equation}\label{def_l}
	l_1(V) :=  V_\theta(0), \quad l_2(V) :=V_\phi(0).
\end{equation}
It can be seen that for every $(\mu,\gamma)\in I$, the matrix $(l_i(V^{j}_{\mu,\gamma}))$ is a diagonal invertible matrix.

\subsection{Existence of solutions with nonzero swirl near $U^{\mu,\gamma}$ when $(\mu,\gamma)\in I_1$} 

Let us first look at the problem near $U^{\mu,\gamma}$ when $(\mu,\gamma)\in I_1$. For some fixed $(\mu,\gamma)\in I_1$, write $\bar{U}=U^{\mu,\gamma}$, recall that in Corollary \ref{cor3_1} we have
\begin{equation}\label{eqbaru}
   \bar{U}_{\theta}=(1-x)\left(1- b - \frac{2 b(1+\gamma-b)}{(1+\gamma+b) (\frac{1+x}{2})^{-b} - \gamma-1 + b} \right)
\end{equation}
where $b=\sqrt{1+2\mu}$. It satisfies
\[
   (1-x^2)\bar{U}'_{\theta}+2x\bar{U}_{\theta}+\frac{1}{2}\bar{U}^2_{\theta}=\mu (1-x)^2. 
\]

Let us start from constructing the Banach spaces we use. Given a compact subset $K\subset I_1$, from the explicit formula of $U^{\mu,\gamma}$ in Section 3, $\bar{U}:=U^{\mu,\gamma}$ satisfies $\bar{U}_{\theta}(-1)<2$, so there exists an $\epsilon>0$, depending only on $K$, satisfying $\displaystyle \max_{(\mu,\gamma)\in K}\frac{U^{\mu,\gamma}_{\theta}(-1)}{2}< \epsilon<1$ for all $(\mu,\gamma)\in K$. For this fixed $\epsilon$, define
\begin{equation*} 
\begin{split}
 &
    \begin{split}
       \mathbf{M}_1=& \mathbf{M}_1(\epsilon)\\
                           := & \left\{  \tilde{U}_\theta \in C([-1, 1], \mathbb{R}) \cap C^1((-1, 1], \mathbb{R}) \cap C^2((0, 1), \mathbb{R}) \mid  \tilde{U}_\theta(1)=\tilde{U}_\theta(-1)=0, \right.\\
        & \left. ||(1+x)^{-1+\epsilon}\tilde{U}_\theta||_{L^\infty(-1,1)}<\infty, 
	  ||(1+x)^{\epsilon}\tilde{U}'_\theta||_{L^\infty(-1,1)} < \infty, ||\tilde{U}_\theta''||_{L^\infty(0,1)} < \infty  \right\},
    \end{split}\\
	&
   \begin{split}
     \mathbf{M}_2=&\mathbf{M}_2(\epsilon)\\
                         :=& \left\{  \tilde{U}_\phi \in C^1( (-1, 1], \mathbb{R})\cap C^2( (-1, 1), \mathbb{R}) \mid \tilde{U}_\phi(1)=0, ||\tilde{U}_\phi ||_{L^\infty(-1,1)} < \infty, \right.\\
	& \left. ||(1+x)^\varepsilon \tilde{U}_\phi'||_{L^\infty(-1,1)} < \infty, ||(1+x)^{1+\varepsilon} \tilde{U}_\phi'' ||_{L^\infty(-1,1)} <\infty \right\}
   \end{split}
 \end{split}
\end{equation*}
with the following norms accordingly:
\begin{equation*}
  \begin{split}
	& ||\tilde{U}_\theta||_{\mathbf{M}_1}:= ||(1+x)^{-1+\epsilon}\tilde{U}_\theta||_{L^\infty(-1,1)} + ||(1+x)^{\epsilon}\tilde{U}_\theta'||_{L^\infty(-1,1)} + ||\tilde{U}_\theta''||_{L^\infty(0,1)}, \\
	 & ||\tilde{U}_\phi||_{\mathbf{M}_2}:=  ||\tilde{U}_\phi||_{L^\infty(-1,1)} + ||(1+x)^\varepsilon \tilde{U}_\phi'||_{L^\infty(-1,1)}  + ||(1+x)^{1+\varepsilon} \tilde{U}_\phi'' ||_{L^\infty(-1,1)} .
  \end{split}
\end{equation*}
Next, define the following function spaces:
\begin{equation*}
 \begin{split}
	& \begin{split}
      \mathbf{N}_1=\mathbf{N}_{1}(\epsilon):= & \left\{  \xi_\theta \in C( (-1, 1], \mathbb{R}) \cap C^1( (0, 1], \mathbb{R}) \mid  \xi_\theta(1)=\xi'_{\theta}(1)=\xi_{\theta}(-1)=0, \right. \\ 
      & \left. ||(1+x)^{-1+\epsilon}\xi_\theta||_{L^\infty(-1,1)} < \infty, ||\frac{\xi_\theta'}{1-x}||_{L^\infty(0,1)} < \infty \right\},
      \end{split} \\
	& \mathbf{N}_2=\mathbf{N}_2(\epsilon):= \left\{  \xi_\phi \in C( (-1, 1], \mathbb{R}) \mid  \xi_\phi(1)=0, ||\frac{(1+x)^\varepsilon \xi_\phi}{1-x}||_{L^\infty(-1,1)} < \infty  \right\}
  \end{split}
\end{equation*}
with the following norms accordingly:
\begin{equation*}
  \begin{split}
	& ||\xi_\theta||_{\mathbf{N}_1}:= ||(1+x)^{-1+\epsilon}\xi_\theta||_{L^\infty(-1,1)} + ||\frac{\xi_\theta'}{1-x}||_{L^\infty(0,1)}, \\
	& ||\xi_\phi||_{\mathbf{N}_2}:= ||\frac{(1+x)^\varepsilon \xi_\phi}{1-x}||_{L^\infty(-1,1)}.
  \end{split}
\end{equation*}
Let $\mathbf{X}:= \{ \tilde{U} = (\tilde{U}_\theta, \tilde{U}_\phi) \mid \tilde{U}_\theta\in \mathbf{M}_1, \tilde{U}_\phi\in \mathbf{M}_2\}$ with the norm $||\tilde{U}||_\mathbf{X}:= ||\tilde{U}_\theta||_{\mathbf{M}_1} + ||\tilde{U}_\phi||_{\mathbf{M}_2}$, and $\mathbf{Y}:= \{ \xi = (\xi_\theta, \xi_\phi) \mid \xi_\theta\in \mathbf{N}_1, \xi_\phi\in \mathbf{N}_2 \}$ with the norm $||\xi||_\mathbf{Y}:= ||\xi_\theta||_{\mathbf{N}_1} + ||\xi_\phi||_{\mathbf{N}_2}$. It is not difficult to verify that  $\mathbf{M}_1$, $\mathbf{M}_2$, $\mathbf{N}_1$, $\mathbf{N}_2$, $\mathbf{X}$ and $\mathbf{Y}$ are Banach spaces.

Let $l_1,l_2:\mathbf{X}\to \mathbb{R}$ be the bounded linear functionals defined by (\ref{def_l}) for each $V\in \mathbf{X}$. Define 
\begin{equation}\label{eq4_1_1x}
	\mathbf{X}_1:= \ker l_1 \cap \ker l_2.
\end{equation}

\begin{thm}\label{thm4_1}
  For every compact $ K \subset I_1$, with  $\max\{0,U^{\mu,\gamma}_{\theta}(-1)\}< 2\epsilon<2$ for every $(\mu,\gamma)\in K$,  there exist $\delta=\delta(K)>0$, and $V\in C^{\infty}(K\times B_{\delta}(0), \mathbf{X}_1)$ satisfying $V(\mu,\gamma,0,0)=0$ and $\displaystyle \frac{\partial V}{\partial \beta_i}|_{\beta=0}=0$, $i=1,2$, such that 
\begin{equation}\label{eq_thm4_1_1}
   U=U^{\mu,\gamma}+\beta_1V_{\mu,\gamma}^1+\beta_2V_{\mu,\gamma}^2+V(\mu,\gamma, \beta_1,\beta_2)
\end{equation}
satisfies equation (\ref{eq4_0})  with $\displaystyle \hat{\mu}=\mu-\frac{1}{4}\psi[U_{\phi}](-1)$.  Moreover, there exists some $\delta'=\delta'(K)>0$, such that if $||U-U^{\mu,\gamma}||_{\mathbf{X}}<\delta'$, $(\mu,\gamma)\in K$,  and $U$ satisfies  equation (\ref{eq4_0}) with some constant $\hat{\mu}$, then (\ref{eq_thm4_1_1}) holds for some $|(\beta_1,\beta_2)|<\delta$ .
\end{thm}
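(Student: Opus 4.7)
The plan is to apply the Implicit Function Theorem (Theorem C) to a reformulation of $G=0$ that quotients out the kernel of the linearization $L_0^{\mu,\gamma}$. By (\ref{eq4_3_ker})--(\ref{eq4_2_ker}), the kernel of $L_0^{\mu,\gamma}$ on $C^1((-1,1),\mathbb{R}^2)$ is three-dimensional, but $V_{\mu,\gamma}^3=(0,1)$ is excluded from $\mathbf{X}$ by the boundary requirement $\tilde U_\phi(1)=0$ built into $\mathbf{M}_2$; this leaves a two-dimensional kernel spanned by $V_{\mu,\gamma}^1, V_{\mu,\gamma}^2$. Since the matrix $(l_i(V_{\mu,\gamma}^j))_{i,j=1,2}$ is diagonal with nonzero entries, $\mathbf{X}$ splits as the direct sum $\mathrm{span}\{V_{\mu,\gamma}^1,V_{\mu,\gamma}^2\}\oplus\mathbf{X}_1$. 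Writing $\tilde U=\beta_1 V_{\mu,\gamma}^1+\beta_2 V_{\mu,\gamma}^2+V$ with $V\in\mathbf{X}_1$, I define
\begin{equation*}
F(\mu,\gamma,\beta,V):=G\bigl(\mu,\gamma,\beta_1 V_{\mu,\gamma}^1+\beta_2 V_{\mu,\gamma}^2+V\bigr)
\end{equation*}
and seek $V=V(\mu,\gamma,\beta)$ via IFT applied to $F=0$ at $(\beta,V)=(0,0)$.

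Before IFT one verifies that $G:K\times\mathbf{X}\to\mathbf{Y}$ is smooth (it is polynomial in $\tilde U$) with uniformly continuous dependence on $(\mu,\gamma)\in K$. This is a matter of weighted-norm bookkeeping using Remark \ref{rmk3_1}: $e^{-a_{\mu,\gamma}(x)}\sim 1-x$ near $x=1$ and $e^{-a_{\mu,\gamma}(x)}\sim(1+x)^{\alpha_0}$ near $x=-1$ with $\alpha_0=1-\bar U_\theta(-1)/2>1-\epsilon$ by the choice of $\epsilon$, with analogous estimates for $e^{\pm b_{\mu,\gamma}}$ and for $V_{\mu,\gamma}^1,V_{\mu,\gamma}^2$. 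The precise form of the nonlocal term $\psi[\tilde U_\phi]-\tfrac14\psi[\tilde U_\phi](-1)(1-x)^2$ in $G_\theta$ is engineered exactly so that $\xi_\theta(-1)=\xi_\theta(1)=\xi'_\theta(1)=0$, matching the defining conditions of $\mathbf{N}_1$.

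The main obstacle is proving that $F_V(\mu,\gamma,0,0)=L_0^{\mu,\gamma}\big|_{\mathbf{X}_1}:\mathbf{X}_1\to\mathbf{Y}$ is a Banach-space isomorphism uniformly for $(\mu,\gamma)\in K$. Injectivity follows immediately from the explicit kernel description and nonsingularity of $(l_i(V_{\mu,\gamma}^j))$. For surjectivity, given $\xi\in\mathbf{Y}$ I solve the decoupled linear system by variation of parameters: set
\begin{equation*}
V_\theta(x)=e^{-a_{\mu,\gamma}(x)}\int_0^x\frac{e^{a_{\mu,\gamma}(t)}\xi_\theta(t)}{1-t^2}\,dt,
\end{equation*}
which automatically satisfies $l_1(V)=V_\theta(0)=0$, and construct $V_\phi$ by variation of parameters using the two independent solutions $\int_x^1 e^{-b_{\mu,\gamma}(t)}dt$ and $1$ of the homogeneous $\phi$-equation, fixing the two integration constants by the requirements $V_\phi(1)=0$ and $l_2(V)=V_\phi(0)=0$. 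The delicate part is the weighted-norm estimates near $x=-1$: the integrand $e^{a_{\mu,\gamma}(t)}\xi_\theta(t)/(1-t^2)$ has size $O((1+t)^{-\alpha_0-\epsilon})$ and may be non-integrable at $-1$, but the cancelling prefactor $e^{-a_{\mu,\gamma}(x)}\sim(1+x)^{\alpha_0}$ yields $V_\theta(x)=O((1+x)^{1-\epsilon})$ in all three subcases of the exponent, with corresponding estimates on $V_\theta'$ and $V_\theta''$. Parallel arguments based on the asymptotics of $e^{\pm b_{\mu,\gamma}}$ handle $V_\phi,V_\phi',V_\phi''$, and the resulting bound $\|V\|_{\mathbf{X}_1}\lesssim\|\xi\|_\mathbf{Y}$ is uniform on $K$.

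Once the isomorphism is established, Theorem C produces $V\in C^\infty(K\times B_\delta(0),\mathbf{X}_1)$, $\delta=\delta(K)>0$, with $V(\mu,\gamma,0)=0$ and $F(\mu,\gamma,\beta,V(\mu,\gamma,\beta))\equiv 0$; substituting into the definition (\ref{G}) of $G$ gives a $U$ of the form (\ref{eq_thm4_1_1}) solving (\ref{eq4_0}) with $\hat\mu=\mu-\tfrac14\psi[U_\phi](-1)$ by construction. For the first-order vanishing $\partial_{\beta_i} V|_{\beta=0}=0$, differentiate $F\equiv 0$ in $\beta_i$ at $\beta=0$ to obtain $L_0^{\mu,\gamma}V_{\mu,\gamma}^i+L_0^{\mu,\gamma}\partial_{\beta_i}V|_{\beta=0}=0$, then invoke $V_{\mu,\gamma}^i\in\ker L_0^{\mu,\gamma}$ together with the injectivity of $L_0^{\mu,\gamma}|_{\mathbf{X}_1}$. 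The local uniqueness (existence of $\delta'$) is the standard uniqueness clause of the IFT, applied to the direct-sum decomposition of $\tilde U$.
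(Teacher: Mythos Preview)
Your proposal is essentially the paper's own proof: define $F(\mu,\gamma,\beta,V)=G(\mu,\gamma,\beta_1V_{\mu,\gamma}^1+\beta_2V_{\mu,\gamma}^2+V)$, verify $G$ is $C^\infty$ via the bounded linear/bilinear decomposition $G=A+Q$, build a right inverse of $L_0^{\mu,\gamma}$ by variation of parameters with the exact integral formula you wrote for $V_\theta$, identify the kernel as $\mathrm{span}\{V_{\mu,\gamma}^1,V_{\mu,\gamma}^2\}$, and apply Theorem~C. Your weighted-norm heuristics near $x=-1$ match the paper's computations (Lemma~\ref{lem4_1}), and your derivation of $\partial_{\beta_i}V|_{\beta=0}=0$ is verbatim the paper's.

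Two places where you gloss over steps the paper treats explicitly: (i) Theorem~C is local at a single base point $(\bar\mu,\bar\gamma,0,0)$, so it only yields $V(\bar\mu,\bar\gamma,0,0)=0$; the paper then argues separately (its ``Claim'') that $V(\mu,\gamma,0,0)=0$ for all nearby $(\mu,\gamma)$ by invoking the IFT uniqueness together with $F(\mu,\gamma,0,0,0)=0$, and uses compactness of $K$ to patch to a uniform $\delta(K)$. (ii) For the $\delta'$-uniqueness, the ``standard uniqueness clause'' needs a quantitative, $K$-uniform bound $\|V\|_{\mathbf X}+|\beta|\le C\|\beta_1V_{\mu,\gamma}^1+\beta_2V_{\mu,\gamma}^2+V\|_{\mathbf X}$ (the paper's Lemma~\ref{lem4_1_10}), which in turn rests on $V_{\mu,\gamma}^1,V_{\mu,\gamma}^2\in C^\infty(K,\mathbf X)$ (Lemma~\ref{lemV1V2}). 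These are routine once noticed, but should be stated.
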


To prove the theorem, we first study the properties of the Banach spaces $\mathbf{X}$ and $\mathbf{Y}$ we constructed.

With the fixed $\epsilon$,  we have

\begin{lem}\label{lemP4_1}
For every $\tilde{U}\in\mathbf{X}$, it satisfies the following
	\begin{equation}\label{eq4_1}
		|\tilde{U}_\phi(s)|  \leq  ||\tilde{U}_\phi||_{\mathbf{M}_2} (1-s), \quad \forall\; -1<s<1,
	\end{equation}
	
     \begin{equation}\label{eq4_4}
       |\tilde{U}_{\theta}(s)|\le ||\tilde{U}_{\theta}||_{\mathbf{M}_1}(1-s)(1+s)^{1-\epsilon}, \quad   \forall\; -1<s<1.
     \end{equation}
\end{lem}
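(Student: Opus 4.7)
My plan is to derive both estimates by combining the weighted pointwise controls encoded in the $\mathbf{M}_1$ and $\mathbf{M}_2$ norms with the vanishing boundary conditions $\tilde{U}_\phi(1)=0$ and $\tilde{U}_\theta(\pm 1)=0$ built into the definition of $\mathbf{X}$. For each of the two estimates I would split the interval $(-1,1)$ at $s=0$, the natural midpoint at which both $(1+s)$ and $(1-s)$ are bounded away from zero, so that on each subinterval only one of the two polynomial factors on the right-hand side needs to be produced by an integration argument, while the other is for free.

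For the estimate on $\tilde{U}_\phi$, on the right half $[0,1]$ I would use $\tilde{U}_\phi(1)=0$ to write $\tilde{U}_\phi(s)=-\int_s^1 \tilde{U}_\phi'(t)\,dt$ and then apply the weighted bound $|\tilde{U}_\phi'(t)|\le (1+t)^{-\epsilon}||\tilde{U}_\phi||_{\mathbf{M}_2}$; since $(1+t)^{-\epsilon}\le 1$ for $t\ge 0$, integration over $[s,1]$ produces the factor $(1-s)$ directly. On the left half $[-1,0]$, $(1-s)\ge 1$ so the estimate reduces to the trivial $L^\infty$ bound coming from the first term of $||\cdot||_{\mathbf{M}_2}$.

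For the estimate on $\tilde{U}_\theta$ the roles of the two endpoints are swapped. On $[-1,0]$ I would apply the weighted $L^\infty$ control $||(1+x)^{-1+\epsilon}\tilde{U}_\theta||_{L^\infty}\le ||\tilde{U}_\theta||_{\mathbf{M}_1}$ to extract the factor $(1+s)^{1-\epsilon}$, while $(1-s)\ge 1$ comes for free. On $[0,1]$ I would instead integrate $\tilde{U}_\theta'$ backwards from $1$ using $\tilde{U}_\theta(1)=0$ and the weighted bound $|\tilde{U}_\theta'(t)|\le(1+t)^{-\epsilon}||\tilde{U}_\theta||_{\mathbf{M}_1}$; on $[s,1]\subset[0,1]$ the weight is bounded by $1$ so the integral contributes a factor $(1-s)$, while $(1+s)^{1-\epsilon}\ge 1$ on $[0,1]$ supplies the missing polynomial weight.

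I do not anticipate any real obstacle: both bounds are routine interpolations between an $L^\infty$-type control from the norm and a backwards integration from a zero boundary value. The only thing that requires a little care is the choice of splitting point, which must be bounded away from the singular endpoint $x=-1$ so that the weight $(1+t)^{-\epsilon}$ stays comparable to $1$ on the subinterval of integration; picking $s=0$ achieves this simultaneously for both estimates.
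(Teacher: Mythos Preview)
Your proposal is correct and follows essentially the same approach as the paper: split at $s=0$, use the $L^\infty$ term of the norm on the half where the relevant weight is at least $1$, and use $\tilde U(1)=0$ together with the derivative control on the other half. The only cosmetic difference is that the paper invokes the mean value theorem instead of writing the integral from $s$ to $1$, which is equivalent.
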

\begin{proof}
For $s\in(0,1)$, there exists $y\in (s,1)$ such that
	\[
		|\tilde{U}_\phi(s)|  =  |\tilde{U}_\phi(s) - \tilde{U}_\phi(1)|  = |\tilde{U}_\phi'(y)| (1-s) \le (1-s)||\tilde{U}_\phi||_{\mathbf{M}_2} ,
	\]
	while for $s\in(-1,0]$, $|\tilde{U}_{\phi}(s)|\le ||\tilde{U}_\phi||_{\mathbf{M}_2} $. So (\ref{eq4_1}) is proved. 
	
Now we prove (\ref{eq4_4}).  For $0<s<1$, $|(1+s)^{-1+\epsilon}\tilde{U}_{\theta}(s)|\le |\tilde{U}_{\theta}(s)|=|\tilde{U}_{\theta}(s)-\tilde{U}_{\theta}(1)|\le ||\tilde{U}'_{\theta}||_{L^{\infty}(0,1)}(1-s)\le ||\tilde{U}_{\theta}||_{\mathbf{M}_1}(1-s)$, and for $-1<s\le 0$, $\left|(1+s)^{-1+\epsilon}(1-s)^{-1}\tilde{U}_{\theta}(s)\right|\le |(1+s)^{-1+\epsilon}\tilde{U}_{\theta}(s)|\le ||\tilde{U}_{\theta}||_{\mathbf{M}_1}$.\\
\end{proof}

\begin{lem}\label{lem_xithe}
	For every $\xi_\theta \in \mathbf{N}_1$,
	\begin{equation}
		|\xi_\theta(x)| \leq ||\xi_\theta||_{\mathbf{N}_1}(1+x)^{1-\epsilon}(1-x)^2, \quad \forall -1<x\leq 1.
	\end{equation}
\end{lem}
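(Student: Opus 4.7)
The plan is to split the interval $(-1,1]$ at $x=0$ and use the two pieces of the norm $\|\cdot\|_{\mathbf{N}_1}$ separately on each side, since the weights are designed around the two endpoints $x=\pm 1$.

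First, for $x \in (0,1]$, I would exploit the two boundary conditions at $x=1$, namely $\xi_\theta(1)=0$ and $\xi'_\theta(1)=0$. The second condition is actually already encoded in the finiteness of $\|\xi'_\theta/(1-x)\|_{L^\infty(0,1)}$, which tells me that $|\xi'_\theta(t)| \le \|\xi_\theta\|_{\mathbf{N}_1}(1-t)$ for $t \in (0,1)$. Writing $\xi_\theta(x) = -\int_x^1 \xi'_\theta(t)\,dt$ and inserting this pointwise bound gives
\begin{equation*}
|\xi_\theta(x)| \le \|\xi_\theta\|_{\mathbf{N}_1}\int_x^1 (1-t)\,dt = \tfrac{1}{2}\|\xi_\theta\|_{\mathbf{N}_1}(1-x)^2.
\end{equation*}
Since $1-\epsilon > 0$ and $x \ge 0$ imply $(1+x)^{1-\epsilon} \ge 1$, this already yields the desired inequality on $(0,1]$.

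Next, for $x \in (-1,0]$, I would use the other half of the norm: the bound $\|(1+x)^{-1+\epsilon}\xi_\theta\|_{L^\infty(-1,1)} \le \|\xi_\theta\|_{\mathbf{N}_1}$ gives immediately $|\xi_\theta(x)| \le \|\xi_\theta\|_{\mathbf{N}_1}(1+x)^{1-\epsilon}$. On this range $1-x \ge 1$, so $(1-x)^2 \ge 1$ and multiplying by this factor only weakens the inequality, producing
\begin{equation*}
|\xi_\theta(x)| \le \|\xi_\theta\|_{\mathbf{N}_1}(1+x)^{1-\epsilon}(1-x)^2.
\end{equation*}
Combining the two cases gives the lemma. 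There is essentially no obstacle here; the result is an elementary consequence of the definition of $\mathbf{N}_1$, and the only point worth checking is that the two weights $(1+x)^{1-\epsilon}$ and $(1-x)^2$ cooperate at the switching point $x=0$ (which they do, since both factors are of order one there).
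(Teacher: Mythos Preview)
Your proof is correct and follows essentially the same approach as the paper: split at $x=0$ and use the weighted $L^\infty$ bound near $x=-1$ and the $C^1$ bound near $x=1$. The only cosmetic difference is that on $(0,1]$ the paper applies the mean value theorem (obtaining $|\xi_\theta(x)|\le\|\xi_\theta\|_{\mathbf{N}_1}(1-x)^2$) whereas you integrate $\xi'_\theta$ directly (yielding the slightly sharper $\tfrac12\|\xi_\theta\|_{\mathbf{N}_1}(1-x)^2$); either is fine.
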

\begin{proof}
	If $\xi_{\theta}\in \mathbf{N}_1$, $\xi_{\theta}(1)=0$. So for every $0<x<1$, there exists $y\in (x,1)$ such that
	\begin{equation*}
		|(1+x)^{-1+\epsilon}\xi_\theta(x)| \le |\xi_\theta(x)| = |\xi_\theta'(y)|(1-x)  \leq  ||\xi_\theta||_{\mathbf{N}_1} (1-y)(1-x) \leq ||\xi_\theta||_{\mathbf{N}_1} (1-x)^2.
	\end{equation*}
For $-1<x\le 0$, $|(1+x)^{-1+\epsilon}\xi_\theta(x)|\le ||\xi_{\theta}||_{\mathbf{N}_1}\le ||\xi_{\theta}||_{\mathbf{N}_1}(1-x)^2$. 
\end{proof}

Near $\bar{U}=(\bar{U}_{\theta}, 0)$, we will prove the existence of a family of solutions $U(\mu, \gamma, \beta)$ in $\mathbf{X}$, $\beta=(\beta_1, \beta_2)\in\mathbb{R}^2$, which are (-1)-homogeneous, axisymmetric, with non-zero swirl when $\beta\ne 0$, and $U(\mu, \gamma,0)=\bar{U}$.\\

For $\tilde{U}_{\phi}\in \mathbf{M}_2$, let $\psi[\tilde{U}_{\phi}](x)$ be defined by (\ref{eqpsi4_1}). Define a map $G$ on $K\times \mathbf{X}$ by (\ref{G}) with $\bar{U}_{\theta}$ given by (\ref{eqbaru}).

\begin{prop}\label{lem_Gcont}
	The map $G$ is in $C^{\infty}(K\times \mathbf{X}, \mathbf{Y})$ in the sense that $G$ has  continuous Fr\'{e}chet derivatives of every order. Moreover, the Fr\'{e}chet derivative of $G$ with respect to $\tilde{U}$ at $(\mu,\gamma, \tilde{U})\in \mathbf{X}$ is given by the linear bounded operator $L^{\mu,\gamma}_{\tilde{U}}: \mathbf{X}\rightarrow \mathbf{Y}$ defined as  in (\ref{eq4_Linear}).
\end{prop}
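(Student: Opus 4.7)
The plan is to exploit the polynomial structure of $G$. By (\ref{eqP4_2})--(\ref{eqP4_3}), $G(\mu,\gamma,\tilde U)=A(\mu,\gamma,\tilde U)+Q(\tilde U,\tilde U)$, where $A$ is linear in $\tilde U$ for each fixed $(\mu,\gamma)$ and $Q$ is symmetric bilinear and independent of $(\mu,\gamma)$. Thus, once I verify that (a) $A(\mu,\gamma,\cdot):\mathbf X\to\mathbf Y$ is bounded with $(\mu,\gamma)\mapsto A(\mu,\gamma,\cdot)$ being $C^\infty$ as a map into the Banach space of bounded linear operators, and (b) $Q:\mathbf X\times\mathbf X\to\mathbf Y$ is a bounded bilinear map, $G$ becomes a degree-two polynomial in $\tilde U$ with $C^\infty$ coefficients in $(\mu,\gamma)$, hence in $C^\infty(K\times\mathbf X,\mathbf Y)$. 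The Fr\'echet derivative in $\tilde U$ at $(\mu,\gamma,\tilde U)$ is then $\tilde V\mapsto A(\mu,\gamma,\tilde V)+Q(\tilde U,\tilde V)+Q(\tilde V,\tilde U)$, and a direct term-by-term comparison with (\ref{eq4_Linear}), using the symmetry of $Q$, shows this equals $L^{\mu,\gamma}_{\tilde U}\tilde V$.

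The first step is to verify that $G$ sends $K\times\mathbf X$ into $\mathbf Y$. For the $\theta$-component, the boundary values $\xi_\theta(1)=\xi_\theta'(1)=0$ follow from $\tilde U_\theta(1)=0$ together with $\bar U_\theta(1)=\bar U_\theta'(1)=0$, read off from the explicit formula in Corollary \ref{cor3_1}, while $\xi_\theta(-1)=0$ is arranged precisely by the correction $\tfrac14\psi[\tilde U_\phi](-1)(1-x)^2$. The weighted bound near $x=-1$ comes from Lemma \ref{lemP4_1} combined with $|\bar U_\theta(x)|\le C(1-x)$: each of $(1-x^2)\tilde U_\theta'$, $(2x+\bar U_\theta)\tilde U_\theta$ and $\tfrac12\tilde U_\theta^2$ is $O((1+x)^{1-\epsilon})$, which after multiplication by $(1+x)^{-1+\epsilon}$ gives the required $\mathbf N_1$ contribution. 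The derivative bound on $(0,1)$ exploits the cancellation $-2x\tilde U_\theta'+(2x+\bar U_\theta)\tilde U_\theta'=\bar U_\theta\tilde U_\theta'=O(1-x)$ together with $(1-x^2)\tilde U_\theta''=(1-x)(1+x)\tilde U_\theta''$ and $|\tilde U_\theta|\le C(1-x)$ near $x=1$. For $\xi_\phi$, $\xi_\phi(1)=0$ is immediate from $1-x^2$ vanishing and $\bar U_\theta(1)=0$, while the weighted estimate $|\tilde U_\phi''|\le C(1+x)^{-1-\epsilon}$, $|\tilde U_\phi'|\le C(1+x)^{-\epsilon}$ and $|\bar U_\theta|\le C(1-x)$ combine to give $(1+x)^\epsilon|\xi_\phi|/(1-x)\le C(\|\tilde U_\phi\|_{\mathbf M_2}+\|\tilde U\|_{\mathbf X}^2)$.

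The main obstacle will be controlling the non-local term $\psi[\tilde U_\phi]$. From Lemma \ref{lemP4_1} one gets $|2\tilde U_\phi(s)\tilde U_\phi'(s)/(1-s^2)|\le C(1+s)^{-1-\epsilon}\|\tilde U_\phi\|_{\mathbf M_2}^2$; the innermost antiderivative therefore blows up like $(1+t)^{-\epsilon}$ as $t\to-1^+$, but since $\epsilon<1$ the next two integrations absorb this singularity, yielding $\psi[\tilde U_\phi]\in C([-1,1])$ together with $|\psi[\tilde U_\phi](-1)-\psi[\tilde U_\phi](x)|\le C(1+x)\|\tilde U_\phi\|_{\mathbf M_2}^2$. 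Multiplying by $(1+x)^{-1+\epsilon}$ gives the $\mathbf N_1$ bound for the $\psi$ contribution to $\xi_\theta$, while near $x=1$ the integrand of $\psi'[\tilde U_\phi](x)=-\int_x^1\int_t^1\frac{2\tilde U_\phi\tilde U_\phi'}{1-s^2}\,ds\,dt$ is $O(1)$, forcing $\psi'[\tilde U_\phi]=O((1-x)^2)$, which is more than enough to handle the $(1-x)^{-1}$ weight. The same pointwise bounds, applied bilinearly to pairs $(\tilde U_\phi,\tilde V_\phi)$, establish the boundedness of $Q$. Finally, smoothness of $A$ in $(\mu,\gamma)$ reduces to smoothness of the multiplications $\tilde U_\theta\mapsto\bar U_\theta\tilde U_\theta$ and $\tilde U_\phi'\mapsto\bar U_\theta\tilde U_\phi'$ into the respective target norms; this follows from the uniform-on-$K$ pointwise bounds on $\partial_\mu^i\partial_\gamma^j\bar U_\theta$ recorded in Remark \ref{rmk3_1}, in combination with the weighted estimates already used above.
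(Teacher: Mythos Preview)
Your approach is essentially the paper's own: decompose $G=A+Q$, invoke Lemma~\ref{lemP4_2} and Lemma~\ref{lemP4_3} (which you sketch inline) to see that $A(\mu,\gamma,\cdot)$ is bounded linear and $Q$ bounded bilinear, then use the pointwise bounds on $\partial_\mu^i\partial_\gamma^j\bar U_\theta$ from Remark~\ref{rmk3_1} to get $C^\infty$ dependence of $A$ on $(\mu,\gamma)$.

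One small slip: you write that $\bar U_\theta'(1)=0$, but in fact $\bar U_\theta'(1)=\gamma$ by (\ref{eq_Uthelim}). Fortunately the conclusion $\xi_\theta'(1)=0$ does not need this. Differentiating $A_\theta=(1-x^2)\tilde U_\theta'+(2x+\bar U_\theta)\tilde U_\theta$ gives $A_\theta'=(1-x^2)\tilde U_\theta''+\bar U_\theta\tilde U_\theta'+(2+\bar U_\theta')\tilde U_\theta$, which vanishes at $x=1$ because $\bar U_\theta(1)=0$ and $\tilde U_\theta(1)=0$ (with $\bar U_\theta'(1)$ merely bounded). The $Q_\theta$ contribution vanishes at $x=1$ for the same reason, so the argument stands with this minor correction.
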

To prove Proposition \ref{lem_Gcont}, we first prove the following lemmas:\\

\begin{lem}\label{lemP4_2}
  For every $(\mu,\gamma)\in K$,  $A(\mu,\gamma,\cdot): \mathbf{X}\to \mathbf{Y}$ defined by (\ref{eqP4_2})	
 is a well-defined bounded linear operator.
\end{lem}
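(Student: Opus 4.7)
The map $A(\mu,\gamma,\cdot)$ is manifestly linear in $\tilde U$, so the content of the lemma is to verify that each component $A_\theta$ and $A_\phi$ defined by (\ref{eqP4_2}) lies in the corresponding target space $\mathbf{N}_1$ or $\mathbf{N}_2$, together with the boundedness estimate $\|A(\mu,\gamma,\tilde U)\|_{\mathbf{Y}}\le C(K)\|\tilde U\|_{\mathbf{X}}$, with constant uniform in $(\mu,\gamma)\in K$. The strategy is to factor each term of $A_\theta$, $A_\theta'$ and $A_\phi$ into one of the weighted derivatives controlled by the $\mathbf{X}$-norms, times a multiplier that is bounded uniformly on $[-1,1]\times K$. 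The key structural input comes from the explicit formula (\ref{eqbaru}): since $b=\sqrt{1+2\mu}>0$ on $K\subset I_1$, the fractional term of $\bar U_\theta$ vanishes as $x\to -1^+$, and $\bar U_\theta(x)/(1-x)$ extends continuously to $[-1,1]$; combined with $\bar U_\theta\in C^{\infty}((-1,1])$ and $\bar U_\theta(1)=0$, compactness of $K$ yields uniform bounds on $\|\bar U_\theta\|_{L^\infty(-1,1)}$, $\|\bar U_\theta/(1-x)\|_{L^{\infty}(-1,1)}$, and $\|\bar U_\theta'\|_{L^{\infty}(0,1)}$.

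For $A_\theta\in\mathbf{N}_1$, I would rewrite
\[
(1+x)^{-1+\epsilon}A_\theta = (1-x)(1+x)^{\epsilon}\tilde U_\theta' + (2x+\bar U_\theta)\bigl[(1+x)^{-1+\epsilon}\tilde U_\theta\bigr],
\]
from which the weighted $L^\infty$ bound and the vanishing at $x=-1$ follow from the definition of $\mathbf{M}_1$ together with Lemma \ref{lemP4_1}, while vanishing at $x=1$ is immediate. Differentiating and regrouping yields
\[
A_\theta' = (1-x^2)\tilde U_\theta'' + \bar U_\theta\,\tilde U_\theta' + (2+\bar U_\theta')\tilde U_\theta.
\]
On $(0,1)$, dividing each summand by $1-x$ and using $(1-x^2)/(1-x)=1+x$, the uniform bound on $\bar U_\theta/(1-x)$, and $|\tilde U_\theta(x)|\le \|\tilde U_\theta'\|_{L^{\infty}(0,1)}(1-x)$ (from $\tilde U_\theta(1)=0$ and the mean value theorem), each term is dominated by $C(K)\|\tilde U_\theta\|_{\mathbf{M}_1}$. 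A direct substitution at $x=1$, using $\tilde U_\theta(1)=\bar U_\theta(1)=0$, gives $A_\theta'(1)=0$.

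For $A_\phi\in\mathbf{N}_2$, one has $A_\phi(1)=\bar U_\theta(1)\tilde U_\phi'(1)=0$, and
\[
\frac{(1+x)^\epsilon |A_\phi|}{1-x}\le (1+x)^{1+\epsilon}|\tilde U_\phi''| + \frac{|\bar U_\theta|}{1-x}\,(1+x)^{\epsilon}|\tilde U_\phi'|,
\]
which is dominated by $C(K)\|\tilde U_\phi\|_{\mathbf{M}_2}$ directly from the definition of $\mathbf{M}_2$ and the uniform bound on $\bar U_\theta/(1-x)$. None of the individual estimates is deep; the only substantive point is making all constants uniform in $(\mu,\gamma)\in K$, which is exactly why the compactness hypothesis and the restriction to $I_1$ (where $b>0$ keeps $\bar U_\theta/(1-x)$ bounded up to $x=-1$) are in force.
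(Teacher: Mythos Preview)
Your proposal is correct and follows essentially the same approach as the paper: you split $A_\theta$, $A_\theta'$, and $A_\phi$ into the same terms, pair each with the appropriate weighted quantity from the $\mathbf{M}_1$/$\mathbf{M}_2$ norms, and control the multipliers via the boundedness of $\bar U_\theta$, $\bar U_\theta/(1-x)$, and $\bar U_\theta'$ on the relevant intervals. The only cosmetic difference is that you make the uniformity of constants over $K$ explicit (the paper leaves this implicit here and uses it only later in Proposition~\ref{lem_Gcont}), and you cite the mean value theorem directly where the paper invokes inequality~(\ref{eq4_4}) of Lemma~\ref{lemP4_1}.
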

\begin{proof}
  In the following, $C$ denotes a universal constant which may change from line to line. For convenience we denote $A=A(\mu,\gamma,\cdot)$ for some fixed $(\mu,\gamma)\in K$. We make use of the property of $\bar{U}_{\theta}$ that $\bar{U}_{\theta}(1)=0$ and $\bar{U}_{\theta}\in C^2(-1,1]\cap L^{\infty}(-1,1)$.
  
  $A$ is clearly linear. For every $\tilde{U}\in\mathbf{X}$, we prove that $A\tilde{U}$ defined by (\ref{eqP4_2}) is in $\mathbf{Y}$ and there exists some constant $C$ such that $||A\tilde{U}||_{\mathbf{Y}}\le C||\tilde{U}||_{\mathbf{X}}$ for all $\tilde{U}\in \mathbf{X}$.
   
By the fact that $\tilde{U}_{\theta}\in \mathbf{M}_1$ and (\ref{eq4_4}), we have
\[
  |(1+x)^{-1+\epsilon}A_{\theta}|\le (1-x)|(1+x)^{\epsilon}\tilde{U}'_{\theta}|+(2+|\bar{U}_{\theta}|)(1+x)^{-1+\epsilon}|\tilde{U}_{\theta}|\le C(1-x)||\tilde{U}_{\theta}||_{\mathbf{M}_1}.
\]
We also see from the above that $\displaystyle{\lim_{x\to 1}A_{\theta}(x)=\lim_{x\to -1}A_{\theta}(x)=0}$. By computation $A'_{\theta}=(1-x^2)\tilde{U}''_{\theta}+\bar{U}_{\theta}\tilde{U}'_{\theta}+(2+\bar{U}'_{\theta})\tilde{U}_{\theta}$.  Then, by (\ref{eq4_4}) and (\ref{eqbaru}),
\[
   \frac{|A'_{\theta}|}{1-x}\le (1+x)|\tilde{U}''_{\theta}|+\frac{|\bar{U}_{\theta}|}{1-x}|\tilde{U}'_{\theta}|+(2+|\bar{U}'_{\theta}|)\frac{|\tilde{U}_{\theta}|}{1-x}\le C||\tilde{U}_{\theta}||_{\mathbf{M}_1}, \textrm{ } 0<x<1.
\]
So we have $A_{\theta}\in \mathbf{N}_1$ and $||A_{\theta}||_{\mathbf{N}_1}\le C||\tilde{U}_{\theta}||_{\mathbf{M}_1}$.\\
Next, since $A_{\phi}=(1-x^2)\tilde{U}''_{\phi}+\bar{U}_{\theta}\tilde{U}'_{\phi}$, by the fact that $\tilde{U}_{\phi}\in \mathbf{M}_2$ and (\ref{eqbaru}) we have that
\[
   \left|\frac{(1+x)^{\epsilon}A_{\phi}}{1-x}\right| \le \frac{(1+x)^{\epsilon}}{1-x}(1-x^2)\frac{||\tilde{U}_{\phi}||_{\mathbf{M}_2}}{(1+x)^{1+\epsilon}}+\frac{(1+x)^{\epsilon}|\bar{U}_{\theta}|}{1-x}\cdot \frac{||\tilde{U}_{\phi}||_{\mathbf{M}_2}}{(1+x)^{\epsilon}}\le C||\tilde{U}_{\phi}||_{\mathbf{M}_2}.
\]
We also see from the above that $\lim_{x\to 1}A_{\phi}(x)=0$. So $A_{\phi}\in \mathbf{N}_1$, and $||A_{\phi}||_{\mathbf{N}_1}\le C||\tilde{U}_{\phi}||_{\mathbf{M}_2}$. We have proved that $A\tilde{U}\in \mathbf{Y}$ and $||A\tilde{U}||_{\mathbf{Y}}\le C||\tilde{U}||_{\mathbf{X}}$ for every $\tilde{U}\in \mathbf{X}$. The proof is finished.
\end{proof}

\begin{lem}\label{lemP4_3}
  The map $Q:\mathbf{X}\times\mathbf{X}\to \mathbf{Y}$ defined by (\ref{eqP4_3})
 is a well-defined bounded bilinear operator.
\end{lem}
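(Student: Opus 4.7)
The map $Q$ is manifestly bilinear from its definition, so the content of the lemma is the uniform estimate $||Q(\tilde U,\tilde V)||_{\mathbf Y}\le C||\tilde U||_{\mathbf X}||\tilde V||_{\mathbf X}$ together with verification that $Q(\tilde U,\tilde V)$ actually lies in $\mathbf Y$ (i.e.\ satisfies the boundary conditions built into $\mathbf N_1$ and $\mathbf N_2$). My plan is to handle the four pieces of $Q=(Q_\theta,Q_\phi)$ separately, relying on the pointwise bounds from Lemma~\ref{lemP4_1}, namely $|\tilde U_\phi(s)|\le ||\tilde U_\phi||_{\mathbf M_2}(1-s)$ and $|\tilde U_\theta(s)|\le ||\tilde U_\theta||_{\mathbf M_1}(1-s)(1+s)^{1-\epsilon}$, together with the direct bound $|\tilde V_\phi'(s)|\le (1+s)^{-\epsilon}||\tilde V_\phi||_{\mathbf M_2}$ built into the $\mathbf M_2$-norm.

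The $\phi$-component $Q_\phi=\tilde U_\theta\tilde V_\phi'$ is immediate: multiplying the two pointwise bounds gives $(1+x)^\epsilon|Q_\phi(x)|/(1-x)\le C(1+x)^{1-\epsilon}||\tilde U_\theta||_{\mathbf M_1}||\tilde V_\phi||_{\mathbf M_2}$, which is bounded since $\epsilon<1$, and $Q_\phi(1)=0$ because $\tilde U_\theta(1)=0$. Inside $Q_\theta$, the algebraic term $\tfrac{1}{2}\tilde U_\theta\tilde V_\theta$ is equally easy to control in the $\mathbf N_1$-norm via the estimate $|\tilde U_\theta\tilde V_\theta|\le C(1-x)^2(1+x)^{2-2\epsilon}||\tilde U_\theta||_{\mathbf M_1}||\tilde V_\theta||_{\mathbf M_1}$ and checking that both $\tilde U_\theta\tilde V_\theta$ and its derivative at $x=1$ vanish.

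The main obstacle is the triple-integral piece $H(x):=-F(x)+\tfrac{(1-x)^2}{4}F(-1)$, where $F(x):=\int_x^1\int_l^1\int_t^1\tfrac{2\tilde U_\phi\tilde V_\phi'}{1-s^2}\,ds\,dt\,dl$. My first step will be to collapse the triple integral via Fubini to $F(x)=\int_x^1\tfrac{(s-x)^2\tilde U_\phi(s)\tilde V_\phi'(s)}{1-s^2}\,ds$. The factor $(s-x)^2$ absorbs the $1/(1+s)$ singularity of $1/(1-s^2)$ near $s=-1$: using $s-x\le 1+s$ (valid for $x\ge -1$), the integrand is majorized by $C(1+s)^{1-\epsilon}$, so $F$ is uniformly bounded on $[-1,1]$ and $F(-1)$ is well defined with $|F(-1)|\le C||\tilde U_\phi||_{\mathbf M_2}||\tilde V_\phi||_{\mathbf M_2}$. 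The correction $\tfrac{(1-x)^2}{4}F(-1)$ is engineered precisely so that $H(-1)=0$, and $H(1)=0$, $H'(1)=0$ then come for free. For the weighted sup estimate I will write $H(x)=\bigl[\tfrac{(1-x)^2}{4}-1\bigr]F(-1)-\int_{-1}^x F'(y)\,dy$, observe $\tfrac{(1-x)^2}{4}-1=-\tfrac{(1+x)(3-x)}{4}=O(1+x)$, and bound $F'(y)=-\int_y^1\tfrac{2(s-y)\tilde U_\phi\tilde V_\phi'}{1-s^2}\,ds$ by the same $s-y\le 1+s$ trick, obtaining $|F'(y)|\le C$; this gives $|H(x)|\le C(1+x)$ and hence $(1+x)^{-1+\epsilon}|H(x)|\le C(1+x)^\epsilon$, which is bounded. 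For the $L^\infty(0,1)$-bound on $H'(x)/(1-x)$, the key simplification is that on $[0,1]$ the weight $(1+s)^{-\epsilon}$ is harmless, so a direct estimate of $F'(x)$ using $|\tilde U_\phi|\le C(1-s)$ and $|\tilde V_\phi'|\le C$ yields $|F'(x)|\le C(1-x)^2$, and combined with the linear term $-\tfrac{1-x}{2}F(-1)$ in $H'$, the bound $|H'(x)/(1-x)|\le C$ follows.

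Putting the four estimates together yields $||Q(\tilde U,\tilde V)||_{\mathbf Y}\le C||\tilde U||_{\mathbf X}||\tilde V||_{\mathbf X}$ with $C$ depending only on $K$ (through $\epsilon$), proving the lemma.
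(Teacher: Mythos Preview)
Your proof is correct and follows essentially the same strategy as the paper: bound $Q_\phi$ and the algebraic part of $Q_\theta$ directly via Lemma~\ref{lemP4_1}, then show that the triple-integral piece minus its $(1-x)^2/4$ correction gains a factor of $(1+x)$ and has the right behavior at $x=1$. The one cosmetic difference is that you collapse the triple integral by Fubini to $F(x)=\int_x^1 (s-x)^2\,\tilde U_\phi\tilde V_\phi'/(1-s^2)\,ds$ and exploit $s-x\le 1+s$ to control the singularity, whereas the paper keeps the iterated form and integrates the pointwise bound $|\tilde U_\phi\tilde V_\phi'|/(1-s^2)\le C(1+s)^{-1-\epsilon}$ three times; both routes produce the same inequalities (your single-integral representation is in fact the device the paper uses later in Section~5, equation~(\ref{eqI_1})).
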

\begin{proof}
   It is clear that $Q$ is a bilinear operator. For every $\tilde{U},\tilde{V}\in\mathbf{X}$, we will prove that $Q(\tilde{U},\tilde{V})$ is in $\mathbf{Y}$ and there exists some constant $C$ independent of $\tilde{U}$ and $\tilde{V}$ such that $||Q(\tilde{U},\tilde{V})||_{\mathbf{Y}}\le C||\tilde{U}||_{\mathbf{X}}||\tilde{V}||_{\mathbf{X}}$. 
   
For convenience we write
\[
  \psi(\tilde{U},\tilde{V})(x)=\int_x^1 \int_l^1 \int_t^1 \frac{2 \tilde{U}_\phi(s)\tilde{V} _\phi'(s)}{1-s^2} ds dt dl.
\]
For $\tilde{U},\tilde{V}\in \mathbf{X}$, we have, using (\ref{eq4_1}) and the fact that $\tilde{U}_{\phi}, \tilde{V}_{\phi}\in \mathbf{M}_2$, that
\begin{equation}\label{eq4_5}
    \left|\frac{ \tilde{U}_\phi(s)\tilde{V} _\phi'(s)}{1-s^2}\right| \le (1+s)^{-1-\epsilon}||\tilde{U}_{\phi}||_{\mathbf{M}_2}||\tilde{U}_{\phi}||_{\mathbf{M}_2}, \quad \forall -1<s<1.
\end{equation}
It follows that $\psi(\tilde{U},\tilde{V})(x)$ is well-defined and 
\begin{equation}\label{eq4_5new}
  |\psi(\tilde{U},\tilde{V})(x)|\le C(\epsilon)(1-x)^3||\tilde{U}_{\phi}||_{\mathbf{M}_2}||\tilde{U}_{\phi}||_{\mathbf{M}_2}, \quad \forall -1<x<1.
\end{equation}

Moreover, we have, in view of (\ref{eq4_5}), that
\begin{equation*}
  \begin{split}
    & |\psi(\tilde{U},\tilde{V})(x)-\frac{(1-x)^2}{4}\psi(\tilde{U},\tilde{V})(-1)| \\
& =\left|\psi(\tilde{U},\tilde{V})(x)-\psi(\tilde{U},\tilde{V})(-1)+\frac{(1+x)(3-x)}{4}\psi(\tilde{U},\tilde{V})(-1)\right|\\
           & =\left|-\int_{-1}^{x} \int_{l}^{1} \int_{t}^{1} \frac{2 \tilde{U}_\phi(s)\tilde{V} _\phi'(s)}{1-s^2} ds dt dl+\frac{(1+x)(3-x)}{4}\psi(\tilde{U},\tilde{V})(-1)\right|\\
           & \le C(\epsilon)(1+x)||\tilde{U}_\phi||_{\mathbf{M}_2}||\tilde{V} _{\phi}||_{\mathbf{M}_2}, \quad \forall -1<x< 1.
  \end{split}
\end{equation*}
Thus, using (\ref{eq4_5new}), we have for any $x\in(-1,1)$
\begin{equation}\label{eq4_5_1}
   |\psi(\tilde{U},\tilde{V})(x)-\frac{(1-x)^2}{4}\psi(\tilde{U},\tilde{V})(-1)| \le C(\epsilon)(1+x)(1-x)^2||\tilde{U}_\phi||_{\mathbf{M}_2}||\tilde{V} _{\phi}||_{\mathbf{M}_2}.
\end{equation}
So by (\ref{eq4_4})  and (\ref{eq4_5_1}), 
\[
  \begin{split}
  & |(1+x)^{-1+\epsilon}Q_{\theta}(x)|\\
   & \le \frac{1}{2}|(1+x)^{-1+\epsilon}\tilde{U}_{\theta}(x)||\tilde{V}_{\theta}(x)|+(1+x)^{-1+\epsilon}\left| \psi(\tilde{U},\tilde{V})(x)-\frac{(1-x)^2}{4}\psi(\tilde{U},\tilde{V})(-1)\right|\\
                  & \le \frac{1}{2}(1-x)^2||\tilde{U}_{\theta}||_{\mathbf{M}_1}||\tilde{V}_{\theta}||_{\mathbf{M}_1}+C(\epsilon)(1+x)^{\epsilon}(1-x)^{2}||\tilde{U}_{\phi}||_{\mathbf{M}_2}||\tilde{V}_{\phi}||_{\mathbf{M}_2}\\
                  & \le C(\epsilon)(1-x)^2||\tilde{U}||_{\mathbf{X}}||\tilde{V}||_{\mathbf{X}}, \quad \forall -1<x<1.
\end{split}
\]
 From this we also have $\displaystyle \lim_{x\to 1}Q_{\theta}(x)= \lim_{x\to -1}Q_{\theta}(x)=0$.
 
By computation,
\[
   Q'_{\theta}(x)=\frac{1}{2}\tilde{U}_{\theta}\tilde{V}'_{\theta}+\frac{1}{2}\tilde{U}'_{\theta}\tilde{V}_{\theta}+\int_{x}^{1}\int_{t}^{1}\frac{2 \tilde{U}_\phi(s) \tilde{V}_\phi'(s)}{1-s^2} ds dt-\frac{1-x}{2}\psi(\tilde{U},\tilde{V})(-1), \textrm{ for } 0<x<1.
\]
Using $\tilde{U}\in \mathbf{X}$ ,  (\ref{eq4_1}),  (\ref{eq4_4}) and (\ref{eq4_5new}), we see that for $0<x<1$,
\[
  \begin{split}
   |Q'_{\theta}(x)| & \le \frac{1}{2}|\tilde{U}_{\theta}||\tilde{V}'_{\theta}|+\frac{1}{2}|\tilde{V}_{\theta}||\tilde{U}'_{\theta}| +\int_{x}^{1}\int_{t}^{1}\frac{2 |\tilde{U}_\phi(s)| |\tilde{V}_\phi'(s)|}{1-s^2} ds dt+\frac{|\psi(\tilde{U},\tilde{V})(-1)|}{2}(1-x)\\
                  & \le C(1-x)||\tilde{U}_{\theta}||_{\mathbf{M}_1}||\tilde{V}_{\theta}||_{\mathbf{M}_1}+ 2||\tilde{U}_{\phi}||_{\mathbf{M}_2}||\tilde{V}_{\phi}||_{\mathbf{M}_2}\int_x^1 \int_l^1 (1+s)^{-\epsilon-1} dt dl \\
                   & +C(\epsilon)(1-x)||\tilde{U}_{\phi}||_{\mathbf{M}_2}||\tilde{V}_{\phi}||_{\mathbf{M}_2}\\
    & \le C(1-x)||\tilde{U}_{\theta}||_{\mathbf{M}_1}||\tilde{V}_{\theta}||_{\mathbf{M}_1}+C(\epsilon)(1-x)||\tilde{U}_{\phi}||_{\mathbf{M}_2}||\tilde{V}_{\phi}||_{\mathbf{M}_2}\\
    &\le C(\epsilon)(1-x)||\tilde{U}||_{\mathbf{X}}||\tilde{V}||_{\mathbf{X}}.
  \end{split}
\]
So there is $Q_{\theta}\in\mathbf{N}_1$, and $||Q_{\theta}||_{\mathbf{N}_1}\le C(\epsilon)||\tilde{U}||_{\mathbf{X}}||\tilde{V}||_{\mathbf{X}}$.\\
Next, since $Q_{\phi}(x)=\tilde{U}_{\theta}(x)\tilde{V}'_{\phi}(x)$, for $-1<x<1$, 
\[
   \left|\frac{(1+x)^{\epsilon}Q_{\phi}(x)}{1-x}\right|  \le \frac{(1+x)^{\epsilon}}{1-x}|\tilde{U}_{\theta}(x)|\frac{||\tilde{V}_{\phi}||_{\mathbf{M}_2}}{(1+x)^{\epsilon}} \le 2||\tilde{U}_{\theta}||_{\mathbf{M}_1}||\tilde{V}_{\phi}||_{\mathbf{M}_2}.
\]
We also see from the above that $\lim_{x\to 1}Q_{\phi}(x)=0$. So $Q_{\phi}\in \mathbf{N}_2$, and 
$$
||Q_{\phi}||_{\mathbf{N}_2}\le ||\tilde{U}_{\theta}||_{\mathbf{M}_1}||\tilde{V}_{\phi}||_{\mathbf{M}_2}.
$$
Thus we have proved that $Q(\tilde{U}, \tilde{V})\in \mathbf{Y}$ and $||Q(\tilde{U},\tilde{V})||_{\mathbf{Y}}\le C||\tilde{U}||_{\mathbf{X}}||\tilde{V}||_{\mathbf{X}}$ for all $\tilde{U}, \tilde{V}\in \mathbf{X}$. Lemma \ref{lemP4_3} is proved.
\end{proof} 

\noindent \emph{Proof of Proposition \ref{lem_Gcont}:} By definition, $G(\mu,\gamma,\tilde{U})=A(\mu,\gamma,\tilde{U})+Q(\tilde{U},\tilde{U})$ for $(\mu,\gamma,\tilde{U})\in K\times \mathbf{X}$.  Using standard theories in functional analysis, by Lemma \ref{lemP4_3} it is clear that $Q$ is $C^{\infty}$ on $I_1\times  \mathbf{X}$.  By Lemma \ref{lemP4_2}, $A(\mu,\gamma; \cdot): \mathbf{X}\to \mathbf{Y}$ is $C^{\infty}$ for each $(\mu,\gamma)\in I_1$. For all $i,j\ge 0$, $i+j\ne 0$,  we have
\[
    \partial_{\mu}^i\partial_{\gamma}^j A(\mu,\gamma,\tilde{U})= \partial_{\mu}^i\partial_{\gamma}^j U^{\mu,\gamma}_{\theta}\left(
	\begin{matrix}
		\tilde{U}_{\theta}  \\  \tilde{U}'_{\phi} 
	\end{matrix}\right).
\]  
By (\ref{eqcor3_1_2}), for each pair of integers $(i,j)$ where $i,j\ge 0$, $i+j\ne 0$, there exists some constant $C=C(i,j,K)$, depending only on $i,j,K$, such that 
\begin{equation}\label{eq_prop4_1_1}
   |\partial_{\mu}^i\partial_{\gamma}^j U^{\mu,\gamma}_{\theta}(x)|\le C(i,j,K)(1-x), \quad -1<x<1.
\end{equation}
From (\ref{eqbaru}) we can also obtain
\[
     \left|\frac{d}{dx}\partial_{\mu}^i\partial_{\gamma}^j U^{\mu,\gamma}_{\theta}(x)\right|\le C(i,j,K), \quad 0<x<1.
\]
Using the above estimates and the fact that $\tilde{U}_{\theta}\in \mathbf{M}_1$, we have
\[
   |(1+x)^{-1+\epsilon}\partial_{\mu}^i\partial_{\gamma}^j A_{\theta}(\mu,\gamma,\tilde{U})|\le C(i,j,K)(1-x)||\tilde{U}_{\theta}||_{\mathbf{M}_1}, \quad -1<x<1,
\]
and 
\[
\begin{split}
   \left|\frac{d}{dx}\partial_{\mu}^i\partial_{\gamma}^j A_{\theta}(\mu,\gamma,\tilde{U})\right|& \le \left|\frac{d}{dx}\partial_{\mu}^i\partial_{\gamma}^j U^{\mu,\gamma}_{\theta}(x)\right||\tilde{U}_{\theta}(x)|+|\partial_{\mu}^i\partial_{\gamma}^j U^{\mu,\gamma}_{\theta}(x)|\left|\frac{d}{dx}\tilde{U}_{\theta}(x)\right|\\
   & \le C(K)(1-x)||\tilde{U}_{\theta}||_{\mathbf{M}_1}, \quad 0<x<1.
   \end{split}
\]
So $\partial_{\mu}^i\partial_{\gamma}^j A_{\theta}(\mu,\gamma,\tilde{U})\in \mathbf{N}_1$, with $||\partial_{\mu}^i\partial_{\gamma}^j A_{\theta}(\mu,\gamma,\tilde{U})||_{\mathbf{N}_1}\le C(i,j,K)||\tilde{U}_{\theta}||_{\mathbf{M}_1}$ for all $(\mu,\gamma,\tilde{U})\in K\times \mathbf{X}$.

Next, by (\ref{eq_prop4_1_1}) and the fact that $\tilde{U}_{\phi}\in \mathbf{M}_1$, we have
\[
   \frac{(1+x)^{\epsilon}}{1-x}|\partial_{\mu}^i\partial_{\gamma}^j A_{\phi}(\mu,\gamma,\tilde{U})(x)|=\frac{|\partial_{\mu}^i\partial_{\gamma}^j U^{\mu,\gamma}_{\theta}(x)|}{1-x}|(1+x)^{\epsilon}U'_{\phi}|\le C(i,j,K)||\tilde{U}_{\phi}||_{\mathbf{M}_2}.
\]
So $\partial_{\mu}^i\partial_{\gamma}^j A_{\phi}(\mu,\gamma,\tilde{U})\in \mathbf{N}_2$, with $||\partial_{\mu}^i\partial_{\gamma}^j A_{\phi}(\mu,\gamma,\tilde{U})||_{\mathbf{N}_2}\le C(i,j,K)||\tilde{U}_{\phi}||_{\mathbf{M}_2}$ for all $(\mu,\gamma,\tilde{U})\in K\times \mathbf{X}$. Thus $\partial_{\mu}^i\partial_{\gamma}^jA(\mu,\gamma,\tilde{U})\in \mathbf{Y}$, with $||\partial_{\mu}^i\partial_{\gamma}^j A(\mu,\gamma,\tilde{U})||_{\mathbf{Y}}\le C(i,j,K)||\tilde{U}||_{\mathbf{X}}$ for all $(\mu,\gamma,\tilde{U})\in K\times \mathbf{X}$, $i,j\ge 0$, $i+j\ne 0$. 

So for each $(\mu,\gamma)\in K$, $\partial_{\mu}^i\partial_{\gamma}^j A(\mu,\gamma;  \cdot):\mathbf{X}\to \mathbf{Y}$ is a bounded linear map with uniform bounded norm on $K$. Then by standard theories in functional analysis, $A:K\times\mathbf{X}\to \mathbf{Y}$ is $C^{\infty}$. So $G$ is a $C^{\infty}$ map from $K\times\mathbf{X}$ to $Y$. By direct calculation we get its Fr\'{e}chet derivative with respect to $\mathbf{X}$ is given by  the linear bounded operator $L^{\mu,\gamma}_{\tilde{U}}: \mathbf{X}\rightarrow \mathbf{Y}$ defined as  (\ref{eq4_Linear}). The proof is finished.     \qed

By Proposition \ref{lem_Gcont}, $L^{\mu,\gamma}_0:\mathbf{X} \to \mathbf{Y}$, the Fr\'{e}chet derivative of $G$ with respect to $\tilde{U}$ at $\tilde{U}=0$,  is given by (\ref{eq_LinearAtZero}).

Let $a_{\mu,\gamma}(x), b_{\mu,\gamma}(x)$ be the functions defined by (\ref{eq_ab}) with $\bar{U}_{\theta}$ given by (\ref{eqbaru}). For $\xi = (\xi_\theta, \xi_\phi) \in \mathbf{Y}$, let the map $W^{\mu,\gamma}$ be defined as $W^{\mu,\gamma}(\xi) := (W^{\mu,\gamma}_\theta(\xi), W^{\mu,\gamma}_\phi(\xi))$,  where
\begin{equation}\label{eq4_1_6}
  \begin{split}
   &W^{\mu,\gamma}_\theta(\xi)(x) := e^{-a_{\mu,\gamma}(x)}\int_{0}^{x}e^{a_{\mu,\gamma}(s)}\frac{\xi_{\theta}(s)}{1-s^2}ds,\\
    & W^{\mu,\gamma}_\phi(\xi)(x) :=\int_{x}^{1}e^{-b_{\mu,\gamma}(t)}\int_{t}^{1}e^{b_{\mu,\gamma}(s)}\frac{\xi_{\phi}(s)}{1-s^2}dsdt.
  \end{split}
\end{equation}
A calculation gives
\begin{equation}\label{eq4_W_d}
    (W^{\mu,\gamma}_{\theta}(\xi))'(x)=-a'(x)W^{\mu,\gamma}_{\theta}(x)+\frac{\xi_{\theta}(x)}{1-x^2}.
\end{equation}
\begin{lem}\label{lem4_1}
	For every $(\mu,\gamma)\in K$, $W^{\mu,\gamma}: \mathbf{Y}\rightarrow\mathbf{X}$ is  continuous, and is a right inverse of $L^{\mu,\gamma}_{0}$.
\end{lem}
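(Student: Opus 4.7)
The lemma has two assertions: (a) $W^{\mu,\gamma}$ is a right inverse of $L_0^{\mu,\gamma}$, and (b) it is a bounded linear operator $\mathbf{Y}\to\mathbf{X}$. Assertion (a) is immediate from the defining integrals together with the identities $(1-x^2)a'_{\mu,\gamma}(x)=2x+\bar{U}_\theta$ and $(1-x^2)b'_{\mu,\gamma}(x)=\bar{U}_\theta$. Indeed, (\ref{eq4_W_d}) gives $(1-x^2)(W_\theta^{\mu,\gamma}(\xi))'+(2x+\bar{U}_\theta)W_\theta^{\mu,\gamma}(\xi)=\xi_\theta$ directly, and a parallel computation for $W_\phi^{\mu,\gamma}(\xi)$ produces $(W_\phi^{\mu,\gamma}(\xi))''=-b'_{\mu,\gamma}(x)(W_\phi^{\mu,\gamma}(\xi))'+\xi_\phi/(1-x^2)$, whence $(1-x^2)(W_\phi^{\mu,\gamma}(\xi))''+\bar{U}_\theta(W_\phi^{\mu,\gamma}(\xi))'=\xi_\phi$. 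The substantive content is therefore the continuity in (b).

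The key input for (b) is the asymptotic behavior of the integrating factors at $x=\pm1$. Since $\bar{U}_\theta\in C^2(-1,1]$ with $\bar{U}_\theta(1)=0$, the function $b_{\mu,\gamma}$ stays bounded near $x=1$; combined with $a_{\mu,\gamma}(x)=-\ln(1-x^2)+b_{\mu,\gamma}(x)$ this gives $e^{\mp a_{\mu,\gamma}(x)}\sim C(1-x)^{\pm1}$ near $x=1$. Near $x=-1$, the expansion $\bar{U}_\theta(x)=\bar{U}_\theta(-1)+O(1+x)$ read off from (\ref{eqbaru}) yields $b_{\mu,\gamma}(x)=\tfrac{\bar{U}_\theta(-1)}{2}\ln(1+x)+O(1)$, so that $e^{\pm b_{\mu,\gamma}(x)}\sim C(1+x)^{\pm\bar{U}_\theta(-1)/2}$ and $e^{\mp a_{\mu,\gamma}(x)}\sim C(1+x)^{\pm\alpha_0}$, where $\alpha_0:=1-\bar{U}_\theta(-1)/2$. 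The choice of $\epsilon$ with $\max_K\bar{U}_\theta(-1)/2<\epsilon<1$ ensures $\alpha_0\in(1-\epsilon,1]$ uniformly on $K$; in particular $\alpha_0+\epsilon>1$.

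With these asymptotics the weighted bounds follow by bookkeeping. For $W_\theta^{\mu,\gamma}$, Lemma \ref{lem_xithe} together with the asymptotics shows that the integrand $e^{a_{\mu,\gamma}(s)}\xi_\theta(s)/(1-s^2)$ is bounded near $s=1$ and is $O((1+s)^{-\alpha_0-\epsilon})\|\xi_\theta\|_{\mathbf{N}_1}$ near $s=-1$. Because $\alpha_0+\epsilon>1$, the cumulative integral grows like $(1+x)^{1-\alpha_0-\epsilon}$ as $x\to-1^+$, and combining with the prefactor $e^{-a_{\mu,\gamma}(x)}\sim(1+x)^{\alpha_0}$ gives $|W_\theta^{\mu,\gamma}(\xi)(x)|=O((1+x)^{1-\epsilon})\|\xi_\theta\|_{\mathbf{N}_1}$; this controls the $(1+x)^{-1+\epsilon}$-weighted seminorm, and the $(1+x)^\epsilon$-weighted bound on $(W_\theta^{\mu,\gamma}(\xi))'$ is then immediate from (\ref{eq4_W_d}). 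For the $L^\infty(0,1)$ bound on $(W_\theta^{\mu,\gamma}(\xi))''$, differentiate the right-inverse identity to obtain $(1-x^2)(W_\theta^{\mu,\gamma}(\xi))''+\bar{U}_\theta(W_\theta^{\mu,\gamma}(\xi))'+(2+\bar{U}'_\theta)W_\theta^{\mu,\gamma}(\xi)=\xi'_\theta$; each of $\bar{U}_\theta(W_\theta^{\mu,\gamma}(\xi))'$, $(2+\bar{U}'_\theta)W_\theta^{\mu,\gamma}(\xi)$ and $\xi'_\theta$ is $O(1-x)$ on $(0,1)$ (using $\|\xi'_\theta/(1-x)\|_{L^\infty(0,1)}<\infty$), matching the zero of $1-x^2$ at $x=1$. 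The analogous estimate for $W_\phi^{\mu,\gamma}$ uses $|\xi_\phi(s)|\leq\|\xi_\phi\|_{\mathbf{N}_2}(1-s)(1+s)^{-\epsilon}$: the integrand of $(W_\phi^{\mu,\gamma}(\xi))'$ is $O((1+s)^{-\alpha_0-\epsilon})\|\xi_\phi\|_{\mathbf{N}_2}$ near $s=-1$, which after integration and multiplication by $e^{-b_{\mu,\gamma}(x)}\sim(1+x)^{\alpha_0-1}$ gives $|(W_\phi^{\mu,\gamma}(\xi))'(x)|\leq C(1+x)^{-\epsilon}\|\xi_\phi\|_{\mathbf{N}_2}$; the $L^\infty$ bound on $W_\phi^{\mu,\gamma}(\xi)$ follows by integrating from $x$ to $1$, and the $(1+x)^{1+\epsilon}$-weighted bound on $(W_\phi^{\mu,\gamma}(\xi))''$ is read off from the right-inverse identity since $\bar{U}_\theta$ is bounded. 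The vanishing boundary values $W_\theta^{\mu,\gamma}(\xi)(\pm 1)=0$ and $W_\phi^{\mu,\gamma}(\xi)(1)=0$ are visible from the vanishing of $e^{-a_{\mu,\gamma}}$ at $\pm 1$ (together with the cancellation $\alpha_0+1-\alpha_0-\epsilon=1-\epsilon>0$ at $x=-1$) and from the double integral defining $W_\phi^{\mu,\gamma}$.

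The main technical point is uniformity of all these estimates in $(\mu,\gamma)\in K$; this is ensured by the compactness of $K$ and the fact that $\epsilon$ is chosen strictly above $\sup_K\bar{U}_\theta(-1)/2$, so that $\alpha_0$, and therefore the exponent $\alpha_0+\epsilon-1>0$ driving every integration, is bounded away from $0$ uniformly on $K$.
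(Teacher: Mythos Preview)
Your argument is correct and follows essentially the same route as the paper's proof: compute the asymptotics of $e^{\pm a_{\mu,\gamma}}$ and $e^{\pm b_{\mu,\gamma}}$ at the two endpoints, then feed Lemma~\ref{lem_xithe} and the $\mathbf{N}_2$-bound on $\xi_\phi$ through the integral formulas, with the second-derivative estimate obtained by differentiating the right-inverse identity (the paper does the equivalent computation via $(a')^2-a''$). Two harmless slips: the expansion near $x=-1$ is $\bar U_\theta(x)=\bar U_\theta(-1)+O((1+x)^{\sqrt{1+2\mu}})$ rather than $O(1+x)$, and $\alpha_0$ need not be $\le 1$ (it exceeds $1$ when $\mu>0$); neither affects your argument, since only the integrability of $(\bar U_\theta-\bar U_\theta(-1))/(1-s^2)$ and the inequality $\alpha_0+\epsilon>1$ are actually used.
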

\begin{proof}
We make use of the property that $\bar{U}_{\theta}(1)=0$, $\bar{U}_{\theta}\in C^2(-1,1]\cap C^0[-1,1]$ and $\bar{U}_{\theta}(-1)<2\epsilon<2$.  For convenience let us write  $W:=W^{\mu,\gamma}(\xi)$ for $\xi\in \mathbf{Y}$,  $a(x)=a_{\mu,\gamma}(x)$ and $b(x)=b_{\mu,\gamma}(x)$.

We first prove $W$ is well-defined.  Applying  Lemma \ref{lem_xithe} in the expression of $W_{\theta}$ in (\ref{eq4_1_6}),
	\begin{equation}\label{eq4_1_7}
		|(1+x)^{-1+\epsilon}W_{\theta}(x)|\le (1+x)^{-1+\epsilon}||\xi_{\theta}||_{\mathbf{N}_1}e^{-a(x)}\int_{0}^{x}e^{a(s)}(1-s)(1+s)^{-\epsilon}ds, \quad -1<x<1.
	\end{equation}
	
	We make estimates first for $0<x\le 1$ and then for $-1<x\le 0$.
	
	\textbf{Case} $1$. $0<x\le 1$.
	
      Since $\bar{U}_{\theta}(x)= -\bar{U}_\theta'(1)(1-x) +O((1-x)^{2})$,
      \begin{equation}\label{eq4_1_7_1}
        b(x)=b(1)+\int_{1}^{x}\frac{ \bar{U}_{\theta}}{ 1-s^2}ds=b(1)+\frac{1}{2}\bar{U}_\theta'(1) (1-x)  + O(1)(1-x)^2, \quad 0<x\le 1,
      \end{equation}
      where $b(1):=\int_{0}^{1}\frac{ \bar{U}_{\theta}}{ 1-s^2}ds$ exists and is finite, we have
	\begin{equation}\label{eq4_1_7_2}
		e^{b(x)} = e^{b(1)}\left[1+\frac{1}{2}\bar{U}_\theta'(1) (1-x)  + O(1)(1-x)^2\right].
	\end{equation}
Notice $a(x)=-\ln(1-x^2)+b(x)$, so
	\begin{equation}\label{eq4_1_7_3}
		e^{a(x)} 
                      =\frac{ e^{b(1)}}{2(1-x)}\left( 1+\frac{1}{2}\left( \bar{U}_\theta'(1)+1\right) (1-x)  + O(1)(1-x)^2 \right).
	\end{equation}

	 Then in (\ref{eq4_1_7}), using the estimate of $a(x)$ and $e^{a(x)}$, it is not hard to see that there exists some positive constant $C$ such that
\[
   e^{a(s)}(1-s)(1+s)^{-\epsilon}\le C, \quad e^{a(x)}\ge \frac{1}{C(1-x)},\quad 0<s<x<1.
\]
Thus
	\begin{equation}\label{eq4_1_W_1}
		|W_{\theta}(x)| \le  C||\xi_{\theta}||_{\mathbf{N}_1} (1-x), \quad 0<x\le 1.
	\end{equation}
 In particular, $W_{\theta}(1)=0$.
 
      By (\ref{eq4_W_a}) and (\ref{eq4_W_d}), for $0<x<1$,
\begin{equation}\label{eq4_1_W_2}
   |a'(x)|
   \le  \frac{C}{(1-x)},\quad |a''(x)|\le \frac{C}{(1-x)^2},
\end{equation}
	\begin{equation*}
		|W'_{\theta}(x)|  \le |a'(x)||W_{\theta}(x)|+\frac{|\xi_{\theta}(x)|}{1-x}\le C ||\xi_{\theta}||_{\mathbf{N}_1},\quad 0<x\le 1,
	\end{equation*}
where we have used (\ref{eq4_1_W_1}),  (\ref{eq4_1_W_2}), the fact that $\xi\in \mathbf{N}_1$, and  Lemma \ref{lem_xithe}. Next,

\[
  \begin{split}
  W''_{\theta}(x) & =-a''(x)W_{\theta}-a'(x)W'_{\theta}(x)+\left(\frac{\xi_{\theta}(x)}{1-x^2}\right)'\\
                  & =((a'(x))^2-a''(x))W_{\theta}(x)-a'(x)\frac{\xi_{\theta}(x)}{1-x^2}+\frac{\xi'_{\theta}(x)}{1-x^2}+\frac{2x\xi_{\theta}(x)}{(1-x^2)^2}.
  \end{split}
\]
Thus
	\begin{equation*}
	\begin{split}
		|W''_{\theta}(x)|  \le  |(a'(x))^2-a''(x)| |W_{\theta}| + |a'(x)|\frac{|\xi_{\theta}|}{1-x^2} + \frac{|\xi'_{\theta}|}{(1-x)}+\frac{|\xi_{\theta}|}{(1-x)^2}.
	\end{split}
	\end{equation*}
By computation
	\[
	    (a'(x))^2-a''(x)=\frac{\bar{U}^2_{\theta}+2x\bar{U}_{\theta}}{(1-x^2)^2}-\frac{2+\bar{U}'_{\theta}}{1-x^2}=O\left(\frac{1}{1-x}\right).
	\]
It follows, using (\ref{eq4_1_W_1}),  (\ref{eq4_1_W_2}) and  Lemma \ref{lem_xithe}, that
\[
   |W''_{\theta}(x)|\le  C\left(\frac{|W_{\theta}(x)|}{1-x}+\frac{|\xi_{\theta}|}{(1-x)^2}+\frac{|\xi'_{\theta}|}{1-x}\right)\le C||\xi_{\theta}||_{\mathbf{N}_1}, \quad 0<x<1.
\]

\textbf{Case} $2$. $-1<x\le 0$.

	Recall that $\bar{U}_{\theta}(-1)< 2\epsilon<2$. Moreover,  $\bar{U}_{\theta}(x)=\bar{U}_{\theta}(-1)+O((1+x)^{b})$ with $b=\sqrt{1+2\mu}$. Then we have , for $-1<x\le 0$, that
\begin{equation*}
     b(x)=\frac{\bar{U}_{\theta}(-1)}{2}\ln (1+x)+O(1), \quad a(x)=(\frac{\bar{U}_{\theta}(-1)}{2}-1)\ln (1+x)+O(1),
\end{equation*}
\begin{equation*}
   e^{a(x)}=(1+x)^{\frac{\bar{U}_{\theta}(-1)}{2}-1}e^{O(1)},\quad  e^{-a(x)}=(1+x)^{1-\frac{\bar{U}_{\theta}(-1)}{2}}e^{O(1)}.
\end{equation*}
So there exists some constant $C$ such that
\[
    e^{a(s)}(1-s)(1+s)^{-\epsilon}\le C(1+s)^{\frac{\bar{U}_{\theta}(-1)}{2}-1-\epsilon},\quad e^{-a(s)}\le C(1+s)^{1-\frac{\bar{U}_{\theta}(-1)}{2}}, \quad -1<s\le 0 .
\]
Apply these estimates in (\ref{eq4_1_7}), and use the fact that $\bar{U}_{\theta}(-1)<2\epsilon$, we have
\begin{equation}\label{eq_W_e}
   |(1+x)^{-1+\epsilon}W_{\theta}(x)|\le C\left| \left(1+x\right)^{-\frac{\bar{U}_{\theta}(-1)}{2}+\epsilon} - 1\right| ||\xi_{\theta}||_{\mathbf{N}_1}\le C||\xi_{\theta}||_{\mathbf{N}_1}, \quad -1<x\le 0.
\end{equation}
By (\ref{eq4_W_d}), (\ref{eq4_W_a}) and (\ref{eq_W_e}), we have, for $-1<x\le 0$, that
\[
   |(1+x)^{\epsilon}W'_{\theta}(x)|\le |a'(x)(1+x)^{\epsilon}W_{\theta}(x)|+\frac{|\xi_{\theta}(x)|(1+x)^{\epsilon}}{1-x^2}\le C||\xi_{\theta}||_{\mathbf{N}_1}.
\]
So we have shown that $W_{\theta}\in \mathbf{M}_1$, and $||W_{\theta}||_{\mathbf{M}_1}\le C||\xi_{\theta}||_{\mathbf{N}_1}$ for some constant $C$.\\
By the definition of $W_{\phi}(\xi)$ in (\ref{eq4_1_6}) and the fact that $\xi_{\phi}\in \mathbf{N}_2$, we have, for every $-1<x<1$, 
\[
   |W_{\phi}(x)|\le \int_{x}^{1}e^{-b(t)}\int_{t}^{1}e^{b(s)}\frac{|\xi_{\phi}(s)|}{1-s^2}dsdt\le ||\xi_{\phi}||_{\mathbf{N}_2} \int_{x}^{1}e^{-b(t)}\int_{t}^{1}e^{b(s)}(1+s)^{-1-\epsilon}dsdt.
\]
Since $b(x)=\frac{\bar{U}_{\theta}(-1)}{2}\ln (1+x)+O(1)$ for all $-1<x<1$, there is some constant $C$ such that
\begin{equation}\label{eq4_W_b}
   e^{b(s)}\le C(1+s)^{\frac{\bar{U}_{\theta}(-1)}{2}},\quad e^{-b(t)}\le C(1+t)^{-\frac{\bar{U}_{\theta}(-1)}{2}}, \quad -1<s,t\le 1. 
\end{equation}
So we have, using $\displaystyle \frac{\bar{U}_{\theta}(-1)}{2}<\epsilon<1$, that for $-1<x\le 1$, 
\[
  \begin{split}
   |W_{\phi}(x)|&\le ||\xi_{\phi}||_{\mathbf{N}_2}\int_{x}^{1}e^{-b(t)}\int_{t}^{1}\frac{e^{b(s)}}{(1+s)^{1+\epsilon}}dsdt\\
      & \le C||\xi_{\phi}||_{\mathbf{N}_2}\int_{x}^{1}(1+t)^{-\frac{\bar{U}_{\theta}(-1)}{2}}\int_{t}^{1}(1+s)^{\frac{\bar{U}_{\theta}(-1)}{2}-1-\epsilon}dsdt\\
                 & \le C(1-x)||\xi_{\phi}||_{\mathbf{N}_2}. 
  \end{split}
\]
In particular, $W_{\phi}(1)=0$.

By computation
\[
   W'_{\phi}(x)=-e^{-b(x)}\int_{x}^{1}e^{b(s)}\frac{\xi_{\phi}(s)}{1-s^2}ds.
\]
Thus, using (\ref{eq4_W_b}) and the fact that $\xi_{\phi}\in \mathbf{N}_2$, we have for $-1<x<1$ that 
\[
  \begin{split}
   |(1+x)^{\epsilon}W'_{\phi}(x)| & \le ||\xi_{\phi}||_{\mathbf{N}_2}(1+x)^{\epsilon}e^{-b(x)}\int_{x}^{1} e^{b(s)} (1+s)^{-1-\epsilon}ds\\
                   & \le C||\xi_{\phi}||_{\mathbf{N}_2}(1+x)^{\epsilon}(1+x)^{-\frac{\bar{U}_{\theta}(-1)}{2}}\int_{x}^{1}(1+s)^{\frac{\bar{U}_{\theta}(-1)}{2}-1-\epsilon}ds\\
                   & \le C||\xi_{\phi}||_{\mathbf{N}_2}.
  \end{split}
\]
Similarly
\[
   W''_{\phi}(x)=b'(x)e^{-b(x)}\int_{x}^{1}e^{b(s)}\frac{\xi_{\phi}(s)}{1-s^2}ds+\frac{\xi_{\phi}(x)}{1-x^2}.
\]
Since $\displaystyle |b'(x)|=\left|\frac{\bar{U}_{\theta}(x)}{1-x^2}\right| \le \frac{C}{1+x}$ for all $-1<x<1$,  using (\ref{eq4_W_b}), that
\[
   |(1+x)^{1+\epsilon}W''_{\phi}(x)|\le C||\xi_{\phi}||_{\mathbf{N}_2}, \quad -1<x<1. 
\]
     So $W_{\phi}\in \mathbf{M}_2$, and $||W_{\phi}||_{\mathbf{M}_2}\le C ||\xi_{\phi}||_{\mathbf{N}_2}$ for some constant $C$.
     
Then $W^{\mu,\gamma}(\xi)\in\mathbf{X}$ for all $\xi\in\mathbf{Y}$, and $||W^{\mu,\gamma}(\xi)||_{\mathbf{X}}\le C||\xi||_{\mathbf{Y}}$ for some constant $C$. So $W^{\mu,\gamma}:\mathbf{Y}\rightarrow\mathbf{X}$ is well-defined and continuous. It can be checked directly that $W^{\mu,\gamma}$ is a right inverse of $L^{\mu,\gamma}_{0}$.
\end{proof}

Let $V_{\mu,\gamma}^1, V_{\mu,\gamma}^2, V_{\mu,\gamma}^3$ be vectors defined by (\ref{eq4_2_ker}), we have
\begin{lem}\label{lem4_3}
    $\{V_{\mu,\gamma}^1,V_{\mu,\gamma}^2\}$ is a basis of the kernel of $L^{\mu,\gamma}_{0}:\mathbf{X}\to \mathbf{Y}$.
\end{lem}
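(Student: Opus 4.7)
The plan is to decompose the problem into three sub-claims: (i) both $V^1_{\mu,\gamma}$ and $V^2_{\mu,\gamma}$ lie in $\mathbf{X}$ and are annihilated by $L^{\mu,\gamma}_0$; (ii) they are linearly independent; and (iii) any $V\in \mathbf{X}$ with $L^{\mu,\gamma}_0 V=0$ is a linear combination of them. The substance is in (i) and in removing $V^3_{\mu,\gamma}$ in (iii), since the full $C^1$ kernel on $(-1,1)$ has already been identified in (\ref{eq4_3_ker}) as three-dimensional with basis $\{V^1_{\mu,\gamma},V^2_{\mu,\gamma},V^3_{\mu,\gamma}\}$.

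For (i), I would first record from the explicit formula (\ref{eqbaru}) the behavior of $a_{\mu,\gamma}$ and $b_{\mu,\gamma}$ at the endpoints. Near $x=1$, both are $C^\infty$ and $e^{-a_{\mu,\gamma}(x)}=O(1-x)$; near $x=-1$,
\begin{equation*}
a_{\mu,\gamma}(x)=\Bigl(\tfrac{\bar U_\theta(-1)}{2}-1\Bigr)\ln(1+x)+O(1),\qquad b_{\mu,\gamma}(x)=\tfrac{\bar U_\theta(-1)}{2}\ln(1+x)+O(1),
\end{equation*}
so $e^{-a_{\mu,\gamma}(x)}=O\bigl((1-x)(1+x)^{1-\bar U_\theta(-1)/2}\bigr)$ and $e^{-b_{\mu,\gamma}(x)}=O\bigl((1+x)^{-\bar U_\theta(-1)/2}\bigr)$. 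Because the working hypothesis of the subsection fixes $\epsilon$ with $\bar U_\theta(-1)/2<\epsilon<1$ uniformly on $K$, the exponents $1-\bar U_\theta(-1)/2$ and $\epsilon-\bar U_\theta(-1)/2$ are strictly positive. This immediately yields $V^1_\theta(\pm 1)=0$ together with boundedness of $(1+x)^{-1+\epsilon}V^1_\theta$, $(1+x)^{\epsilon}(V^1_\theta)'$, and $(V^1_\theta)''$ on $(0,1)$; a parallel check with $(V^1_\theta)'=-a'_{\mu,\gamma}e^{-a_{\mu,\gamma}}$ and $a'_{\mu,\gamma}=O((1+x)^{-1})$ near $x=-1$ handles the derivative weights. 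For $V^2_{\mu,\gamma}$ the integrability of $e^{-b_{\mu,\gamma}}$ on $(-1,1)$ (again using $\bar U_\theta(-1)/2<1$) shows $V^2_\phi$ is bounded and continuous, with $V^2_\phi(1)=0$, and the identities $(V^2_\phi)'=-e^{-b_{\mu,\gamma}}$, $(V^2_\phi)''=b'_{\mu,\gamma}e^{-b_{\mu,\gamma}}$ give the weighted bounds $(1+x)^\epsilon(V^2_\phi)'$, $(1+x)^{1+\epsilon}(V^2_\phi)''=O((1+x)^{\epsilon-\bar U_\theta(-1)/2})$. A direct differentiation confirms $L^{\mu,\gamma}_0V^i_{\mu,\gamma}=0$ for $i=1,2$.

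Linear independence in (ii) is immediate from the block structure: $V^1_{\mu,\gamma}$ has nonzero $\theta$-component and vanishing $\phi$-component, while $V^2_{\mu,\gamma}$ has the reverse, and $\int_{-1}^1 e^{-b_{\mu,\gamma}(t)}\,dt>0$ so $V^2_\phi\not\equiv 0$. For (iii), given $V\in\mathbf{X}$ with $L^{\mu,\gamma}_0V=0$, the inclusions $V_\theta\in C^1((-1,1])$ and $V_\phi\in C^2((-1,1))$ make $V$ a bona fide $C^1$ solution of the homogeneous system on $(-1,1)$, hence by (\ref{eq4_3_ker}) we have $V=c_1 V^1_{\mu,\gamma}+c_2V^2_{\mu,\gamma}+c_3V^3_{\mu,\gamma}$ for some real constants. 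Evaluating the $\phi$-component at $x=1$ and using $V^1_\phi\equiv 0$, $V^2_\phi(1)=0$, $V^3_\phi\equiv 1$, membership in $\mathbf{M}_2$ forces $c_3=V_\phi(1)=0$, completing the proof.

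The main obstacle is bookkeeping of the two simultaneous weight systems $(1-x)$ and $(1+x)$ in each weighted norm appearing in $\mathbf{M}_1,\mathbf{M}_2$; the decisive structural point is that the choice of $\epsilon$ with $\max_K \bar U_\theta(-1)/2<\epsilon<1$, which is possible exactly because $K\subset I_1$ (so $\bar U_\theta(-1)<2$), ensures all relevant $(1+x)$-exponents are strictly nonnegative.
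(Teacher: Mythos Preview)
Your proposal is correct and follows essentially the same approach as the paper: both verify $V^1_{\mu,\gamma},V^2_{\mu,\gamma}\in\mathbf{X}$ (the paper just says ``it is not hard to verify'' whereas you supply the endpoint asymptotics), both invoke the three-dimensional $C^1$ kernel (\ref{eq4_3_ker}), and both eliminate $V^3_{\mu,\gamma}$ by the condition $\tilde U_\phi(1)=0$ built into $\mathbf{M}_2$. Your evaluation-at-$x=1$ argument is exactly the content of the paper's observation that $V^3_{\mu,\gamma}\notin\mathbf{X}$, so the two proofs coincide.
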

\begin{proof}
  Let $V\in\mathbf{X}$, $L^{\mu,\gamma}_{0}V=0$. We know that $V$ is given by (\ref{eq4_3_ker}) for some $c_1,c_2,c_3\in\mathbb{R}$. Since $\bar{U}_{\theta}(-1)<2$, it is not hard to verify that $V_{\mu,\gamma}^1, V_{\mu,\gamma}^2\in \mathbf{X}$, and  $ V_{\mu,\gamma}^3\notin\mathbf{X}$. Since $V\in \mathbf{X}$, we must have $c_3V_{\mu,\gamma}^3\in \mathbf{X}$, so $c_3=0$, and  $V\in \mathrm{span}\{V_{\mu,\gamma}^1,V_{\mu,\gamma}^2\}$. It is clear that $\{V_{\mu,\gamma}^1,V_{\mu,\gamma}^2\}$ is independent. So $\{V_{\mu,\gamma}^1,V_{\mu,\gamma}^2\}$ is a basis of the kernel.
\end{proof}

\begin{cor}\label{cor4_1}
  For any $\xi\in\mathbf{Y}$, all solutions of $L^{\mu,\gamma}_{0}V=\xi$, $V\in\mathbf{X}$, are given by
\begin{equation*}
   V=W^{\mu,\gamma}(\xi)+c_1V_{\mu,\gamma}^1+c_2V_{\mu,\gamma}^2,\quad c_1,c_2\in\mathbb{R}.
\end{equation*}
Namely,
\begin{equation*}
		V_\theta = W^{\mu,\gamma}_\theta(\xi)+ c_1e^{-a_{\mu,\gamma}(x)}, \quad V_\phi = W^{\mu,\gamma}_\phi(\xi)+ c_2 \int_{x}^{1}e^{-b_{\mu,\gamma}(t)}dt,\textrm{ }c_1,c_2\in\mathbb{R}.
	\end{equation*}
\end{cor}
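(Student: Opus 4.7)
The plan is to derive this corollary purely as a formal consequence of the two preceding lemmas: Lemma \ref{lem4_1} gives a continuous right inverse $W^{\mu,\gamma}$ of $L_0^{\mu,\gamma}$, and Lemma \ref{lem4_3} identifies the kernel of $L_0^{\mu,\gamma}$ in $\mathbf{X}$. There is no new analytic content; the task is just the standard ``particular solution $+$ kernel'' decomposition.

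First, I would note that $W^{\mu,\gamma}(\xi) \in \mathbf{X}$ and $L_0^{\mu,\gamma}(W^{\mu,\gamma}(\xi)) = \xi$ by Lemma \ref{lem4_1}, so $W^{\mu,\gamma}(\xi)$ is a particular solution in $\mathbf{X}$ of the inhomogeneous equation $L_0^{\mu,\gamma}V = \xi$. Next, for any other $V \in \mathbf{X}$ solving $L_0^{\mu,\gamma}V = \xi$, linearity of $L_0^{\mu,\gamma}$ yields
\begin{equation*}
L_0^{\mu,\gamma}\bigl(V - W^{\mu,\gamma}(\xi)\bigr) = \xi - \xi = 0,
\end{equation*}
so $V - W^{\mu,\gamma}(\xi) \in \ker L_0^{\mu,\gamma} \cap \mathbf{X}$. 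By Lemma \ref{lem4_3}, this kernel is spanned by $V_{\mu,\gamma}^1$ and $V_{\mu,\gamma}^2$, so there exist $c_1, c_2 \in \mathbb{R}$ with $V - W^{\mu,\gamma}(\xi) = c_1 V_{\mu,\gamma}^1 + c_2 V_{\mu,\gamma}^2$. Conversely, every such combination solves $L_0^{\mu,\gamma}V = \xi$, again by linearity together with $L_0^{\mu,\gamma}V_{\mu,\gamma}^i = 0$.

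Finally, to obtain the explicit componentwise formula in the corollary, I would simply substitute the definition of $W^{\mu,\gamma}_\theta$, $W^{\mu,\gamma}_\phi$ from (\ref{eq4_1_6}) and of $V_{\mu,\gamma}^1$, $V_{\mu,\gamma}^2$ from (\ref{eq4_2_ker}) into the decomposition $V = W^{\mu,\gamma}(\xi) + c_1 V_{\mu,\gamma}^1 + c_2 V_{\mu,\gamma}^2$, observing that $V_{\mu,\gamma}^1$ contributes only to the $\theta$-component via $e^{-a_{\mu,\gamma}(x)}$, while $V_{\mu,\gamma}^2$ contributes only to the $\phi$-component via $\int_x^1 e^{-b_{\mu,\gamma}(t)}\,dt$. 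I expect no obstacle here: all of the work (the weighted $L^\infty$ estimates showing $W^{\mu,\gamma}(\xi) \in \mathbf{X}$, and the exclusion of $V_{\mu,\gamma}^3$ from $\mathbf{X}$ because the constant function in the $\phi$-slot does not vanish at $x = 1$) has already been done in Lemmas \ref{lem4_1} and \ref{lem4_3}.
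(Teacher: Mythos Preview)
Your proposal is correct and follows exactly the same approach as the paper: the paper's proof simply states that by Lemma~\ref{lem4_1}, $V - W^{\mu,\gamma}(\xi)$ lies in the kernel of $L_0^{\mu,\gamma}:\mathbf{X}\to\mathbf{Y}$, and then invokes Lemma~\ref{lem4_3}. Your write-up is just a more explicit version of the same two-line argument.
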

\begin{proof}
  By Lemma \ref{lem4_1}, $V-W^{\mu,\gamma}(\xi)$ is in the kernel of $L_{0}:\mathbf{X}\to \mathbf{Y}$. The conclusion then follows from Lemma \ref{lem4_3}.
\end{proof}

Let $l_1,l_2$ be the functionals on $\mathbf{X}$ defined by (\ref{def_l}), and $\mathbf{X}_1$ be the subspace of $\mathbf{X}$ defined by (\ref{eq4_1_1x}). As shown in Section 4.1, the matrix $(l_i(V^{j}_{\mu,\gamma}))$ is a diagonal invertible matrix, for every $(\mu,\gamma)\in K$. So $\mathbf{X}_1(\mu,\gamma)$ is a closed subspace of $\mathbf{X}$, and
\begin{equation}\label{eq4_1xb} 
	\mathbf{X} = \mbox{span} \{ V_{\mu,\gamma}^{1}, V_{\mu,\gamma}^{2} \} \oplus\mathbf{X}_1(\mu,\gamma),\quad \forall (\mu,\gamma)\in K, 
\end{equation}
with the projection operator $P(\mu,\gamma): \mathbf{X}\rightarrow\mathbf{X}_1$ given by 
\begin{equation*}
    P(\mu,\gamma)V = V-  l_1(V)V_{\mu,\gamma}^{1}-c(\mu,\gamma)l_2(V)V_{\mu,\gamma}^{2} \textrm{ for } V\in \mathbf{X}.
    \end{equation*}
where $c(\mu,\gamma)=\left(\int_{0}^{1}e^{-b_{\mu,\gamma}(t)}dt\right)^{-1}>0$ for all $(\mu,\gamma)\in K$.

\begin{lem}\label{lem4_2}
	For each $(\mu,\gamma)\in K$, the operator $ L^{\mu,\gamma}_{0}: \mathbf{X}_1\rightarrow\mathbf{Y}$ is an isomorphism.
\end{lem}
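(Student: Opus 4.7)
The plan is to verify bijectivity of $L_0^{\mu,\gamma}:\mathbf{X}_1\to\mathbf{Y}$ and then invoke the bounded inverse theorem. The essential ingredients have already been assembled in the preceding lemmas (Corollary \ref{cor4_1}, Lemma \ref{lem4_3}, Lemma \ref{lem4_1}, and the decomposition (\ref{eq4_1xb})); this lemma is really the clean packaging of those facts.

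First I would establish injectivity. Suppose $V\in\mathbf{X}_1$ satisfies $L_0^{\mu,\gamma}V=0$. By Lemma \ref{lem4_3}, $V$ must lie in the kernel of $L_0^{\mu,\gamma}$ acting on $\mathbf{X}$, which is spanned by $V_{\mu,\gamma}^{1}$ and $V_{\mu,\gamma}^{2}$. Writing $V=c_1V_{\mu,\gamma}^{1}+c_2V_{\mu,\gamma}^{2}$ and applying the functionals $l_1,l_2$, the hypothesis $V\in\mathbf{X}_1=\ker l_1\cap\ker l_2$ yields a $2\times 2$ linear system with coefficient matrix $(l_i(V_{\mu,\gamma}^{j}))$. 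As noted in Section 4.1, this matrix is diagonal with nonzero entries $(V_{\mu,\gamma}^{1})_\theta(0)=1$ and $(V_{\mu,\gamma}^{2})_\phi(0)=\int_0^1 e^{-b_{\mu,\gamma}(t)}\,dt>0$, hence invertible, forcing $c_1=c_2=0$ and thus $V=0$.

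Next I would establish surjectivity using Lemma \ref{lem4_1} together with the projection $P(\mu,\gamma)$. Given any $\xi\in\mathbf{Y}$, set
\begin{equation*}
   V := P(\mu,\gamma)W^{\mu,\gamma}(\xi) = W^{\mu,\gamma}(\xi) - l_1(W^{\mu,\gamma}(\xi))V_{\mu,\gamma}^{1} - c(\mu,\gamma)\,l_2(W^{\mu,\gamma}(\xi))V_{\mu,\gamma}^{2}.
\end{equation*}
Since $W^{\mu,\gamma}(\xi)\in\mathbf{X}$ by Lemma \ref{lem4_1} and $V_{\mu,\gamma}^{1},V_{\mu,\gamma}^{2}\in\mathbf{X}$, we have $V\in\mathbf{X}$; by construction of $P(\mu,\gamma)$ we have $l_1(V)=l_2(V)=0$, so $V\in\mathbf{X}_1$. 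Applying $L_0^{\mu,\gamma}$ and using that $V_{\mu,\gamma}^{1},V_{\mu,\gamma}^{2}$ lie in its kernel together with the right-inverse property $L_0^{\mu,\gamma}W^{\mu,\gamma}(\xi)=\xi$ from Lemma \ref{lem4_1}, one obtains $L_0^{\mu,\gamma}V=\xi$.

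Finally, since $\mathbf{X}_1$ (as a closed subspace of the Banach space $\mathbf{X}$) and $\mathbf{Y}$ are both Banach spaces, and $L_0^{\mu,\gamma}|_{\mathbf{X}_1}$ is a bounded linear bijection (boundedness being inherited from Proposition \ref{lem_Gcont}), the bounded inverse theorem gives a bounded two-sided inverse, completing the proof that $L_0^{\mu,\gamma}:\mathbf{X}_1\to\mathbf{Y}$ is an isomorphism. There is no genuine obstacle here: the only conceptual point to keep in mind is that the invertibility of the matrix $(l_i(V_{\mu,\gamma}^{j}))$ is what forces the two-dimensional complement in (\ref{eq4_1xb}) to interact correctly with the functionals $l_1,l_2$, and this was already verified in Section 4.1.
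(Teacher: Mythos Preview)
Your proof is correct and follows essentially the same approach as the paper's: the paper argues in two lines that $L_0^{\mu,\gamma}:\mathbf{X}\to\mathbf{Y}$ is surjective (Corollary \ref{cor4_1}) with kernel $\mathrm{span}\{V_{\mu,\gamma}^1,V_{\mu,\gamma}^2\}$ (Lemma \ref{lem4_3}), and then invokes the direct sum decomposition (\ref{eq4_1xb}) to conclude. Your argument simply unpacks that last step explicitly via the projection $P(\mu,\gamma)$ and the invertibility of $(l_i(V_{\mu,\gamma}^j))$, which is exactly what the direct sum property encodes.
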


\begin{proof}
	By Corollary \ref{cor4_1} and Lemma \ref{lem4_3}, $L^{\mu,\gamma}_{0}:\mathbf{X}\rightarrow\mathbf{Y}$ is surjective and $\ker L^{\mu,\gamma}_{0} = $ span $\{V_{\mu,\gamma}^{1}, V_{\mu,\gamma}^{2}\}$. The conclusion of the lemma then follows in view of the direct sum property (\ref{eq4_1xb}).	
\end{proof}

\begin{lem}\label{lemV1V2}
 $V_{\mu,\gamma}^1,V_{\mu,\gamma}^2\in C^{\infty}(K,\mathbf{X})$.
\end{lem}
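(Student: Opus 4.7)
The plan is to verify smoothness directly from the explicit formulas
$V_{\mu,\gamma}^1 = (e^{-a_{\mu,\gamma}(x)}, 0)$ and $V_{\mu,\gamma}^2 = (0, \int_x^1 e^{-b_{\mu,\gamma}(t)}\,dt)$. From (\ref{eqbaru}) and the weighted estimates (\ref{eqcor3_1_2}) in Remark \ref{rmk3_1}, $\bar{U}_\theta(\mu,\gamma,\cdot) = U^{\mu,\gamma}_\theta$ is smooth in $(\mu,\gamma)\in K$ with uniform control of all $\partial_\mu^i\partial_\gamma^j \bar{U}_\theta$ as functions of $x$. Consequently $a_{\mu,\gamma}(x)$ and $b_{\mu,\gamma}(x)$, defined in (\ref{eq_ab}), are smooth in $(\mu,\gamma)$ for each fixed $x\in(-1,1)$, with differentiation under the integral sign justified on any compact subinterval of $(-1,1)$ via (\ref{eqcor3_1_2}).

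The main task is to promote pointwise smoothness to smoothness in the $\mathbf{X}$-norm, uniformly on $K$. Shrinking $K$ if necessary, fix $\epsilon\in((\max_K \bar{U}_\theta(-1))/2, 1)$. By the chain rule, $\partial_\mu^i\partial_\gamma^j e^{-a_{\mu,\gamma}(x)}$ equals $e^{-a_{\mu,\gamma}(x)}$ times a universal polynomial in the lower-order derivatives $\partial_\mu^{i'}\partial_\gamma^{j'} a_{\mu,\gamma}(x)$. Using $e^{-a_{\mu,\gamma}(x)} = (1-x^2)e^{-b_{\mu,\gamma}(x)}$, together with the asymptotics $b_{\mu,\gamma}(x) = \tfrac{1}{2}\bar{U}_\theta(-1)\ln(1+x) + O(1)$ near $x=-1$, gives $|e^{-a_{\mu,\gamma}(x)}| \leq C(1-x)(1+x)^{1-\bar{U}_\theta(-1)/2}$ uniformly on $K$. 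Differentiating $a_{\mu,\gamma}(x)$ in $(\mu,\gamma)$ under the integral sign and applying (\ref{eqcor3_1_2}), one finds that each $\partial_\mu^{i'}\partial_\gamma^{j'} a_{\mu,\gamma}(x)$ is bounded by $C|\ln(1+x)|^{\,i'+1}$ near $x=-1$, and these logarithms are absorbed by the strict inequality $\epsilon - \bar{U}_\theta(-1)/2 > 0$ when multiplied by the weight $(1+x)^{-1+\epsilon}$ appearing in $\|\cdot\|_{\mathbf{M}_1}$. The $x$-derivatives of these expressions are controlled analogously using (\ref{eq4_W_a}), handling the $(1+x)^\epsilon$-weighted first-derivative seminorm and the $L^\infty(0,1)$ second-derivative seminorm in $\mathbf{M}_1$. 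The analysis of $V_{\mu,\gamma}^2$ is similar: $e^{-b_{\mu,\gamma}(t)} \leq C(1+t)^{-\bar{U}_\theta(-1)/2}$ is integrable near $t=-1$ since $\bar{U}_\theta(-1)/2 < \epsilon < 1$, and derivatives in $(\mu,\gamma)$ introduce only logarithmic factors that are again absorbed.

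The remaining and main obstacle is to check that these formal candidates are genuine Fr\'echet derivatives in $\mathbf{X}$: given $(\mu_0,\gamma_0)\in K$ and a small increment $h=(h_1,h_2)$, one must show the telescoped remainder $V_{(\mu_0,\gamma_0)+h}^{\,j} - \sum_{|\alpha|\leq k} \tfrac{h^\alpha}{\alpha!}\,\partial^\alpha V_{\mu_0,\gamma_0}^{\,j}$ is $o(|h|^k)$ in $\mathbf{X}$-norm. This is settled by writing the remainder in integral form and applying the same uniform weighted bounds developed above on a slightly enlarged compact set. Iterating this argument for each order $k$ yields $V_{\mu,\gamma}^1, V_{\mu,\gamma}^2 \in C^\infty(K,\mathbf{X})$, as claimed.
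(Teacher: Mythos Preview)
Your proposal is correct and follows essentially the same approach as the paper: both arguments compute the parameter derivatives of $a_{\mu,\gamma}$ and $b_{\mu,\gamma}$ via (\ref{eqcor3_1_2}), observe that these introduce only logarithmic factors in $(1+x)$ that are absorbed by the spare power $(1+x)^{\epsilon-\bar U_\theta(-1)/2}$, and then verify the $\mathbf{M}_1$- and $\mathbf{M}_2$-seminorms term by term. Your final paragraph on the integral remainder to upgrade uniform bounds on formal derivatives to genuine $C^\infty$-smoothness in $\mathbf{X}$ is a point the paper leaves implicit, so in that respect your write-up is slightly more careful.
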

\begin{proof}
  We know $2\bar{\epsilon}:=\max\{U^{\mu,\gamma}_{\theta}(-1)|(\mu,\gamma)\in K\}<2\epsilon$. For convenience in this proof let us denote $a(x)=a_{\mu,\gamma}(x)$, $b(x)=b_{\mu,\gamma}(x)$ and $V^i=V^i_{\mu,\gamma}$, $i=1,2$.
  
  By computation, using the explicit expression of $U^{\mu,\gamma}_{\theta}(x), a(x), a'(x), b(x), V^1_{\theta}(x)$ and $V^2_{\phi}(x)$ given by (\ref{eqbaru}), (\ref{eq_ab}), (\ref{eq4_W_a}) and (\ref{eq4_2_ker}), and the estimate of  $\partial^i_{\mu}\partial^j_{\gamma}U^{\mu,\gamma}_{\theta}$ in (\ref{eqcor3_1_2}) for all  $i,j\ge 0$. we have, for $(\mu,\gamma)\in K$, that
 \[
    e^{-a(x)}=O(1)(1+x)^{1-\frac{U^{\mu,\gamma}_{\theta}(-1)}{2}}, \quad e^{-b(x)}=O(1)(1+x)^{-\frac{U^{\mu,\gamma}_{\theta}(-1)}{2}}, \quad  -1<x\le 0.
 \]
 So
\[
    \left|V^1_{\theta}(x)\right|= O(1)(1+x)^{1-\frac{U^{\mu,\gamma}_{\theta}(-1)}{2}}=O(1)(1+x)^{1-\bar{\epsilon}}, \quad V^2_{\phi}(x)=O(1), \quad -1<x\le 0,
\]
and 
\[
   \left|\frac{d}{dx}V^1_{\theta}(x)\right|=\left|e^{-a(x)}a'(x)\right|= O(1)(1+x)^{-\frac{U^{\mu,\gamma}_{\theta}(-1)}{2}}=O(1)(1+x)^{-\bar{\epsilon}}, \quad -1<x\le 0,
\]
\[
   \left|\frac{d}{dx}V^2_{\phi}(x)\right|=e^{-b(x)}= O(1)(1+x)^{-\frac{U^{\mu,\gamma}_{\theta}(-1)}{2}}=O(1)(1+x)^{-\bar{\epsilon}}, \quad -1<x\le 0.
\]
Moreover, 
\[
  \begin{split}
  & \frac{\partial^i}{\partial \mu^i}a(x)=\frac{\partial^i}{\partial \mu^i}b(x)=\int_{0}^{x}\frac{1}{1-s^2}\frac{\partial^i}{\partial \mu^i}U^{\mu,\gamma}(s)ds\\
  &=-\left(\frac{d^i}{d \mu^i} \sqrt{1+2\mu}\right)\ln(1+x)+O(1)\int_{0}^{x}(1+s)^{b-1}\left|\ln(1+s)\right|^i ds\\
  &=-\left(\frac{d^i}{d\mu^i}\sqrt{1+2\mu}\right)\ln(1+x)+O(1)(1+x)^{b}\left|\ln(1+x)\right|^i, 
   \end{split}
\]
where $|O(1)|\le C$ depending only on $K$ and $i$.
So we have
\[
   \left|\partial^i_{\mu}V^1_{\theta}(x)\right|=e^{-a(x)}O\left(\left|\ln(1+x)\right|^i\right)=O(1)(1+x)^{1-\bar{\epsilon}}\left|\ln(1+x)\right|^i, \quad -1<x\le 0, i=1,2,3...
\]
Similarly,
\[
   \left|\partial^j_{\gamma}\partial^i_{\mu}V^1_{\theta}(x)\right|=e^{-a(x)}O\left((1+x)^b\left|\ln(1+x)\right|^i\right)=O(1)(1+x)^{1-\bar{\epsilon}}, \quad -1<x\le 0, i=1,2,3...
\]
From the above we can see that for all $(\mu,\gamma)\in K$ and $i,j\ge 0$, there exists some constant $C=C(i,j,K)$, such that
\[
    \left|(1+x)^{-1+\epsilon}\partial^j_{\gamma}\partial^i_{\mu}V^1_{\theta}(x)\right|\le C,\quad \left|(1+x)^{\epsilon}\frac{d}{dx}\partial^j_{\gamma}\partial^i_{\mu}V^1_{\theta}(x)\right|\le C,\quad -1<x\le 0.
\]
We can also show that for $i,j \ge 0$, 
\[
   \partial^j_{\gamma}\partial^i_{\mu}V^1_{\theta}(1)=0,
\]
 and there exists some constant $C$ such that
 \[
    \left|\frac{d^l}{dx^l}\partial^j_{\gamma}\partial^i_{\mu}V^1_{\theta}(x)\right|\le C, \quad l=0,1,2, \quad 0\le x<1.
 \]
 The above imply that  for all $i,j\ge 0$, $\partial^j_{\gamma}\partial^i_{\mu}V^1(x)\in \mathbf{X}$, and $V^1_{\theta}\in C^{\infty}(K,M_1)$.
 
 Similarly, we can show that $V^2_{\phi}\in C^{\infty}(K,M_2)$. So $V^1,V^2\in C^{\infty}(K,\mathbf{X})$.
\end{proof}

\begin{lem}\label{lem4_1_10}
  There exists $C=C(K)>0$ such that for all $(\mu,\gamma)\in K$,  $(\beta_1,\beta_2)\in \mathbb{R}^2$, and $V\in \mathbf{X}_1$, 
  \[
     ||V||_{\mathbf{X}}+|(\beta_1,\beta_2)|\le C||\beta_1V^1_{\mu,\gamma}+\beta_2V^2_{\mu,\gamma}+V||_{\mathbf{X}}.
  \] 
\end{lem}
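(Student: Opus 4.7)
The plan is to apply the functionals $l_1, l_2$ to the combination $w := \beta_1 V^1_{\mu,\gamma}+\beta_2V^2_{\mu,\gamma}+V$ to extract $\beta_1, \beta_2$, and then use the direct sum structure together with compactness of $K$ to obtain uniform bounds. The crucial observation is that since $V \in \mathbf{X}_1 = \ker l_1 \cap \ker l_2$, we have $l_i(w) = \beta_1 l_i(V^1_{\mu,\gamma}) + \beta_2 l_i(V^2_{\mu,\gamma})$, and the matrix $(l_i(V^j_{\mu,\gamma}))_{i,j}$ is diagonal by construction.

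First I would record that $l_1$ and $l_2$ are bounded linear functionals on $\mathbf{X}$: since $x = 0$ is an interior point of $(-1,1)$ and the weights $(1+x)^{-1+\epsilon}$, $(1+x)^\epsilon$ at $x=0$ are $O(1)$, point evaluation at $0$ satisfies $|l_1(V)| \le \|V_\theta\|_{\mathbf{M}_1}$ and $|l_2(V)| \le \|V_\phi\|_{\mathbf{M}_2}$, so $|l_i(V)| \le \|V\|_{\mathbf{X}}$. These bounds are independent of $(\mu,\gamma)$. By direct calculation from \eqref{eq4_2_ker}, $l_1(V^1_{\mu,\gamma}) = e^{-a_{\mu,\gamma}(0)} = 1$, $l_1(V^2_{\mu,\gamma}) = 0$, $l_2(V^1_{\mu,\gamma}) = 0$, and $l_2(V^2_{\mu,\gamma}) = \int_0^1 e^{-b_{\mu,\gamma}(t)}\,dt$. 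Applying $l_1$ and $l_2$ to $w$ therefore gives $\beta_1 = l_1(w)$ and $\beta_2 = l_2(w)/\int_0^1 e^{-b_{\mu,\gamma}(t)}\,dt$.

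Next I would establish the uniformity in $(\mu,\gamma) \in K$. By Corollary \ref{cor3_1} and compactness of $K \subset I_1$, the map $(\mu,\gamma,t) \mapsto b_{\mu,\gamma}(t)$ is continuous on $K\times[0,1]$, hence $\int_0^1 e^{-b_{\mu,\gamma}(t)}\,dt$ is a continuous, strictly positive function of $(\mu,\gamma)$ on the compact set $K$, and therefore admits a positive lower bound $c_0 = c_0(K) > 0$. Combining with the bound on $l_2$ from the previous paragraph yields $|\beta_1| + |\beta_2| \le (1 + c_0^{-1})\|w\|_{\mathbf{X}}$.

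Finally, writing $V = w - \beta_1 V^1_{\mu,\gamma} - \beta_2 V^2_{\mu,\gamma}$ and using Lemma \ref{lemV1V2}, which gives $V^1_{\mu,\gamma}, V^2_{\mu,\gamma} \in C^\infty(K,\mathbf{X})$ and hence $M := \max_{i\in\{1,2\},\,(\mu,\gamma)\in K} \|V^i_{\mu,\gamma}\|_{\mathbf{X}} < \infty$ by compactness of $K$, I obtain
\[
    \|V\|_{\mathbf{X}} \le \|w\|_{\mathbf{X}} + (|\beta_1|+|\beta_2|)M \le \bigl(1 + M(1+c_0^{-1})\bigr)\|w\|_{\mathbf{X}}.
\]
Adding the bound on $|\beta_1|+|\beta_2|$ produces the stated inequality with $C = C(K)$ depending only on $c_0$, $M$, and the operator norms of $l_1, l_2$. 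There is no real obstacle here; the only point that requires care is ensuring that the positive quantity $\int_0^1 e^{-b_{\mu,\gamma}(t)}\,dt$ is uniformly bounded below on $K$, which is immediate from continuity and compactness.
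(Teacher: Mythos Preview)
Your proof is correct and takes a genuinely different route from the paper. The paper argues by contradiction and compactness: assuming the inequality fails along a sequence with $\|V^i\|_{\mathbf{X}}+|(\beta_1^i,\beta_2^i)|=1$, it extracts convergent subsequences using compactness of $K$ and Lemma~\ref{lemV1V2}, and shows the limit $V=-(\beta_1 V^1_{\mu,\gamma}+\beta_2 V^2_{\mu,\gamma})$ lies in $\mathbf{X}_1\cap\mathrm{span}\{V^1_{\mu,\gamma},V^2_{\mu,\gamma}\}=\{0\}$, contradicting the normalization. Your argument is instead direct and constructive: you exploit the explicit diagonal form of $(l_i(V^j_{\mu,\gamma}))$ to read off $\beta_1,\beta_2$ from $l_1(w),l_2(w)$, and obtain uniformity via the positive lower bound on $\int_0^1 e^{-b_{\mu,\gamma}(t)}\,dt$ over $K$ and the uniform bound on $\|V^i_{\mu,\gamma}\|_{\mathbf{X}}$ from Lemma~\ref{lemV1V2}. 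Your approach yields an explicit constant and avoids the subsequence machinery; the paper's soft argument, on the other hand, would apply verbatim in situations where the matrix $(l_i(V^j))$ is merely invertible rather than diagonal with computable entries.
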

\begin{proof}
   We prove the lemma by contradiction. Assume there exist a sequence $(\mu^i,\gamma^i)\in K$, and $(\beta_1^i,\beta_2^i)\in \mathbb{R}^2$, $V^i\in \mathbf{X}_1$, such that
   \begin{equation}\label{eq4_1_10_1}
      ||V^i||_{\mathbf{X}}+|(\beta^i_1,\beta^i_2)|\ge i||\beta^i_1V^1_{\mu^i,\gamma^i}+\beta^i_2V^2_{\mu^i,\gamma^i}+V^i||_{\mathbf{X}}.
   \end{equation}
   Without loss of generality we can assume that
   \[
      ||V^i||_{\mathbf{X}}+|(\beta^i_1,\beta^i_2)|=1.
   \] 
   Since $K$ is compact,  there exists a subsequence of $(\mu^i,\gamma^i)$, we still denote it as $(\mu^i,\gamma^i)$ and some $(\mu,\gamma)\in K$ such that $(\mu^i,\gamma^i)\to (\mu,\gamma)\in K$ as $i\to \infty$. Similarly, since $|(\beta^i_1,\beta^i_2)|\le 1$, there exists some subsequence, still denote as $(\beta^i_1,\beta^i_2)$, such that $(\beta^i_1,\beta^i_2)\to (\beta_1,\beta_2)\in \mathbb{R}^2$ as $i \to \infty$. By Lemma \ref{lemV1V2} we have
   \[
      V^j_{\mu^i,\gamma^i}\to V^j_{\mu,\gamma}, \quad j=1,2. 
   \]
By (\ref{eq4_1_10_1}),
   \[
      \beta^i_1V^1_{\mu^i,\gamma^i}+\beta^i_2V^2_{\mu^i,\gamma^i}+V^i \to 0.
   \]

 This implies
 \[
     V^i\to V:=-( \beta_1V^1_{\mu,\gamma}+\beta_2V^2_{\mu,\gamma}).
 \]
 On the other hand, $V^i\in \mathbf{X}_1$. Since $\mathbf{X}_1$ is a closed subspace of $\mathbf{X}$,  we have $V\in \mathbf{X}_1$.
 Thus $V\in \mathbf{X}_1\cap \mbox{span} \{ V_{\mu,\gamma}^{1}, V_{\mu,\gamma}^{2} \}$. So $V=0$. 
 
 Since $V^1_{\mu,\gamma}, V^2_{\mu,\gamma}$ are independent for any $(\mu,\gamma)\in K$. We have $\beta_1=\beta_2=0$. However, $||V^i||_{\mathbf{X}}+|(\beta^i_1,\beta^i_2)|=1$ leads to $||V||_{\mathbf{X}}+|(\beta_1,\beta_2)|=1$, contradiction. The lemma is proved.
\end{proof}

\noindent \emph{Proof of Theorem \ref{thm4_1}:} Define a map $F: K\times\mathbb{R}^2\times \mathbf{X}_1\to \mathbf{Y}$ by
\[
    F(\mu,\gamma,\beta_1,\beta_2,V)=G(\mu,\gamma,\beta_1V^1_{\mu,\gamma}+\beta_2V^2_{\mu,\gamma}+V).
\]
By Proposition \ref{lem_Gcont}, $G$ is a $C^{\infty}$ map from $K\times \mathbf{X}$ to $\mathbf{Y}$. Let $\tilde{U}=\tilde{U}(\mu,\gamma,\beta_1,\beta_2,\bar{V})=\beta_1V^1_{\mu,\gamma}+\beta_2V^2_{\mu,\gamma}+V$. Using Lemma \ref{lemV1V2}, we have $\tilde{U}\in C^{\infty}(K\times\mathbb{R}^2\times \mathbf{X}_1, \mathbf{X})$. So it concludes that $F\in C^{\infty}(K\times\mathbb{R}^2\times \mathbf{X}_1,\mathbf{Y})$. 

Next, by definition $F(\mu,\gamma,0,0,0)=0$ for all $(\mu,\gamma)\in K$. Fix some $(\bar{\mu}, \bar{\gamma})\in K$, using Lemma \ref{lem4_2}, we have $F_{V}(\bar{\mu}, \bar{\gamma}, 0,0,0)=L_0^{\bar{\mu}, \bar{\gamma}}:\mathbf{X}_1\to \mathbf{Y}$ is an isomorphism.

Applying Theorem C, there exist some $\delta>0$ and a unique $V\in C^{\infty}(B_{\delta}(\bar{\mu}, \bar{\gamma})\times B_{\delta}(0), \mathbf{X}_1 )$, such that
\[
    F(\mu,\gamma,\beta_1,\beta_2,V(\mu,\gamma,\beta_1,\beta_2))=0, \quad \forall (\mu,\gamma)\in B_{\delta}(\bar{\mu}, \bar{\gamma}), (\beta_1,\beta_2)\in B_{\delta}(0),
\]
and 
\[
   V(\bar{\mu}, \bar{\gamma},0,0)=0.
\]
The uniqueness part of Theorem C holds in the sense that there exists some $0<\bar{\delta}<\delta$,  such that $B_{\bar{\delta}}(\bar{\mu},\bar{\gamma},0,0,0)\cap F^{-1}(0) \subset  \{(\mu,\gamma,\beta_1,\beta_2,V(\mu,\gamma,\beta_1,\beta_2))|(\mu,\gamma)\in B_{\delta}(\bar{\mu}, \bar{\gamma}), \beta\in B_{\delta}(0)\}$.

\textbf{Claim}: there exists some $0<\delta_1<\frac{\bar{\delta}}{2}$, such that $V(\mu,\gamma,0,0)=0$ for every $(\mu,\gamma)\in B_{\delta_1}(\bar{\mu},\bar{\gamma})$.

\emph{Proof of the claim:}
   Since $V(\bar{\mu},\bar{\gamma},0,0)=0$ and $V(\mu,\gamma,0,0)$ is continuous in $(\mu,\gamma)$, there exists some $0<\delta_1<\frac{\bar{\delta}}{2}$, such that for all $(\mu,\gamma)\in B_{\delta_1}(\bar{\mu},\bar{\gamma})$, $(\mu,\gamma, 0,0, V(\mu,\gamma,0,0))\in B_{\bar{\delta}(\bar{\mu},\bar{\gamma},0,0,0)}$. We know that for all $(\mu,\gamma)\in B_{\delta_1}(\bar{\mu},\bar{\gamma})$,
   \[
      F(\mu,\gamma, 0,0,0)=0,  
      \]
       and 
       \[
          F(\mu,\gamma, 0,0, V(\mu,\gamma,0,0))=0.
       \]
By the above mentioned uniqueness result, $V(\mu,\gamma,0,0)=0$, for every $(\mu,\gamma)\in B_{\delta_1}(\bar{\mu},\bar{\gamma})$.

Now we have $V\in C^{\infty}(B_{\delta_1}(\bar{\mu}, \bar{\gamma})\times B_{\delta_1}(0), \mathbf{X}_1(\bar{\mu}, \bar{\gamma}) )$, and 
\[
     F(\mu,\gamma,\beta_1,\beta_2,V(\mu,\gamma,\beta_1,\beta_2))=0, \quad \forall (\mu,\gamma)\in B_{\delta_1}(\bar{\mu}, \bar{\gamma}), (\beta_1,\beta_2)\in B_{\delta_1}(0).
\]
i.e.
\[
    G(\mu,\gamma, \beta_1V^1_{\mu,\gamma}+\beta_2V^2_{\mu,\gamma}+V(\mu,\gamma,\beta_1,\beta_2) )=0, \quad \forall (\mu,\gamma)\in B_{\delta_1}(\bar{\mu}, \bar{\gamma}), (\beta_1,\beta_2)\in B_{\delta_1}(0).
\]
Take derivative of the above with respect to $\beta_i$ at $(\mu,\gamma, 0)$, $i=1,2$, we have
\[
   G_{\tilde{U}}(\mu,\gamma,0)(V^i_{\mu,\gamma}+\partial_{\beta_i}V(\mu,\gamma,0,0))=0.
\]
Since $G_{\tilde{U}}(\mu,\gamma,0)V^i_{\mu,\gamma}=0$ by Lemma \ref{lem4_3}, we have 
\[
   G_{\tilde{U}}(\mu,\gamma,0)\partial_{\beta_i}V(\mu,\gamma,0,0)=0.
\]
But $\partial_{\beta_i}V(\mu,\gamma,0,0)\in C^{\infty}(\mathbf{X}_1)$, so 
\[
    \partial_{\beta_i}V(\mu,\gamma,0,0)=0, \quad i=1,2.
\]
Since $K$ is compact, we can take $\delta_1$ to be a universal constant for each  $(\mu,\gamma)\in K$. So we have proved the existence of $V$ in Theorem \ref{thm4_1}.

Next, let $(\mu,\gamma)\in B_{\delta_1}(\bar{\mu},\bar{\gamma})$. Let $\delta'$ be a small constant to be determined.  For any $U$ satisfies the equation (\ref{eq4_0}) with $U-U^{\mu,\gamma}\in \mathbf{X}$, and $||U-U^{\mu,\gamma}||_{ \mathbf{X}}\le \delta'$ there exist some $\beta_1,\beta_2\in \mathbb{R}$ and $V^*\in \mathbf{X}_1$ such that
 \[
     U-U^{\mu,\gamma}=\beta_1V^1_{\mu,\gamma}+\beta_2V^2_{\mu,\gamma}+V^*.
 \]
Then by Lemma \ref{lem4_1_10}, there exists some constant $C>0$ such that
\[
    \frac{1}{C}(|(\beta_1,\beta_2)|+||V^*||_{\mathbf{X}})\le ||\beta_1V^1_{\mu,\gamma}+\beta_2V^2_{\mu,\gamma}+V^*||_{\mathbf{X}}\le \delta'.
\]
This gives $||V^*||_{\mathbf{X}}\le C\delta'$.

 Choose $\delta'$ small enough such that $C\delta'<\delta_1$. We have the uniqueness of $V^*$. 
So  $V^*=V(\mu,\gamma,\beta_1,\beta_2)$ in (\ref{eq_thm4_1_1}).
The theorem is proved.
\qed

\subsection{Existence of solutions with nonzero swirl near $U^{\mu,\gamma}$ when $(\mu, \gamma)\in I_2$}

Let us look at the problem near $U^{\mu,\gamma}$ when $\mu=-\frac{1}{2}$ and $\gamma>-1$.  For such a fixed $(\mu,\gamma)$, write $\bar{U}=(\bar{U}_{\theta},0)$. Recall that in Corollary \ref{cor3_1}, we have
\begin{equation}\label{eq_noswirl4_2}
   \bar{U}_{\theta}=(1-x)\left(1+\frac{2(\gamma+1)}{(\gamma+1)\ln\frac{1+x}{2}-2}\right).
\end{equation}
It satisfies
\[
(1-x^2)\bar{U}'_{\theta}+2x\bar{U}_{\theta}+\frac{1}{2}\bar{U}^2_{\theta}=-\frac{1}{2}(1-x)^2.
\]
We will work with $\tilde{U}=U-\bar{U}$. Let $0<\epsilon<\frac{1}{2}$, define
\begin{equation*}
 \begin{split}
 &
 \begin{split} 
 \mathbf{M}_1 := & \left\{  \tilde{U}_\theta \in C([-1, 1], \mathbf{R}) \cap C^1((-1, 1], \mathbf{R}) \cap C^2((0, 1), \mathbf{R}) \mid \right. \\
	 & \tilde{U}_\theta(1)=\tilde{U}_\theta(-1)=0, ||\ln\left(\frac{1+x}{3}\right)\tilde{U}_\theta||_{L^\infty(-1,1)}<\infty, \\
	 & \left. ||(1+x)\left(\ln\frac{1+x}{3}\right)^2\tilde{U}_\theta'||_{L^\infty(-1,1)} < \infty, ||\tilde{U}_\theta''||_{L^\infty(0,1)} < \infty  \right\},
  \end{split}\\
   &
  \begin{split}
\mathbf{M}_2 = & \mathbf{M}_2(\epsilon) \\
:= & \left\{  \tilde{U}_\phi \in C^1( (-1, 1], \mathbf{R})\cap C^2( (-1, 1), \mathbf{R}) \mid \tilde{U}_\phi(1)=0, ||(1+x)^{\epsilon}\tilde{U}_\phi ||_{L^\infty(-1,1)} < \infty, \right.\\
	& \left. ||(1+x)^{1+\epsilon} \tilde{U}_\phi'||_{L^\infty(-1,1)} < \infty, ||(1+x)^{2+\epsilon} \tilde{U}_\phi'' ||_{L^\infty(-1,1)} <\infty \right\}
   \end{split}
\end{split}
\end{equation*}
 with the following norms accordingly:
\begin{equation*}
 \begin{split}
    &||\tilde{U}_{\theta}||_{\mathbf{M}_1}=||\ln\left(\frac{1+x}{3}\right)\tilde{U}_{\theta}||_{L^{\infty}(-1,1)}+||\left(\ln\frac{1+x}{3}\right)^2(1+x)\tilde{U}'_{\theta}||_{L^{\infty}(-1,1)}+||\tilde{U}''_{\theta}||_{L^{\infty}(0,1)}, \\
   &	||\tilde{U}_\phi||_{\mathbf{M}_2}:=  ||(1+x)^{\epsilon}\tilde{U}_\phi||_{L^\infty(-1,1)} + ||(1+x)^{1+\epsilon} \tilde{U}_\phi'||_{L^\infty(-1,1)}  + ||(1+x)^{2+\epsilon} \tilde{U}_\phi'' ||_{L^\infty(-1,1)}.
 \end{split}
\end{equation*}
Next, define
\begin{equation*}
  \begin{split}
  &\begin{split}
   \mathbf{N}_1:= & \left\{  \xi_\theta \in C( [-1, 1], \mathbf{R}) \cap C^1( (0, 1], \mathbf{R}) \mid  \xi_\theta(1)=\xi_{\theta}(-1)=\xi'_{\theta}(1)=0, \right.\\
   & ||\left(\ln\frac{1+x}{3}\right)^2\xi_\theta||_{L^\infty(-1,1)} < \infty,
     \left. ||\frac{\xi_\theta'}{1-x}||_{L^\infty(0,1)} < \infty \right\},
   \end{split}\\
  &   \mathbf{N}_2=\mathbf{N}_2(\epsilon):= \left\{  \xi_\phi \in C( (-1, 1], \mathbb{R}) \mid  \xi_\phi(1)=0, ||\frac{(1+x)^{1+\epsilon} \xi_\phi}{1-x}||_{L^\infty(-1,1)} < \infty  \right\} 
  \end{split}
\end{equation*}
with the following norms accordingly:
\begin{equation*}
  \begin{split}
   & ||\xi_{\theta}||_{\mathbf{N}_1}:=||\left(\ln\frac{1+x}{3}\right)^2\xi_\theta||_{L^\infty(-1,1)}+||\frac{\xi_\theta'}{1-x}||_{L^\infty(0,1)}, \\
   &||\xi_\phi||_{\mathbf{N}_2}:= ||\frac{(1+x)^{1+\varepsilon} \xi_\phi}{1-x}||_{L^\infty(-1,1)}.
  \end{split}
\end{equation*}
Let $\mathbf{X}:= \{ \tilde{U} = (\tilde{U}_\theta, \tilde{U}_\phi) \mid \tilde{U}_\theta\in \mathbf{M}_1, \tilde{U}_\phi\in \mathbf{M}_2\}$ with the norm $||\tilde{U}||_{\mathbf{X}}:= ||\tilde{U}_\theta||_{\mathbf{M}_1} + ||\tilde{U}_\phi||_{\mathbf{M}_2}$,  and $\mathbf{Y}:= \{ \xi = (\xi_\theta, \xi_\phi) \mid \xi_\theta\in \mathbf{N}_1, \xi_\phi\in \mathbf{N}_2 \}$ with the norm $||\xi||_\mathbf{Y}:= ||\xi_\theta||_{\mathbf{N}_1} + ||\xi_\phi||_{\mathbf{N}_2}$.  It is not difficult to verify that $\mathbf{M}_1$, $\mathbf{M}_2$, $\mathbf{N}_1$, $\mathbf{N}_2$, $\mathbf{X}$ and $\mathbf{Y}$ are Banach spaces.

Let $l_1,l_2:\mathbf{X}\to \mathbb{R}$ be the bounded linear functionals defined by (\ref{def_l}) for each $V\in \mathbf{X}$. Define 
\begin{equation}\label{eq4_2x}
	\mathbf{X}_1:= \ker l_1 \cap \ker l_2.
\end{equation}

\begin{thm}\label{thm4_2}
  For every  compact subset  $K$ of $(-1,+\infty)$,  there exists $\delta=\delta(K)>0$, and $V\in C^{\infty}(K\times B_{\delta}(0), \mathbf{X}_1)$ satisfying $V(\gamma,0,0)=0$ and $\displaystyle \frac{\partial V}{\partial \beta_i}|_{\beta=0}=0$, $i=1,2$, such that 
\begin{equation}\label{eq_thm4_2_1}
   U=U^{-\frac{1}{2},\gamma}+\beta_1V_{-\frac{1}{2},\gamma}^1+\beta_2V_{-\frac{1}{2},\gamma}^2+V(\gamma, \beta_1,\beta_2)
\end{equation}
satisfies  equation (\ref{eq4_0})  with $\displaystyle \hat{\mu}=-\frac{1}{2}-\frac{1}{4}\psi[U_{\phi}](-1)$.  Moreover, there exists some $\delta'=\delta'(K)>0$, such that if $||U-U^{-\frac{1}{2},\gamma}||_{\mathbf{X}}<\delta'$, $\gamma\in K$,  and $U$ satisfies   equation (\ref{eq4_0}) with some constant $\hat{\mu}$, then (\ref{eq_thm4_2_1}) holds for some $|(\beta_1,\beta_2)|<\delta$ .
\end{thm}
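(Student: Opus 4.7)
The plan is to transplant the architecture of the proof of Theorem \ref{thm4_1} verbatim, adjusting only the weights in the Banach spaces and the asymptotic estimates near $x=-1$. I will form the same nonlinear map $G(\gamma,\tilde U) = A(\gamma,\tilde U) + Q(\tilde U,\tilde U)$ defined by (\ref{G})--(\ref{eqP4_3}) with $\bar U_\theta = U^{-1/2,\gamma}_\theta$ now given by (\ref{eq_noswirl4_2}), and execute four steps on the new $\mathbf{X}$, $\mathbf{Y}$: (i) show $G \in C^\infty(K \times \mathbf{X}, \mathbf{Y})$, with Fr\'echet derivative at $\tilde U=0$ given by (\ref{eq_LinearAtZero}); (ii) verify $V^1_{-1/2,\gamma}, V^2_{-1/2,\gamma} \in \mathbf{X}$ but $V^3_{-1/2,\gamma} \notin \mathbf{X}$ (the latter violates $\tilde U_\phi(1) = 0$), whence $\ker L^{-1/2,\gamma}_0\big|_\mathbf{X} = \mathrm{span}\{V^1,V^2\}$; (iii) prove that $W^{-1/2,\gamma}: \mathbf{Y} \to \mathbf{X}$ defined by (\ref{eq4_1_6}) is bounded, so $L^{-1/2,\gamma}_0 : \mathbf{X}_1 \to \mathbf{Y}$ is an isomorphism; (iv) apply Theorem C to $F(\gamma,\beta_1,\beta_2,V) := G\bigl(\gamma,\,\beta_1 V^1_{-1/2,\gamma} + \beta_2 V^2_{-1/2,\gamma} + V\bigr)$ and upgrade to a uniform-on-$K$ neighborhood by the compactness-contradiction argument of Lemma \ref{lem4_1_10}.

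All four steps hinge on the precise asymptotics of $\bar U_\theta$ and of $a_{-1/2,\gamma}, b_{-1/2,\gamma}$ as $x \to -1^+$. From (\ref{eqcor3_1_3}),
\[
  \bar U_\theta(x) = 2 + \frac{4}{\ln((1+x)/3)} + O\bigl((\ln((1+x)/3))^{-2}\bigr),
\]
so $a_{-1/2,\gamma}'(x) = (2x + \bar U_\theta)/(1-x^2) \sim 2/\bigl((1+x)\ln((1+x)/3)\bigr)$ and $b_{-1/2,\gamma}'(x) = \bar U_\theta/(1-x^2) \sim 1/(1+x)$; integrating yields
\[
  e^{\pm a_{-1/2,\gamma}(x)} \asymp \bigl|\ln((1+x)/3)\bigr|^{\pm 1}, \qquad e^{\pm b_{-1/2,\gamma}(x)} \asymp (1+x)^{\pm 1}\bigl|\ln((1+x)/3)\bigr|^{\pm 1}.
\]
These rates are precisely calibrated with the logarithmic weights in $\mathbf{M}_1, \mathbf{N}_1$ and the $(1+x)^\epsilon$-type weights in $\mathbf{M}_2, \mathbf{N}_2$: the factors $\bar U_\theta, \bar U_\theta'$ appearing in $A$ are absorbed cleanly; the triple integral $\psi[\tilde U_\phi]$ in $Q_\theta$ converges because $\tilde U_\phi \tilde U_\phi'/(1-s^2) = O((1+s)^{-2-2\epsilon})$ is integrable and yields an $O((1-x)^3)$ bound; and the endpoint decay of $W^{-1/2,\gamma}(\xi)$ at $x=-1$ matches exactly what the weights require. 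Smoothness in $\gamma$ (the analog of Lemma \ref{lemV1V2}) follows from the explicit formula (\ref{eq_noswirl4_2}) and the $\gamma$-derivative estimates in (\ref{eqcor3_1_3}).

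The main obstacle is step (iii), the boundedness of $W^{-1/2,\gamma}_\phi$: in the $I_1$ setting one had $\bar U_\theta(-1) < 2$ giving clean power-law bounds on $e^{\pm b}$, whereas here $\bar U_\theta(-1) = 2$ is borderline and forces the logarithmic corrections above. This is exactly why the weight on $\tilde U_\phi$ itself in $\mathbf{M}_2$ is upgraded from $L^\infty$ (as in Theorem \ref{thm4_1}) to $(1+x)^\epsilon L^\infty$: it absorbs the mild growth that $W^{-1/2,\gamma}_\phi(\xi)$ picks up from integrating against $e^{-b_{-1/2,\gamma}} \asymp 1/\bigl((1+t)\ln((1+t)/3)\bigr)$ while still enforcing $\tilde U_\phi(1) = 0$. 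Once these weight-matching estimates are verified, the remaining pieces---the IFT application at each $\bar\gamma\in K$, the vanishing $\partial_{\beta_i}V\big|_{\beta=0} = 0$ obtained by differentiating $G(\gamma,\beta_1 V^1 + \beta_2 V^2 + V(\gamma,\beta)) = 0$ at $\beta=0$ and using $L^{-1/2,\gamma}_0 V^i = 0$ with the isomorphism on $\mathbf{X}_1$, and the uniqueness of the decomposition (\ref{eq_thm4_2_1}) for small $\|U - U^{-1/2,\gamma}\|_\mathbf{X}$ via the compactness argument of Lemma \ref{lem4_1_10}---are essentially literal transcriptions of the corresponding steps in the proof of Theorem \ref{thm4_1}.
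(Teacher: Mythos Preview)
Your architecture is exactly the paper's: the same map $G=A+Q$, the same kernel analysis, the same right inverse $W^{-1/2,\gamma}$, Lemma \ref{lemV1V2}-style smoothness in $\gamma$, and the IFT/compactness finish. One computational slip to fix: integrating $a'(x)\sim 2/\bigl((1+x)\ln((1+x)/3)\bigr)$ gives $a(x)=2\ln\bigl|\ln((1+x)/3)\bigr|+O(1)$, so
\[
  e^{\pm a_{-1/2,\gamma}(x)}\asymp\bigl|\ln\tfrac{1+x}{3}\bigr|^{\pm 2},\qquad
  e^{\pm b_{-1/2,\gamma}(x)}\asymp(1+x)^{\pm1}\bigl|\ln\tfrac{1+x}{3}\bigr|^{\pm 2},
\]
not $|\ln|^{\pm1}$ (for $b$ the subleading term $4/\ln$ in $\bar U_\theta$ contributes the extra $|\ln|^{2}$). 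This exponent matters: with your stated power the $\mathbf{M}_1$-bound on $W^{-1/2,\gamma}_\theta(\xi)$ picks up an unbounded $\ln|\ln|$ factor, whereas with the correct $|\ln|^{\pm2}$ the prefactor $|\ln|\cdot e^{-a}\asymp|\ln|^{-1}$ exactly cancels the $\int_0^x(1+s)^{-1}\,ds\asymp|\ln|$ from the integral, and the weight-matching closes as you describe.
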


To prove Theorem \ref{thm4_2}, we first study properties of the Banach spaces $\mathbf{X}$ and $\mathbf{Y}$.

With the fixed $\epsilon\in (0,1)$, we have

\begin{lem}\label{lemP4_2_1}
For every $\tilde{U}\in \mathbf{X}$, it satisfies 
\begin{equation}\label{eq4_2_3'} 
   |\tilde{U}_{\phi}(s)|\le (1-s)(1+s)^{-\epsilon}||\tilde{U}_{\phi}||_{\mathbf{M}_2},\quad \forall -1<s<1,
\end{equation}

\begin{equation}\label{eq4_2_2'}
   |\tilde{U}_{\theta}(s)|\le (\ln 3)\left(\ln \frac{2}{3}\right)^{-2}\left(\ln \frac{1+s}{3}\right)^{-1}(1-s)||\tilde{U}_{\theta}||_{\mathbf{M}_1}, \quad \forall -1<s<1.
\end{equation}
\end{lem}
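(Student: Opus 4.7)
The plan is to establish each of the two pointwise bounds by splitting the interval $(-1,1)$ at $s=0$ and arguing separately on the two halves. On $(-1,0]$ the desired bound should read off directly from the weighted $L^\infty$ information packaged into the norms of $\mathbf{M}_1$ and $\mathbf{M}_2$, noting that the factor $1-s\ge 1$ appearing on the right-hand side is harmless there. On $[0,1)$ I would exploit the boundary conditions $\tilde{U}_\theta(1)=0$ and $\tilde{U}_\phi(1)=0$ together with the mean value theorem to trade a function value for a derivative value times $1-s$, and then control the derivative using the remaining terms of the norms.

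For (\ref{eq4_2_3'}): on $(-1,0]$ the term $||(1+x)^{\epsilon}\tilde{U}_\phi||_{L^\infty}\le||\tilde{U}_\phi||_{\mathbf{M}_2}$ gives $|\tilde{U}_\phi(s)|\le (1+s)^{-\epsilon}||\tilde{U}_\phi||_{\mathbf{M}_2}\le (1-s)(1+s)^{-\epsilon}||\tilde{U}_\phi||_{\mathbf{M}_2}$. On $[0,1)$ the mean value theorem with $\tilde{U}_\phi(1)=0$ produces some $y\in(s,1)$ with $|\tilde{U}_\phi(s)|=(1-s)|\tilde{U}_\phi'(y)|$, and then the bound $|\tilde{U}_\phi'(y)|\le (1+y)^{-1-\epsilon}||\tilde{U}_\phi||_{\mathbf{M}_2}$ combined with the monotonicity estimate $(1+y)^{-1-\epsilon}\le (1+y)^{-\epsilon}\le (1+s)^{-\epsilon}$, valid because $y\ge s\ge 0$, closes the case.

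For (\ref{eq4_2_2'}): on $(-1,0]$ the term $||\ln(\frac{1+x}{3})\tilde{U}_\theta||_{L^\infty}\le||\tilde{U}_\theta||_{\mathbf{M}_1}$ gives $|\tilde{U}_\theta(s)|\le |\ln\frac{1+s}{3}|^{-1}||\tilde{U}_\theta||_{\mathbf{M}_1}$, and again $1-s\ge 1$ is harmless. On $[0,1)$ I apply the MVT with $\tilde{U}_\theta(1)=0$ to write $|\tilde{U}_\theta(s)|=(1-s)|\tilde{U}_\theta'(y)|$ for some $y\in(s,1)$, and estimate $|\tilde{U}_\theta'(y)|\le [(1+y)(\ln\frac{1+y}{3})^2]^{-1}||\tilde{U}_\theta||_{\mathbf{M}_1}$. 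On this range $(1+y)(\ln\frac{1+y}{3})^2\ge (\ln\frac{2}{3})^2$, while the target weight $|\ln\frac{1+s}{3}|^{-1}$ stays bounded below by $(\ln 3)^{-1}$; a direct comparison of these extremal values produces exactly the constant $(\ln 3)(\ln\frac{2}{3})^{-2}$ displayed in the lemma.

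No step presents a genuine obstacle; the proof is a routine weight-matching exercise, parallel in structure to Lemma \ref{lemP4_1}. The one point meriting attention is that on $[0,1)$ the left-endpoint log-weight $(\ln\frac{1+s}{3})^{-2}$ in the definition of $||\cdot||_{\mathbf{M}_1}$ is not naturally of the same shape as the target weight $(\ln\frac{1+s}{3})^{-1}$, so the bound on $[0,1)$ is effectively obtained up to a constant and then matched to the left-endpoint bound by using that both weights are uniformly bounded above and below by positive constants on this subinterval.
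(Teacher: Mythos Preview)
Your proposal is correct and follows essentially the same approach as the paper: both split at $s=0$, use the weighted $L^\infty$ bounds directly on $(-1,0]$, and apply the mean value theorem with the boundary conditions $\tilde U_\theta(1)=\tilde U_\phi(1)=0$ on $[0,1)$, bounding the resulting derivative via the remaining norm terms. Your chain $(1+y)^{-1-\epsilon}\le(1+y)^{-\epsilon}\le(1+s)^{-\epsilon}$ for (\ref{eq4_2_3'}) is in fact slightly more explicit than the paper's write-up, and your justification of the constant $(\ln 3)(\ln\tfrac{2}{3})^{-2}$ via the extremal values of the two log-weights on $[0,1)$ is exactly the paper's computation.
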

\begin{proof}
  For $s\in (0,1)$, there exists $y\in (s,1)$ such that
\[
   |\tilde{U}_{\phi}(s)|=|\tilde{U}'_{\phi}(y)|(1-s)\le (1-s)||\tilde{U}_{\phi}||_{\mathbf{M}_2},
\]
while for $s\in (-1,0]$, $|\tilde{U}_{\phi}(s)|\le (1+s)^{-\epsilon}||\tilde{U}_{\phi}||_{\mathbf{M}_2}\le (1-s)(1+s)^{-\epsilon}||\tilde{U}_{\phi}||_{\mathbf{M}_2}$. So (\ref{eq4_2_3'}) is proved.  

  Now we prove (\ref{eq4_2_2'}). For $0\le s<1$, by the fact that $\tilde{U}_{\theta}\in \mathbf{M}_1$, we have $|\tilde{U}'_{\theta}(s)|\le \left(\ln \frac{2}{3}\right)^{-2}||\tilde{U}_{\theta}||_{\mathbf{M}_1}$. So 
  \[
    \begin{split}
    \left|\left(\ln\frac{1+s}{3}\right)\tilde{U}_{\theta}(s)\right|& \le (\ln 3)|\tilde{U}_{\theta}(s)|= (\ln 3)|\tilde{U}_{\theta}(s)-\tilde{U}_{\theta}(1)|\le  (\ln 3)||\tilde{U}'_{\theta}||_{L^{\infty}(0,1)}(1-s)\\
    & \le (\ln 3)\left(\ln \frac{2}{3}\right)^{-2}(1-s)||\tilde{U}_{\theta}||_{\mathbf{M}_1}.
    \end{split}
    \]
     For $-1<s<0$, $\left|\left(\ln\frac{1+s}{3}\right)(1-s)^{-1}\tilde{U}_{\theta}(s)\right|\le \left|\left(\ln\frac{1+s}{3}\right)\tilde{U}_{\theta}(s)\right|\le ||\tilde{U}_{\theta}||_{\mathbf{M}_1}$. So (\ref{eq4_2_2'}) is proved.
\end{proof}

\begin{lem}\label{lemP4_2_xi}
   For every $\xi_{\theta}\in \mathbf{N}_1$,
   \[
       |\xi_{\theta}(x)|\le (\ln 3)^2 \left(\ln \frac{1+x}{3}\right)^{-2} (1-x)^2||\xi_{\theta}||_{\mathbf{N}_1}, \quad -1<x<1.
   \]
\end{lem}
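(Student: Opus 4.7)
The plan is to split the estimate into the two regimes $-1<x\leq 0$ and $0<x<1$, exploiting in each regime whichever of the two terms in the norm $\|\xi_\theta\|_{\mathbf{N}_1}$ is more informative. The whole argument is elementary and relies only on the definition of $\mathbf{N}_1$ together with the boundary condition $\xi_\theta(1)=0$; I do not anticipate any real obstacle.

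For the first regime, $-1<x\leq 0$, the first summand in the norm directly gives
\[
|\xi_\theta(x)| \leq \Bigl(\ln\tfrac{1+x}{3}\Bigr)^{-2}\|\xi_\theta\|_{\mathbf{N}_1}.
\]
Since $1-x\geq 1$ on this interval, and $(\ln 3)^2>1$, one immediately has $1\leq (\ln 3)^2(1-x)^2$, so the desired bound follows by multiplying both sides of the displayed inequality by this factor.

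For the second regime, $0<x<1$, I would use the boundary condition $\xi_\theta(1)=0$ together with the pointwise bound on $\xi_\theta'$ encoded by $\|\xi_\theta'/(1-x)\|_{L^\infty(0,1)}\leq \|\xi_\theta\|_{\mathbf{N}_1}$, to write
\[
|\xi_\theta(x)| = \Bigl|\int_x^1 \xi_\theta'(y)\,dy\Bigr| \leq \|\xi_\theta\|_{\mathbf{N}_1}\int_x^1 (1-y)\,dy = \tfrac{1}{2}(1-x)^2\|\xi_\theta\|_{\mathbf{N}_1} \leq (1-x)^2\|\xi_\theta\|_{\mathbf{N}_1}.
\]
It then suffices to note that for $0<x<1$ one has $\bigl|\ln\tfrac{1+x}{3}\bigr| \leq \ln 3$, equivalently $(\ln 3)^2\bigl(\ln\tfrac{1+x}{3}\bigr)^{-2} \geq 1$, so multiplying the preceding inequality by this factor yields the stated bound. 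Combining the two cases gives the conclusion for all $-1<x<1$.
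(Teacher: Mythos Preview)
Your proof is correct and follows essentially the same approach as the paper: split into $-1<x\le 0$ and $0<x<1$, use the weighted $L^\infty$ part of the norm in the first regime and the derivative bound together with $\xi_\theta(1)=0$ in the second, then insert the elementary inequalities $1\le (\ln 3)^2(1-x)^2$ and $(\ln 3)^2\bigl(\ln\tfrac{1+x}{3}\bigr)^{-2}\ge 1$ respectively. The only cosmetic difference is that the paper invokes the mean value theorem on $(x,1)$ where you integrate $\xi_\theta'$ directly; both give the same $(1-x)^2$ factor.
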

\begin{proof}
   If $\xi_{\theta}\in \mathbf{N}_1$, $\xi_{\theta}(1)=0$. So for every $0<x<1$, there exists $y\in (x,1)$ such that
   \[
     \begin{split}
       \left|\left(\ln\frac{1+x}{3}\right)^2\xi_{\theta}(x)\right| & \le (\ln 3)^2|\xi_{\theta}(x)|=(\ln 3)^2|\xi'_{\theta}(y)(1-x)|\le (\ln 3)^2||\xi_{\theta}||_{\mathbf{N}_1}(1-y)(1-x)\\
       & \le (\ln 3)^2||\xi_{\theta}||_{\mathbf{N}_1}(1-x)^2.
       \end{split}
   \]
   For $-1<x\le 0$, $\left|\left(\ln\frac{1+x}{3}\right)^2\xi_{\theta}(x)\right|\le ||\xi_{\theta}||_{\mathbf{N}_1} \le ||\xi_{\theta}||_{\mathbf{N}_1}(1-x)^2$.
\end{proof}

Now let $K$ be a compact subset of $(-1,+\infty)$. For $\tilde{U}_{\phi}\in \mathbf{M}_2$, let $\psi[\tilde{U}_{\phi}](x)$ be defined by (\ref{eqpsi4_1}). Then define a map $G$ on $K\times\mathbf{X}$ such that for each $(\gamma,\tilde{U})\in K\times\mathbf{X}$, $G(\gamma,\tilde{U})=G(-\frac{1}{2}, \gamma,\tilde{U})$ given by (\ref{G}) with $\bar{U}_{\theta}$ in (\ref{eq_noswirl4_2}). If $\tilde{U}$ satisfies $G(\gamma, \tilde{U})=0$, then $U=\tilde{U}+\bar{U}$ gives a solution of (\ref{eq4_0}) with $\hat{\mu}=-\frac{1}{2}-\frac{1}{4}\psi[\tilde{U}_{\phi}](-1)$, satisfying $U_{\theta}(-1)=\bar{U}_{\theta}(-1)=2$.

\begin{prop}\label{prop4_2}
	The map $G$ is in $C^{\infty}(K\times \mathbf{X},\mathbf{Y})$ in the sense that $G$ has continuous Fr\'{e}chet derivatives of every order.  Moreover, the Fr\'{e}chet derivative of $G$ with respect to $\tilde{U}$ at $(\gamma,\tilde{U})\in K\times \mathbf{X}$ is given by the linear operator $L^{-\frac{1}{2},\gamma}_{\tilde{U}}:\mathbf{X}\rightarrow \mathbf{Y}$ defined as in (\ref{eq4_Linear}).
\end{prop}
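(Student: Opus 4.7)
\medskip

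\noindent\textbf{Proof proposal.} The plan is to mimic the proof of Proposition \ref{lem_Gcont}, but with the logarithmic weights of Section 4.3 in place of the power weights of Section 4.2. As before, write $G(\gamma,\tilde U)=A(\gamma,\tilde U)+Q(\tilde U,\tilde U)$ where $A$ is the linear part in $\tilde U$ defined by (\ref{eqP4_2}) (with $\bar U_\theta$ given by (\ref{eq_noswirl4_2})) and $Q$ is the bilinear part in (\ref{eqP4_3}). It then suffices to establish two boundedness lemmas analogous to Lemma \ref{lemP4_2} and Lemma \ref{lemP4_3}: (i) $A(\gamma,\cdot):\mathbf{X}\to\mathbf{Y}$ is bounded linear with operator norm locally bounded in $\gamma\in K$, and all its $\gamma$-derivatives are bounded linear maps into $\mathbf{Y}$ with norms locally bounded on $K$; (ii) $Q:\mathbf{X}\times\mathbf{X}\to\mathbf{Y}$ is bounded bilinear. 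Once (i) and (ii) are in place, the $C^\infty$ regularity of $G$ and the formula (\ref{eq4_Linear}) for its Fr\'echet derivative in $\tilde U$ are immediate from standard functional analysis.

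For step (i), I would use the key pointwise information on $\bar U_\theta$ that is already encoded in (\ref{eqcor3_1_3}), namely
\[
2x+\bar U_\theta(x)=(1+x)+2(1-x)\left(\ln\tfrac{1+x}{3}\right)^{-1}+O(1)(1-x)\left(\ln\tfrac{1+x}{3}\right)^{-2},
\]
together with the auxiliary bounds $|\partial_\gamma^i U^{-1/2,\gamma}_\theta(x)|\le C(i,K)(1-x)(\ln\tfrac{1+x}{3})^{-2}$ from the same formula. For $A_\theta=(1-x^2)\tilde U_\theta'+(2x+\bar U_\theta)\tilde U_\theta$ the weights in $\mathbf{M}_1$ are designed so that $|(\ln\tfrac{1+x}{3})^2 A_\theta(x)|\le C(1-x)\,\|\tilde U_\theta\|_{\mathbf{M}_1}$; the two summands are controlled using the bounds $(1-x^2)|\tilde U_\theta'|\le (1-x)(\ln\tfrac{1+x}{3})^{-2}\|\tilde U_\theta\|_{\mathbf{M}_1}$ and $|2x+\bar U_\theta|\le C|\ln\tfrac{1+x}{3}|^{-1}$ combined with Lemma \ref{lemP4_2_1}. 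The $x$-derivative bound $\|A_\theta'/(1-x)\|_{L^\infty(0,1)}\le C\|\tilde U_\theta\|_{\mathbf{M}_1}$ is even easier since $\bar U_\theta\in C^2((0,1])$. For $A_\phi=(1-x^2)\tilde U_\phi''+\bar U_\theta\tilde U_\phi'$ one uses $|\bar U_\theta(x)|\le C(1-x)\cdot 1_{x\ge 0}+C\cdot 1_{x<0}$ together with the $\mathbf{M}_2$ bounds to check that $(1-x)^{-1}(1+x)^{1+\epsilon}|A_\phi|\le C\|\tilde U_\phi\|_{\mathbf{M}_2}$. The smoothness in $\gamma$ is handled exactly as in Proposition \ref{lem_Gcont}: the $i$-th $\gamma$-derivative of $A$ only involves the factor $\partial_\gamma^i\bar U_\theta$, which by (\ref{eqcor3_1_3}) gains an extra $(\ln\tfrac{1+x}{3})^{-2}$ and thus lies in the image space uniformly on $K$.

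For step (ii), $Q_\theta$ contains the quadratic term $\tfrac{1}{2}\tilde U_\theta\tilde V_\theta$ together with the iterated integral $\psi(\tilde U,\tilde V)(x)-\tfrac{(1-x)^2}{4}\psi(\tilde U,\tilde V)(-1)$. Since $\tilde U_\phi,\tilde V_\phi\in\mathbf{M}_2$ give $|\tilde U_\phi\tilde V_\phi'(s)/(1-s^2)|\le C(1+s)^{-1-2\epsilon-1}\|\tilde U_\phi\|_{\mathbf{M}_2}\|\tilde V_\phi\|_{\mathbf{M}_2}$, the triple integral is well defined and integrates (via the same identity used in the proof of Lemma \ref{lemP4_3}) to a function bounded by $C(1+x)^{1-2\epsilon}(1-x)^2\|\tilde U_\phi\|_{\mathbf{M}_2}\|\tilde V_\phi\|_{\mathbf{M}_2}$, which after multiplying by $(\ln\tfrac{1+x}{3})^2$ is absorbed into a bound of the required form. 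The quadratic term is controlled by Lemma \ref{lemP4_2_1}, which yields $|\tilde U_\theta\tilde V_\theta|\le C(1-x)^2|\ln\tfrac{1+x}{3}|^{-2}\|\tilde U_\theta\|_{\mathbf{M}_1}\|\tilde V_\theta\|_{\mathbf{M}_1}$. Similarly $Q_\phi=\tilde U_\theta\tilde V_\phi'$ is handled directly with $|\tilde U_\theta|\le C(1-x)|\ln\tfrac{1+x}{3}|^{-1}\|\tilde U_\theta\|_{\mathbf{M}_1}$ and the $\mathbf{M}_2$-bound on $\tilde V_\phi'$.

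The main obstacle I anticipate is keeping track of the matching powers of $(1+x)^{\epsilon}$ and the logarithmic factor $(\ln\tfrac{1+x}{3})^{-1}$ near $x=-1$, since here $\bar U_\theta(-1)=2$ places us exactly at the threshold where the characteristic behavior of solutions degenerates from power to logarithmic (this is precisely why $\mathbf{M}_1,\mathbf{M}_2,\mathbf{N}_1,\mathbf{N}_2$ are built with different weights than in Section 4.2). The delicate step is verifying that the cancellation between $\psi(\tilde U,\tilde V)(x)$ and the subtracted term $\tfrac{(1-x)^2}{4}\psi(\tilde U,\tilde V)(-1)$ supplies the missing $(1+x)$ factor so that $Q_\theta$ still lies in $\mathbf{N}_1$; the argument is the same as in (\ref{eq4_5_1}) and uses that $\tilde U_\phi\in\mathbf{M}_2$ with $\epsilon<1/2$ ensures integrability of $\tilde U_\phi\tilde V_\phi'/(1-s^2)$ down to $s=-1$.
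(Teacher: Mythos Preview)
Your proposal is correct and follows essentially the same route as the paper: the decomposition $G=A+Q$, the two boundedness lemmas for $A$ and $Q$ (which are exactly Lemmas \ref{lemP4_2_2'} and \ref{lemP4_2_3'}), and the handling of $\partial_\gamma^i A$ via the estimate (\ref{eqcor3_1_3}) all match the paper's argument. The cancellation step you flag for $\psi(\tilde U,\tilde V)(x)-\tfrac{(1-x)^2}{4}\psi(\tilde U,\tilde V)(-1)$ is indeed carried out in the paper exactly as in (\ref{eq4_5_1}), yielding the bound (\ref{eq4_2_6'_2}).
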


To prove Proposition \ref{prop4_2}, we first prove the following lemmas:

\begin{lem}\label{lemP4_2_2'}
For every $\gamma\in K$, the map $A(-\frac{1}{2},\gamma,\cdot):\mathbf{X} \to \mathbf{Y}$  defined by (\ref{eqP4_2}) is a bounded linear operator.
\end{lem}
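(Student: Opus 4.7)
The proof will follow the template of Lemma \ref{lemP4_2}, with adjustments to accommodate the logarithmic weights in $\mathbf{M}_1$, $\mathbf{N}_1$ and the different boundary value $\bar{U}_{\theta}(-1)=2$ at the singular endpoint. Linearity is immediate from the definition (\ref{eqP4_2}), so the task is to produce a uniform-in-$\gamma \in K$ estimate $\|A(-\tfrac{1}{2},\gamma,\tilde{U})\|_{\mathbf{Y}} \le C\|\tilde{U}\|_{\mathbf{X}}$. The first step is to extract from the explicit formula (\ref{eq_noswirl4_2}) the asymptotic data that control the coefficients: $\bar{U}_{\theta} \in C^{\infty}((-1,1]) \cap C^0([-1,1])$ with $\bar{U}_{\theta}(1)=0$, $\bar{U}'_{\theta}(1)$ finite, $\bar{U}_{\theta}(-1)=2$, and the key expansion
\[
\bar{U}_{\theta}(x) = 2 - (1+x) + \frac{2(1-x)}{\ln\frac{1+x}{2}} + O\!\left(\bigl(\ln\tfrac{1+x}{3}\bigr)^{-2}\right)\quad\text{as } x\to -1^+,
\]
which implies $|2x+\bar{U}_{\theta}(x)| \le C/|\ln\frac{1+x}{3}|$ near $x=-1$, together with uniform boundedness of $|\bar{U}_{\theta}|/(1-x)$ and $|\bar{U}'_{\theta}|$ on $(-1,1)$.

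Next I would verify $A_{\theta} \in \mathbf{N}_1$. The boundary vanishing $A_{\theta}(\pm 1)=0$ and $A'_{\theta}(1)=0$ follow from $\tilde{U}_{\theta}(\pm 1)=0$ combined with the asymptotics above. For the weighted sup norm, split $A_{\theta}=(1-x^2)\tilde{U}'_{\theta}+(2x+\bar{U}_{\theta})\tilde{U}_{\theta}$. The first summand is bounded by $(1-x)\|\tilde{U}_{\theta}\|_{\mathbf{M}_1}$ after multiplying by $(\ln\frac{1+x}{3})^2$, directly from the definition of $\|\cdot\|_{\mathbf{M}_1}$. For the second summand, apply Lemma \ref{lemP4_2_1}, estimate (\ref{eq4_2_2'}), to extract a factor $(1-x)/|\ln\frac{1+x}{3}|$ from $\tilde{U}_{\theta}$; the remaining quantity to bound is $|\ln\frac{1+x}{3}|\cdot|2x+\bar{U}_{\theta}(x)|\cdot(1-x)$, which is uniformly bounded on $(-1,1)$ by the logarithmic cancellation just described. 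The derivative estimate $\|A'_{\theta}/(1-x)\|_{L^{\infty}(0,1)}$ is then routine: on $(0,1)$ the functions $\tilde{U}''_{\theta}$, $\tilde{U}'_{\theta}$, $\tilde{U}_{\theta}$ are bounded by $\|\tilde{U}_{\theta}\|_{\mathbf{M}_1}$, and $\bar{U}_{\theta}/(1-x)$, $\bar{U}'_{\theta}$ are bounded on $[0,1]$.

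To show $A_{\phi} \in \mathbf{N}_2$, I would decompose analogously. Writing
\[
\frac{(1+x)^{1+\epsilon} A_{\phi}}{1-x} = (1+x)^{2+\epsilon}\tilde{U}''_{\phi} + \frac{\bar{U}_{\theta}}{1-x}\,(1+x)^{1+\epsilon}\tilde{U}'_{\phi},
\]
both terms are bounded by $\|\tilde{U}_{\phi}\|_{\mathbf{M}_2}$ thanks to the definition of $\|\cdot\|_{\mathbf{M}_2}$ and the uniform boundedness of $\bar{U}_{\theta}/(1-x)$ on $(-1,1)$. The vanishing $A_{\phi}(1)=0$ follows from $(1-x^2)\tilde{U}''_{\phi}\to 0$ and $\bar{U}_{\theta}(1)=0$ together with $\tilde{U}'_{\phi}(1)$ finite.

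The only point that is not pure bookkeeping is the near-$x=-1$ estimate of the $(2x+\bar{U}_{\theta})\tilde{U}_{\theta}$ term: here the $(\ln\frac{1+x}{3})^2$ weight of $\mathbf{N}_1$ is precisely borderline, and one really needs both the logarithmic decay of $\tilde{U}_{\theta}$ guaranteed by $\mathbf{M}_1$ and the logarithmic smallness of $2x+\bar{U}_{\theta}$ coming from the expansion of $\bar{U}_{\theta}$ at the boundary. Once these two logarithms are exhibited, everything else is a direct check. Uniformity in $\gamma$ over the compact set $K\subset (-1,\infty)$ follows because the constants entering the asymptotics of $\bar{U}_{\theta}$ depend continuously on $\gamma$ and $K$ stays bounded away from $\gamma=-1$ where the formula (\ref{eq_noswirl4_2}) would degenerate.
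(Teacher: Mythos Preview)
Your proposal is correct and follows essentially the same approach as the paper. The paper's proof is slightly more streamlined in that it bounds the term $(2x+\bar U_\theta)\tilde U_\theta$ by splitting the two logarithms as $\bigl|(2x+\bar U_\theta)\ln\tfrac{1+x}{3}\bigr|\cdot\bigl|\tilde U_\theta\ln\tfrac{1+x}{3}\bigr|$ and using directly the $\mathbf{M}_1$ norm for the second factor rather than passing through Lemma~\ref{lemP4_2_1}, but the key observation---the logarithmic smallness of $2x+\bar U_\theta$ coming from $\bar U_\theta(-1)=2$---is exactly the one you identify.
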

\begin{proof}
For convenience we denote $A=A(-\frac{1}{2},\gamma,\cdot)$. We make use of the properties of $\bar{U}_{\theta}$ that $\bar{U}_{\theta}(1)=0$, $\bar{U}_{\theta}\in C^2(-1,1]\cap L^{\infty}(-1,1)$ and $\displaystyle \bar{U}_{\theta}-2=O(1)\frac{1}{\ln(1+x)}$.

 $A$ is clearly linear.  For every $\tilde{U}\in\mathbf{X}$, we prove that $A\tilde{U}$ defined by (\ref{eqP4_2}) is in $\mathbf{Y}$ and there exists some constant $C$ such that $||A\tilde{U}||_{\mathbf{Y}}\le C||\tilde{U}||_{\mathbf{X}}$ for all $\tilde{U}\in \mathbf{X}$.
 
 By the fact that $\tilde{U}_{\theta}\in \mathbf{M}_1$ and (\ref{eq4_2_2'}),  we have
\[
\begin{split}
  & \left|\left(\ln\frac{1+x}{3}\right)^2A_{\theta}\right|  \\
\le & (1-x)\left|(1+x)\left(\ln\frac{1+x}{3}\right)^2\tilde{U}'_{\theta}\right|+\left|\left(2x+\bar{U}_{\theta}\right)\ln\frac{1+x}{3}\right| \cdot \left| \tilde{U}_{\theta} \ln\frac{1+x}{3}\right| \\
      \le & C(1-x)||\tilde{U}_{\theta}||_{\mathbf{M}_1}.
\end{split}
\]
From the above we also see that $\lim_{x\to 1}A_{\theta}(x)=\lim_{x\to -1}A_{\theta}(x)=0$. By computation $A'_{\theta}=(1-x^2)\tilde{U}''_{\theta}+\bar{U}_{\theta}\tilde{U}'_{\theta}+(2+\bar{U}'_{\theta})\tilde{U}_{\theta}$. Then by the fact that $\tilde{U}_{\theta}\in \mathbf{M}_1$ and  (\ref{eq4_2_2'}), for $0<x<1$,
\[
   \frac{|A'_{\theta}(x)|}{1-x}  \le (1+x)|\tilde{U}''_{\theta}|+|\tilde{U}'_{\theta}|+\frac{|\tilde{U}_{\theta}|}{1-x}\le C||\tilde{U}_{\theta}||_{\mathbf{M}_1}, \quad 0<x<1.
\]
So $A_{\theta}\in \mathbf{N}_1$ and $||A_{\theta}||_{\mathbf{N}_1}\le C ||\tilde{U}_{\theta}||_{\mathbf{M}_1}$.

Next, by the fact that $\tilde{U}_{\phi}\in \mathbf{M}_2$ and (\ref{eq_noswirl4_2}), 
\[
   \frac{(1+x)^{1+\epsilon}}{1-x}|A_{\phi}|\le |(1+x)^{2+\epsilon} \tilde{U}_\phi'' |+|(1+x)^{1+\epsilon} \frac{|\bar{U}_{\theta}|}{1-x}\tilde{U}_\phi'|\le C||\tilde{U}_{\phi}||_{\mathbf{M}_2}.
\]
We also see from the above that $\lim_{x\to 1}A_{\phi}(x)=0$. So $A_{\phi}\in \mathbf{N}_2$, and $||A_{\phi}||_{\mathbf{N}_2}\le  C||\tilde{U}_{\phi}||_{\mathbf{M}_2}$.
 We have proved that $A\tilde{U}\in\mathbf{Y}$, and $||A\tilde{U}||_{\mathbf{Y}}\le C||\tilde{U}||_{\mathbf{X}}$ for every $\tilde{U}\in \mathbf{X}$.
\end{proof}

\begin{lem}\label{lemP4_2_3'}
The map $ Q:\mathbf{X}\times \mathbf{X}\to \mathbf{Y}$  defined by (\ref{eqP4_3}) is a  bounded bilinear operator.
\end{lem}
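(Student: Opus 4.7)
The bilinear operator $Q$ is defined by the same formula as in Lemma \ref{lemP4_3}; only the ambient spaces $\mathbf{X},\mathbf{Y}$ have changed, now carrying heavier weights at $x=-1$ suited to the $\bar{U}_\theta(-1)=2$ case. Bilinearity is immediate from the definition, so the task is to establish the bound $\|Q(\tilde{U},\tilde{V})\|_\mathbf{Y}\le C\|\tilde{U}\|_\mathbf{X}\|\tilde{V}\|_\mathbf{X}$. The plan is to bound $Q_\theta$ in $\mathbf{N}_1$ and $Q_\phi$ in $\mathbf{N}_2$ by following the template of Lemma \ref{lemP4_3}, but substituting the pointwise estimates of Lemma \ref{lemP4_2_1} and Lemma \ref{lemP4_2_xi}.

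First I would control the integrand in the triple integral. Combining $|\tilde{U}_\phi(s)|\le(1-s)(1+s)^{-\epsilon}\|\tilde{U}_\phi\|_{\mathbf{M}_2}$ from Lemma \ref{lemP4_2_1} with the norm bound $(1+s)^{1+\epsilon}|\tilde{V}_\phi'(s)|\le\|\tilde{V}_\phi\|_{\mathbf{M}_2}$ gives
\[
\left|\frac{\tilde{U}_\phi(s)\tilde{V}_\phi'(s)}{1-s^2}\right|\le (1+s)^{-2-2\epsilon}\|\tilde{U}_\phi\|_{\mathbf{M}_2}\|\tilde{V}_\phi\|_{\mathbf{M}_2}.
\]
Since $\epsilon<\tfrac12$, iterated integration shows $\psi[\tilde{U},\tilde{V}](x)$ of (\ref{eqpsi4_1}) is well-defined on $[-1,1]$, with $\int_t^1\frac{\tilde{U}_\phi\tilde{V}_\phi'}{1-s^2}\,ds=O((1+t)^{-1-2\epsilon})$ and $\int_l^1\int_t^1=O((1+l)^{-2\epsilon})$. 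Writing the combination appearing in $Q_\theta$ as
\[
\psi(x)-\tfrac{(1-x)^2}{4}\psi(-1)=-\int_{-1}^x\!\int_l^1\!\int_t^1\tfrac{2\tilde{U}_\phi\tilde{V}_\phi'}{1-s^2}\,ds\,dt\,dl+\Big(1-\tfrac{(1-x)^2}{4}\Big)\psi(-1),
\]
the first piece is $O((1+x)^{1-2\epsilon})$ and the second is $O(1+x)$; both beat the logarithmic weight $(\ln\tfrac{1+x}{3})^2$ near $x=-1$ precisely because $1-2\epsilon>0$.

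Second, combining the cancellation above with $|\tilde{U}_\theta\tilde{V}_\theta|\le C(1-x)^2(\ln\tfrac{1+x}{3})^{-2}\|\tilde{U}_\theta\|_{\mathbf{M}_1}\|\tilde{V}_\theta\|_{\mathbf{M}_1}$ (from (\ref{eq4_2_2'})) yields $\|(\ln\tfrac{1+x}{3})^2 Q_\theta\|_{L^\infty(-1,1)}\le C\|\tilde{U}\|_\mathbf{X}\|\tilde{V}\|_\mathbf{X}$. Differentiating $Q_\theta$ and using that the inner double integral is $O((1-x)^2)$ on $(0,1)$—since $|\tilde{U}_\phi|=O(1-s)$ cancels the $1/(1-s^2)$ singularity at $s=1$—gives $|Q_\theta'(x)|\le C(1-x)\|\tilde{U}\|_\mathbf{X}\|\tilde{V}\|_\mathbf{X}$ for $0<x<1$. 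The endpoint conditions $Q_\theta(\pm 1)=0$ and $Q_\theta'(1)=0$ follow by direct evaluation from these bounds. For $Q_\phi=\tilde{U}_\theta\tilde{V}_\phi'$, the pointwise estimate $|\tilde{U}_\theta|\le C(1-x)|\ln\tfrac{1+x}{3}|^{-1}\|\tilde{U}_\theta\|_{\mathbf{M}_1}$ paired with $(1+x)^{1+\epsilon}|\tilde{V}_\phi'|\le\|\tilde{V}_\phi\|_{\mathbf{M}_2}$, together with the fact that $|\ln\tfrac{1+x}{3}|\ge|\ln\tfrac{2}{3}|>0$ for $-1<x<1$, yields the desired $\mathbf{N}_2$-bound immediately.

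The main technical point, and the only place care is needed, is the cancellation at $x=-1$: since membership in $\mathbf{N}_1$ requires $\xi_\theta(-1)=0$ and control of the weighted norm $\|(\ln\tfrac{1+x}{3})^2\xi_\theta\|_{L^\infty}$, the singular behavior of $\psi[\tilde{U},\tilde{V}](x)$ near $x=-1$ must be suppressed. The quadratic corrector $\tfrac14(1-x)^2\psi(-1)$ in the definition of $Q_\theta$ is engineered to kill exactly the value $\psi(-1)$, and the hypothesis $\epsilon<\tfrac12$ is what guarantees that the residual power $(1+x)^{1-2\epsilon}$ dominates any $|\ln(1+x)|^2$ weight as $x\to-1^+$.
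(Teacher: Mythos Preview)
Your proposal is correct and follows essentially the same approach as the paper's proof: the same integrand bound $(1+s)^{-2-2\epsilon}$, the same cancellation identity giving $O((1+x)^{1-2\epsilon})$ for $\psi(x)-\tfrac{(1-x)^2}{4}\psi(-1)$, and the same pointwise estimates from Lemma~\ref{lemP4_2_1} to handle the $\mathbf{N}_1$ and $\mathbf{N}_2$ norms. Your observation that the hypothesis $\epsilon<\tfrac12$ is exactly what makes $(1+x)^{1-2\epsilon}$ dominate the $(\ln\tfrac{1+x}{3})^2$ weight is the key point, and you identified it correctly.
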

\begin{proof}
In the following, $C$ denotes a universal constant which may change from line to line.  
 It is clear that $Q$ is a bilinear operator. For every $\tilde{U},\tilde{V}\in\mathbf{X}$, we will prove that $Q(\tilde{U},\tilde{V})$ is in $\mathbf{Y}$ and there exists some constant $C$ independent of $\tilde{U}$ and $\tilde{V}$ such that $||Q(\tilde{U},\tilde{V})||_{\mathbf{Y}}\le C||\tilde{U}||_{\mathbf{X}}||\tilde{V}||_{\mathbf{X}}$.
 
For convenience we write
\[
  \psi(\tilde{U},\tilde{V})(x)=\int_x^1 \int_l^1 \int_t^1 \frac{2 \tilde{U}_\phi(s)\tilde{V} _\phi'(s)}{1-s^2} ds dt dl.
\]
 For $\tilde{U},\tilde{V}\in \mathbf{X}$, we have, using (\ref{eq4_2_3'}) in Lemma \ref{lemP4_2_1}, that
\begin{equation}\label{eq4_2_6'}
    \left|\frac{ \tilde{U}_\phi(s)\tilde{V} _\phi'(s)}{1-s^2}\right|\le (1+s)^{-2-2\epsilon}||\tilde{U}_{\phi}||_{\mathbf{M}_2}||\tilde{V}_{\phi}||_{\mathbf{M}_2}.
\end{equation}
It follows that $\psi(\tilde{U},\tilde{V})(x)$ is well-defined and 
\begin{equation}\label{eq4_2_6'new}
    |\psi(\tilde{U},\tilde{V})(x)|\le C(\epsilon)(1-x)^3||\tilde{U}_{\phi}||_{\mathbf{M}_2}||\tilde{V}_{\phi}||_{\mathbf{M}_2}, \quad -1<x<1.
\end{equation}
Moreover, we have, in view of (\ref{eq4_2_6'}), that
\begin{equation}\label{eq4_2_6'_1}
  \begin{split}
    & |\psi(\tilde{U},\tilde{V})(x)-\frac{(1-x)^2}{4}\psi(\tilde{U},\tilde{V})(-1)| \\
& =\left|\psi(\tilde{U},\tilde{V})(x)-\psi(\tilde{U},\tilde{V})(-1)+\frac{(1+x)(3-x)}{4}\psi(\tilde{U},\tilde{V})(-1)\right|\\
           & =\left|-\int_{-1}^{x} \int_{l}^{1} \int_{t}^{1} \frac{2 \tilde{U}_\phi(s)\tilde{V} _\phi'(s)}{1-s^2} ds dt dl+\frac{(1+x)(3-x)}{4}\psi(\tilde{U},\tilde{V})(-1)\right|\\
                   & \le C(1+x)^{1-2\epsilon}||\tilde{U}_\phi||_{\mathbf{M}_2}||\tilde{V} _{\phi}||_{\mathbf{M}_2}, \quad \forall -1<x\le 0.
  \end{split}
\end{equation}
Thus, using (\ref{eq4_2_6'new}) and (\ref{eq4_2_6'_1}), we have
\begin{equation}\label{eq4_2_6'_2}
  |\psi(\tilde{U},\tilde{V})(x)-\frac{(1-x)^2}{4}\psi(\tilde{U},\tilde{V})(-1)|\le C(\epsilon)(1+x)^{1-2\epsilon}(1-x)^2||\tilde{U}_\phi||_{\mathbf{M}_2}||\tilde{V} _{\phi}||_{\mathbf{M}_2}, \quad \forall -1<x<1.
\end{equation}
So by (\ref{eq4_2_2'}), (\ref{eq4_2_6'_2}) and the fact that $\tilde{U}_{\theta},\tilde{V}_{\theta}\in \mathbf{M}_1$, we have
\[
  \begin{split}
  & |\left(\ln\frac{1+x}{3}\right)^2Q_{\theta}(x)| \\
  & \le \frac{1}{2}\left(\ln\frac{1+x}{3}\right)^2|\tilde{U}_{\theta}(x)||\tilde{V}_{\theta}(x)|+\left(\ln\frac{1+x}{3}\right)^2\left|\psi(\tilde{U},\tilde{V})(x)-\frac{(1-x)^2}{4}\psi(\tilde{U},\tilde{V})(-1)\right|\\
                  & \le C(1-x)^2||\tilde{U}_{\theta}||_{\mathbf{M}_1}||\tilde{V}_{\theta}||_{\mathbf{M}_1}+ C\left(\ln\frac{1+x}{3}\right)^2(1+x)^{1-2\epsilon}(1-x)^2||\tilde{U}_{\phi}(s)||_{\mathbf{M}_2}||\tilde{V} _{\phi}||_{\mathbf{M}_2}\\
                  & \le C(1-x)^2||\tilde{U}||_{\mathbf{X}}||\tilde{V}||_{\mathbf{X}}, \quad \forall -1<x<1.
\end{split}
\]
From this we also have $\displaystyle \lim_{x\to 1}Q_{\theta}(x)=\lim_{x\to -1}Q_{\theta}(x)=0$.

A calculation gives
\[
   Q'_{\theta}(x)=\frac{1}{2}\tilde{U}_{\theta}\tilde{V}'_{\theta}+\frac{1}{2}\tilde{U}'_{\theta}\tilde{V}_{\theta}+\int_{x}^{1}\int_{t}^{1}\frac{ 2\tilde{U}_\phi(s)\tilde{V} _\phi'(s)}{1-s^2}dsdt-\frac{1-x}{2}\psi(\tilde{U},\tilde{V})(-1), \textrm{ for } 0<x<1.
\]
Using $\tilde{U}\in \mathbf{X}$, (\ref{eq4_2_2'}), (\ref{eq4_2_3'}) and (\ref{eq4_2_6'}), we see that,
\[
   |Q'_{\theta}(x)|\le C(1-x)||\tilde{U}||_{\mathbf{X}}||\tilde{V}||_{\mathbf{X}},\quad \forall 0<x<1.
\]
So $Q_{\theta}\in\mathbf{N}_1$, and $||Q_{\theta}||_{\mathbf{N}_1}\le C||\tilde{U}||_{\mathbf{X}}||\tilde{V}||_{\mathbf{X}}$.

Next, since $Q_{\phi}(x)=\tilde{U}_{\theta}(x)\tilde{V}'_{\phi}(x)$, for $-1<x<1$, 
\[
   \left|\frac{(1+x)^{1+\epsilon}Q_{\phi}}{1-x}\right|  \le \frac{(1+x)^{1+\epsilon}}{1-x}|\tilde{U}_{\theta}(x)|\frac{||\tilde{V}_{\phi}||_{\tilde{\mathbf{M}}_2}}{(1+x)^{1+\epsilon}} \le C||\tilde{U}_{\theta}||_{\tilde{\mathbf{M}}_1}||\tilde{V}_{\phi}||_{\mathbf{M}_2}.
\]
We also see from the above that $\displaystyle \lim_{x\to 1}Q_{\phi}(x)=0$. So $Q_{\phi}\in \mathbf{N}_2$, and $||Q_{\phi}||_{\mathbf{N}_2}\le||\tilde{U}_{\theta}||_{\mathbf{M}_1}||\tilde{V}_{\phi}||_{\mathbf{M}_2}$. Thus we have proved $Q(\tilde{U},\tilde{V})\in \mathbf{Y}$ and $||Q(\tilde{U},\tilde{V})||_{\mathbf{Y}}\le C||\tilde{U}||_{\mathbf{X}}||\tilde{V}||_{\mathbf{X}}$ for all $\tilde{U}, \tilde{V}\in \mathbf{X}$. The proof is finished.
\end{proof}

\noindent \emph{Proof of Proposition \ref{prop4_2}:}
By definition, $G(-\frac{1}{2},\gamma,\tilde{U})=A(-\frac{1}{2},\gamma,\tilde{U})+Q(\tilde{U},\tilde{U})$ for $(\gamma,\tilde{U})\in K\times \mathbf{X}$.  Using standard theories in functional analysis, by Lemma \ref{lemP4_2_3'} it is clear that $Q$ is $C^{\infty}$ on $K\times  \mathbf{X}$.  By Lemma \ref{lemP4_2_2'}, $A(-\frac{1}{2},\gamma, \cdot): \mathbf{X}\to \mathbf{Y}$ is $C^{\infty}$ for each $\gamma\in K$. For all $i\ge 1$, we have
\[
    \partial_{\gamma}^i A(-\frac{1}{2},\gamma,\tilde{U})= \partial_{\gamma}^i U^{-\frac{1}{2},\gamma}_{\theta}\left(
	\begin{matrix}
		\tilde{U}_{\theta}  \\  \tilde{U}'_{\phi}
	\end{matrix}\right).
\]  
By (\ref{eqcor3_1_3}), for each integer $i\ge 1$, there exists some constant $C=C(i,K)$, depending only on $i,K$, such that 
\begin{equation}\label{eq_prop4_1_2}
   |\partial_{\gamma}^i U^{-\frac{1}{2},\gamma}_{\theta}(x)|\le C(i,K)(1-x) \left( \ln \frac{1+x}{3} \right)^{-2}, \quad -1<x<1.
\end{equation}
From (\ref{eq_noswirl4_2}) we also obtain
\[
     \left|\frac{d}{dx}\partial_{\gamma}^i U^{-\frac{1}{2},\gamma}_{\theta}(x)\right|\le C(i,K), \quad 0<x<1.
\]
Using the above estimates and the fact that $\tilde{U}_{\theta}\in \mathbf{M}_1$, we have
\[
   \left|\left(\ln \frac{1+x}{3}\right)^2\partial_{\gamma}^i A_{\theta}(-\frac{1}{2},\gamma,\tilde{U})\right|\le C(i,K)(1-x)||\tilde{U}_{\theta}||_{\mathbf{M}_1}, \quad -1<x<1,
\]
and 
\[
\begin{split}
   \left|\frac{d}{dx}\partial_{\gamma}^i A_{\theta}(-\frac{1}{2},\gamma,\tilde{U})\right|& \le \left|\frac{d}{dx}\partial_{\gamma}^i U^{-\frac{1}{2},\gamma}_{\theta}(x)\right||\tilde{U}_{\theta}(x)|+|\partial_{\gamma}^i U^{-\frac{1}{2},\gamma}_{\theta}(x)|\left|\frac{d}{dx}\tilde{U}_{\theta}(x)\right|\\
   & \le C(i,K)(1-x)||\tilde{U}_{\theta}||_{\mathbf{M}_1}, \quad 0<x<1.
   \end{split}
\]
So $\partial_{\gamma}^i A_{\theta}(-\frac{1}{2},\gamma,\tilde{U})\in \mathbf{N}_1$, with $||\partial_{\gamma}^i A_{\theta}(-\frac{1}{2},\gamma,\tilde{U})||_{\mathbf{N}_1}\le C(i,K)||\tilde{U}_{\theta}||_{\mathbf{M}_1}$ for all $(\gamma,\tilde{U})\in K\times \mathbf{X}$.

Next, by (\ref{eq_prop4_1_2}) and the fact that $\tilde{U}_{\phi}\in \mathbf{M}_1$, we have
\[
   \frac{(1+x)^{1+\epsilon}}{1-x}|\partial_{\gamma}^i A_{\phi}(\mu,\gamma,\tilde{U})(x)|=\frac{|\partial_{\gamma}^i U^{-\frac{1}{2},\gamma}_{\theta}(x)|}{1-x}|(1+x)^{1+\epsilon}U'_{\phi}|\le C(i,K)||\tilde{U}_{\phi}||_{\mathbf{M}_2}.
\]
So $\partial_{\gamma}^i A_{\phi}(-\frac{1}{2},\gamma,\tilde{U})\in \mathbf{N}_2$, with $||\partial_{\gamma}^i A_{\phi}(-\frac{1}{2},\gamma,\tilde{U})||_{\mathbf{N}_2}\le C(i,K)||\tilde{U}_{\phi}||_{\mathbf{M}_2}$ for all $(\gamma,\tilde{U})\in K\times \mathbf{X}$. Thus $\partial_{\gamma}^iA(-\frac{1}{2},\gamma,\tilde{U})\in \mathbf{Y}$, with $||\partial_{\gamma}^i A(-\frac{1}{2},\gamma,\tilde{U})||_{\mathbf{Y}}\le C(i,K)||\tilde{U}||_{\mathbf{X}}$ for all $(\gamma,\tilde{U})\in K\times \mathbf{X}$, $i\ge 1$. 

So for each $\gamma \in K$, $\partial_{\gamma}^i A(-\frac{1}{2},\gamma,  \cdot):\mathbf{X}\to \mathbf{Y}$ is a bounded linear map with uniform bounded norm on $K$. Then by standard theories in functional analysis, $A:K\times\mathbf{X}\to \mathbf{Y}$ is $C^{\infty}$. So $G$ is a $C^{\infty}$ map from $K\times\mathbf{X}$ to $\mathbf{Y}$. By direct calculation we have that its Fr\'{e}chet derivative with respect to $\mathbf{X}$ is given by  the linear bounded operator $L^{-\frac{1}{2},\gamma}_{\tilde{U}}: \mathbf{X}\rightarrow \mathbf{Y}$ defined as  (\ref{eq4_Linear}). The proof is finished.     \qed\\

By Proposition \ref{prop4_2}, $L^{-\frac{1}{2},\gamma}_0:\mathbf{X}\to \mathbf{Y}$, the Fr\'{e}chet derivative of $G$ with respect to $\tilde{U}$ at $\tilde{U}=0$, is given by (\ref{eq_LinearAtZero}).

Next, with $a_{-\frac{1}{2},\gamma}(x)$, $b_{-\frac{1}{2},\gamma}(x)$ defined by (\ref{eq_ab}) with $\bar{U}_{\theta}$ given by (\ref{eq_noswirl4_2}), we define   $W^{-\frac{1}{2},\gamma}(\xi)$   by (\ref{eq4_1_6}) for $\xi\in \mathbf{Y}$. Then $a_{-\frac{1}{2},\gamma}(x)$ and $W^{-\frac{1}{2},\gamma}(\xi)(x)$ satisfy (\ref{eq4_W_a}) and (\ref{eq4_W_d}).

\begin{lem} \label{lem4_2_W}
   For every $\gamma\in K$, $W^{-\frac{1}{2},\gamma}:\mathbf{Y}\to \mathbf{X}$ is continuous and is a right inverse of $L^{-\frac{1}{2},\gamma}_{0}$.
\end{lem}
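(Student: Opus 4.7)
The plan is to mirror the proof architecture of Lemma~\ref{lem4_1}, using the same integral formulas defining $W^{-1/2,\gamma}$, but with weighted estimates tailored to the logarithmic (rather than power-law) behavior of $\bar{U}_\theta$ near $x=-1$ forced by $\mu=-\tfrac{1}{2}$. That $L_0^{-1/2,\gamma}\circ W^{-1/2,\gamma}=\mathrm{id}_{\mathbf{Y}}$ follows by direct differentiation exactly as in Lemma~\ref{lem4_1}, so the real content is the boundedness $W^{-1/2,\gamma}:\mathbf{Y}\to\mathbf{X}$.

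First I would extract the sharp asymptotics of $a_{-1/2,\gamma}$ and $b_{-1/2,\gamma}$ near $x=-1$ from the explicit formula (\ref{eq_noswirl4_2}). From $\bar{U}_\theta(x) = 2 + 4/\ln\tfrac{1+x}{2} + O((\ln\tfrac{1+x}{2})^{-2})$ a direct integration gives
\begin{equation*}
b_{-1/2,\gamma}(x) = \ln(1+x) + 2\ln\bigl|\ln\tfrac{1+x}{2}\bigr| + O(1), \qquad x\to -1^+,
\end{equation*}
so that $e^{\pm b_{-1/2,\gamma}(x)}$ is of order $(1+x)^{\pm 1}(\ln\tfrac{1+x}{2})^{\pm 2}$ and, since $a=b-\ln(1-x^2)$ and $1-x\to 2$, $e^{\pm a_{-1/2,\gamma}(x)}$ is of order $(\ln\tfrac{1+x}{2})^{\pm 2}$ near $x=-1$. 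A crucial cancellation $2(-1)+\bar{U}_\theta(-1)=0$ also yields the sharp bound $|a'_{-1/2,\gamma}(x)| \lesssim 1/\bigl((1+x)|\ln\tfrac{1+x}{2}|\bigr)$.

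As in Lemma~\ref{lem4_1}, the estimates split into two regimes. For $0\le x\le 1$, where $\bar{U}_\theta$ is smooth with $\bar{U}_\theta(1)=0$ and all log weights are bounded, the argument of Case~1 of Lemma~\ref{lem4_1} transfers almost verbatim. For $-1<x\le 0$, Lemma~\ref{lemP4_2_xi} gives $|\xi_\theta(s)/(1-s^2)|\le C\|\xi_\theta\|_{\mathbf{N}_1}(\ln\tfrac{1+s}{3})^{-2}/(1+s)$, and since $\ln\tfrac{1+s}{2}$ and $\ln\tfrac{1+s}{3}$ differ by a bounded factor with ratio tending to $1$, the two logs are interchangeable, so $|e^{a(s)}\xi_\theta(s)/(1-s^2)|\le C\|\xi_\theta\|_{\mathbf{N}_1}/(1+s)$. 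Integrating and multiplying by $e^{-a(x)}\lesssim (\ln\tfrac{1+x}{2})^{-2}$ yields $|W_\theta^{-1/2,\gamma}(\xi)(x)|\lesssim \|\xi_\theta\|_{\mathbf{N}_1}/|\ln(1+x)|$, which closes the $\mathbf{M}_1$ bound on $\ln\tfrac{1+x}{3}\cdot W_\theta^{-1/2,\gamma}(\xi)$. The derivative $(W_\theta^{-1/2,\gamma})'$ is controlled via (\ref{eq4_W_d}) using the sharp estimate on $a'$, giving the $(1+x)(\ln\tfrac{1+x}{3})^2$-weighted bound. For the swirl component, $|\xi_\phi(s)|/(1-s^2)\le C\|\xi_\phi\|_{\mathbf{N}_2}/(1+s)^{2+\epsilon}$ combined with the $e^{\pm b}$ asymptotics gives, after the two iterated integrations, $|W_\phi^{-1/2,\gamma}(\xi)(x)|\lesssim (1+x)^{-\epsilon}\|\xi_\phi\|_{\mathbf{N}_2}$, with analogous bounds on the first two derivatives matching the weights $(1+x)^{1+\epsilon}$ and $(1+x)^{2+\epsilon}$ of $\mathbf{M}_2$.

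The main technical obstacle I expect is the precise bookkeeping of the logarithmic factors: the $(\ln\tfrac{1+x}{2})^{\pm 2}$ growth of $e^{\pm a}$ must cancel exactly against the $(\ln\tfrac{1+s}{3})^{\mp 2}$ weights of $\mathbf{N}_1$ and $\mathbf{M}_1$, and the derivative estimate relies essentially on the critical cancellation $2(-1)+\bar{U}_\theta(-1)=0$ specific to the borderline case $\bar{U}_\theta(-1)=2$; without it $a'$ would blow up like $1/(1+x)$ and the estimate for $W'_\theta$ would fail. The log weights in $\mathbf{M}_1$ and $\mathbf{N}_1$ are therefore not cosmetic but are precisely calibrated to the borderline decay rate of $\bar{U}_\theta-2$.
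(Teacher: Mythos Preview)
Your proposal is correct and follows essentially the same route as the paper: the same two-regime split, the same logarithmic asymptotics $e^{\pm a}\sim(\ln(1+x))^{\pm2}$ and $e^{\pm b}\sim(1+x)^{\pm1}(\ln(1+x))^{\pm2}$ near $x=-1$, and the same use of the cancellation $2(-1)+\bar U_\theta(-1)=0$ to get the sharp bound $|a'|\lesssim 1/\bigl((1+x)|\ln(1+x)|\bigr)$ needed for the $W'_\theta$ estimate. The only cosmetic difference is that the paper uses $\ln\tfrac{1+x}{3}$ throughout rather than $\ln\tfrac{1+x}{2}$, which as you note is harmless.
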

\begin{proof}
We make use of the properties that $U^{-\frac{1}{2},\gamma}_{\theta}(1)=0$, $U^{-\frac{1}{2},\gamma}_{\theta}\in C^2(-1,1]\cap C^0[-1,1]$ and $\left|\left(\ln \frac{1+x}{3}\right)(\bar{U}_{\theta}(x)-2)\right|\in L^{\infty}(-1,1)$. For convenience, we write $W:=W^{-\frac{1}{2},\gamma}$, $a(x)=a_{-\frac{1}{2},\gamma}(x)$ and $b(x)=b_{-\frac{1}{2},\gamma}(x)$.

We first prove that $W$ is well-defined, denote $W:=W(\xi)$. Applying Lemma \ref{lemP4_2_xi} in the expression of $W_{\theta}$ in (\ref{eq4_1_6}),  we have, for $-1<x<1$, that
\begin{equation}\label{eq4_2_W_1}
   \left|\left(\ln \frac{1+x}{3}\right)W_{\theta}(x)\right|\le C\left(\ln \frac{1+x}{3}\right)||\xi_{\theta}||_{\mathbf{N}_1}e^{-a(x)}\int_{0}^{x}e^{a(s)}(1-s)(1+s)^{-1}\left(\ln \frac{1+s}{3}\right)^{-2}ds.
\end{equation}
We make estimates first for  $0<x\le 1$ and then for $-1<x\le 0$.

\textbf{Case} 1: $0<x\le 1$.

By (\ref{eq_noswirl4_2}), $\bar{U}_{\theta}=-\bar{U}'_{\theta}(1)(1-x)+O((1-x)^2)$. Using similar arguments as in the proof of Lemma \ref{lem4_1}, $b(x)$ and $a(x)$ satisfy (\ref{eq4_1_7_1}),  (\ref{eq4_1_7_2}) and (\ref{eq4_1_7_3}). So there exists some positive constant $C$ such that
\[
   e^{a(s)}(1-s)(1+s)^{-1}\left(\ln \frac{1+s}{3}\right)^{-2}\le C, \quad e^{a(x)}\ge \frac{1}{C(1-x)}, \quad 0<s<x<1.
\]
Then using the above estimate in (\ref{eq4_2_W_1}), we have that
\begin{equation}\label{eq4_2_W_2}
   |W_{\theta}(x)|\le C||\xi_{\theta}||_{\mathbf{N}_1}(1-x), \quad 0<x\le 1.
\end{equation}
In particular, $W_{\theta}(1)=0$.

In (\ref{eq4_W_a}), using $\bar{U}_{\theta}=-\bar{U}'_{\theta}(1)(1-x)+O((1-x)^2)$, we have
\begin{equation}\label{eq4_2_W_3}
   |a'(x)|\le \frac{C}{1-x}, \quad |a''(x)|\le \frac{C}{(1-x)^2}, \quad 0<x<1.
\end{equation}
Then 
\[
   |W'_{\theta}(x)|\le |a'(x)||W_{\theta}(x)|+\frac{|\xi_{\theta}(x)|}{1-x}\le C||\xi_{\theta}||_{\mathbf{N}_1}, \quad 0<x<1, 
\]
where we have used (\ref{eq4_2_W_2}), (\ref{eq4_2_W_3}), the fact that $\xi\in \mathbf{Y}$ and Lemma \ref{lemP4_2_xi}. 

Next, A calculation gives
\[
  W''_{\theta}(x)  =((a'(x))^2-a''(x))W_{\theta}(x)-a'(x)\frac{\xi_{\theta}(x)}{1-x^2}+\frac{\xi'_{\theta}(x)}{1-x^2}+\frac{2x\xi_{\theta}(x)}{(1-x^2)^2}.
\]
So
	\[
		|W''_{\theta}(x)|  \le  |(a'(x))^2-a''(x)| |W_{\theta}| + |a'(x)|\frac{|\xi_{\theta}|}{1-x^2} + \frac{|\xi'_{\theta}|}{(1-x)}+\frac{|\xi_{\theta}|}{(1-x)^2}.
	\]
	By computation
	\[
	    (a'(x))^2-a''(x)=\frac{\bar{U}^2_{\theta}+2x\bar{U}_{\theta}}{(1-x^2)^2}-\frac{2+\bar{U}'_{\theta}}{1-x^2}=O\left(\frac{1}{1-x}\right).
	\]
It follows, using (\ref{eq4_2_W_2}),  (\ref{eq4_2_W_3}) and  Lemma \ref{lemP4_2_xi}, that
\[
   |W''_{\theta}(x)|\le  C\left(\frac{|W_{\theta}(x)|}{1-x}+\frac{|\xi_{\theta}|}{(1-x)^2}+\frac{|\xi'_{\theta}|}{1-x}\right)\le C||\xi_{\theta}||_{\mathbf{N}_1}, \quad 0<x<1.
\]

\textbf{Case} 2: $-1<x\le 0$.

In (\ref{eq_noswirl4_2}), since $\gamma>-1$, we have 
\begin{equation}\label{eq4_2_W_0}
   \bar{U}_{\theta}(x)=2+\frac{4}{\ln\frac{1+x}{3}}+O\left(\left(\ln \frac{1+x}{3}\right)^{-2}\right).
   \end{equation}
    Then we have, for $-1<x\le 0$, that
\[
   b(x)=\ln \frac{1+x}{3}+2\ln\left(-\ln \frac{1+x}{3}\right)+O(1), \quad a(x)=2\ln\left(-\ln \frac{1+x}{3}\right)+O(1),
\]
\[
   e^{a(x)}=\left(\ln \frac{1+x}{3}\right)^2e^{O(1)}, \quad e^{-a(x)}=\left(\ln \frac{1+x}{3}\right)^{-2}e^{O(1)}.
\]
So there exists some constant $C$ such that for $-1<x<s\le 0$
\[
   e^{a(s)}(1-s)(1+s)^{-1}\left(\ln \frac{1+s}{3}\right)^{-2}\le C(1+s)^{-1}, \quad e^{-a(x)}\le C\left(\ln \frac{1+x}{3}\right)^{-2}.
\]
Apply these estimates in (\ref{eq4_2_W_1}), we have
\begin{equation}\label{eq4_2_W_4}
   \left|\left(\ln \frac{1+x}{3}\right)W_{\theta}(x)\right|\le C||\xi_{\theta}||_{\mathbf{N}_1}\left(\ln \frac{1+x}{3}\right)^{-1}\int_{0}^{x}\frac{1}{1+s}ds\le C||\xi_{\theta}||_{\mathbf{N}_1}, \quad -1<x\le 0.
\end{equation}
By (\ref{eq4_W_a}) and (\ref{eq4_2_W_0}),  there exists some $C$ such that
\[
   |a'(x)|\le \frac{C}{(1+x)\ln\frac{1+x}{3}}.
\]
Then by (\ref{eq4_W_d}), (\ref{eq4_2_W_4}) and Lemma \ref{lemP4_2_xi}, we have, for $-1<x\le 0$,  that
\[
\begin{split}
   & \left|(1+x) \left(\ln \frac{1+x}{3}\right)^2W'_{\theta}(x)\right| \\
\le & \left|a'(x)(1+x)\left(\ln \frac{1+x}{3}\right)^2W_{\theta}(x)\right|+\left(\ln \frac{1+x}{3}\right)^2\frac{|\xi_{\theta}(x)|}{1-x}\le C||\xi_{\theta}||_{\mathbf{N}_1}.
\end{split}
\]
So we have shown that $W_{\theta}\in \mathbf{M}_1$, and $||W_{\theta}||_{\mathbf{M}_1}\le C||\xi_{\theta}||_{\mathbf{N}_1}$ for some constant $C$.

By the definition of $W_{\phi}(\xi)$ in (\ref{eq4_1_6}) and the fact that $\xi_{\phi}\in \mathbf{N}_2$, we have, for every $-1<x<1$, that
\[
    |W_{\phi}(x)|\le \int_{x}^{1}e^{-b(t)}\int_{t}^{1}e^{b(s)}\frac{|\xi_{\phi}(s)|}{1-s^2}dsdt\le ||\xi_{\phi}||_{\mathbf{N}_2} \int_{x}^{1}e^{-b(t)}\int_{t}^{1}e^{b(s)}(1+s)^{-\textcolor{red}{2}-\epsilon}dsdt.
\]
Since $b(x)=\ln \frac{1+x}{3}+2\ln\left(-\ln \frac{1+x}{3}\right)+O(1)$ for $-1<x<1$, there exists some constant $C$ such that
\begin{equation}\label{eq4_2_W_5}
   e^{b(s)}\le C(1+s)\left(\ln \frac{1+s}{3}\right)^2, \quad e^{-b(t)}\le \frac{C}{(1+t)\left(\ln \frac{1+t}{3}\right)^2}, \quad -1<s,t\le 1.
\end{equation}
So we have
\[
 \begin{split}
    (1+x)^{\epsilon}|W_{\phi}(x)| & \le C(1+x)^{\epsilon}||\xi_{\phi}||_{\mathbf{N}_2}\int_{x}^{1}(1+t)^{-1}\left(\ln \frac{1+t}{3}\right)^{-2}\int_{t}^{1}(1+s)^{-1-\epsilon}\left(\ln \frac{1+s}{3}\right)^2dsdt\\
       & \le C||\xi_{\phi}||_{\mathbf{N}_2}, \quad -1<x\le 1.
       \end{split}
\]
For $0<x<1$, it can be seen from the above that $|W_{\phi}(x)|\le C||\xi_{\phi}||_{\mathbf{N}_2}(1-x)$. In particular, $W_{\phi}(1)=0$.  By computation
\[
    W'_{\phi}(x)=-e^{-b(x)}\int_{x}^{1}e^{b(s)}\frac{\xi_{\phi}(s)}{1-s^2}ds.
\]
Using (\ref{eq4_2_W_5}) and the fact that $\xi_{\phi}\in \mathbf{N}_2$, we have that for $-1<x<1$,
\[
   |(1+x)^{1+\epsilon}W'_{\phi}(x)|\le C||\xi_{\phi}||_{\mathbf{N}_2}(1+x)^{\epsilon}\left(\ln \frac{1+x}{3}\right)^{-2}\int_{x}^{1}(1+s)^{-1-\epsilon}\left(\ln \frac{1+s}{3}\right)^2ds\le C||\xi_{\phi}||_{\mathbf{N}_2}.
\]
Similarly, 
\[
  W''_{\phi}(x)=b'(x)e^{-b(x)}\int_{x}^{1}e^{b(s)}\frac{\xi_{\phi}(s)}{1-s^2}ds+\frac{\xi_{\phi}(x)}{1-x^2}.
\]
By (\ref{eq4_2_W_0}), $|b'(x)|=\frac{|\bar{U}_{\theta}(x)|}{1-x^2}=O((1+x)^{-1})$. Using (\ref{eq4_2_W_5}), we have
\[
   |(1+x)^{2+\epsilon}W''_{\phi}(x)|\le C||\xi_{\phi}||_{\mathbf{N}_2},\quad -1<x<1.
\]
So $W_{\phi}\in \mathbf{M}_2$, and $||W_{\phi}||_{\mathbf{M}_2}\le C||\xi_{\phi}||_{\mathbf{N}_2}$ for some constant $C$. 

Thus $W^{-\frac{1}{2},\gamma}(\xi)\in \mathbf{X}$ for all $\xi\in \mathbf{Y}$, and $||W^{-\frac{1}{2},\gamma}(\xi)||_{\mathbf{X}}\le C||\xi||_{\mathbf{Y}}$ for some constant $C$. So $W^{-\frac{1}{2},\gamma}: \mathbf{Y}\to \mathbf{X}$ is well-defined and continuous. It can be directly checked that $W$ is a right inverse of $L^{-\frac{1}{2},\gamma}_0$.
\end{proof}

Let $V_{-\frac{1}{2},\gamma}^1,V_{-\frac{1}{2},\gamma}^2,V_{-\frac{1}{2},\gamma}^3$ be defined by (\ref{eq4_2_ker}) with related $a_{-\frac{1}{2},\gamma}(x)$ and $b_{-\frac{1}{2},\gamma}(x)$ in the current case, we have
\begin{lem}\label{lem4_7}
$\{V_{-\frac{1}{2},\gamma}^1,V_{-\frac{1}{2},\gamma}^2\}$ is a basis of the kernel of $L^{-\frac{1}{2},\gamma}_0:\mathbf{X}\to \mathbf{Y}$.
\end{lem}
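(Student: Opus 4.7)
The plan is to follow the template of Lemma \ref{lem4_3}, adapting it to the new Banach space $\mathbf{X}$ used in the $I_2$ setting. The key difference is that the weights in $\mathbf{M}_1$ and $\mathbf{M}_2$ are logarithmic rather than power-law, so every membership check must be redone using the asymptotics of $\bar{U}_\theta$ near $x=-1$ appropriate to the $\mu=-\tfrac{1}{2}$ case.

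First I would recall that any $V \in \ker L^{-\frac{1}{2},\gamma}_0$, viewed as a solution of a first order linear ODE in $V_\theta$ and a second order linear ODE in $V_\phi$ in $(-1,1)$, has the form $V = c_1 V^1_{-\frac{1}{2},\gamma} + c_2 V^2_{-\frac{1}{2},\gamma} + c_3 V^3_{-\frac{1}{2},\gamma}$ for some $c_1,c_2,c_3 \in \mathbb{R}$, where $V^i$ are given by (\ref{eq4_2_ker}). Then I would verify directly that $V^1_{-\frac{1}{2},\gamma}$ and $V^2_{-\frac{1}{2},\gamma}$ lie in $\mathbf{X}$, while $V^3_{-\frac{1}{2},\gamma} = (0,1) \notin \mathbf{X}$ since $\mathbf{M}_2$ requires $\tilde{U}_\phi(1)=0$. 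This forces $c_3=0$ and the linear independence of $V^1,V^2$ is immediate from their disjoint nonzero components, proving that they form a basis.

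The only real work is checking $V^1_{-\frac{1}{2},\gamma}, V^2_{-\frac{1}{2},\gamma} \in \mathbf{X}$. For this I would reuse the asymptotics established in the proof of Lemma \ref{lem4_2_W}. Near $x=1$, one has $b(x),a(x)$ bounded with $e^{-a(x)} = O(1-x)$, so $V^1_\theta(x) = e^{-a(x)}$ vanishes at $x=1$ and is smooth there; near $x=-1$ one has $a(x) = 2\ln(-\ln\tfrac{1+x}{3}) + O(1)$, which gives $e^{-a(x)} = O(1)(\ln\tfrac{1+x}{3})^{-2}$ and similarly $(e^{-a(x)})' = O(1)(1+x)^{-1}(\ln\tfrac{1+x}{3})^{-3}$, matching the weights in $\mathbf{M}_1$. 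For $V^2_\phi(x) = \int_x^1 e^{-b(t)}\,dt$, the expansion $b(x) = \ln\tfrac{1+x}{3} + 2\ln(-\ln\tfrac{1+x}{3}) + O(1)$ yields $e^{-b(t)} = O(1)(1+t)^{-1}(\ln\tfrac{1+t}{3})^{-2}$, so $V^2_\phi(x) = O(1)(-\ln\tfrac{1+x}{3})^{-1}$, and its first and second derivatives are controlled as required by $\mathbf{M}_2$, with $V^2_\phi(1)=0$ by construction.

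The main (very mild) obstacle is simply bookkeeping the logarithmic factors in the three weighted norms of $\mathbf{M}_2$ simultaneously, and confirming that the rough bounds above are tight enough. Once these pointwise estimates are in hand the lemma follows exactly as in Lemma \ref{lem4_3}.
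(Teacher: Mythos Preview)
Your approach is exactly the paper's: write any kernel element as $c_1V^1+c_2V^2+c_3V^3$, verify $V^1,V^2\in\mathbf{X}$ and $V^3\notin\mathbf{X}$ (via $V^3_\phi(1)=1\neq 0$), and conclude. One small slip: $V^2_\phi(x)=\int_x^1 e^{-b(t)}\,dt$ does \emph{not} tend to zero as $x\to -1^+$ (the integrand $\sim (1+t)^{-1}(\ln\tfrac{1+t}{3})^{-2}$ is integrable near $-1$), so your bound $V^2_\phi=O((-\ln\tfrac{1+x}{3})^{-1})$ is false; but since $\mathbf{M}_2$ only asks for $(1+x)^\epsilon V^2_\phi\in L^\infty$, the boundedness of $V^2_\phi$ already suffices and the argument goes through.
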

\begin{proof}
   Let $V\in \mathbf{X}$, $L_0 V=0$. It can be seen that $V$ is given by $V=c_1V_{-\frac{1}{2},\gamma}^1+c_2V_{-\frac{1}{2},\gamma}^2+c_3V_{-\frac{1}{2},\gamma}^3$ for some constants $c_1,c_2,c_3$. It is not hard to verify that $V_{-\frac{1}{2},\gamma}^1, V_{-\frac{1}{2},\gamma}^2\in \mathbf{X}$, and  $ V_{-\frac{1}{2},\gamma}^3\notin\mathbf{X}$. Since $V\in \mathbf{X}$, we must have $c_3V^3\in \mathbf{X}$, so $c_3=0$, and  $V\in \mathrm{span}\{V_{-\frac{1}{2},\gamma}^1,V_{-\frac{1}{2},\gamma}^2\}$. It is clear that $\{V_{-\frac{1}{2},\gamma}^1,V_{-\frac{1}{2},\gamma}^2\}$ is independent. So $\{V_{-\frac{1}{2},\gamma}^1,V_{-\frac{1}{2},\gamma}^2\}$ is a basis of the kernel.
\end{proof}
\begin{cor}\label{cor4_7}
For any $\xi=(\xi_{\theta}, \xi_{\phi})\in \mathbf{Y}$, all solutions  of $L^{-\frac{1}{2},\gamma}_{0}(V)=\xi$, $V\in \mathbf{X}$,  are given by
\begin{equation*}
   V=W^{-\frac{1}{2},\gamma}(\xi)+c_1V_{-\frac{1}{2},\gamma}^1+c_2V_{-\frac{1}{2},\gamma}^2, \quad  c_1,c_2\in \mathbb{R}.
\end{equation*}
Namely,
\begin{equation*}
		V_\theta = W^{-\frac{1}{2},\gamma}_\theta(\xi)+ c_1e^{-a(x)}, \quad V_\phi = W^{-\frac{1}{2},\gamma}_\phi(\xi)+ c_2 \int_{x}^{1}e^{-b(t)}dt,\textrm{ }c_1,c_2\in\mathbb{R}.
	\end{equation*}
\end{cor}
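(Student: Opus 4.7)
The plan is to mirror the argument used for Corollary \ref{cor4_1} in Section 4.2, now drawing on the two structural lemmas proved just above for the $I_2$ case. First I would take any $V \in \mathbf{X}$ with $L^{-\frac{1}{2},\gamma}_{0}V = \xi$ and form the difference $\tilde V := V - W^{-\frac{1}{2},\gamma}(\xi)$. Since $W^{-\frac{1}{2},\gamma}(\xi) \in \mathbf{X}$ by Lemma \ref{lem4_2_W} and is a right inverse of $L^{-\frac{1}{2},\gamma}_{0}$, linearity gives $L^{-\frac{1}{2},\gamma}_{0} \tilde V = \xi - \xi = 0$, so $\tilde V \in \ker L^{-\frac{1}{2},\gamma}_{0} \subset \mathbf{X}$.

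Next I would invoke Lemma \ref{lem4_7}, which identifies $\ker L^{-\frac{1}{2},\gamma}_{0}$ on $\mathbf{X}$ as the two-dimensional span of $V^{1}_{-\frac{1}{2},\gamma}$ and $V^{2}_{-\frac{1}{2},\gamma}$. Thus there exist unique $c_1, c_2 \in \mathbb{R}$ with $\tilde V = c_1 V^{1}_{-\frac{1}{2},\gamma} + c_2 V^{2}_{-\frac{1}{2},\gamma}$, which rearranges to the stated formula. Conversely, any $V$ of that form satisfies $L^{-\frac{1}{2},\gamma}_{0}V = \xi$ by linearity, so the two sets of solutions coincide.

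The component-wise formula in the second display is then just bookkeeping: $V^{1}_{-\frac{1}{2},\gamma}$ has zero $\phi$-component and its $\theta$-component equals $e^{-a(x)}$ by (\ref{eq4_2_ker}), while $V^{2}_{-\frac{1}{2},\gamma}$ has zero $\theta$-component and its $\phi$-component equals $\int_x^1 e^{-b(t)}dt$, so substituting the definitions of $W^{-\frac{1}{2},\gamma}_\theta$ and $W^{-\frac{1}{2},\gamma}_\phi$ from (\ref{eq4_1_6}) yields the displayed expression. There is no genuine obstacle here; all the analytic work sits in Lemma \ref{lem4_2_W} (continuity and right-inverse property, which required the delicate weighted estimates involving $\ln \frac{1+x}{3}$) and Lemma \ref{lem4_7} (ruling out $V^{3}_{-\frac{1}{2},\gamma}$ from $\mathbf{X}$). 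The corollary itself is a one-line consequence once those are in place.
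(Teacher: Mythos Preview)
Your proposal is correct and follows essentially the same approach as the paper's proof: form the difference $V - W^{-\frac{1}{2},\gamma}(\xi)$, observe via Lemma~\ref{lem4_2_W} that it lies in the kernel of $L^{-\frac{1}{2},\gamma}_{0}$ on $\mathbf{X}$, and then invoke Lemma~\ref{lem4_7} to identify that kernel as $\mathrm{span}\{V^{1}_{-\frac{1}{2},\gamma}, V^{2}_{-\frac{1}{2},\gamma}\}$. Your additional remarks on the converse direction and the component-wise bookkeeping are accurate elaborations of what the paper leaves implicit.
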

\begin{proof}
  By Lemma \ref{lem4_2_W}, $V-W^{-\frac{1}{2},\gamma}(\xi)$ is in the kernel of $L^{-\frac{1}{2},\gamma}_{0}: \mathbf{X}\to \mathbf{Y}$. The conclusion then follows from Lemma \ref{lem4_7}.
\end{proof}

Let $l_1,l_2$ be the functionals on $\mathbf{X}$ defined by (\ref{def_l}), and $\mathbf{X}_1$ be the subspace of $\mathbf{X}$ defined by (\ref{eq4_2x}). As shown in Section 4.1, the matrix $(l_i(V^{j}_{-\frac{1}{2},\gamma}))$, $i,j=1,2$, is an invertible matrix, for every $\gamma \in K$. So $\mathbf{X}_1$ is a closed subspace of $\mathbf{X}$, and
\begin{equation}\label{eq4_2xb}
	\mathbf{X} = \mbox{span} \{ V_{-\frac{1}{2},\gamma}^{1}, V_{-\frac{1}{2},\gamma}^{2} \} \oplus\mathbf{X}_1,\quad \forall \gamma\in K, 
\end{equation}
with the projection operator $P(\gamma): \mathbf{X}\rightarrow\mathbf{X}_1$ given by 
\begin{equation*}
    P(\gamma)V = V-  l_1(V)V_{-\frac{1}{2},\gamma}^{1}-c(\gamma)l_2(V)V_{-\frac{1}{2},\gamma}^{2} \textrm{ for } V\in \mathbf{X}.
    \end{equation*}
where $c(\gamma)=\left(\int_{0}^{1}e^{-b_{-\frac{1}{2},\gamma}(t)}dt\right)^{-1}>0$ for all $\gamma\in K$.
\begin{lem}\label{lem4_2iso}
The operator $ L^{-\frac{1}{2},\gamma}_0:\mathbf{X}_1 \to \mathbf{Y}$ is an isomorphism.
\end{lem}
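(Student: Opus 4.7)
The plan is to imitate verbatim the strategy used for Lemma \ref{lem4_2} in the $I_1$ setting, since all the necessary analogues have just been established in the current subsection.

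First, by Corollary \ref{cor4_7}, for every $\xi\in\mathbf{Y}$ the element $W^{-\frac{1}{2},\gamma}(\xi)\in\mathbf{X}$ solves $L^{-\frac{1}{2},\gamma}_0 V=\xi$, so $L^{-\frac{1}{2},\gamma}_0:\mathbf{X}\to\mathbf{Y}$ is surjective. Next, Lemma \ref{lem4_7} identifies $\ker L^{-\frac{1}{2},\gamma}_0 = \mathrm{span}\{V^1_{-\frac{1}{2},\gamma},V^2_{-\frac{1}{2},\gamma}\}$. Combining these two facts with the direct sum decomposition (\ref{eq4_2xb}) of $\mathbf{X}$, I conclude that the restriction $L^{-\frac{1}{2},\gamma}_0|_{\mathbf{X}_1}$ is both surjective (from any preimage, subtract the projection onto $\mathrm{span}\{V^1_{-\frac{1}{2},\gamma},V^2_{-\frac{1}{2},\gamma}\}$ via $P(\gamma)$ to land in $\mathbf{X}_1$ without changing the image) and injective (since $\mathbf{X}_1\cap\ker L^{-\frac{1}{2},\gamma}_0=\{0\}$ by the direct sum property).

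Finally, continuity of $L^{-\frac{1}{2},\gamma}_0$ as a bounded linear operator $\mathbf{X}\to\mathbf{Y}$ follows from Proposition \ref{prop4_2} (it is the Fr\'echet derivative of $G$ at $\tilde U=0$), hence its restriction to the closed subspace $\mathbf{X}_1$ is also bounded. Being a bounded bijection between Banach spaces, the open mapping theorem yields that its inverse is bounded, so it is an isomorphism. No genuine obstacle is anticipated: the only point requiring a bit of care is to verify that the three ingredients (Corollary \ref{cor4_7}, Lemma \ref{lem4_7}, and the splitting (\ref{eq4_2xb})) have been established for the full range $\gamma\in K$ uniformly, which is clear from their statements.
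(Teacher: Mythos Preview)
Your proposal is correct and follows essentially the same approach as the paper's proof: invoke Corollary~\ref{cor4_7} and Lemma~\ref{lem4_7} for surjectivity and the kernel, then use the direct sum decomposition (\ref{eq4_2xb}) to conclude that the restriction to $\mathbf{X}_1$ is a bijection. Your added remarks on boundedness via Proposition~\ref{prop4_2} and the open mapping theorem make explicit what the paper leaves implicit, but the argument is the same.
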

\begin{proof}
   By Corollary \ref{cor4_7} and Lemma \ref{lem4_7}, $L^{-\frac{1}{2},\gamma}_0: \mathbf{X}\to \mathbf{Y}$ is surjective and $\ker L_{0} = \mbox{span}\{V^{1}, V^{2}\}$. The conclusion of the lemma then follows in view of the property that $\mathbf{X}=\mbox{span}\{V^1,V^2\}\oplus \mathbf{X}_1$.
\end{proof}

\begin{lem}\label{lem4_8}
$V^1_{-\frac{1}{2},\gamma},V^2_{-\frac{1}{2},\gamma}\in C^{\infty}((-1,\infty),\mathbf{X})$.
\end{lem}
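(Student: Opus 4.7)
The plan is to mirror the strategy used in the proof of Lemma \ref{lemV1V2}, but with the weighted norms of $\mathbf{M}_1, \mathbf{M}_2$ appropriate to the $I_2$ regime, where $\bar U_\theta := U^{-1/2,\gamma}_\theta$ satisfies $\bar U_\theta(-1)=2$ and, by (\ref{eqcor3_1_3}), has the slow logarithmic decay $\bar U_\theta(x)-2 = 4/\ln\tfrac{1+x}{3} + O((\ln\tfrac{1+x}{3})^{-2})$. Write $a(x)=a_{-1/2,\gamma}(x)$, $b(x)=b_{-1/2,\gamma}(x)$. From the computation already carried out in the proof of Lemma \ref{lem4_2_W}, as $x\to -1$ one has $a(x) = 2\ln\bigl(-\ln\tfrac{1+x}{3}\bigr)+O(1)$ and $b(x)=\ln\tfrac{1+x}{3}+2\ln\bigl(-\ln\tfrac{1+x}{3}\bigr)+O(1)$, so $e^{-a(x)}=O\bigl((\ln\tfrac{1+x}{3})^{-2}\bigr)$ and $e^{-b(x)}=O\bigl((1+x)^{-1}(\ln\tfrac{1+x}{3})^{-2}\bigr)$, while near $x=1$, $e^{-a(x)}=O(1-x)$ and $e^{-b(x)}=O(1)$. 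First I would use these pointwise bounds, together with the corresponding ones for the $x$-derivatives of $e^{-a}$ and $e^{-b}$ (coming from $a'(x)=(2x+\bar U_\theta)/(1-x^2)$ and $b'(x)=\bar U_\theta/(1-x^2)$), to verify directly that $V^1_{-1/2,\gamma}$ and $V^2_{-1/2,\gamma}$ belong to $\mathbf{X}$: the weights $|\ln\tfrac{1+x}{3}|$ on $V^1_\theta$ and $(1+x)(\ln\tfrac{1+x}{3})^2$ on $(V^1_\theta)'$, and $(1+x)^\epsilon, (1+x)^{1+\epsilon}, (1+x)^{2+\epsilon}$ on $V^2_\phi$ and its first two $x$-derivatives, are tuned exactly so that these weighted quantities remain bounded as $x\to -1$, and the vanishing at $x=\pm 1$ is immediate.

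The crucial step is to control the $\gamma$-derivatives. Since $\partial_\gamma^i a(x)=\partial_\gamma^i b(x)=\int_0^x\frac{\partial_\gamma^i\bar U_\theta(s)}{1-s^2}\,ds$ for $i\ge 1$, the estimate in (\ref{eqcor3_1_3}), namely $|\partial_\gamma^i\bar U_\theta(s)|\le C(i,K)(1-s)(\ln\tfrac{1+s}{3})^{-2}$, gives
\begin{equation*}
|\partial_\gamma^i a(x)|+|\partial_\gamma^i b(x)|\le C(i,K)\int_0^x\frac{ds}{(1+s)(\ln\tfrac{1+s}{3})^2}\le C(i,K),
\end{equation*}
uniformly in $x\in(-1,1)$ and $\gamma\in K$, because the antiderivative $-1/\ln\tfrac{1+s}{3}$ is bounded. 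Then the Faà di Bruno formula $\partial_\gamma^i e^{-a(x)}= e^{-a(x)}\,P_i(\partial_\gamma a,\ldots,\partial_\gamma^i a)$, with $P_i$ a polynomial (no constant term), implies that $\partial_\gamma^i e^{-a(x)}$ enjoys the same pointwise bound as $e^{-a(x)}$ up to a constant $C(i,K)$, and similarly for $e^{-b(x)}$. Differentiating under the integral and under $\partial_x$ is legitimate because everything is smooth in $(x,\gamma)\in(-1,1)\times K$ with uniform-in-$x$ control.

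With these estimates in hand, one checks, weight by weight and exactly as in the analysis of $V^1, V^2$ themselves, that $\partial_\gamma^i V^1_{-1/2,\gamma}$ and $\partial_\gamma^i V^2_{-1/2,\gamma}$ lie in $\mathbf{X}$ with norm bounded by $C(i,K)$, and depend continuously on $\gamma$ in $\mathbf{X}$ (dominated convergence applied to the explicit integral representations). This identifies the Fréchet derivatives $\partial_\gamma^i V^j_{-1/2,\gamma}$ in $\mathbf{X}$ and shows $V^1_{-1/2,\gamma}, V^2_{-1/2,\gamma}\in C^\infty(K,\mathbf{X})$. Since $K\Subset(-1,\infty)$ is arbitrary, the lemma follows.

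The main obstacle is the borderline character of the weights in $\mathbf{M}_1,\mathbf{M}_2$: the factor $(1+x)(\ln\tfrac{1+x}{3})^2$ just barely tames $(V^1_\theta)'=-a'(x)e^{-a(x)}$, which blows up like $(1+x)^{-1}(\ln\tfrac{1+x}{3})^{-3}$ at $x=-1$, so there is no room to spare. This is why it is essential that $\partial_\gamma^i a$ be genuinely bounded (rather than merely growing like a power of $|\ln(1+x)|$); the integral identity above is the key that this boundedness indeed holds.
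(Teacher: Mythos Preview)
Your proposal is correct and follows essentially the same approach as the paper: both establish the key bounds $e^{-a(x)}=O((\ln\tfrac{1+x}{3})^{-2})$, $e^{-b(x)}=O((1+x)^{-1}(\ln\tfrac{1+x}{3})^{-2})$, and then use (\ref{eqcor3_1_3}) to show $\partial_\gamma^i a(x)=\partial_\gamma^i b(x)=O(1)$ uniformly, which is indeed the decisive point. One small remark: your statement that there is ``no room to spare'' in the weight on $(V^1_\theta)'$ is a slight overstatement, since $(1+x)(\ln\tfrac{1+x}{3})^2\cdot(1+x)^{-1}(\ln\tfrac{1+x}{3})^{-3}=(\ln\tfrac{1+x}{3})^{-1}\to 0$, so there is actually one logarithmic power in reserve; this does not affect the argument.
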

\begin{proof} 
For convenience, in this proof we denote $a(x)=a_{-\frac{1}{2},\gamma}(x)$, $b(x)=b_{-\frac{1}{2},\gamma}(x)$ and $V^i=V^i_{-\frac{1}{2},\gamma}$, $i=1,2$.

  By computation, using the explicit expression of $U^{-\frac{1}{2},\gamma}_{\theta}(x), a(x), a'(x), b(x), V^1_{\theta}(x)$ and $V^2_{\phi}(x)$ given by (\ref{eq_noswirl4_2}), (\ref{eq_ab}), (\ref{eq4_W_a}) and (\ref{eq4_2_ker}), and the estimates of $\partial^i_{\gamma}U^{-\frac{1}{2},\gamma}$ given by (\ref{eqcor3_1_3}) for all $i\ge 0$, we have, for $\gamma\in K$, that
 \[
    e^{-a(x)}=O(1)\left(\ln\frac{1+x}{3}\right)^{-2}, \quad e^{-b(x)}=O(1)\frac{1}{(1+x)\left(\ln\frac{1+x}{3}\right)^{2}}, \quad  -1<x\le 0.
 \]
 and 
 \[
    a'(x)=\frac{2x+U^{-\frac{1}{2},\gamma}_{\theta}(x)}{1-x^2}=O(1)\frac{1}{(1+x)\left(\ln\frac{1+x}{3}\right)}.
 \]
 So
\[
    \left|V^1_{\theta}(x)\right|= O(1)\left(\ln\frac{1+x}{3}\right)^{-2}, \quad V^2_{\phi}(x)=O(1), \quad -1<x\le 0,
\]
and 
\[
   \left|\frac{d}{dx}V^1_{\theta}(x)\right|=\left|e^{-a(x)}a'(x)\right|= O(1)\frac{1}{(1+x)\left(\ln\frac{1+x}{3}\right)^3},\quad -1<x\le 0, 
   \]
   \[
   \left|\frac{d}{dx}V^2_{\phi}(x)\right|=e^{-b(x)}= O(1)\frac{1}{(1+x)\left(\ln\frac{1+x}{3}\right)^{2}}, \quad -1<x\le 0.
\]
Moreover, 
\[
  \begin{split}
  & \frac{\partial^i}{\partial \gamma^i}a(x)=\frac{\partial^i}{\partial \gamma^i}b(x)=\int_{0}^{x}\frac{1}{1-s^2}\frac{\partial^i}{\partial \gamma^i}U^{-\frac{1}{2},\gamma}(s)ds\\
  &=O(1)\int_{0}^{x}\frac{1}{(1+s)\left(\ln\frac{1+x}{3}\right)^{2}} ds=O(1),
   \end{split}
\]
and 
\[
    \frac{\partial^i}{\partial \gamma^i}a'(x)=\frac{\partial^i}{\partial \gamma^i}b'(x)=\frac{1}{1-x^2}\frac{\partial^i}{\partial \gamma^i}U^{-\frac{1}{2},\gamma}(x)=O(1)\frac{1}{(1+x)\left(\ln\frac{1+x}{3}\right)^{2}},
\]
where $|O(1)|\le C$ depending only on $\gamma$ and $i$. So we have
\[
   \left|\partial^i_{\gamma}V^1_{\theta}(x)\right|=e^{-a(x)}O(1)=O(1)\left(\ln\frac{1+x}{3}\right)^{-2}, \quad -1<x\le 0, i=1,2,3...
\]
From the above we can see that for all $\gamma>-1$ and $i\ge 0$, there exists some constant $C$, such that
\[
    \left|\left(\ln\frac{1+x}{3}\right)^{2}\partial^i_{\gamma}V^1_{\theta}(x)\right|\le C,\quad \left|(1+x)\left(\ln\frac{1+x}{3}\right)^{2}\frac{d}{dx}\partial^i_{\gamma}V^1_{\theta}(x)\right|\le C,\quad -1<x\le 0.
\]
We can also show that for $i\ge 0$, 
\[
   \partial^i_{\gamma}V^1_{\theta}(1)=0,
\]
 and there exists some constant $C$ such that
 \[
    \left|\frac{d^l}{dx^l}\partial^i_{\gamma}V^1_{\theta}(x)\right|\le C, \quad l=0,1,2, \quad 0\le x<1.
 \]
 The above imply that  for all $i\ge 0$, $\partial^i_{\gamma}V^1(x)\in \mathbf{X}$, and $V^1_{\theta}\in C^{\infty}((-1,+\infty),\mathbf{M}_1)$.
 
 Similarly, we can show that $V^2_{\phi}\in C^{\infty}((-1,+\infty),\mathbf{M}_2)$. So $V^1,V^2\in C^{\infty}((-1,+\infty),\mathbf{X})$.
\end{proof}

Next, by similar arguments in the proof of Lemma \ref{lem4_1_10}, using Lemma \ref{lem4_8}, we have
\begin{lem}\label{lem4_2_10}
  There exists $C=C(K)>0$ such that for all $\gamma\in K$,  $(\beta_1,\beta_2)\in \mathbb{R}^2$, and $V\in \mathbf{X}_1$, 
  \[
     ||V||_{\mathbf{X}}+|(\beta_1,\beta_2)|\le C||\beta_1V^1_{-\frac{1}{2},\gamma}+\beta_2V^2_{-\frac{1}{2},\gamma}+V||_{\mathbf{X}}.
  \] 
\end{lem}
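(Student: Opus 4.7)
The plan is to follow the template of the proof of Lemma \ref{lem4_1_10} essentially verbatim, now with $\mu = -\frac{1}{2}$ fixed and only $\gamma$ varying over the compact set $K \subset (-1,\infty)$. I would argue by contradiction: suppose the inequality fails for every $C = i \in \mathbb{N}$. Then there exist sequences $\gamma^i \in K$, $(\beta_1^i, \beta_2^i) \in \mathbb{R}^2$, and $V^i \in \mathbf{X}_1$ with
\[
   \|V^i\|_\mathbf{X} + |(\beta_1^i, \beta_2^i)| = 1, \qquad \|\beta_1^i V^1_{-\frac{1}{2},\gamma^i} + \beta_2^i V^2_{-\frac{1}{2},\gamma^i} + V^i\|_\mathbf{X} \leq \frac{1}{i}.
\]

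Next, I would extract convergent subsequences. Compactness of $K$ gives $\gamma^i \to \gamma \in K$; boundedness of $(\beta_1^i, \beta_2^i)$ gives $(\beta_1^i, \beta_2^i) \to (\beta_1, \beta_2) \in \mathbb{R}^2$. By Lemma \ref{lem4_8}, the maps $\gamma \mapsto V^j_{-\frac{1}{2},\gamma}$ are continuous from $(-1,\infty)$ to $\mathbf{X}$, so
\[
   V^j_{-\frac{1}{2},\gamma^i} \longrightarrow V^j_{-\frac{1}{2},\gamma} \quad \text{in } \mathbf{X}, \quad j = 1,2.
\]
Combining this with $\|\beta_1^i V^1_{-\frac{1}{2},\gamma^i} + \beta_2^i V^2_{-\frac{1}{2},\gamma^i} + V^i\|_\mathbf{X} \to 0$ forces
\[
   V^i \longrightarrow V := -\bigl(\beta_1 V^1_{-\frac{1}{2},\gamma} + \beta_2 V^2_{-\frac{1}{2},\gamma}\bigr) \quad \text{in } \mathbf{X}.
\]

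Then I would use the direct sum decomposition (\ref{eq4_2xb}) to finish. Since $\mathbf{X}_1$ is a closed subspace of $\mathbf{X}$ (being the intersection of kernels of the bounded functionals $l_1, l_2$), the limit $V$ lies in $\mathbf{X}_1$. At the same time $V \in \mathrm{span}\{V^1_{-\frac{1}{2},\gamma}, V^2_{-\frac{1}{2},\gamma}\}$, and by (\ref{eq4_2xb}) the intersection of these two subspaces is $\{0\}$. Hence $V = 0$, and the linear independence of $V^1_{-\frac{1}{2},\gamma}, V^2_{-\frac{1}{2},\gamma}$ yields $\beta_1 = \beta_2 = 0$. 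But then $V^i \to 0$ in $\mathbf{X}$ as well, so $\|V^i\|_\mathbf{X} + |(\beta_1^i, \beta_2^i)| \to 0$, contradicting the normalization.

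The only step requiring attention is the continuity input $V^j_{-\frac{1}{2},\gamma^i} \to V^j_{-\frac{1}{2},\gamma}$ in the $\mathbf{X}$ norm; this is precisely the content of Lemma \ref{lem4_8}, which was proved using the explicit asymptotic estimates (\ref{eqcor3_1_3}) for the $\gamma$-derivatives of $U^{-\frac{1}{2},\gamma}_\theta$ near $x = -1$ in the $\mu = -\frac{1}{2}$ regime. Once that ingredient is in hand, the rest of the argument is an essentially formal repetition of the one used for Lemma \ref{lem4_1_10}, since the only structural facts about the Banach space $\mathbf{X}$ and its subspaces that enter are that $\mathbf{X}_1$ is closed and that the two kernel vectors remain independent and complementary to $\mathbf{X}_1$ uniformly in $\gamma \in K$.
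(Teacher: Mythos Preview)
Your proposal is correct and matches the paper's approach exactly: the paper simply states that the lemma follows ``by similar arguments in the proof of Lemma~\ref{lem4_1_10}, using Lemma~\ref{lem4_8},'' and your write-up spells out precisely that argument.
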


\noindent \emph{Proof of Theorem \ref{thm4_2}:}
Define a map $F: K\times\mathbb{R}^2\times \mathbf{X}_1\to \mathbf{Y}$ by
\[
    F(\gamma,\beta_1,\beta_2,V)=G(\gamma,\beta_1V^1_{-\frac{1}{2},\gamma}+\beta_2V^2_{-\frac{1}{2},\gamma}+V).
\]
By Proposition \ref{prop4_2}, $G$ is a $C^{\infty}$ map from $K\times \mathbf{X}$ to $\mathbf{Y}$. Let $\tilde{U}=\tilde{U}(\gamma,\beta_1,\beta_2,V)=\beta_1V^1_{-\frac{1}{2},\gamma}+\beta_2V^2_{-\frac{1}{2},\gamma}+V$. Using Lemma \ref{lem4_8}, we have $\tilde{U}\in C^{\infty}(K\times\mathbb{R}^2\times \mathbf{X}_1, \mathbf{X})$. So $F\in C^{\infty}(K\times\mathbb{R}^2\times \mathbf{X}_1,\mathbf{Y})$. 

Next, by definition $F(\gamma,0,0,0)=0$ for all $\gamma\in K$. Fix some $\bar{\gamma}\in K$, using Lemma \ref{lem4_2iso}, we have $F_{V}( \bar{\gamma}, 0,0,0)=L_0^{-\frac{1}{2}, \bar{\gamma}}:\mathbf{X}_1\to \mathbf{Y}$ is an isomorphism.

Applying Theorem C, there exist some $\delta>0$ and a unique $V\in C^{\infty}(B_{\delta}(\bar{\gamma})\times B_{\delta}(0), \mathbf{X}_1 )$, such that
\[
    F(\gamma,\beta_1,\beta_2,V(\gamma,\beta_1,\beta_2))=0, \quad \forall \gamma\in B_{\delta}(\bar{\gamma}), (\beta_1,\beta_2)\in B_{\delta}(0),
\]
and 
\[
   V(\bar{\gamma},0,0)=0.
\]
The uniqueness part of Theorem C holds in the sense that there exists some $0<\bar{\delta}<\delta$,  such that $B_{\bar{\delta}}(\bar{\gamma},0,0,0)\cap F^{-1}(0) \subset  \{(\gamma,\beta_1,\beta_2,V(\gamma,\beta_1,\beta_2))|(\gamma)\in B_{\delta}(\bar{\gamma}), \beta\in B_{\delta}(0)\}$.

\textbf{Claim}: there exists some $0<\delta_1<\frac{\bar{\delta}}{2}$, such that $V(\gamma,0,0)=0$ for every $\gamma\in B_{\delta_1}(\bar{\gamma})$. 

\emph{Proof of the claim:}
   Since $V(\bar{\gamma},0,0)=0$ and $V(\gamma,0,0)$ is continuous in $\gamma$, there exists some $0<\delta_1<\frac{\bar{\delta}}{2}$, such that for all $\gamma\in B_{\delta_1}(\bar{\gamma})$, $(\gamma, 0,0, V(\gamma,0,0))\in B_{\bar{\delta}(\bar{\gamma},0,0,0)}$. We know that for all $\gamma\in B_{\delta_1}(\bar{\gamma})$,
   \[
      F(\gamma, 0,0,0)=0,  
      \]
       and 
       \[
          F(\gamma, 0,0, V(\gamma,0,0))=0.
       \]
By the above mentioned uniqueness result, $V(\gamma,0,0)=0$, for every $\gamma\in B_{\delta_1}(\bar{\gamma})$.

Now we have $V\in C^{\infty}(B_{\delta_1}(\bar{\gamma})\times B_{\delta_1}(0), \mathbf{X}_1 )$, and 
\[
     F(\gamma,\beta_1,\beta_2,V(\gamma,\beta_1,\beta_2))=0, \quad \forall \gamma\in B_{\delta_1}( \bar{\gamma}), (\beta_1,\beta_2)\in B_{\delta_1}(0).
\]
i.e.
\[
    G(\gamma, \beta_1V^1_{-\frac{1}{2}, \gamma}+\beta_2V^2_{-\frac{1}{2}, \gamma}+V(\gamma,\beta_1,\beta_2) )=0, \quad \forall \gamma\in B_{\delta_1}(\bar{\gamma}), (\beta_1,\beta_2)\in B_{\delta_1}(0).
\]
Take derivative of the above with respect to $\beta_i$ at $(\gamma, 0)$, i=1,2, we have
\[
   G_{\tilde{U}}(\gamma,0)(V^i_{-\frac{1}{2},\gamma}+\partial_{\beta_i}V(\gamma,0,0))=0.
\]
Since $G_{\tilde{U}}(\gamma,0)V^i_{-\frac{1}{2},\gamma}=0$ by Lemma \ref{lem4_7}, we have 
\[
   G_{\tilde{U}}(\gamma,0)\partial_{\beta_i}V(\gamma,0,0)=0.
\]
But $\partial_{\beta_i}V(\gamma,0,0)\in \mathbf{X}_1$, so 
\[
    \partial_{\beta_i}V(\gamma,0,0)=0, \quad i=1,2.
\]
Since $K$ is compact, we can take $\delta_1$ to be a universal constant for each  $\gamma\in K$. So we have proved the existence of $V$ in Theorem \ref{thm4_2}.

Next, let $\gamma\in B_{\delta_1}(\bar{\gamma})$. Let $\delta'$ be a small constant to be determined.  For any $U$ satisfying the equation (\ref{eq4_0}) with $U-U^{-\frac{1}{2},\gamma}\in \mathbf{X}$, and $||U-U^{-\frac{1}{2},\gamma}||_{ \mathbf{X}}\le \delta'$ there exist some $\beta_1,\beta_2\in \mathbb{R}$ and $V^*\in \mathbf{X}_1$ such that
 \[
     U-U^{-\frac{1}{2},\gamma}=\beta_1V^1_{-\frac{1}{2},\gamma}+\beta_2V^2_{-\frac{1}{2},\gamma}+V^*.
 \]
Then by Lemma \ref{lem4_2_10}, there exists some constant $C>0$ such that
\[
    \frac{1}{C}(|(\beta_1,\beta_2)|+||V^*||_{\mathbf{X}})\le ||\beta_1V^1_{-\frac{1}{2},\gamma}+\beta_2V^2_{-\frac{1}{2},\gamma}+V^*||_{\mathbf{X}}\le \delta'.
\]
This gives $||V^*||_{\mathbf{X}}\le C\delta'$.

 Choose $\delta'$ small enough such that $C\delta'<\delta_1$. We have the uniqueness of $V^*$. So  $V^*=V(\gamma,\beta_1,\beta_2)$ in (\ref{eq_thm4_2_1}). The theorem is proved.  \qed

\subsection{Existence of solutions with nonzero swirl near $U^{\mu,\gamma}$ when $(\mu, \gamma)\in I_3\cap \{ - \frac{1}{2} \le \mu < - \frac{3}{8} \}$}

Next we look at the problem near $U^{\mu,\gamma}$ when $\mu\ge -\frac{1}{2}$ and $\gamma=-(1+\sqrt{1+2\mu})$. For such a fixed $(\mu,\gamma)$, write $\bar{U}=U^{\mu,\gamma}$. Recall that in Corollary \ref{cor3_1} we have
\begin{equation}\label{eq4_3_0}
  \bar{U}_{\theta}=(1-x)(1+\sqrt{1+2\mu}).
\end{equation}
It satisfies 
\[
     (1-x^2)\bar{U}'_{\theta}+2x\bar{U}_{\theta}+\frac{1}{2}\bar{U}^2_{\theta}=\mu (1-x)^2.
\]

We will work with $\tilde{U}=U-\bar{U}$.  Given a compact subset $K\in (-\frac{1}{2}, -\frac{3}{8})$ or $K=\{-\frac{1}{2}\}$, there exists  an $\epsilon>0$, depending only on $K$, satisfying $\displaystyle \max_{\mu\in K}\sqrt{1+2\mu}<\epsilon<\frac{1}{2}$. For this fixed $\epsilon$, define

\begin{equation*}
 \begin{split}
 &
 \begin{split}
 \mathbf{M}_1 = & \mathbf{M}_1(\epsilon) \\
:= & \left\{  \tilde{U}_\theta \in C([-1, 1], \mathbb{R}) \cap C^1((-1, 1], \mathbb{R}) \cap C^2((0, 1), \mathbb{R}) \mid  \tilde{U}_\theta(1)=\tilde{U}_\theta(-1)=0,\right.\\
	 & \left.  ||(1+x)^{-1+2\epsilon}\tilde{U}_\theta||_{L^\infty(-1,1)}<\infty,  ||(1+x)^{2\epsilon}\tilde{U}'_\theta||_{L^\infty(-1,1)} < \infty, ||\tilde{U}_\theta''||_{L^\infty(0,1)} < \infty  \right\},
  \end{split}\\
   &
  \begin{split}
\mathbf{M}_2 = & \mathbf{M}_2(\epsilon) \\
:=& \left\{  \tilde{U}_\phi \in C^1( (-1, 1], \mathbf{R})\cap C^2( (-1, 1), \mathbf{R}) \mid \tilde{U}_\phi(1)=0, ||(1+x)^{\epsilon}\tilde{U}_\phi ||_{L^\infty(-1,1)} < \infty, \right.\\
	& \left. ||(1+x)^{1+\epsilon} \tilde{U}_\phi'||_{L^\infty(-1,1)} < \infty, ||(1+x)^{2+\epsilon} \tilde{U}_\phi'' ||_{L^\infty(-1,1)} <\infty \right\} 
   \end{split}
\end{split}
\end{equation*}
 with the following norms accordingly:
\begin{equation*}
 \begin{split}
    & ||\tilde{U}_\theta||_{\mathbf{M}_1}:= ||(1+x)^{-1+2\epsilon}\tilde{U}_\theta||_{L^\infty(-1,1)} + ||(1+x)^{2\epsilon}\tilde{U}_\theta'||_{L^\infty(-1,1)} + ||\tilde{U}_\theta''||_{L^\infty(0,1)}, \\
   &	||\tilde{U}_\phi||_{\mathbf{M}_2}:=  ||(1+x)^{\epsilon}\tilde{U}_\phi||_{L^\infty(-1,1)} + ||(1+x)^{1+\epsilon} \tilde{U}_\phi'||_{L^\infty(-1,1)}  + ||(1+x)^{2+\epsilon} \tilde{U}_\phi'' ||_{L^\infty(-1,1)}.
 \end{split}
\end{equation*}
Next, define
\begin{equation*}
  \begin{split}
  &\begin{split}
   \mathbf{N}_1= \mathbf{N}_1(\epsilon):=  & \left\{  \xi_\theta \in C( (-1, 1], \mathbb{R}) \cap C^1( (0, 1], \mathbb{R}) \mid  \xi_\theta(1)=\xi'_{\theta}(1)=\xi_{\theta}(-1)=0, \right.\\
    & \left.  ||(1+x)^{-1+2\epsilon}\xi_\theta||_{L^\infty(-1,1)} < \infty, 
          ||\frac{\xi_\theta'}{1-x}||_{L^\infty(0,1)} < \infty \right\},
   \end{split}\\
  &   \mathbf{N}_2=\mathbf{N}_2(\epsilon):= \left\{  \xi_\phi \in C( (-1, 1], \mathbb{R}) \mid  \xi_\phi(1)=0, ||\frac{(1+x)^{1+\epsilon} \xi_\phi}{1-x}||_{L^\infty(-1,1)} < \infty  \right\} 
  \end{split}
\end{equation*}
with the following norms accordingly:
\begin{equation*}
  \begin{split}
   & ||\xi_\theta||_{\mathbf{N}_1}:= ||(1+x)^{-1+2\epsilon}\xi_\theta||_{L^\infty(-1,1)} + ||\frac{\xi_\theta'}{1-x}||_{L^\infty(0,1)}, \\
   &||\xi_\phi||_{\mathbf{N}_2}:= ||\frac{(1+x)^{1+\varepsilon} \xi_\phi}{1-x}||_{L^\infty(-1,1)}.
  \end{split}
\end{equation*}

Let $\mathbf{X}:= \{ \tilde{U} = (\tilde{U}_\theta, \tilde{U}_\phi) \mid \tilde{U}_\theta\in \mathbf{M}_1, \tilde{U}_\phi\in \mathbf{M}_2\}$ with the norm $||\tilde{U}||_\mathbf{X}:= ||\tilde{U}_\theta||_{\mathbf{M}_1} + ||\tilde{U}_\phi||_{\mathbf{M}_2}$, and $\mathbf{Y}:= \{ \xi = (\xi_\theta, \xi_\phi) \mid \xi_\theta\in \mathbf{N}_1, \xi_\phi\in \mathbf{N}_2 \}$ with the norm $||\xi||_\mathbf{Y}:= ||\xi_\theta||_{\mathbf{N}_1} + ||\xi_\phi||_{\mathbf{N}_2}$. It is not difficult to verify that $\mathbf{M}_1$, $\mathbf{M}_2$, $\mathbf{N}_1$, $\mathbf{N}_2$, $\mathbf{X}$ and $\mathbf{Y}$ are Banach spaces.

Let $l_2:\mathbf{X}\to \mathbb{R}$ be the bounded linear functional defined by (\ref{def_l}) for each $V\in \mathbf{X}$. Define 
\begin{equation}\label{eq4_3x}
	\mathbf{X}_1:= \ker l_2.
\end{equation}

\begin{thm}\label{thm4_3}
  For every  compact subset  $K$ of $(-\frac{1}{2},-\frac{3}{8})$ or $K=\{-\frac{1}{2}\}$,  there exist $\delta=\delta(K)>0$, and $V\in C^{\infty}(K\times B_{\delta}(0), \mathbf{X}_1)$ satisfying $V(\mu,0)=0$ and $\displaystyle \frac{\partial V}{\partial \beta}|_{\beta=0}=0$, such that 
\begin{equation}\label{eq_thm4_3_1}
   U=U^{\mu, -1-\sqrt{1+2\mu}}+\beta V_{\mu,-1-\sqrt{1+2\mu}}^2+V(\mu, \beta)
\end{equation}
satisfies  equation (\ref{eq4_0})  with $\displaystyle \hat{\mu}=\mu-\frac{1}{4}\psi[U_{\phi}](-1)$.  Moreover, there exists some $\delta'=\delta'(K)>0$, such that if $||U-U^{\mu,-1-\sqrt{1+2\mu}}||_{\mathbf{X}}<\delta'$, $\mu\in K$,  and $U$ satisfies  equation (\ref{eq4_0}) with some constant $\hat{\mu}$, then (\ref{eq_thm4_3_1}) holds for some $|\beta|<\delta$ .
\end{thm}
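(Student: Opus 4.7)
The plan is to follow the same implicit function theorem strategy employed in the proofs of Theorem \ref{thm4_1} and Theorem \ref{thm4_2}, adapted to the weighted spaces $\mathbf{X},\mathbf{Y}$ defined above, with the essential structural difference that the kernel of the linearized operator on $\mathbf{X}$ is now one-dimensional, which is why only a single parameter $\beta$ appears in $(\ref{eq_thm4_3_1})$. Explicitly, set $b=\sqrt{1+2\mu}$ so that $\bar{U}_\theta=(1+b)(1-x)$ is linear; a direct computation of $(\ref{eq_ab})$ gives $a_{\mu,\gamma}(x)=-\ln(1-x)+b\ln(1+x)$ and $b_{\mu,\gamma}(x)=(1+b)\ln(1+x)$, so $e^{-a_{\mu,\gamma}(x)}=(1-x)(1+x)^{-b}$ and $e^{-b_{\mu,\gamma}(t)}=(1+t)^{-1-b}$. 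By the choice $\max_{\mu\in K}b<\epsilon<1/2$, one checks that $V^2_{\mu,-1-b}\in\mathbf{X}$ while $V^1_{\mu,-1-b}\notin\mathbf{X}$ (its $\theta$-component grows like $(1+x)^{-b}$ which fails the weight at $x=-1$), and $V^3_{\mu,-1-b}\notin\mathbf{X}$ because of the boundary condition at $x=1$.

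First I would prove the analogues of Lemma \ref{lemP4_1} and Lemma \ref{lem_xithe}: for $\tilde{U}\in\mathbf{X}$ one has $|\tilde{U}_\theta(s)|\le C(1-s)(1+s)^{1-2\epsilon}\|\tilde{U}_\theta\|_{\mathbf{M}_1}$ and $|\tilde{U}_\phi(s)|\le (1-s)(1+s)^{-\epsilon}\|\tilde{U}_\phi\|_{\mathbf{M}_2}$, and for $\xi_\theta\in\mathbf{N}_1$, $|\xi_\theta(x)|\le C(1+x)^{1-2\epsilon}(1-x)^2\|\xi_\theta\|_{\mathbf{N}_1}$; these follow from the mean value theorem exactly as before. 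Using these pointwise controls together with the simple form of $\bar{U}_\theta$, I would then verify that the splitting $G(\mu,-1-b,\tilde{U})=A(\mu,-1-b,\tilde{U})+Q(\tilde{U},\tilde{U})$ with $A,Q$ as in $(\ref{eqP4_2})$--$(\ref{eqP4_3})$ defines continuous linear and bilinear maps into $\mathbf{Y}$, and that $\mu\mapsto\bar{U}_\theta=(1+b)(1-x)$ and all its $\mu$-derivatives are multiples of $(1-x)$ with coefficients bounded on $K$, yielding the $C^\infty$ regularity of $G:K\times\mathbf{X}\to\mathbf{Y}$ together with the formula $(\ref{eq4_Linear})$ for its Fréchet derivative.

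The main obstacle is the construction of a right inverse $W^{\mu,-1-b}:\mathbf{Y}\to\mathbf{X}$ of $L_0^{\mu,-1-b}$, defined by $(\ref{eq4_1_6})$, and showing it is bounded. Near $x=1$ the estimates are identical to the earlier cases. Near $x=-1$, using $e^{a(s)}=(1-s)^{-1}(1+s)^{b}$, one obtains
\[
|W_\theta(\xi)(x)|\le C\|\xi_\theta\|_{\mathbf{N}_1}\,(1-x)(1+x)^{-b}\int_0^x(1+s)^{b-2\epsilon}\,ds,
\]
which, since $b<\epsilon<1/2$, gives $(1+x)^{-1+2\epsilon}|W_\theta(\xi)|\le C\|\xi_\theta\|_{\mathbf{N}_1}$. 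Similarly, using $e^{b(s)}=(1+s)^{1+b}$ and writing
\[
W_\phi(\xi)(x)=\int_x^1(1+t)^{-1-b}\int_t^1(1+s)^{b-1}\xi_\phi(s)(1-s)^{-1}\,ds\,dt,
\]
the bound $|\xi_\phi(s)|\le C(1-s)(1+s)^{-1-\epsilon}\|\xi_\phi\|_{\mathbf{N}_2}$ and integration yield $(1+x)^\epsilon|W_\phi|+(1+x)^{1+\epsilon}|W_\phi'|+(1+x)^{2+\epsilon}|W_\phi''|\le C\|\xi_\phi\|_{\mathbf{N}_2}$; here the inequality $b<\epsilon$ is essential and is exactly the algebraic reason we cannot cover $\mu\ge -\frac{3}{8}$ by this argument.

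Having $W$, I would verify (as in Lemmas \ref{lem4_3}, \ref{lem4_7}) that $\ker L_0^{\mu,-1-b}|_\mathbf{X}=\mathrm{span}\{V^2_{\mu,-1-b}\}$ and deduce from the splitting $\mathbf{X}=\mathrm{span}\{V^2_{\mu,-1-b}\}\oplus\mathbf{X}_1$ (where $\mathbf{X}_1=\ker l_2$ and $l_2(V^2_{\mu,-1-b})=\int_0^1 e^{-b_{\mu,-1-b}(t)}dt>0$) that $L_0^{\mu,-1-b}:\mathbf{X}_1\to\mathbf{Y}$ is an isomorphism. A parameter-smoothness lemma $V^2\in C^\infty(K,\mathbf{X})$ is proved by differentiating $e^{-b_{\mu,-1-b}(t)}=(1+t)^{-1-b}$ in $\mu$ and using that $\sup_{\mu\in K} b<\epsilon$, and a uniform lower bound $\|V\|_\mathbf{X}+|\beta|\le C\|\beta V^2_{\mu,-1-b}+V\|_\mathbf{X}$ is established by the compactness contradiction argument of Lemma \ref{lem4_1_10}. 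Finally, defining
\[
F(\mu,\beta,V):=G(\mu,-1-\sqrt{1+2\mu},\beta V^2_{\mu,-1-b}+V):K\times\mathbb{R}\times\mathbf{X}_1\to\mathbf{Y},
\]
we have $F(\mu,0,0)=0$ and $F_V(\bar\mu,0,0)=L_0^{\bar\mu,-1-b}$ is an isomorphism, so Theorem C produces the unique $V(\mu,\beta)\in\mathbf{X}_1$ with $F(\mu,\beta,V(\mu,\beta))\equiv 0$; the claim $V(\mu,0)=0$ follows from uniqueness, and differentiating the identity in $\beta$ at $\beta=0$ together with $L_0 V^2=0$ gives $\partial_\beta V|_{\beta=0}=0$. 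The uniqueness/second half of the statement is obtained exactly as at the end of the proof of Theorem \ref{thm4_1}, using the uniform splitting estimate above.
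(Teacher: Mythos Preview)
Your overall strategy matches the paper's and is correct, but there is one concrete gap in the construction of the right inverse. You take $W_\theta$ from formula $(\ref{eq4_1_6})$, i.e.\ with lower limit $0$, and then claim that
\[
|W_\theta(\xi)(x)|\le C\|\xi_\theta\|_{\mathbf{N}_1}\,(1-x)(1+x)^{-b}\Bigl|\int_0^x(1+s)^{b-2\epsilon}\,ds\Bigr|
\]
yields $(1+x)^{-1+2\epsilon}|W_\theta(\xi)|\le C$. This fails: for $-1<x\le 0$ the integral $\int_x^0(1+s)^{b-2\epsilon}ds$ tends to the nonzero constant $\int_{-1}^0(1+s)^{b-2\epsilon}ds$ as $x\to -1^+$ (the integrand is integrable since $b-2\epsilon>-1$), so your bound only gives $|W_\theta(x)|\le C(1+x)^{-b}$, and $(1+x)^{-1+2\epsilon}|W_\theta|$ blows up like $(1+x)^{2\epsilon-1-b}$. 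This is not an artifact of the estimate: with lower limit $0$, $W_\theta(\xi)$ differs from any $\mathbf{M}_1$-solution by a generically nonzero multiple of $(V^1_{\mu,-1-b})_\theta=e^{-a(x)}=(1-x)(1+x)^{-b}$, which you yourself noted is not in $\mathbf{M}_1$ (when $b>0$ it grows, and even when $b=0$ it violates $\tilde U_\theta(-1)=0$).

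The paper's fix is precisely to shift the lower limit to $-1$,
\[
W^\mu_\theta(\xi)(x):=e^{-a_\mu(x)}\int_{-1}^{x}e^{a_\mu(s)}\frac{\xi_\theta(s)}{1-s^2}\,ds,
\]
which is well defined because $e^{a(s)}\xi_\theta(s)/(1-s^2)=O((1+s)^{b-2\epsilon})$ is integrable at $s=-1$. Now the integral itself is $O((1+x)^{b-2\epsilon+1})$, the factor $(1+x)^{-b}$ from $e^{-a(x)}$ cancels, and one obtains $|W_\theta(x)|\le C(1-x)(1+x)^{1-2\epsilon}$, hence $W_\theta\in\mathbf{M}_1$. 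This change of base point is the only essential new ingredient distinguishing the proof of Theorem~\ref{thm4_3} from those of Theorems~\ref{thm4_1} and~\ref{thm4_2}; the remainder of your outline (the pointwise lemmas, boundedness of $A$ and $Q$, the one-dimensional kernel, the isomorphism $L_0^\mu:\mathbf{X}_1\to\mathbf{Y}$, smoothness of $V^2_\mu$ in $\mu$, the uniform splitting estimate, and the IFT argument) agrees with the paper.
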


To prove Theorem \ref{thm4_3}, we first study the properties of the Banach spaces $\mathbf{X}$ and $\mathbf{Y}$.

With the fixed $\epsilon$, we have

\begin{lem}\label{lem4_3_1}
   For every $\tilde{U}\in \mathbf{X}$, it satisfies 
   \begin{equation}\label{eq4_3_1}
      |\tilde{U}_{\phi}(s)|\le (1-s)(1+s)^{-\epsilon}||\tilde{U}_{\phi}||_{\mathbf{M}_2},  \quad \forall -1<s<1,
   \end{equation}
   \begin{equation}\label{eq4_3_2}
      |\tilde{U}_{\theta}(s)|\le (1-s)(1+s)^{1-2\epsilon}||\tilde{U}_{\theta}||_{\mathbf{M}_1},  \quad \forall -1<s<1.
   \end{equation}
\end{lem}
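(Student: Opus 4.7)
The plan is to prove the two bounds separately, treating $s\in(0,1)$ and $s\in(-1,0]$ as distinct regimes in each case, exactly in the style of the earlier Lemmas \ref{lemP4_1} and \ref{lemP4_2_1}. The vanishing of $\tilde{U}_\theta$ and $\tilde{U}_\phi$ at $x=1$ (built into the definitions of $\mathbf{M}_1$ and $\mathbf{M}_2$) will let me apply the mean value theorem on the $s>0$ side, while the weighted $L^\infty$ control near $x=-1$ handles the $s\le 0$ side directly; the factor $(1-s)\ge 1$ for $s\le 0$ is then inserted for free to unify the expressions.

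For (\ref{eq4_3_1}), on $s\in(0,1)$ I would pick $y\in(s,1)$ with $\tilde{U}_\phi(s)=-\tilde{U}_\phi'(y)(1-s)$ via the MVT (using $\tilde{U}_\phi(1)=0$), and estimate $|\tilde{U}_\phi'(y)|\le (1+y)^{-1-\epsilon}\|\tilde{U}_\phi\|_{\mathbf{M}_2}\le\|\tilde{U}_\phi\|_{\mathbf{M}_2}$, giving $|\tilde{U}_\phi(s)|\le(1-s)\|\tilde{U}_\phi\|_{\mathbf{M}_2}$, which is absorbed into the claimed form since the weights are mutually bounded on $[0,1]$. On $s\in(-1,0]$, the weighted bound from $\mathbf{M}_2$ yields $|\tilde{U}_\phi(s)|\le(1+s)^{-\epsilon}\|\tilde{U}_\phi\|_{\mathbf{M}_2}$ directly, and I multiply by $(1-s)\ge 1$.

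For (\ref{eq4_3_2}), on $s\in(0,1)$ I again use $\tilde{U}_\theta(1)=0$ and the MVT to write $\tilde{U}_\theta(s)=-\tilde{U}_\theta'(y)(1-s)$ for some $y\in(s,1)$, then bound $|\tilde{U}_\theta'(y)|\le(1+y)^{-2\epsilon}\|(1+x)^{2\epsilon}\tilde{U}_\theta'\|_{L^\infty}\le\|\tilde{U}_\theta\|_{\mathbf{M}_1}$; again the weights $(1+s)^{1-2\epsilon}$ are bounded above and below by constants on $[0,1]$, so this folds into the claimed form. On $s\in(-1,0]$ the $L^\infty$ control of $(1+x)^{-1+2\epsilon}\tilde{U}_\theta$ gives $|\tilde{U}_\theta(s)|\le(1+s)^{1-2\epsilon}\|\tilde{U}_\theta\|_{\mathbf{M}_1}$, and I again multiply by $(1-s)\ge 1$ to match (\ref{eq4_3_2}).

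There is no real obstacle: this is a routine extraction of pointwise bounds from the weighted norms, and the only bookkeeping is to verify that the weights $(1+s)^{-\epsilon}$ and $(1+s)^{1-2\epsilon}$ (together with the factor $(1-s)$) are large enough on their respective subintervals to dominate the cleaner MVT-based bounds, possibly up to an absorbed universal constant; since $\epsilon<\tfrac12$ is fixed this is immediate.
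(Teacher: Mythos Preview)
Your proposal is correct and follows exactly the template of the analogous Lemmas~\ref{lemP4_1} and~\ref{lemP4_2_1}; in fact the paper omits the proof of Lemma~\ref{lem4_3_1} entirely, tacitly invoking those earlier arguments. One small sharpening: on $s\in(0,1)$ you do not need to absorb any constant, since for $y\in(s,1)$ one has $(1+y)^{-1-\epsilon}\le(1+s)^{-1-\epsilon}\le(1+s)^{-\epsilon}$ and $(1+y)^{-2\epsilon}\le(1+s)^{-2\epsilon}\le(1+s)^{1-2\epsilon}$, which yields the stated bounds with constant exactly~$1$.
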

\begin{lem}\label{lem4_3_2}
  For every $\xi_{\theta}\in \mathbf{N}_1$, 
  \begin{equation}\label{eq4_3_3}
     |\xi_{\theta}(s)|\le (1-s)^2(1+s)^{1-2\epsilon}||\xi_{\theta}||_{\mathbf{N}_1}, \quad \forall -1<s<1.
  \end{equation}
\end{lem}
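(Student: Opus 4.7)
The plan is to imitate the proofs of the analogous Lemmas \ref{lem_xithe} and \ref{lemP4_2_xi} from the two preceding subsections, adapting the power to the new weight $(1+x)^{-1+2\epsilon}$ that appears in the norm on $\mathbf{N}_1$ here. Recall from the setup of this subsection that $\epsilon$ satisfies $\max_{\mu\in K}\sqrt{1+2\mu} < \epsilon < \frac{1}{2}$, so in particular $1-2\epsilon>0$. This positivity is what makes the factor $(1+s)^{1-2\epsilon}$ bounded below by $1$ on $[0,1)$, which is needed on the ``north pole'' side.

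I would split the estimate into two cases depending on the sign of $s$.

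\textbf{Case $0\le s<1$.} Since $\xi_\theta\in C((-1,1],\mathbb{R})\cap C^1((0,1],\mathbb{R})$ with $\xi_\theta(1)=0$, the mean value theorem gives some $y\in(s,1)$ with $\xi_\theta(s) = -\xi_\theta'(y)(1-s)$. The $\mathbf{N}_1$-norm controls $|\xi_\theta'(y)|\le (1-y)\|\xi_\theta\|_{\mathbf{N}_1}$, so
\[
 |\xi_\theta(s)| \le (1-y)(1-s)\|\xi_\theta\|_{\mathbf{N}_1} \le (1-s)^2 \|\xi_\theta\|_{\mathbf{N}_1}.
\]
Since $1-2\epsilon>0$ and $s\ge 0$, we have $(1+s)^{1-2\epsilon}\ge 1$, so we may insert this factor on the right without loss, giving the claimed bound.

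\textbf{Case $-1<s\le 0$.} Here $(1-s)^2\ge 1$, so it suffices to show $|\xi_\theta(s)|\le (1+s)^{1-2\epsilon}\|\xi_\theta\|_{\mathbf{N}_1}$. This is immediate from the definition of the norm: $|(1+s)^{-1+2\epsilon}\xi_\theta(s)|\le \|\xi_\theta\|_{\mathbf{N}_1}$.

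There is no real obstacle here; the only point to watch is the sign of $1-2\epsilon$, which hinges on $\epsilon<\frac{1}{2}$ (guaranteed by the choice of $\epsilon$ for $K\subset (-\tfrac{1}{2},-\tfrac{3}{8})\cup\{-\tfrac{1}{2}\}$). Once that is noted, the two pointwise estimates combine to yield the lemma on all of $(-1,1)$.
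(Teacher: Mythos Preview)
Your proof is correct and follows exactly the approach the paper uses for the analogous Lemmas \ref{lem_xithe} and \ref{lemP4_2_xi}; indeed the paper states Lemma \ref{lem4_3_2} without proof precisely because it is the same two-case argument with the weight $(1+s)^{1-2\epsilon}$ in place of $(1+s)^{1-\epsilon}$, and your observation that $\epsilon<\tfrac{1}{2}$ ensures $(1+s)^{1-2\epsilon}\ge 1$ on $[0,1)$ is the only new point to check.
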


Now let $K$ be a compact subset of $(-\frac{1}{2},-\frac{3}{8})$ or $K=\{-\frac{1}{2}\}$. For $\tilde{U}_{\phi}\in \mathbf{M}_2$, let $\psi[\tilde{U}_{\phi}](x)$ be defined by (\ref{eqpsi4_1}). Then define a map $G$ on $K\times\mathbf{X}$ such that for each $(\mu,\tilde{U})\in K\times\mathbf{X}$, $G(\mu,\tilde{U})=G(\mu,-1-\sqrt{1+2\mu},\tilde{U})$ given by (\ref{G}) with $\bar{U}_{\theta}$ in (\ref{eq4_3_0}). If $\tilde{U}$ satisfies $G(\mu, \tilde{U})=0$, then $U=\tilde{U}+\bar{U}$ gives a solution of (\ref{eq4_0}) with $\hat{\mu}=\mu - \frac{1}{4}\psi[\tilde{U}_{\phi}](-1)$, satisfying $U_{\theta}(-1)=\bar{U}_{\theta}(-1)$.

\begin{prop}\label{prop4_3}
The map $G$ is in $C^{\infty}(K\times \mathbf{X},\mathbf{Y})$ in the sense that $G$ has continuous Fr\'{e}chet derivatives of every order.  Moreover, the Fr\'{e}chet derivative of $G$ with respect to $\tilde{U}$ at $(\mu,\tilde{U})\in K\times \mathbf{X}$ is given by the linear operator $L^{\mu}_{\tilde{U}}:\mathbf{X}\rightarrow \mathbf{Y}$ where $L^{\mu}:=L^{\mu,-1-\sqrt{1+2\mu}}$ defined as in (\ref{eq4_Linear}).
\end{prop}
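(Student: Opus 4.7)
The strategy mirrors the proofs of Propositions \ref{lem_Gcont} and \ref{prop4_2}. I would decompose $G(\mu,\tilde U) = A(\mu,\tilde U) + Q(\tilde U,\tilde U)$ as in (\ref{eqP4_2})--(\ref{eqP4_3}), now with $\bar U_\theta$ given by (\ref{eq4_3_0}), and then prove separately that (a) for each fixed $\mu \in K$ the linear operator $A(\mu,\cdot) : \mathbf{X} \to \mathbf{Y}$ is bounded with norm uniform in $\mu \in K$, with the same true for every $\partial_\mu^i A(\mu,\cdot)$ when $K \Subset (-1/2,-3/8)$, and (b) $Q : \mathbf{X} \times \mathbf{X} \to \mathbf{Y}$ is a bounded bilinear operator (independent of $\mu$). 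Once these two statements are in hand, the conclusion that $G \in C^\infty(K \times \mathbf{X}, \mathbf{Y})$ and the formula (\ref{eq4_Linear}) for $L^\mu_{\tilde U}$ follow by the same standard functional-analytic argument used at the end of Propositions \ref{lem_Gcont} and \ref{prop4_2}.

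The decisive simplification in the present case is that $\bar U_\theta(x) = (1-x)(1+\sqrt{1+2\mu})$ is linear in $x$, so $\bar U_\theta$, $2x + \bar U_\theta$, $\bar U_\theta'$, and $\bar U_\theta/(1-x) = 1+\sqrt{1+2\mu}$ are all bounded uniformly on $K \times [-1,1]$. With these uniform bounds and the pointwise estimates of Lemma \ref{lem4_3_1}, all the $\mathbf{N}_1$ and $\mathbf{N}_2$ bounds for $A_\theta = (1-x^2)\tilde U'_\theta + (2x+\bar U_\theta)\tilde U_\theta$ and $A_\phi = (1-x^2)\tilde U''_\phi + \bar U_\theta \tilde U'_\phi$ become direct verifications, considerably simpler than the corresponding Lemma \ref{lemP4_2_2'}. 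The linearity in $x$ combined with the smoothness of $\mu \mapsto \sqrt{1+2\mu}$ on any $K \Subset (-1/2,-3/8)$ gives the same type of uniform bound for every $\partial_\mu^i A(\mu,\cdot)$; for the degenerate case $K = \{-1/2\}$ only the statement for $i=0$ is required.

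The only genuinely technical step is the bilinear estimate for $Q$, and this is where the choice $\sqrt{1+2\mu} < \epsilon < 1/2$ is used. The integrand satisfies $|\tilde U_\phi(s)\tilde V'_\phi(s)/(1-s^2)| \le C(1+s)^{-2-2\epsilon}||\tilde U_\phi||_{\mathbf{M}_2}||\tilde V_\phi||_{\mathbf{M}_2}$, which fails to be integrable at $s = -1$ in one variable; however the iterated triple integration over $x < l < t < s < 1$ reduces the order of singularity by one at each stage, exactly as in Lemma \ref{lemP4_2_3'}, giving the uniform bound $|\psi(\tilde U,\tilde V)(x)| \le C(1-x)^3 ||\tilde U_\phi||_{\mathbf{M}_2}||\tilde V_\phi||_{\mathbf{M}_2}$ together with the improved bound $|\psi(x) - \psi(-1)| \le C(1+x)^{1-2\epsilon} ||\tilde U_\phi||_{\mathbf{M}_2}||\tilde V_\phi||_{\mathbf{M}_2}$ on $-1 < x \le 0$. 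Combining the two regimes gives
\[
\Bigl|\psi(\tilde U,\tilde V)(x) - \tfrac{(1-x)^2}{4}\psi(\tilde U,\tilde V)(-1)\Bigr| \le C(1+x)^{1-2\epsilon}(1-x)^2 ||\tilde U_\phi||_{\mathbf{M}_2} ||\tilde V_\phi||_{\mathbf{M}_2},
\]
which, together with (\ref{eq4_3_2}) applied to $\tilde U_\theta \tilde V_\theta$, yields $(1+x)^{-1+2\epsilon}|Q_\theta| \le C(1-x)^2 ||\tilde U||_{\mathbf{X}}||\tilde V||_{\mathbf{X}}$. The bound for $|Q_\theta'|/(1-x)$ on $(0,1)$ and the $\mathbf{N}_2$ bound for $Q_\phi = \tilde U_\theta \tilde V'_\phi$ are routine adaptations of the corresponding steps in Lemma \ref{lemP4_2_3'}.

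The main obstacle is therefore the weighted triple-integral bookkeeping for $Q$ near $x = -1$, which needs the $\epsilon < 1/2$ constraint together with the iterated-integration trick to convert a non-integrable pointwise bound into the desired $(1-x)^2(1+x)^{1-2\epsilon}$ control; everything else, and in particular the linear part $A$ together with its dependence on $\mu$, is considerably simpler than in Sections 4.2 and 4.3 thanks to the elementary form of $\bar U_\theta$ given by (\ref{eq4_3_0}).
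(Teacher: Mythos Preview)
Your proposal is correct and follows essentially the same approach as the paper: the paper likewise decomposes $G = A + Q$, proves the analogues of Lemmas~\ref{lemP4_3_1} and~\ref{lemP4_3_2} (the latter by referring back to the proof of Lemma~\ref{lemP4_2_3'} for the $\psi$-estimate you describe), and then handles the $\mu$-derivatives of $A$ via the explicit formula $\partial_\mu^i \bar U_\theta = (1-x)\,\partial_\mu^i\sqrt{1+2\mu}$ from (\ref{eqcor3_1_4}), exactly as you outline. Your observation that the linear form of $\bar U_\theta$ makes the $A$-estimates simpler than in Sections~4.2--4.3, and that the singleton $K=\{-1/2\}$ requires no $\mu$-differentiation, is accurate.
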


To prove Proposition \ref{prop4_3}, we first have the following lemmas:
\begin{lem}\label{lemP4_3_1}
For every $\mu\in K$, the map $A(\mu,-1-\sqrt{1+2\mu},\cdot):\mathbf{X} \to \mathbf{Y}$  defined by (\ref{eqP4_2}) is a bounded linear operator.
\end{lem}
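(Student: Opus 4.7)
The plan is to mimic closely the proofs of Lemma \ref{lemP4_2} and Lemma \ref{lemP4_2_2'}, adapting the weighted estimates to the new norms of $\mathbf{M}_1,\mathbf{M}_2,\mathbf{N}_1,\mathbf{N}_2$ appropriate for the region $I_3\cap\{-\tfrac12\le\mu<-\tfrac38\}$. Linearity of $A(\mu,-1-\sqrt{1+2\mu},\cdot)$ is immediate, so everything reduces to showing that there is a constant $C=C(K)$ with $\|A\tilde U\|_{\mathbf Y}\le C\|\tilde U\|_{\mathbf X}$ for all $\tilde U\in\mathbf X$. The decisive structural input, which makes this case easier than the two preceding ones, is that here $\bar U_\theta(x)=(1+b)(1-x)$ is an honest polynomial with $b=\sqrt{1+2\mu}\le\epsilon<\tfrac12$; hence $\bar U_\theta(1)=0$, $\bar U_\theta\in C^\infty[-1,1]$, and the quantities $2x+\bar U_\theta=(1-b)x+(1+b)$, $\bar U_\theta'=-(1+b)$, $\bar U_\theta/(1-x)=1+b$, $2+\bar U_\theta'=1-b$ are all bounded constants or bounded linear functions with bounds depending only on $K$.

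For the $\theta$-component I would write
$A_\theta=(1-x^2)\tilde U_\theta'+(2x+\bar U_\theta)\tilde U_\theta,$
and estimate $|(1+x)^{-1+2\epsilon}A_\theta|\le (1-x)|(1+x)^{2\epsilon}\tilde U_\theta'|+C|(1+x)^{-1+2\epsilon}\tilde U_\theta|\le C\|\tilde U_\theta\|_{\mathbf M_1}$. The vanishing $A_\theta(\pm1)=0$ follows from $\tilde U_\theta(\pm1)=0$ together with the pointwise bound $|\tilde U_\theta'|\le(1+x)^{-2\epsilon}\|\tilde U_\theta\|_{\mathbf M_1}$ and $\epsilon<\tfrac12$. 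Differentiating yields $A_\theta'=(1-x^2)\tilde U_\theta''+\bar U_\theta\tilde U_\theta'+(2+\bar U_\theta')\tilde U_\theta$; on $(0,1)$, using the bound $\|\tilde U_\theta''\|_{L^\infty(0,1)}\le\|\tilde U_\theta\|_{\mathbf M_1}$ from $\mathbf M_1$ and the linear decay $|\tilde U_\theta(x)|\le(1-x)\|\tilde U_\theta\|_{\mathbf M_1}$ coming from $\tilde U_\theta(1)=0$ (cf.\ Lemma \ref{lem4_3_2}), one gets $|A_\theta'/(1-x)|\le C\|\tilde U_\theta\|_{\mathbf M_1}$, together with $A_\theta'(1)=0$ from $\bar U_\theta(1)=\tilde U_\theta(1)=0$. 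Hence $A_\theta\in\mathbf N_1$ with $\|A_\theta\|_{\mathbf N_1}\le C\|\tilde U_\theta\|_{\mathbf M_1}$.

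For the $\phi$-component I would just bound
$\left|\tfrac{(1+x)^{1+\epsilon}A_\phi}{1-x}\right|\le|(1+x)^{2+\epsilon}\tilde U_\phi''|+(1+b)|(1+x)^{1+\epsilon}\tilde U_\phi'|\le C\|\tilde U_\phi\|_{\mathbf M_2},$
with $A_\phi(1)=0$ following from $\bar U_\theta(1)=0$ and the $(1-x^2)$ prefactor. Adding the two component bounds gives $\|A\tilde U\|_{\mathbf Y}\le C\|\tilde U\|_{\mathbf X}$.

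There is essentially no real obstacle here: because $\bar U_\theta$ is a linear polynomial with no logarithmic or power singularity at $x=-1$, none of the delicate integration or endpoint blow-up analyses that complicated Lemma \ref{lem4_1} and Lemma \ref{lem4_2_W} appear, and all estimates reduce to inserting the $\mathbf M_1$ or $\mathbf M_2$ weights and reading off the result. The only small care needed is in checking $A_\theta'(1)=0$, which is immediate once one writes the explicit expression for $A_\theta'$, and in keeping the constant $C$ uniform in $\mu\in K$, which is ensured by the uniform bound $b=\sqrt{1+2\mu}\le\epsilon$ enforced in the definition of the spaces.
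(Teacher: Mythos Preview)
Your proposal is correct and follows essentially the same approach as the paper's proof: both estimate $(1+x)^{-1+2\epsilon}A_\theta$, $A_\theta'/(1-x)$ on $(0,1)$, and $(1+x)^{1+\epsilon}A_\phi/(1-x)$ directly from the $\mathbf{M}_1,\mathbf{M}_2$ norms, exploiting that $\bar U_\theta=(1+b)(1-x)$ is smooth up to $x=-1$. One tiny correction: the linear decay $|\tilde U_\theta(x)|\le(1-x)\|\tilde U_\theta\|_{\mathbf M_1}$ on $(0,1)$ is the content of Lemma~\ref{lem4_3_1} (eq.~(\ref{eq4_3_2})), not Lemma~\ref{lem4_3_2}, which is the analogous statement for $\xi_\theta\in\mathbf N_1$.
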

\begin{proof}
For convenience we denote $A=A(\mu,-1-\sqrt{1+2\mu},\cdot)$. We make use of the properties of $\bar{U}_{\theta}$ that $\bar{U}_{\theta}(1)=0$ and $\bar{U}_{\theta}\in C^2(-1,1]\cap L^{\infty}(-1,1)$.

 $A$ is clearly linear.  For every $\tilde{U}\in\mathbf{X}$, we prove that $A\tilde{U}$ defined by (\ref{eqP4_2}) is in $\mathbf{Y}$ and there exists some constant $C$ such that $||A\tilde{U}||_{\mathbf{Y}}\le C||\tilde{U}||_{\mathbf{X}}$ for all $\tilde{U}\in \mathbf{X}$.
 
 By the fact that $\tilde{U}_{\theta}\in \mathbf{M}_1$ and (\ref{eq4_3_1}),  we have
\[
  \left|(1+x)^{-1+2\epsilon}A_{\theta}\right|  \le (1-x)(1+x)^{2\epsilon}|\tilde{U}'_{\theta}|+(2+|\bar{U}_{\theta}|)(1+x)^{-1+2\epsilon}|\tilde{U}_{\theta}|
      \le C(1-x)||\tilde{U}_{\theta}||_{\mathbf{M}_1}.
\]
We also see from the above that $\lim_{x\to 1}A_{\theta}(x)=\lim_{x\to -1}A_{\theta}(x)=0$. By computation $A'_{\theta}=(1-x^2)\tilde{U}''_{\theta}+\bar{U}_{\theta}\tilde{U}'_{\theta}+(2+\bar{U}'_{\theta})\tilde{U}_{\theta}$. Then by (\ref{eq4_3_0}), (\ref{eq4_3_2}) and the fact that $\tilde{U}_{\theta}\in \mathbf{M}_1$, 
\[
   \frac{|A'_{\theta}(x)|}{1-x}\le C||\tilde{U}_{\theta}||_{\mathbf{M}_1}, \quad 0<x<1.
\]
So $A_{\theta}\in \mathbf{N}_1$ and $||A_{\theta}||_{\mathbf{N}_1}\le C ||\tilde{U}_{\theta}||_{\mathbf{M}_1}$.

Next, by the fact that $\tilde{U}_{\phi}\in \mathbf{M}_2$ and (\ref{eq4_3_0}), with similar arguments in the proof of Lemma \ref{lemP4_2_2'}, we have
\[
   \frac{(1+x)^{1+\epsilon}}{1-x}|A_{\phi}|\le C||\tilde{U}_{\phi}||_{\mathbf{M}_2},\quad -1<x<1.
\]
In particular, $\lim_{x\to 1}A_{\phi}(x)=0$. So $A_{\phi}\in \mathbf{N}_2$, and $||A_{\phi}||_{\mathbf{N}_2}\le  C||\tilde{U}_{\phi}||_{\mathbf{M}_2}$.
 We have proved that $A\tilde{U}\in\mathbf{Y}$, and $||A\tilde{U}||_{\mathbf{Y}}\le C||\tilde{U}||_{\mathbf{X}}$ for every $\tilde{U}\in \mathbf{X}$.
\end{proof}

\begin{lem}\label{lemP4_3_2}
The map $ Q:\mathbf{X}\times \mathbf{X}\to \mathbf{Y}$  defined by (\ref{eqP4_3}) is a  bounded bilinear operator.
\end{lem}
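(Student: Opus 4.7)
The plan is to mimic the proofs of the bilinearity/boundedness lemmas for $Q$ from Section 4.2 (Lemma \ref{lemP4_3}) and Section 4.3 (Lemma \ref{lemP4_2_3'}), adapted to the weighted spaces used here. Bilinearity is immediate from the formula (\ref{eqP4_3}), so the real content is the bound $\|Q(\tilde U,\tilde V)\|_{\mathbf Y}\le C\|\tilde U\|_{\mathbf X}\|\tilde V\|_{\mathbf X}$. All necessary pointwise decay information on $\tilde U_\theta$, $\tilde U_\phi$ and $\xi_\theta$ has already been packaged in Lemma \ref{lem4_3_1} and Lemma \ref{lem4_3_2}; nothing else about the background solution $\bar U=U^{\mu,-1-\sqrt{1+2\mu}}$ is needed, since $\bar U_\theta$ does not appear in $Q$.

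The first step is to control the triple integral
\[
\psi(\tilde U,\tilde V)(x):=\int_x^1\!\int_l^1\!\int_t^1\frac{2\tilde U_\phi(s)\tilde V_\phi'(s)}{1-s^2}\,ds\,dt\,dl.
\]
Using (\ref{eq4_3_1}) and $\bigl|(1+s)^{1+\epsilon}\tilde V_\phi'(s)\bigr|\le\|\tilde V_\phi\|_{\mathbf M_2}$, one gets the integrand bound $(1+s)^{-2-2\epsilon}\|\tilde U_\phi\|_{\mathbf M_2}\|\tilde V_\phi\|_{\mathbf M_2}$, which since $\epsilon<\tfrac12$ is integrable near $s=-1$. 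Integrating three times yields $|\psi(\tilde U,\tilde V)(x)|\le C(1-x)^3\|\tilde U_\phi\|_{\mathbf M_2}\|\tilde V_\phi\|_{\mathbf M_2}$, and, by isolating the linear-in-$(1+x)$ contribution exactly as in (\ref{eq4_2_6'_1})--(\ref{eq4_2_6'_2}),
\[
\Bigl|\psi(\tilde U,\tilde V)(x)-\tfrac{(1-x)^2}{4}\psi(\tilde U,\tilde V)(-1)\Bigr|\le C(1+x)^{1-2\epsilon}(1-x)^2\|\tilde U_\phi\|_{\mathbf M_2}\|\tilde V_\phi\|_{\mathbf M_2}.
\]

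The second step handles $Q_\theta$. The quadratic piece $\tfrac12\tilde U_\theta\tilde V_\theta$ is bounded via (\ref{eq4_3_2}) by $C(1-x)^2(1+x)^{2-4\epsilon}\|\tilde U_\theta\|_{\mathbf M_1}\|\tilde V_\theta\|_{\mathbf M_1}$, and combined with the $\psi$-estimate above one obtains $|(1+x)^{-1+2\epsilon}Q_\theta(x)|\le C(1-x)^2\|\tilde U\|_{\mathbf X}\|\tilde V\|_{\mathbf X}$ on $(-1,1)$, which also gives $Q_\theta(\pm1)=0$. For the derivative, differentiating (\ref{eqP4_3}) produces $Q_\theta'=\tfrac12(\tilde U_\theta\tilde V_\theta)'+\int_x^1\!\int_t^1\frac{2\tilde U_\phi\tilde V_\phi'}{1-s^2}\,ds\,dt-\tfrac{1-x}{2}\psi(\tilde U,\tilde V)(-1)$; on $(0,1)$ all these pieces are $O(1-x)\|\tilde U\|_{\mathbf X}\|\tilde V\|_{\mathbf X}$ using the $\mathbf M_1$-norm control on $0<x<1$ and the integrability already established, so $\|Q_\theta\|_{\mathbf N_1}\le C\|\tilde U\|_{\mathbf X}\|\tilde V\|_{\mathbf X}$.

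The third step, for $Q_\phi=\tilde U_\theta\tilde V_\phi'$, is a direct one-line estimate: using (\ref{eq4_3_2}) and the $\mathbf M_2$-bound on $\tilde V_\phi'$,
\[
\left|\frac{(1+x)^{1+\epsilon}Q_\phi(x)}{1-x}\right|\le (1+x)^{1-2\epsilon}\|\tilde U_\theta\|_{\mathbf M_1}\|\tilde V_\phi\|_{\mathbf M_2}\le C\|\tilde U_\theta\|_{\mathbf M_1}\|\tilde V_\phi\|_{\mathbf M_2},
\]
and $Q_\phi(1)=0$ follows from $\tilde U_\theta(1)=0$. Combining the three steps gives $Q(\tilde U,\tilde V)\in\mathbf Y$ with the required bilinear bound. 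I do not anticipate a genuine obstacle; the only point requiring care is that the weight $\epsilon$ chosen in Section 4.4 satisfies $\max_{\mu\in K}\sqrt{1+2\mu}<\epsilon<\tfrac12$, so in particular $1-2\epsilon>0$, which is exactly what is needed for the integrability of the triple integral near $s=-1$ and for the linear-in-$(1+x)$ subtraction to produce the extra power of $(1+x)^{1-2\epsilon}$ above.
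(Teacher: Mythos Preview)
Your proposal is correct and follows essentially the same route as the paper: the paper's proof explicitly says ``by the same proof as that of Lemma \ref{lemP4_2_3'}'' to obtain the key subtraction estimate (\ref{eq4_3_5}), then combines it with (\ref{eq4_3_2}) and the $\mathbf M_2$-weights exactly as you do, and dispatches $Q_\theta'$ on $(0,1)$ and $Q_\phi$ by the same one-line computations you wrote down. Your remark that the choice $\epsilon<\tfrac12$ is precisely what makes $\psi(\tilde U,\tilde V)(-1)$ finite and yields the extra factor $(1+x)^{1-2\epsilon}$ is the only subtlety, and you identified it correctly.
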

\begin{proof}
 It is clear that $Q$ is a bilinear operator. For every $\tilde{U},\tilde{V}\in\mathbf{X}$, we will prove that $Q(\tilde{U},\tilde{V})$ is in $\mathbf{Y}$ and there exists some constant $C$ independent of $\tilde{U}$ and $\tilde{V}$ such that $||Q(\tilde{U},\tilde{V})||_{\mathbf{Y}}\le C||\tilde{U}||_{\mathbf{X}}||\tilde{V}||_{\mathbf{X}}$.
 
For convenience we write
\[
  \psi(\tilde{U},\tilde{V})(x)=\int_x^1 \int_l^1 \int_t^1 \frac{2 \tilde{U}_\phi(s)\tilde{V} _\phi'(s)}{1-s^2} ds dt dl.
\]
 By the same proof as that of Lemma \ref{lemP4_2_3'}, for $\tilde{U}_{\phi},\tilde{V}_{\phi}\in \mathbf{M}_2$,  we have for any $-1<x<1$
\begin{equation}\label{eq4_3_5}
  |\psi(\tilde{U},\tilde{V})(x)-\frac{(1-x)^2}{4}\psi(\tilde{U},\tilde{V})(-1)|\le C(\epsilon)(1+x)^{1-2\epsilon}(1-x)^2||\tilde{U}_\phi||_{\mathbf{M}_2}||\tilde{V} _{\phi}||_{\mathbf{M}_2}. 
\end{equation}
So by (\ref{eq4_3_2}), (\ref{eq4_3_5}) and the fact that $\tilde{U}_{\theta},\tilde{V}_{\theta}\in \mathbf{M}_1$, we have
\[
  \begin{split}
  & |(1+x)^{-1+2\epsilon}Q_{\theta}(x)| \\
  & \le \frac{1}{2}(1+x)^{-1+2\epsilon}|\tilde{U}_{\theta}(x)||\tilde{V}_{\theta}(x)|+(1+x)^{-1+2\epsilon}|\psi(\tilde{U},\tilde{V})(x)-\frac{(1-x)^2}{4}\psi(\tilde{U},\tilde{V})(-1)|\\
                  & \le C(1-x)^2||\tilde{U}_{\theta}||_{\mathbf{M}_1}||\tilde{V}_{\theta}||_{\mathbf{M}_1}+ C(1-x)^2||\tilde{U}_{\phi}(s)||_{\mathbf{M}_2}||\tilde{V} _{\phi}||_{\mathbf{M}_2}\\
                  & \le C(1-x)^2||\tilde{U}||_{\mathbf{X}}||\tilde{V}||_{\mathbf{X}}, \quad \forall -1<x<1.
\end{split}
\]
Since $\epsilon<\frac{1}{2}$, from the above we also see that $\lim_{x\to 1}Q_{\theta}(x)=\lim_{x\to -1}Q_{\theta}(x)=0$.

Using (\ref{eq4_3_1}), (\ref{eq4_3_2}) and the fact that $\tilde{U}\in \mathbf{X}$, with the same argument in the proof of Lemma \ref{lemP4_2_3'}, it can be shown that
\[
   |Q'_{\theta}(x)|\le C(1-x)||\tilde{U}||_{\mathbf{X}}||\tilde{V}||_{\mathbf{X}},\quad \forall 0<x<1.
\]
So $Q_{\theta}\in\mathbf{N}_1$, and $||Q_{\theta}||_{\mathbf{N}_1}\le C||\tilde{U}||_{\mathbf{X}}||\tilde{V}||_{\mathbf{X}}$.

Next, using (\ref{eq4_3_2}) and similar proof of Lemma \ref{lemP4_2_3'}, we can prove
\[
   \left|\frac{(1+x)^{1+\epsilon}Q_{\phi}}{1-x}\right|  \le C||\tilde{U}_{\theta}||_{\tilde{\mathbf{M}}_1}||\tilde{V}_{\phi}||_{\tilde{\mathbf{M}}_2}, \quad -1<x<1, 
\]
and $\displaystyle \lim_{x\to 1}Q_{\phi}(x)=0$. So $Q_{\phi}\in \mathbf{N}_2$, and $||Q_{\phi}||_{\mathbf{N}_2}\le||\tilde{U}_{\theta}||_{\mathbf{M}_1}||\tilde{V}_{\phi}||_{\mathbf{M}_2}$. Thus we have proved $Q(\tilde{U},\tilde{V})\in \mathbf{Y}$ and $||Q(\tilde{U},\tilde{V})||_{\mathbf{Y}}\le C||\tilde{U}||_{\mathbf{X}}||\tilde{V}||_{\mathbf{X}}$ for all $\tilde{U}, \tilde{V}\in \mathbf{X}$. The proof is finished.
\end{proof}

\noindent \emph{Proof of Proposition \ref{prop4_3}:}
By definition, $G(\mu,\tilde{U})=A(\mu,-1-\sqrt{1+2\mu},\tilde{U})+Q(\tilde{U},\tilde{U})$ for $(\mu,\tilde{U})\in K\times \mathbf{X}$.  Using standard theories in functional analysis, by Lemma \ref{lemP4_3_2} it is clear that $Q$ is $C^{\infty}$ on $K\times  \mathbf{X}$.  By Lemma \ref{lemP4_3_1}, $A(\mu,-1-\sqrt{1+2\mu}, \cdot): \mathbf{X}\to \mathbf{Y}$ is $C^{\infty}$ for each $\mu \in K$. For all $i\ge 1$, we have
\[
    \partial_{\mu}^i A(\mu,-1-\sqrt{1+2\mu},\tilde{U})= \partial_{\mu}^i U^{\mu,-1-\sqrt{1+2\mu}}_{\theta}\left(
	\begin{matrix}
		\tilde{U}_{\theta}  \\  \tilde{U}'_{\phi}
	\end{matrix}\right).
\]  
By (\ref{eqcor3_1_4}), for each integer $i\ge 1$, there exists some constant $C=C(i,K)$, depending only on $i,K$, such that 
\begin{equation}\label{eq_prop4_3_1}
   |\partial_{\mu}^i U^{\mu,-1-\sqrt{1+2\mu}}_{\theta}(x)|\le C(i,K)(1-x), \quad -1<x<1.
\end{equation}
From (\ref{eq4_3_0}) we can also obtain
\[
     \left|\frac{d}{dx}\partial_{\mu}^i U^{\mu,-1-\sqrt{1+2\mu}}_{\theta}(x)\right|\le C(i,K), \quad 0<x<1.
\]
Using the above estimates and the fact that $\tilde{U}_{\theta}\in \mathbf{M}_1$, we have
\[
   \left|(1+x)^{-1+2\epsilon}\partial_{\mu}^i A_{\theta}(\mu,-1-\sqrt{1+2\mu},\tilde{U})\right|\le C(i,K)(1-x)||\tilde{U}_{\theta}||_{\mathbf{M}_1}, \quad -1<x<1,
\]
and 
\[
\begin{split}
  & \left|\frac{d}{dx}\partial_{\mu}^i A_{\theta}(\mu,-1-\sqrt{1+2\mu},\tilde{U})\right| \\
 \le & \left|\frac{d}{dx}\partial_{\mu}^i U^{\mu,-1-\sqrt{1+2\mu}}_{\theta}(x)\right||\tilde{U}_{\theta}(x)|+|\partial_{\mu}^i U^{\mu,-1-\sqrt{1+2\mu}}_{\theta}(x)|\left|\frac{d}{dx}\tilde{U}_{\theta}(x)\right|\\
   \le &  C(i,K)(1-x)||\tilde{U}_{\theta}||_{\mathbf{M}_1}, \quad 0<x<1.
   \end{split}
\]
So $\partial_{\mu}^i A_{\theta}(\mu,-1-\sqrt{1+2\mu},\tilde{U})\in \mathbf{N}_1$, with $||\partial_{\mu}^i A_{\theta}(\mu,-1-\sqrt{1+2\mu},\tilde{U})||_{\mathbf{N}_1}\le C(i,K)||\tilde{U}_{\theta}||_{\mathbf{M}_1}$ for all $(\mu,\tilde{U})\in K\times \mathbf{X}$.

Next, by (\ref{eq_prop4_3_1}) and the fact that $\tilde{U}_{\phi}\in \mathbf{M}_1$, we have
\[
   \frac{(1+x)^{1+\epsilon}}{1-x}|\partial_{\mu}^i A_{\phi}(\mu,-1-\sqrt{1+2\mu},\tilde{U})|=\frac{|\partial_{\mu}^i U^{\mu,-1-\sqrt{1+2\mu}}_{\theta} |}{1-x}|(1+x)^{1+\epsilon}U'_{\phi}|\le C(i,K)||\tilde{U}_{\phi}||_{\mathbf{M}_2}.
\]
So $\partial_{\mu}^i A_{\phi}(\mu,-1-\sqrt{1+2\mu},\tilde{U})\in \mathbf{N}_2$, with 
$$
	||\partial_{\mu}^i A_{\phi}(\mu,-1-\sqrt{1+2\mu},\gamma,\tilde{U})||_{\mathbf{N}_2}\le C(i,K)||\tilde{U}_{\phi}||_{\mathbf{M}_2}
$$ 
for all $(\mu,\tilde{U})\in K\times \mathbf{X}$. Thus $\partial_{\mu}^iA(\mu,-1-\sqrt{1+2\mu},\tilde{U})\in \mathbf{Y}$, with 
$$
	||\partial_{\mu}^i A(\mu,-1-\sqrt{1+2\mu},\tilde{U})||_{\mathbf{Y}} \le C(i,K)||\tilde{U}||_{\mathbf{X}}
$$
for all $(\mu,\tilde{U})\in K\times \mathbf{X}$, $i\ge 1$. 

So for each $\mu \in K$, $\partial_{\mu}^i A(\mu,-1-\sqrt{1+2\mu},  \cdot):\mathbf{X}\to \mathbf{Y}$ is a bounded linear map with uniform bounded norm on $K$. Then by standard theories in functional analysis, $A:K\times\mathbf{X}\to \mathbf{Y}$ is $C^{\infty}$. So $G$ is a $C^{\infty}$ map from $K\times\mathbf{X}$ to $\mathbf{Y}$. By direct calculation we get its Fr\'{e}chet derivative with respect to $\mathbf{X}$ is given by  the linear bounded operator $L^{\mu,-1-\sqrt{1+2\mu}}_{\tilde{U}}: \mathbf{X}\rightarrow \mathbf{Y}$ defined as  (\ref{eq4_Linear}). The proof is finished.   \qed \\

By Proposition \ref{prop4_3},  $L^{\mu}_0: \mathbf{X}\to \mathbf{Y}$, the Fr\'{e}chet derivative of $G$ at $\tilde{U}=0$ is given by (\ref{eq_LinearAtZero}).

Next, let $a_{\mu}(x)=a_{\mu, -1-\sqrt{1+2\mu}}(x)$, $b_{\mu}(x)=b_{\mu, -1-\sqrt{1+2\mu}}(x)$ be the functions defined by (\ref{eq_ab}) with $\bar{U}_{\theta}$ given by (\ref{eq4_3_0}).

Since $\bar{U}=(1-x)(1+\sqrt{1+2\mu})$, we have
\begin{equation}\label{eq4_3_ab}
 \begin{split}
   & a_{\mu}(x)=-\ln(1-x^2)+(1+\sqrt{1+2\mu})\ln(1+x), \\
   &b_{\mu}(x)=(1+\sqrt{1+2\mu})\ln(1+x).
 \end{split}
\end{equation}
For $\xi = (\xi_\theta, \xi_\phi) \in \mathbf{Y}$,  by (\ref{eq4_3_3}) and (\ref{eq4_3_ab}), we have
\[
    \int_{-1}^{1}e^{a_{\mu}(s)}\frac{|\xi_{\theta}(s)|}{1-s^2}ds\le ||\xi_{\theta}||_{\mathbf{N}_1}\int_{-1}^{1}(1+s)^{\sqrt{1+2\mu}-2\epsilon}ds<\infty.
\]
Let the map $W^{\mu}$ be defined as $W^{\mu}(\xi):=(W^{\mu}_{\theta}(\xi), W^{\mu}_{\phi}(\xi))$ by
\begin{equation}\label{eq4_3_W}
  \begin{split}
   & W^{\mu}_{\theta}(\xi)(x)= e^{-a_{\mu}(x)}\int_{-1}^{x}e^{a_{\mu}(s)}\frac{\xi_{\theta}(s)}{1-s^2}ds,\\
   & W^{\mu}_{\phi}(\xi)(x)=\int_{x}^{1}e^{-b_{\mu}(t)}\int_{t}^{1}e^{b_{\mu}(s)}\frac{\xi_{\phi}(s)}{1-s^2}dsdt.
   \end{split}
\end{equation}
Then $W^{\mu}$ satisfies (\ref{eq4_W_d}).

\begin{lem} \label{lem4_3_W}
   $W^{\mu}:\mathbf{Y}\to \mathbf{X}$ is  continuous and is a right inverse of $L^{\mu}_{0}$.
\end{lem}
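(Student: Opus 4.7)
The plan is to follow the template established by Lemmas~\ref{lem4_1} and~\ref{lem4_2_W}: first establish that $W^\mu(\xi)\in\mathbf{X}$ with $\|W^\mu(\xi)\|_{\mathbf{X}}\le C\|\xi\|_{\mathbf{Y}}$, and then verify $L^\mu_0 W^\mu(\xi)=\xi$ by direct differentiation of (\ref{eq4_3_W}) using (\ref{eq4_W_a}) and (\ref{eq4_W_d}). The essential simplification in the present case is that $\bar{U}_\theta=(1+\sqrt{1+2\mu})(1-x)$ is linear, so the exponentials in (\ref{eq4_3_ab}) reduce to pure power functions:
\[
e^{\pm a_\mu(x)}=(1-x)^{\mp 1}(1+x)^{\pm\sqrt{1+2\mu}},\qquad e^{\pm b_\mu(x)}=(1+x)^{\pm(1+\sqrt{1+2\mu})}.
\]

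For $W^\mu_\theta$ on $(-1,0]$, Lemma~\ref{lem4_3_2} yields $|e^{a_\mu(s)}\xi_\theta(s)/(1-s^2)|\le C\|\xi_\theta\|_{\mathbf{N}_1}(1+s)^{\sqrt{1+2\mu}-2\epsilon}$, and the choice $\max_K\sqrt{1+2\mu}<\epsilon<1/2$ puts this exponent in $(-1,0)$, so the improper integral $\int_{-1}^x$ converges and produces a factor $(1+x)^{1+\sqrt{1+2\mu}-2\epsilon}$. Multiplying by $e^{-a_\mu(x)}$ gives $|W^\mu_\theta(x)|\le C\|\xi_\theta\|_{\mathbf{N}_1}(1-x)(1+x)^{1-2\epsilon}$, matching the $\mathbf{M}_1$-weight exactly; the bound on $(W^\mu_\theta)'$ near $x=-1$ then follows from (\ref{eq4_W_d}). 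On $[0,1]$, the Taylor expansion of $\bar{U}_\theta$ at $x=1$ (with $\bar{U}_\theta'(1)=-(1+\sqrt{1+2\mu})$) reproduces Case~1 of Lemma~\ref{lem4_1} verbatim, controlling $W^\mu_\theta$, $(W^\mu_\theta)'$ and $(W^\mu_\theta)''$.

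For $W^\mu_\phi$, the inequality $|\xi_\phi(s)|\le\|\xi_\phi\|_{\mathbf{N}_2}(1-s)(1+s)^{-1-\epsilon}$ (immediate from the $\mathbf{N}_2$-norm) combined with the explicit powers of $e^{\pm b_\mu}$ reduces each of the three weighted bounds defining $\|W^\mu_\phi\|_{\mathbf{M}_2}$ to a two-fold iterated integral of elementary powers of $(1\pm s)$, all with exponents strictly greater than $-1$ thanks to $\epsilon<1/2$. The identity $L^\mu_0 W^\mu(\xi)=\xi$ then follows from a straightforward differentiation using (\ref{eq4_W_a}) and (\ref{eq4_W_d}), and continuity of $W^\mu$ is inherited from the linear norm bounds.

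The genuine novelty, and the main obstacle to simply transplanting Lemma~\ref{lem4_1}, is that here $V^1_\mu=(e^{-a_\mu},0)$ no longer belongs to $\mathbf{X}$: the quantity $(1+x)^{-1+2\epsilon}e^{-a_\mu(x)}$ is unbounded at $x=-1$ once $\bar{U}_\theta(-1)\ge 2$ and $2\epsilon<1$. Accordingly, the lower limit of integration in $W^\mu_\theta$ must be taken to be $-1$ rather than $0$, so that the primitive does not silently acquire an out-of-space multiple of $V^1_\mu$; the convergence of this improper integral at $s=-1$, guaranteed by $\sqrt{1+2\mu}-2\epsilon>-1$, is the single quantitative ingredient that distinguishes the present lemma from its predecessors.
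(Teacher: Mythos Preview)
Your proposal is correct and follows essentially the same route as the paper. The only cosmetic difference is that the paper does not split the analysis of $W^\mu_\theta$ into the two subintervals $(-1,0]$ and $[0,1]$; because $e^{\pm a_\mu}$ and $e^{\pm b_\mu}$ are exact powers here, the estimate $|W^\mu_\theta(x)|\le C\|\xi_\theta\|_{\mathbf{N}_1}(1-x)(1+x)^{1-2\epsilon}$ and the subsequent bounds on $(W^\mu_\theta)'$, $(W^\mu_\theta)''$ are obtained in one stroke on the whole interval $(-1,1]$, without invoking a Taylor expansion near $x=1$. Your identification of the essential new point---that the lower limit in $W^\mu_\theta$ must be $-1$ because $V^1_\mu\notin\mathbf{X}$, with convergence guaranteed by $\sqrt{1+2\mu}-2\epsilon>-1$---is exactly the crux of the argument.
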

\begin{proof}
For convenience we write $W=W^{\mu}(\xi)$, $a(x)=a_{\mu}(x)$ and $b(x)=b_{\mu}(x)$.

We first prove that $W$ is well-defined. For $\xi\in \mathbf{Y}$, denote $W:=W(\xi)$. Applying Lemma \ref{lem4_3_2} in the expression of $W_{\theta}$ in (\ref{eq4_3_W}),  we have
\begin{equation}\label{eq4_3_6}
   \left|(1+x)^{-1+2\epsilon}W_{\theta}(x)\right|\le C(1+x)^{-1+2\epsilon}||\xi_{\theta}||_{\mathbf{N}_1}e^{-a(x)}\int_{-1}^{x}e^{a(s)}(1-s)(1+s)^{-2\epsilon}ds,\quad -1<x<1.
\end{equation}
Using (\ref{eq4_3_ab}), we have
\[
    e^{a(s)}=(1+s)^{\sqrt{1+2\mu}}(1-s)^{-1}, \quad e^{-a(x)}=(1+s)^{-\sqrt{1+2\mu}}(1-s), \quad -1<s<x<1.
\]
Apply this in (\ref{eq4_3_6}), it is not hard to see that
\begin{equation}\label{eq4_3_7}
   |W_{\theta}(x)|\le C||\xi_{\theta}||_{\mathbf{N}_1}(1+x)^{1-2\epsilon}(1-x), \quad -1<x\le 1.
\end{equation}
In particular $W_{\theta}(1)=0$. Since $\epsilon<\frac{1}{2}$, $\displaystyle \lim_{x\to -1}W_{\theta}(x)=0$, 

By (\ref{eq4_3_ab}),  
\begin{equation}\label{eq4_3_8}
   |a'(x)|\le \frac{C}{1-x^2}, \quad |a''(x)|\le \frac{C}{(1-x^2)^2}, \quad -1<x<1.
\end{equation}
Using the above estimate of $|a'(x)|$, (\ref{eq4_3_3}), (\ref{eq4_3_7}) and (\ref{eq4_W_d}), we have
\[
    |(1+x)^{2\epsilon}W'_{\theta}|\le  (1+x)^{2\epsilon}|a'(x)||W_{\theta}(x)|+\frac{|\xi_{\theta}(x)|(1+x)^{2\epsilon}}{1-x^2}\le C||\xi_{\theta}||_{\mathbf{N}_1}, \quad -1<x<1.
\]

Next, A calculation gives
\[
  W''_{\theta}(x)  =((a'(x))^2-a''(x))W_{\theta}(x)-a'(x)\frac{\xi_{\theta}(x)}{1-x^2}+\frac{\xi'_{\theta}(x)}{1-x^2}+\frac{2x\xi_{\theta}(x)}{(1-x^2)^2}.
\]
So
	\begin{equation*}
		|W''_{\theta}(x)|  \le  |(a'(x))^2-a''(x)| |W_{\theta}| + |a'(x)|\frac{|\xi_{\theta}|}{1-x^2} + \frac{|\xi'_{\theta}|}{(1-x)}+\frac{|\xi_{\theta}|}{(1-x)^2}.
	\end{equation*}
	By (\ref{eq4_3_ab}), we have the estimate
	\[
	    (a'(x))^2-a''(x)=O\left(\frac{1}{1-x}\right).
	\]
It follows, using (\ref{eq4_3_7}),  (\ref{eq4_3_8}) and  Lemma \ref{lem4_3_2}, that
\[
   |W''_{\theta}(x)|\le  C\left(\frac{|W_{\theta}(x)}{1-x}+\frac{|\xi_{\theta}|}{(1-x)^2}+\frac{|\xi'_{\theta}|}{1-x}\right)\le C||\xi_{\theta}||_{\mathbf{N}_1}, \quad 0<x<1.
\]

So we have shown that $W_{\theta}\in \mathbf{M}_1$, and $||W_{\theta}||_{\mathbf{M}_1}\le C||\xi_{\theta}||_{\mathbf{N}_1}$ for some constant $C$.

By definition of $W_{\phi}(\xi)$ in (\ref{eq4_3_W}) and the fact that $\xi_{\phi}\in \mathbf{N}_2$, we have, for every $-1<x<1$, that
\[
    |W_{\phi}(x)|\le  ||\xi_{\phi}||_{\mathbf{N}_2} \int_{x}^{1}e^{-b(t)}\int_{t}^{1}e^{b(s)}(1+s)^{-2-\epsilon}dsdt.
\]
Using (\ref{eq4_3_ab}), we have
\begin{equation}\label{eq4_3_9}
    e^{b(s)}=(1+s)^{1+\sqrt{1+2\mu}},\quad e^{-b(t)}=(1+t)^{-1-\sqrt{1+2\mu}}, \quad -1<s,t<1.
\end{equation}
So we have, using $\sqrt{1+2\mu}<\epsilon<\frac{1}{2}$, 
\[
 \begin{split}
    |W_{\phi}(x)| & \le C||\xi_{\phi}||_{\mathbf{N}_2}\int_{x}^{1}(1+t)^{-1-\sqrt{1+2\mu}}\int_{t}^{1}(1+s)^{\sqrt{1+2\mu}-1-\epsilon}dsdt\\
       & \le C||\xi_{\phi}||_{\mathbf{N}_2}(1+x)^{-\epsilon}, \quad -1<x\le 1.
       \end{split}
\]
For $0<x<1$, it can be seen from the above that $|W_{\phi}(x)|\le C||\xi_{\phi}||_{\mathbf{N}_2}(1-x)$. In particular, $W_{\phi}(1)=0$.  By computation
\[
    W'_{\phi}(x)=-e^{-b(x)}\int_{x}^{1}e^{b(s)}\frac{\xi_{\phi}(s)}{1-s^2}ds.
\]
Using (\ref{eq4_3_9}), $\epsilon>\sqrt{1+2\mu}$ and the fact that $\xi_{\phi}\in \mathbf{N}_2$, we have,
\[
   |(1+x)^{1+\epsilon}W'_{\phi}(x)|\le  C||\xi_{\phi}||_{\mathbf{N}_2}\quad -1<x<1.
\]
Similarly, 
\[
  W''_{\phi}(x)=b'(x)e^{-b(x)}\int_{x}^{1}e^{b(s)}\frac{\xi_{\phi}(s)}{1-s^2}ds+\frac{\xi_{\phi}(x)}{1-x^2}.
\]
By (\ref{eq4_3_ab}), $b'(x)=\frac{1+\sqrt{1+2\mu}}{1+x}=O((1+x)^{-1})$. Using (\ref{eq4_3_9}), we have
\[
   |(1+x)^{2+\epsilon}W''_{\phi}(x)|\le C||\xi_{\phi}||_{\mathbf{N}_2},\quad -1<x<1.
\]
So $W_{\phi}\in \mathbf{M}_2$, and $||W_{\phi}||_{\mathbf{M}_2}\le C||\xi_{\phi}||_{\mathbf{N}_2}$ for some constant $C$. 

Thus $W^{\mu}(\xi)\in \mathbf{X}$ for all $\xi\in \mathbf{Y}$, and $||W(\xi)||_{\mathbf{X}}\le C||\xi||_{\mathbf{Y}}$ for some constant $C$. So $W^{\mu}: \mathbf{X}\to \mathbf{Y}$ is well-defined and continuous. It can be directly checked that $W^{\mu}$ is a right inverse of $L^{\mu}_0$.
\end{proof}

Let $V^i_{\mu}:=V^i_{\mu,-1-\sqrt{1+2\mu}}$, $i=1,2,3$,  be defined by (\ref{eq4_2_ker}) with related $a_{\mu,-1-\sqrt{1+2\mu}}=a_{\mu}(x)$ and $b_{\mu,-1-\sqrt{1+2\mu}}=b_{\mu}(x)$ given by (\ref{eq4_3_ab}) , we have
\begin{lem}\label{lem4_3_6}
$\{V_{\mu}^2\}$ is a basis of the kernel of $L^{\mu}_0:\mathbf{X}\to \mathbf{Y}$.
\end{lem}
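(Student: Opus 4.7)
My plan is to mirror the pattern used in Lemmas \ref{lem4_3} and \ref{lem4_7}: identify all $C^1$-kernel elements of $L^\mu_0$ via the general solution formula (\ref{eq4_3_ker})--(\ref{eq4_2_ker}), then filter out those that actually lie in the Banach space $\mathbf{X}$ via the defining boundary and weight conditions. The outcome will be that exactly one of the three candidates $V^1_\mu, V^2_\mu, V^3_\mu$ survives, namely $V^2_\mu$.

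Let $V\in \mathbf{X}$ with $L^\mu_0 V=0$. Since the defining ODE $L^{\mu}_0 V=0$ decouples into a first-order equation for $V_\theta$ and a second-order equation for $V_\phi$, standard ODE theory gives $V=c_1 V^1_\mu+c_2 V^2_\mu+c_3 V^3_\mu$ for constants $c_1,c_2,c_3$. Plugging the explicit form (\ref{eq4_3_ab}) into the formulas of $V^i_\mu$ with $b:=\sqrt{1+2\mu}<\epsilon<1/2$, I compute
\[
(V^1_\mu)_\theta(x)=e^{-a_\mu(x)}=(1-x)(1+x)^{-b},\quad (V^2_\mu)_\phi(x)=\int_x^1(1+t)^{-(1+b)}\,dt,\quad (V^3_\mu)_\phi\equiv 1.
\]
The strategy is then to check, component-by-component, which of these satisfy the $\mathbf{M}_1, \mathbf{M}_2$ boundary conditions and weight estimates.

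For $V^2_\mu$: when $b>0$, $(V^2_\mu)_\phi(x)=\tfrac{1}{b}((1+x)^{-b}-2^{-b})$; when $b=0$, $(V^2_\mu)_\phi(x)=\ln(2/(1+x))$. In both cases the weights $(1+x)^{\epsilon}$, $(1+x)^{1+\epsilon}$, $(1+x)^{2+\epsilon}$ acting on $(V^2_\mu)_\phi$ and its first two derivatives yield bounded functions, since $\epsilon>b$; and $(V^2_\mu)_\phi(1)=0$. Hence $V^2_\mu\in \mathbf{X}$. For $V^3_\mu$: since $(V^3_\mu)_\phi\equiv 1$ fails the boundary requirement $\tilde U_\phi(1)=0$ in $\mathbf{M}_2$, we have $V^3_\mu\notin \mathbf{X}$. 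For $V^1_\mu$: either $b>0$, in which case $(V^1_\mu)_\theta(x)=(1-x)(1+x)^{-b}\to+\infty$ as $x\to -1^+$, so the $\mathbf{M}_1$-weight $(1+x)^{-1+2\epsilon}$ (with $-1+2\epsilon<0$) is insufficient to keep it bounded; or $b=0$ and $(V^1_\mu)_\theta(-1)=2\neq 0$, violating the boundary condition $\tilde U_\theta(-1)=0$ built into $\mathbf{M}_1$. Either way $V^1_\mu\notin \mathbf{X}$.

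Combining these, for $V=c_1V^1_\mu+c_2V^2_\mu+c_3V^3_\mu$ to lie in $\mathbf{X}$ we need $c_1V^1_\mu+c_3V^3_\mu\in \mathbf{X}$, and since the two offending asymptotics (blow-up/nonzero limit at $x=-1$ in the $\theta$-component, and nonzero limit at $x=1$ in the $\phi$-component) are linearly independent obstructions, we must have $c_1=c_3=0$. Thus $V=c_2V^2_\mu\in\operatorname{span}\{V^2_\mu\}$, and since $V^2_\mu\neq 0$ the single vector $\{V^2_\mu\}$ is indeed a basis of $\ker L^\mu_0\subset \mathbf{X}$. The only mildly delicate point is verifying the weighted estimates for $V^2_\mu$ uniformly in the parameter, but this reduces to the elementary inequality $\epsilon>b$ that is built into the choice of $\epsilon=\epsilon(K)$ at the start of Section 4.4.
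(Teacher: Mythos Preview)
Your proof is correct and follows essentially the same approach as the paper's: the paper simply asserts that by (\ref{eq4_3_ab}) one verifies $V_\mu^2\in\mathbf{X}$ and $V_\mu^1,V_\mu^3\notin\mathbf{X}$, then invokes the pattern of Lemma~\ref{lem4_3}, while you carry out that verification explicitly (including the borderline case $b=0$). The decoupling into $\theta$- and $\phi$-components that you call ``linearly independent obstructions'' is exactly what makes the argument go through, since $V_\mu^1$ lives purely in $\mathbf{M}_1$ and $V_\mu^3$ purely in $\mathbf{M}_2$.
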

\begin{proof}
   By (\ref{eq4_3_ab}), it is not hard to verify that $V_{\mu}^2\in \mathbf{X}$, and  $ V_{\mu}^1,V_{\mu}^3\notin\mathbf{X}$. Then by similar proof as Lemma \ref{lem4_3}, we obtain the conclusion.
\end{proof}
\begin{cor}\label{cor4_3_7}
For any $\xi=(\xi_{\theta}, \xi_{\phi})\in \mathbf{Y}$, all solutions  of $L^{\mu}_{0}(V)=\xi$, $V\in \mathbf{X}$,  are given by
\begin{equation*}
   V=W^{\mu}(\xi)+cV_{\mu}^2, \quad  c\in \mathbb{R}.
\end{equation*}
Namely,
\begin{equation*}
		V_\theta = W^{\mu}_\theta(\xi), \quad V_\phi = W^{\mu}_\phi(\xi)+ c \int_{x}^{1}e^{-b_{\mu}(t)}dt,\textrm{ }c \in\mathbb{R}.
	\end{equation*}
\end{cor}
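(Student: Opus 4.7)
The plan is to mirror the argument used previously for Corollary \ref{cor4_1} and Corollary \ref{cor4_7}, which are the $I_1$ and $I_2$ analogues of the present statement. The corollary is a short bookkeeping consequence of two results already in place: Lemma \ref{lem4_3_W}, which gives a continuous right inverse $W^{\mu}:\mathbf{Y}\to\mathbf{X}$ of the linearization $L^{\mu}_0$, and Lemma \ref{lem4_3_6}, which identifies $\ker L^{\mu}_0=\mathrm{span}\{V^2_{\mu}\}$.

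The first step is to observe that for any $\xi\in\mathbf{Y}$ the vector $W^{\mu}(\xi)\in\mathbf{X}$ is a particular solution: by Lemma \ref{lem4_3_W}, $L^{\mu}_0(W^{\mu}(\xi))=\xi$. Then for any other solution $V\in\mathbf{X}$ of $L^{\mu}_0 V=\xi$, the difference $V-W^{\mu}(\xi)$ lies in $\mathbf{X}$ and is annihilated by $L^{\mu}_0$, hence belongs to $\ker L^{\mu}_0$. Applying Lemma \ref{lem4_3_6} gives $V-W^{\mu}(\xi)=cV^2_{\mu}$ for some $c\in\mathbb{R}$, which is precisely the claimed representation. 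Conversely, every $V$ of this form satisfies $L^{\mu}_0 V=\xi$ by linearity.

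The componentwise formula then follows by inserting the explicit expression for $V^2_{\mu}$ from (\ref{eq4_2_ker}), namely $(V^2_{\mu})_\theta=0$ and $(V^2_{\mu})_\phi=\int_x^1 e^{-b_{\mu}(t)}\,dt$, together with the definitions of $W^{\mu}_\theta$ and $W^{\mu}_\phi$ in (\ref{eq4_3_W}). Since the two ingredient lemmas are already established, there is no genuine obstacle here; the only thing to watch is that one really does need the space $\mathbf{X}$ to contain $V^2_{\mu}$ but not $V^1_{\mu}$ or $V^3_{\mu}$, which is the content of Lemma \ref{lem4_3_6} and is what distinguishes the current case $(\mu,\gamma)\in I_3\cap\{-\tfrac{1}{2}\le\mu<-\tfrac{3}{8}\}$ (one-dimensional kernel) from the $I_1$ and $I_2$ cases (two-dimensional kernels). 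This one-dimensionality is also the reason the subsequent existence theorem involves a single parameter $\beta$ rather than $(\beta_1,\beta_2)$.
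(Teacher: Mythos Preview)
Your argument is correct and matches the paper's proof exactly: the paper also observes that $V-W^{\mu}(\xi)$ lies in $\ker L^{\mu}_0$ by Lemma~\ref{lem4_3_W} and then invokes Lemma~\ref{lem4_3_6} to conclude. Your additional remarks on the converse direction and the componentwise unpacking are fine elaborations but not required.
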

\begin{proof}
  By Lemma \ref{lem4_3_W}, $V-W^{\mu}(\xi)$ is in the kernel of $L^{\mu}_{0}: \mathbf{X}\to \mathbf{Y}$. The conclusion then follows from Lemma \ref{lem4_3_6}.
\end{proof}

Let $l_2$ be the functionals on $\mathbf{X}$ defined by (\ref{def_l}), and $\mathbf{X}_1$ be the subspace of $\mathbf{X}$ defined by (\ref{eq4_3x}). As shown in Section 4.1, $l_2(V^2_{\mu})>0$ for every $\mu \in K$. So $\mathbf{X}_1$ is a closed subspace of $\mathbf{X}$, and
\begin{equation*} 
	\mathbf{X} = \mbox{span} \{ V_{\mu}^{2} \} \oplus\mathbf{X}_1,\quad \forall \mu \in K, 
\end{equation*}
with the projection operator $P(\mu): \mathbf{X}\rightarrow\mathbf{X}_1$ given by 
\begin{equation*}
    P(\mu)V = V-  c(\mu)l_2(V)V_{\mu}^{2} \textrm{ for } V\in \mathbf{X}.
    \end{equation*}
where $c(\mu)=\left(\int_{0}^{1}e^{-b_{\mu}(t)}dt\right)^{-1}>0$ for all $\mu\in K$.

By Lemma \ref{lem4_3_6} and Corollary \ref{cor4_3_7}, using similar proof as Lemma \ref{lem4_2}, we have
\begin{lem}\label{lem4_3iso}
	The operator $ L^{\mu}_{0}: \mathbf{X}_1\rightarrow\mathbf{Y}$ is an isomorphism.
\end{lem}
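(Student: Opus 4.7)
The plan is to argue exactly as in Lemma \ref{lem4_2iso} (the analogous statement in the $I_2$ regime), replacing the two dimensional kernel $\mathrm{span}\{V^1_{-\frac{1}{2},\gamma}, V^2_{-\frac{1}{2},\gamma}\}$ by the one dimensional kernel $\mathrm{span}\{V^2_\mu\}$ identified in Lemma \ref{lem4_3_6}. Accordingly the complementary subspace is cut out by a single linear functional $l_2$ rather than two, and the projection $P(\mu)$ defined just before the lemma is the appropriate one.

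For surjectivity, given $\xi\in\mathbf{Y}$, Lemma \ref{lem4_3_W} furnishes $W^\mu(\xi)\in\mathbf{X}$ with $L^\mu_0 W^\mu(\xi)=\xi$. The vector $P(\mu) W^\mu(\xi)= W^\mu(\xi)-c(\mu)\, l_2(W^\mu(\xi))\,V^2_\mu$ lies in $\mathbf{X}_1$, and differs from $W^\mu(\xi)$ by an element of $\mathrm{span}\{V^2_\mu\}=\ker L^\mu_0$ (Lemma \ref{lem4_3_6}), so it is still mapped to $\xi$ by $L^\mu_0$. For injectivity, if $V\in\mathbf{X}_1$ satisfies $L^\mu_0 V=0$, Lemma \ref{lem4_3_6} forces $V=c V^2_\mu$; applying $l_2$ and using $l_2(V)=0$ together with $l_2(V^2_\mu)=\int_0^1 e^{-b_\mu(t)}\,dt>0$ forces $c=0$, hence $V=0$.

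Combining these, $L^\mu_0:\mathbf{X}_1\to\mathbf{Y}$ is a bounded linear bijection between Banach spaces (boundedness is part of Proposition \ref{prop4_3} specialized to $\tilde U=0$), so the open mapping theorem yields the continuity of the inverse and the conclusion.

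There is no genuinely new obstacle here: the work has already been done upstream. The only point demanding vigilance, already addressed in Lemma \ref{lem4_3_6}, is why the kernel on $\mathbf{X}$ is only one dimensional in the $I_3$ regime. A quick sanity check confirms this: from (\ref{eq4_3_ab}) one has $e^{-a_\mu(x)}=(1-x)(1+x)^{-\sqrt{1+2\mu}}$, so $V^1_\mu$ either blows up at $x=-1$ (when $\mu>-\tfrac12$) or fails to vanish there (when $\mu=-\tfrac12$), violating the boundary condition $\tilde U_\theta(-1)=0$ built into $\mathbf{M}_1$; and $V^3_\mu=(0,1)$ clearly fails $\tilde U_\phi(1)=0$ in $\mathbf{M}_2$. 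Thus only $V^2_\mu$ survives, and the one dimensional complement spanned by it is removed precisely by imposing $l_2=0$.
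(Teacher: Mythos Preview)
Your proof is correct and follows essentially the same approach as the paper, which simply refers to Lemma \ref{lem4_3_6} and Corollary \ref{cor4_3_7} and invokes the analogous argument from Lemma \ref{lem4_2}. You have spelled out the surjectivity, injectivity, and open mapping steps more explicitly than the paper does, and your closing sanity check on why $V^1_\mu,V^3_\mu\notin\mathbf{X}$ is accurate but already subsumed in Lemma \ref{lem4_3_6}.
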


\begin{lem}\label{lem4_3V2}
  $V^2_{\mu}\in C^{\infty}(K, \mathbf{X})$.
\end{lem}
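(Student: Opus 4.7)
\textbf{Proof plan for Lemma \ref{lem4_3V2}.} The plan is to follow the template of the proofs of Lemmas \ref{lemV1V2} and \ref{lem4_8}, with the simplification that $V^2_\mu$ has vanishing $\theta$-component. Thus I need only verify that $V^2_{\mu,\phi}(x)=\int_x^1 e^{-b_\mu(t)}\,dt$, together with all its $\mu$-derivatives, belongs to $\mathbf{M}_2$ with norms uniform in $\mu\in K$. The explicit formula (\ref{eq4_3_ab}) reduces the problem to estimating
\[
V^2_{\mu,\phi}(x)=\int_x^1 (1+t)^{-1-\sqrt{1+2\mu}}\,dt,
\]
from which $V^2_{\mu,\phi}(1)=0$ and $V^2_{\mu,\phi}\in C^\infty(-1,1]$ are immediate.

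Since $K$ is a compact subset of $(-\tfrac12,-\tfrac38)$ or $K=\{-\tfrac12\}$, the choice of $\epsilon$ at the start of Section~4.4 ensures $\bar\epsilon:=\max_{\mu\in K}\sqrt{1+2\mu}<\epsilon<\tfrac12$. For $K\subset(-\tfrac12,-\tfrac38)$ the map $\mu\mapsto\sqrt{1+2\mu}$ is $C^\infty$ on $K$, so differentiating under the integral yields, for each $i\ge 0$,
\[
\partial_\mu^i V^2_{\mu,\phi}(x)=\int_x^1 P_i\!\bigl(\mu,\ln(1+t)\bigr)\,(1+t)^{-1-\sqrt{1+2\mu}}\,dt,
\]
where $P_i$ is a polynomial in $\ln(1+t)$ of degree $i$ with coefficients that are $C^\infty$ in $\mu\in K$. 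A direct estimate using $\sqrt{1+2\mu}\le\bar\epsilon$ gives
\[
\bigl|\partial_\mu^i V^2_{\mu,\phi}(x)\bigr|\le C_i(K)\,(1+x)^{-\bar\epsilon}\bigl(1+|\ln(1+x)|^{i}\bigr),\quad -1<x\le 1,
\]
while differentiating once, respectively twice, in $x$ only produces additional factors $(1+x)^{-1}$ and $(1+x)^{-2}$, with analogous polynomial-in-$\ln(1+x)$ corrections. Multiplying by the weights $(1+x)^\epsilon$, $(1+x)^{1+\epsilon}$, $(1+x)^{2+\epsilon}$ appearing in the definition of $\|\cdot\|_{\mathbf{M}_2}$, the strict inequality $\epsilon>\bar\epsilon$ absorbs both the power and the logarithmic factors uniformly on $(-1,1]$. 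Thus each $\partial_\mu^i V^2_\mu\in\mathbf{X}$ with norm bounded uniformly on $K$, and a standard functional-analytic argument (identical to the one applied in Lemmas \ref{lemV1V2} and \ref{lem4_8}) then promotes this to $V^2_\mu\in C^\infty(K,\mathbf{X})$. In the remaining case $K=\{-\tfrac12\}$ there are no $\mu$-derivatives to control: one just checks directly that $V^2_{-1/2,\phi}(x)=\ln\!\bigl(2/(1+x)\bigr)$ lies in $\mathbf{M}_2$, which is immediate since $\epsilon>0$.

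The only non-routine point is keeping track of the logarithmic factors $|\ln(1+x)|^i$ produced by $\mu$-differentiation; they are handled by the gap $\bar\epsilon<\epsilon$, which is precisely where the compactness of $K$ inside the open region $\sqrt{1+2\mu}<\epsilon$ enters. Everything else is bookkeeping of powers of $(1+x)$ and $(1-x)$, completely parallel to the previous two subsections.
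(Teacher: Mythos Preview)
Your proposal is correct and follows essentially the same approach as the paper: both use the explicit formula $e^{-b_\mu(t)}=(1+t)^{-1-\sqrt{1+2\mu}}$ from (\ref{eq4_3_ab}), differentiate in $\mu$ to produce polynomial-in-$\ln(1+x)$ factors, and absorb these using the gap $\sqrt{1+2\mu}\le\bar\epsilon<\epsilon$. The paper writes the derivative estimates out pointwise rather than via differentiation under the integral, and does not treat the single-point case $K=\{-\tfrac12\}$ separately as you do, but these are cosmetic differences.
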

\begin{proof}
For convenience, in this proof we denote $a(x)=a_{\mu}(x)$, $b(x)=b_{\mu}(x)$ and $V^2=V^2_{\mu}$.

   By computation, using the explicit expression of $U^{\mu,-1-\sqrt{1+2\mu}}_{\theta}(x), a(x), a'(x), b(x)$ and $V^2_{\phi}(x)$ given by (\ref{eq4_3_0}), (\ref{eq4_3_ab}) and (\ref{eq4_2_ker}), and the estimates of $\partial_{\mu}^{i}U^{\mu,-1-\sqrt{1+2\mu}}_{\theta}$ in (\ref{eqcor3_1_4}), we have, for $\mu\in (-\frac{1}{2},-\frac{3}{8})$, that
 \[
    e^{-b(x)}=(1+x)^{-1-\sqrt{1+2\mu}},  \quad  -1<x<1.
 \]
 So
\[
    V^2_{\phi}(x)=O(1)(1-x)(1+x)^{-\sqrt{1+2\mu}},  \quad \left| \frac{d}{dx}V^2_{\phi}(x) \right| = e^{-b(x)}=(1+x)^{-1-\sqrt{1+2\mu}},
    \]
    \[
     \left| \frac{d^2}{dx^2}V^2_{\phi}(x) \right| = \left| b'(x) \right| e^{-b(x)}=O(1)(1+x)^{-2-\sqrt{1+2\mu}},\quad  -1<x<1.
\]
Moreover, 
\[
  \frac{\partial^i}{\partial \mu^i}b(x)=\frac{\partial^i}{\partial \mu^i}\sqrt{1+2\mu}\ln(1+x).
\]
So we have, for $-1<x<1, i=1,2,3...$, that
\[
   \left|\partial^i_{\mu}V^2_{\phi}(x)\right|=O(1)(1-x)(1+x)^{-\sqrt{1+2\mu}}(\ln(1+x))^i, 
\]
\[
   \left|\partial^i_{\mu}\frac{d}{dx}V^2_{\phi}(x)\right|=O(1)(1+x)^{-1-\sqrt{1+2\mu}}(\ln(1+x))^i, 
\]
\[
   \left|\partial^i_{\mu}\frac{d^2}{dx^2}V^2_{\phi}(x)\right|=O(1)(1+x)^{-2-\sqrt{1+2\mu}}(\ln(1+x))^i.
\]
 The above imply that  for all $i\ge 0$, $\partial^i_{\mu}V^2(x)\in \mathbf{X}$, and $V^2_{\phi}\in C^{\infty}(K,M_2)$. So $V^2\in C^{\infty}(K,\mathbf{X})$.
\end{proof}

Next, by similar arguments  in the proof of Lemma \ref{lem4_1_10}, using Lemma \ref{lem4_3V2}, we have
\begin{lem}\label{lem4_3_10}
  There exists $C=C(K)>0$ such that for all $\mu\in K$,  $\beta \in \mathbb{R}^2$, and $V\in \mathbf{X}_1$, 
  \[
     ||V||_{\mathbf{X}}+|\beta |\le C||\beta V^2_{\mu}+V||_{\mathbf{X}}.
  \] 
\end{lem}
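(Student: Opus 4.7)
The plan is to argue by contradiction, exactly paralleling the proof of Lemma 4.1.10 (and Lemma 4.2.10), with $V^2_\mu$ now playing the role that $V^1_{\mu,\gamma}, V^2_{\mu,\gamma}$ played there. The key input is Lemma 4.3.V2, which says $V^2_\mu \in C^\infty(K, \mathbf{X})$, together with the direct sum decomposition $\mathbf{X} = \mathrm{span}\{V^2_\mu\} \oplus \mathbf{X}_1$ and the fact that $\mathbf{X}_1$ is closed in $\mathbf{X}$.

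Suppose the conclusion fails. Then there exist sequences $\mu^i \in K$, $\beta^i \in \mathbb{R}$, and $V^i \in \mathbf{X}_1$ such that
\[
   \|V^i\|_{\mathbf{X}} + |\beta^i| \ge i\, \|\beta^i V^2_{\mu^i} + V^i\|_{\mathbf{X}}.
\]
Normalizing, I may assume $\|V^i\|_{\mathbf{X}} + |\beta^i| = 1$, so that $\beta^i V^2_{\mu^i} + V^i \to 0$ in $\mathbf{X}$. Since $K$ is compact and $\{\beta^i\}$ is bounded in $\mathbb{R}$, after passing to a subsequence I may assume $\mu^i \to \mu \in K$ and $\beta^i \to \beta \in \mathbb{R}$.

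By Lemma 4.3.V2, $V^2_{\mu^i} \to V^2_\mu$ in $\mathbf{X}$, hence $\beta^i V^2_{\mu^i} \to \beta V^2_\mu$ in $\mathbf{X}$. Combined with $\beta^i V^2_{\mu^i} + V^i \to 0$, this forces $V^i \to -\beta V^2_\mu =: V$ in $\mathbf{X}$. Since $V^i \in \mathbf{X}_1$ and $\mathbf{X}_1$ is closed, $V \in \mathbf{X}_1$. But $V \in \mathrm{span}\{V^2_\mu\}$, so $V \in \mathbf{X}_1 \cap \mathrm{span}\{V^2_\mu\} = \{0\}$. Since $V^2_\mu \neq 0$, this yields $\beta = 0$, and therefore also $V^i \to 0$ in $\mathbf{X}$. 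Then $\|V^i\|_{\mathbf{X}} + |\beta^i| \to 0$, contradicting the normalization.

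The only delicate point, and the reason Lemma 4.3.V2 is needed, is the convergence $V^2_{\mu^i} \to V^2_\mu$ in the $\mathbf{X}$-norm; pointwise convergence would not suffice because the weights in $\mathbf{M}_2$ near $x = -1$ depend on $\mu$ through the exponent $\sqrt{1+2\mu}$. Once continuity of $\mu \mapsto V^2_\mu$ into $\mathbf{X}$ is in hand, the compactness/closedness argument above is entirely routine, and no further structure is required.
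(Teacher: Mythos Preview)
Your proof is correct and takes essentially the same approach as the paper: the paper simply states that the lemma follows ``by similar arguments in the proof of Lemma \ref{lem4_1_10}, using Lemma \ref{lem4_3V2}'', and your contradiction argument is precisely that adaptation to the one-dimensional kernel case.
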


\noindent \emph{Proof of Theorem \ref{thm4_3}:}
Define a map $F: K\times\mathbb{R}\times \mathbf{X}_1\to \mathbf{Y}$ by
\[
    F(\mu,\beta,V)=G(\mu,\beta V^2_{\mu}+V).
\]
By Proposition \ref{prop4_3}, $G$ is a $C^{\infty}$ map from $K\times \mathbf{X}$ to $\mathbf{Y}$. Let $\tilde{U}=\tilde{U}(\mu,\beta,V)=\beta_2V^2_{\mu}+V$. Using Lemma \ref{lem4_3V2}, we have $\tilde{U}\in C^{\infty}(K\times\mathbb{R}\times \mathbf{X}_1, \mathbf{X})$. So it concludes that $F\in C^{\infty}(K\times\mathbb{R}\times \mathbf{X}_1,\mathbf{Y})$. 

Next, by definition $F(\mu,0,0)=0$ for all $\mu\in K$. Fix some $\bar{\mu}\in K$, using Lemma \ref{lem4_3iso}, we have $F_{V}( \bar{\mu}, 0,0)=L_0^{ \bar{\mu}}:\mathbf{X}_1\to \mathbf{Y}$ is an isomorphism.

Applying Theorem C, there exist some $\delta>0$ and a unique $V\in C^{\infty}(B_{\delta}(\bar{\mu})\times B_{\delta}(0), \mathbf{X}_1 )$, such that
\[
    F(\mu,\beta,V(\mu,\beta))=0, \quad \forall \mu\in B_{\delta}(\bar{\mu}), \beta\in B_{\delta}(0),
\]
and 
\[
   V(\bar{\mu},0)=0.
\]
The uniqueness part of Theorem C holds in the sense that there exists some $0<\bar{\delta}<\delta$,  such that $B_{\bar{\delta}}(\bar{\mu},0,0)\cap F^{-1}(0) \subset  \{(\mu,\beta,V(\mu,\beta))|(\gamma)\in B_{\delta}(\bar{\mu}), \beta\in B_{\delta}(0)\}$.

\textbf{Claim}: there exists some $0<\delta_1<\frac{\bar{\delta}}{2}$, such that $V(\mu,0)=0$ for every $\mu\in B_{\delta_1}(\bar{\mu})$.

\emph{Proof of the claim:}
Since $V(\bar{\mu},0)=0$ and $V(\mu,0)$ is continuous in $\mu$, there exists some $0<\delta_1<\frac{\bar{\delta}}{2}$, such that for all $\mu\in B_{\delta_1}(\bar{\mu})$, $(\mu, 0,V(\mu,0))\in B_{\bar{\delta}(\bar{\mu},0,0)}$. We know that for all $\mu\in B_{\delta_1}(\bar{\mu})$,
   \[
      F(\mu, 0,0)=0,  
      \]
       and 
       \[
          F(\mu, 0, V(\mu,0))=0.
       \]
By the above mentioned uniqueness result, $V(\mu,0)=0$, for every $\mu\in B_{\delta_1}(\bar{\mu})$.

Now we have $V\in C^{\infty}(B_{\delta_1}(\bar{\mu})\times B_{\delta_1}(0), \mathbf{X}_1 )$, and 
\[
     F(\mu, \beta,V(\mu,\beta))=0, \quad \forall \mu\in B_{\delta_1}( \bar{\mu}), \beta\in B_{\delta_1}(0).
\]
i.e.
\[
    G(\mu, \beta V^2_{\mu}+V(\mu,\beta) )=0, \quad \forall \mu\in B_{\delta_1}(\bar{\mu}), \beta\in B_{\delta_1}(0).
\]
Take derivative of the above with respect to $\beta$ at $(\mu, 0)$, we have
\[
   G_{\tilde{U}}(\mu,0)(V^2_{\mu}+\partial_{\beta}V(\mu,0))=0.
\]
Since $G_{\tilde{U}}(\mu,0)V^2_{\mu}=0$ by Lemma \ref{lem4_3_6}, we have 
\[
   G_{\tilde{U}}(\mu,0)\partial_{\beta}V(\mu,0)=0.
\]
But $\partial_{\beta_i}V(\mu,0)\in C^{\infty}(\mathbf{X}_1)$, so 
\[
    \partial_{\beta}V(\mu,0)=0.
\]
Since $K$ is compact, we can take $\delta_1$ to be a universal constant for each  $\mu\in K$. So we have proved the existence of $V$ in Theorem \ref{thm4_3}.

Next, let $\mu\in B_{\delta_1}(\bar{\mu})$. Let $\delta'$ be a small constant to be determined.  For any $U$ satisfies the equation (\ref{eq4_0}) with $U-U^{\mu,-1-\sqrt{1+2\mu}}\in \mathbf{X}$, and $||U-U^{\mu,-1-\sqrt{1+2\mu}}||_{ \mathbf{X}}\le \delta'$ there exist some $\beta\in \mathbb{R}$ and $V^*\in \mathbf{X}_1$ such that
 \[
     U-U^{\mu,-1-\sqrt{1+2\mu}}=\beta V^2_{\mu}+V^*.
 \]
Then by Lemma \ref{lem4_3_10}, there exists some constant $C>0$ such that
\[
    \frac{1}{C}(|\beta|+||V^*||_{\mathbf{X}})\le ||\beta V^2_{\mu}+V^*||_{\mathbf{X}}\le \delta'.
\]
This gives $||V^*||_{\mathbf{X}}\le C\delta'$.

 Choose $\delta'$ small enough such that $C\delta'<\delta_1$. We have the uniqueness of $V^*$. So  $V^*=V(\mu,\beta)$ in (\ref{eq_thm4_3_1}). The theorem is proved.  \qed

Now with Theorem \ref{thm4_1}-\ref{thm4_3} we can give the \\

\noindent \emph{Proof of the existence part of Theorem \ref{thm1_2}:}
Recall the relation between the parameters $(\mu,\gamma)$ and $(\tau,\sigma)$
\[
   \mu=\frac{1}{8}\tau^2-\frac{1}{2}\tau,\quad \gamma=-2\sigma.
\]
Let $K$ be a compact subset of one of the four sets $J_1$, $J_2$, $J_3\cap\{2< \tau<3\}$ and $J_3\cap \{\tau=2\}$, where $J_1,J_2,J_3$ are the sets defined by (\ref{eqJ}). 

For $(\tau,\sigma)\in K\cap J_1$, let
\[
    u(\tau,\sigma,\beta)=\frac{1}{\sin\theta}\left(U^{\mu,\gamma}+\beta V_{\mu,\gamma}^2+V(\mu,\gamma,0, \beta)\right) ,\quad \beta \in (-\delta,\delta),
\]
where $\delta,  V_{\mu,\gamma}^2$ and $V(\mu,\gamma,0, \beta)$ are as in Theorem \ref{thm4_1}.

For $(\tau,\sigma)\in K\cap  J_2$, let
\[
    u(\tau,\sigma,\beta)=\frac{1}{\sin\theta}\left(U^{-\frac{1}{2},\gamma}+\beta V_{-\frac{1}{2},\gamma}^2+V(\gamma,0, \beta)\right) ,\quad \beta \in (-\delta,\delta),
\]
where $\delta, V_{-\frac{1}{2},\gamma}^2$ and $V(\gamma,0, \beta)$ are as in  Theorem \ref{thm4_2}.

For $(\tau,\sigma)\in K\cap (J_3\cap\{2\le \tau<3\})$, let
\[
    u(\tau,\sigma,\beta)=\frac{1}{\sin\theta}\left(U^{\mu,-1-\sqrt{1+2\mu}}+\beta V_{\mu,-1-\sqrt{1+2\mu}}^2+V(\mu,\beta)\right), \quad \beta \in (-\delta,\delta),
\]
where $\delta, V_{\mu,-1-\sqrt{1+2\mu}}^2$ and $V(\mu,\beta)$ are as in Theorem \ref{thm4_3}.

With  $u(\tau,\sigma,\beta)$ defined as the above, the existence part of Theorem \ref{thm1_2} follows from Theorem \ref{thm4_1}-\ref{thm4_3}.

\section{Asymptotic behavior of solutions}
In this section we study the asymptotic behavior of (-1)-homogeneous axisymmetric solutions of (\ref{eq_homo}) in a punctured ball around the north or south pole of $\mathbb{S}^2$. In particular we prove Theorem \ref{thm1_5} and Theorem \ref{thm1_6}.

Recall that the Navier-Stokes equations for (-1)-homogeneous solutions have been converted to the system

\begin{equation}\label{eq5_2}
\left\{
\begin{split}
	& (1-x^2) U_\theta' + 2x U_\theta +\frac{1}{2} U_\theta^2 + \int_{x_0}^{x} \int_{x_0}^{l} \int_{x_0}^{t} \frac{2 U_\phi(s) U_\phi'(s)}{1-s^2} ds dt dl = c_1 x^2 + c_2 x + c_3, \\
	& (1-x^2) U_\phi'' + U_\theta U_\phi' = 0. 
\end{split}
\right. 
\end{equation}
where $x_0$ is some fixed number in $(-1,1)$, $c_1,c_2,c_3$ are constants.

It follows from the second line of the above that
\begin{equation}\label{eq5_10}
   U'_{\phi}(x)=Ce^{-\int_{-1+\delta_1}^{x}\frac{U_{\theta}}{1-s^2}ds}.
\end{equation}

Let $\delta>0$ be a real number, $H$ be a function of $x$, we consider the equation 
\begin{equation}\label{5_1}
  (1-x^2)U_{\theta}'(x)+2xU_{\theta}+\frac{1}{2}U^2_{\theta}=H(x), \quad -1<x\le -1+\delta.
\end{equation}

Define, with $x_0=-1+\delta$, 
\begin{equation}\label{eq5_int}
   I(x):=\int_{x_0}^{x} \int_{x_0}^{l} \int_{x_0}^{t} \frac{2 U_\phi(s) U_\phi'(s)}{1-s^2} ds dt dl.
\end{equation}

We can write $I$ as 
\begin{equation}\label{eqI_1}
  \begin{split}
     I(x) & =\int_{x_0}^{x} \int_{s}^{x} \int_{t}^{x}\frac{2 U_\phi(s) U_\phi'(s)}{1-s^2} dl dt ds
	= \int_{x_0}^{x} \frac{U_\phi(s) U_\phi'(s)(s-x)^2}{1-s^2} ds\\
           & = -\frac{(x-x_0)^2}{2(1-x_0^2)}U^2_{\phi}(x_0)+I_1
     \end{split}
\end{equation}
where 
\begin{equation}\label{eqI_2}
   I_1(x)=- \int_{x_0}^{x}  \frac{U_\phi^2(s) (s-x)(1-sx)}{(1-s^2)^2} ds.
\end{equation}
By computation
\begin{equation}\label{eqI_3}
  I_1'(x)= -\int_{x_0}^{x}\frac{U^2_{\phi}(s)(-s^2+2xs-1)}{(1-s^2)^2}ds<0, \quad -1<x<x_0.
  \end{equation}
Indeed, the first inequality in the above follows from $-s^2+2xs-1\le -s^2+s^2+x^2-1=x^2-1<0$, for all $-1<x<s<x_0$.

\noindent \emph{Proof of (i) and (ii) of Theorem \ref{thm1_5}:} We write the first equation  of (\ref{eq5_2}) as (\ref{5_1}) with
\[
  H(x)=-I(x)+c_1x^2+c_2x+c_3,
\]
where $I(x)$ is defined in (\ref{eq5_int}).

By (\ref{eqI_1}) and (\ref{eqI_3}), $H(x)$ is the sum of a bounded function and a monotonically increasing function in $(-1,-1+\delta]$. It follows that $H^+\in L^{\infty}(-1,-1+\delta)$. 

Let $g(x):=U_{\theta}(x)$, $a(x):=1-x^2$ and $b(x):=2x$. An application of Proposition \ref{lemA_1} yields part (i) and (ii) of the theorem. \qed \\

For $H\in C[-1,-1+\delta]$, denote $\tau_1=2-\sqrt{4+2H(-1)}$, and $\tau_2=2+\sqrt{4+2H(-1)}$

\begin{lem}\label{lem5_4}
 For $\delta>0$, $H\in C[-1,-1+\delta]$, let  $U_{\theta}\in C^1(-1, -1+\delta]$ be a solution of (\ref{5_1}) in $(-1,-1+\delta)$. Then 
\[
      U_{\theta}(-1): =\lim_{x\to -1^+}U_{\theta}(x)=\tau_1 \textrm{  or  } \tau_2, 
\]
and $H(-1)=-2U_{\theta}(-1)+\frac{1}{2}U^2_{\theta}(-1)\ge -2$.
\end{lem}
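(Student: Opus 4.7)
The plan is to reduce everything to Proposition~\ref{lemA_1} (the scalar ODE result from Section~7 that was just invoked to prove parts~(i)-(ii) of Theorem~\ref{thm1_5}), and then to pass to the limit in the equation. Since $H \in C[-1,-1+\delta]$, we have $H \in L^\infty(-1,-1+\delta)$, so $H^+ \in L^\infty(-1,-1+\delta)$. Setting $g := U_\theta$, $a(x) := 1-x^2$, $b(x) := 2x$, the hypotheses of Proposition~\ref{lemA_1} are satisfied on $(-1,-1+\delta]$. Its conclusion immediately gives both that $U_\theta(-1):=\lim_{x\to -1^+}U_\theta(x)$ exists and is finite, and that $\lim_{x\to -1^+}(1+x)U_\theta'(x)=0$. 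In particular
\[
   (1-x^2)U_\theta'(x) \;=\; (1-x)\,(1+x)U_\theta'(x) \;\longrightarrow\; 0 \quad \text{as } x\to -1^+.
\]

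Next, I would let $x\to -1^+$ in equation~(\ref{5_1}). Every term has a limit: the derivative term vanishes by the above; $2xU_\theta(x)\to -2U_\theta(-1)$ and $\tfrac12 U_\theta^2(x)\to \tfrac12 U_\theta(-1)^2$ by continuity of $U_\theta$ at $-1$; and $H(x)\to H(-1)$ since $H$ is continuous on $[-1,-1+\delta]$. Hence
\[
   \tfrac12 U_\theta(-1)^2 - 2\,U_\theta(-1) \;=\; H(-1),
\]
which is the stated identity $H(-1)=-2U_\theta(-1)+\tfrac12 U_\theta(-1)^2$.

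Finally, I would solve this as a quadratic in $U_\theta(-1)$: $U_\theta(-1)^2 - 4U_\theta(-1) - 2H(-1) = 0$, giving $U_\theta(-1) = 2\pm \sqrt{4+2H(-1)}$, i.e. $\tau_1$ or $\tau_2$. Since $U_\theta(-1)$ is a real number (it is the limit of a real-valued function), the discriminant must be nonnegative, forcing $4+2H(-1)\ge 0$, i.e. $H(-1)\ge -2$. There is no real obstacle here — the entire content of the lemma is packaged into Proposition~\ref{lemA_1}; the remaining work is the routine limit in the ODE and the quadratic formula.
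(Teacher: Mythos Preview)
Your proof is correct and essentially identical to the paper's: both apply Proposition~\ref{lemA_1} with $g=U_\theta$, $a(x)=1-x^2$, $b(x)=2x$ to obtain the finite limit $U_\theta(-1)$ and $(1-x^2)U_\theta'\to 0$, then pass to the limit in (\ref{5_1}) and read off the quadratic relation. The paper simply completes the square to write $H(-1)=\tfrac12(U_\theta(-1)-2)^2-2$, from which both conclusions are immediate, exactly as you argue via the discriminant.
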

\begin{proof}
  Let $g(x):=U_{\theta}(x)$, $a(x):=1-x^2$ and $b(x):=2x$. By Proposition \ref{lemA_1},  $U_{\theta}(-1):=\lim_{x\to -1^+}U_{\theta}(x)$ exists and is finite, and $\lim_{x\to -1^+}(1-x^2) U_{\theta}'(x)=0$. Sending $x\to -1$ in (\ref{5_1}) leads to 
\[
   H(-1)=-2U_{\theta}(-1)+\frac{1}{2}U^2_{\theta}(-1)=\frac{1}{2}\left(U_{\theta}(-1)-2\right)^2-2. 
\]
Lemma \ref{lem5_4} follows from the above.
\end{proof}

Now we are ready to give some further local asymptotic behavior of local solutions $U$ of (\ref{eq5_2}) as $x\to -1^+$. By part (i) of Theorem \ref{thm1_5} we know that $\lim_{x\to -1^+}U_{\theta}(x)=U_{\theta}(-1)$ exists and is finite. 

Now let us prove part (iii) of Theorem \ref{thm1_5}.

\begin{lem} \label{lem5_7}
 For $\delta>0$,  $x_0\in (-1,-1+\delta]$, let $U=(U_{\theta},U_{\phi})$ be a solution of system (\ref{eq5_2}) in $(-1,-1+\delta)$, and $U_{\theta}\in C^1(-1,-1+\delta]$, $U_{\phi}\in C^2(-1,-1+\delta]$, with $U_{\theta}(-1)<2$. \\
Then if $U_{\theta}(-1)\ne 0$,  there exist some constants $a_1,a_2$ and $b_1,b_2,b_3$, such that for any $\epsilon>0$,
\begin{equation*} 
\begin{split}
  U_{\theta}(x) = & U_{\theta}(-1)+ a_1(1+x)^{\alpha_0}+a_2(1+x)+O((1+x)^{2\alpha_0-\epsilon})+O((1+x)^{2-\epsilon}),\\
   U_{\phi}(x) = & U_{\phi}(-1)+b_1(1+x)^{\alpha_0}+b_2(1+x)^{2\alpha_0}+b_3(1+x)^{1+\alpha_0} \\
   & +O((1+x)^{\alpha_0+2-\epsilon})+O((1+x)^{3\alpha_0-\epsilon}) 
   \end{split}
\end{equation*}
where $\alpha_0=1-\frac{U_{\theta}(-1)}{2}$.

If $U_{\theta}(-1)=0$, there exist some constants $a_1,a_2$ and $b_1,b_2,b_3$ such that for any $\epsilon>0$, 
\begin{equation*}
\begin{split}
  & U_{\theta}(x)=a_1(1+x)\ln (1+x)+a_2(1+x)+O((1+x)^{2-\epsilon}),\\
  & U_{\phi}(x)=U_{\phi}(-1)+b_1(1+x)+b_2(1+x)^{2}\ln(1+x)+b_3(1+x)^{2}+O((1+x)^{3-\epsilon}).
  \end{split}
\end{equation*}
\end{lem}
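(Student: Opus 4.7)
Write $\tau := U_\theta(-1)$ and $\alpha_0 := 1 - \tau/2 > 0$. I will bootstrap between the two components of $U$: first use parts (i)--(ii) of Theorem~\ref{thm1_5} together with (\ref{eq5_10}) to extract a crude expansion of $U_\phi$; then substitute into the integral $I$ in (\ref{eq5_int}) and solve the linearized equation for $U_\theta-\tau$ by variation of parameters; and finally feed this improved $U_\theta$ back into (\ref{eq5_10}) to produce three terms for $U_\phi$.

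\textbf{Step 1 (crude $U_\phi$).} Since $U_\theta(s) = \tau + o(1)$ near $-1$ one has
\[
\int_x^{x_0}\frac{U_\theta(s)}{1-s^2}\,ds \;=\; \tfrac{\tau}{2}\ln\tfrac{1}{1+x} + h(x),
\]
with $h$ bounded (indeed H\"older continuous) up to $x=-1$. Substituting in (\ref{eq5_10}) gives $U_\phi'(x) = K(1+x)^{-\tau/2}(1+o(1))$. Because $\tau/2<1$ this is integrable near $-1$, so $U_\phi(-1):=\lim_{x\to-1^+}U_\phi(x)$ exists and integration yields the leading term $U_\phi(x)-U_\phi(-1) = b_1(1+x)^{\alpha_0}(1+o(1))$ when $\tau\neq 0$, and $b_1(1+x)(1+o(1))$ when $\tau=0$.

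\textbf{Step 2 (expansion of $U_\theta$).} Setting $w := U_\theta-\tau$, the first equation in (\ref{eq5_2}) becomes
\[
(1-x^2)w' + (2x+\tau)w + \tfrac12 w^2 \;=\; F(x), \qquad F(x) := c_1(x^2-1)+c_2(1+x) - \bigl(I(x)-I(-1)\bigr).
\]
Combining Step 1 with the rewriting (\ref{eqI_1})--(\ref{eqI_2}) of $I$ shows $I(x)-I(-1) = O((1+x)^{2\alpha_0})$, so $F(s) = O(1+s) + O((1+s)^{2\alpha_0})$. The homogeneous solution of the linear part is $\varphi(x) = (1-x)^{1+\tau/2}(1+x)^{\alpha_0}$, so variation of parameters gives
\[
w(x) \;=\; \varphi(x)\left(C_0 + \int_{x_0}^x \frac{F(s)-\tfrac12 w(s)^2}{(1-s^2)\varphi(s)}\,ds\right).
\]
The kernel behaves like $(1+s)^{-1-\alpha_0}$ near $-1$; integrating the $(1+s)$ contribution against it and multiplying by $\varphi$ produces the $a_2(1+x)$ particular term, integrating the $(1+s)^{2\alpha_0}$ contribution gives the $O((1+x)^{2\alpha_0-\epsilon})$ error, and the homogeneous piece $C_0\varphi$ supplies the $a_1(1+x)^{\alpha_0}$ term; the nonlinear $w^2$ integral only improves the remainders. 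In the resonant case $\tau=0$ (so $\alpha_0=1$) the $(1+s)$ forcing and the homogeneous solution $(1+x)$ collide, producing the $a_1(1+x)\ln(1+x)$ term displayed in the statement.

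\textbf{Step 3 (refined $U_\phi$) and main obstacle.} Inserting the $U_\theta$ expansion from Step 2 into the integral in (\ref{eq5_10}) gives
\[
\int_x^{x_0}\frac{U_\theta(s)}{1-s^2}\,ds \;=\; \tfrac{\tau}{2}\ln\tfrac{1}{1+x} \;-\; \tfrac{a_1}{2\alpha_0}(1+x)^{\alpha_0} \;+\; \tilde h(x),
\]
with $\tilde h$ of higher order (and an analogous identity containing $\ln^2(1+x)$ in the $\tau=0$ case). Taylor-expanding the exponential to second order and integrating term by term produces precisely the three powers $(1+x)^{\alpha_0}$, $(1+x)^{2\alpha_0}$ and $(1+x)^{1+\alpha_0}$ with the stated remainders $O((1+x)^{\alpha_0+2-\epsilon})+O((1+x)^{3\alpha_0-\epsilon})$, and the corresponding $(1+x)$, $(1+x)^2\ln(1+x)$, $(1+x)^2$ terms in the $\tau=0$ branch. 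The delicate part of the argument is the careful bookkeeping of exponents in Step 2 and Step 3: whenever two characteristic exponents coincide (notably $\alpha_0=1$ at $\tau=0$, but potentially also when $2\alpha_0$ meets an integer) one must replace a power by a logarithm, and the H\"older regularity of the remainders is what forces the $(1+x)^{-\epsilon}$ loss. The first-order ODE lemmas collected in Section~7 supply the uniform estimates needed to justify all of these manipulations.
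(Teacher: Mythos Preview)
Your overall bootstrap strategy---crude $U_\phi$ from (\ref{eq5_10}), then a linear first--order ODE analysis for $U_\theta-\tau$, then refined $U_\phi$---is exactly the paper's. The one execution difference worth noting is how the quadratic term is handled in Step~2. You keep $\tfrac12 w^2$ on the right and say ``the nonlinear $w^2$ integral only improves the remainders''; but to justify that you first need a crude bound like $w=O((1+x)^{\min\{\alpha_0,1\}-\epsilon})$, and your variation--of--parameters identity alone does not deliver it (the $w^2$ contribution, fed only $w=o(1)$, gives back merely $o(1)$). The paper sidesteps this by factoring $\tfrac12 U_\theta^2+2xU_\theta-\lambda=\tfrac12(U_\theta-\tau_1)(U_\theta-\tau_2)+2(1+x)U_\theta$ and absorbing one factor into the coefficient: setting $V=U_\theta-\tau_1$ and $B=\tfrac{U_\theta-\tau_2}{2(1-x^2)}$ makes the equation \emph{genuinely} linear, $V'+BV=H$, with $(1+x)B\to-\alpha_0$, so Lemma~\ref{lemA_3} gives the crude estimate immediately and Lemma~\ref{lemA_5} the refined one. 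Two smaller points: in Step~1, from $U_\theta=\tau+o(1)$ you only get $h(x)=o(\ln(1+x))$, hence $U_\phi'=C(1+x)^{-\tau/2+o(1)}$ rather than $C(1+x)^{-\tau/2}(1+o(1))$---good enough for the bootstrap but not what you wrote; and in Step~2 the estimate $I(x)-I(-1)=O((1+x)^{2\alpha_0})$ is not correct as stated, since $I'(-1)$ is finite and contributes an $I'(-1)(1+x)$ term (which you should fold into the $O(1+x)$ part of $F$).
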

\begin{proof}
Let $I(x)$ be defined by (\ref{eq5_int}). The first equation of (\ref{eq5_2}) can be written as
\begin{equation*}
    (1-x^2)U_{\theta}'+2xU_{\theta}+\frac{1}{2}U_{\theta}^2=\lambda+h(x),
\end{equation*}
where by Lemma \ref{lem5_4}, $\lambda$ is a constant , $\lambda=-2U_{\theta}(-1)+\frac{1}{2}U^2_{\theta}(-1)=-\frac{\tau_1\tau_2}{2}$, and $h(x)=-I(x)+I(-1)+c_1(1+x)+c_2(1+x)^2$ for some constants $c_1$ and $c_2$.

Since $U_{\theta}(-1)<2$, there exist  $\delta_1,\epsilon>0$ such that $\displaystyle \frac{U_{\theta}(x)}{1-x}\le \frac{U_{\theta}(-1)+\epsilon}{2}<1$ for $-1<x\le -1+\delta_1$.

For convenience denote $\tau_1=U_{\theta}(-1)$ and let $\tau_2=4-U_{\theta}(-1)$.  It follows from (\ref{eq5_10}) that for some constant $C_1$,  $ |U'_{\phi}|\le C_1(1+x)^{-\frac{\tau_1+\epsilon}{2}}$ and $|U_{\phi}(x)|\le C_1$ for $-1<x<-1+\delta_1$. Then $I'''(x)=O((1+x)^{-1-\frac{\tau_1+\epsilon}{2}})$. Therefore both $I(-1)$ and $I'(-1)$ exist and are finite, and $I(x)=I(-1)+I'(-1)(1+x)+O((1+x)^2)+O((1+x)^{2-\frac{\tau_1}{2}-\epsilon})$. So $h(x)=(c_1-I'(-1))(1+x)+O((1+x)^2)+O((1+x)^{2-\frac{\tau_1}{2}-\epsilon})$.

Rewriting the above equation as 
\begin{equation*} 
   (1-x^2)(U_{\theta}-\tau_1)'+\frac{1}{2}(U_{\theta}-\tau_1)(U_{\theta}-\tau_2)=\tilde{h}(x):=h(x)-2(1+x)U_{\theta}. 
\end{equation*}
Let $V:=U_{\theta}-\tau_1$, $B:=\frac{U_{\theta}-\tau_2}{2(1-x^2)}$, $H:=\frac{\tilde{h}}{1-x^2}$. It can be checked that $B,H\in C(-1,-1+\delta]$, $H\in L^{\infty}(-1,-1+\delta)$  and $\lim_{x\to -1^+}(1+x)B(x)=\frac{\tau_1-\tau_2}{4}=-\alpha_0<0$, and  $V,B,H$ satisfy
\begin{equation*}
  V'(x)+B(x)V(x)=H(x), \quad -1<x<-1+\delta.
\end{equation*}
So we can apply Lemma \ref{lemA_3} with $\beta=\alpha_0$ and $b=1$ to obtain $U_{\theta}-\tau_1=O((1+x)^{\min\{\alpha_0,1\}-\epsilon})$ for any $\epsilon>0$. 

Next, use this estimate in (\ref{eq5_10}), we have $U'_{\phi}=O(1)(1+x)^{-\frac{\tau_1}{2}}$. So $U_{\phi}=U_{\phi}(-1)+O(1)(1+x)^{1-\frac{\tau_1}{2}}$ and $I(x)=I(-1)+I'(-1)(1+x)+O((1+x)^{2-\frac{\tau_1}{2}-\epsilon})$ for any $\epsilon>0$. Then by the estimate of $I(x)$ and $U_{\theta}$, notice $\alpha_0=1-\frac{\tau_1}{2}$, there is some constant $d_1$ such that $\tilde{h}(x)=d_1(1+x)+O((1+x)^{1+\min\{\alpha_0,1\}-\epsilon})$ for any $\epsilon>0$. So $H=d_1+O((1+x)^{\min\{\alpha_0,1\}-\epsilon})$.  Moreover,
\[
   (1+x)B+\alpha_0=O((1+x)^{\min\{\alpha_0,1\}-\epsilon}).
\]
So we can apply Lemma \ref{lemA_5} .  If $\alpha_0\ne 1$, there exist some constants $a_1, a_2$ such that
\[
   U_{\theta}-\tau_1=a_1(1+x)^{\alpha_0}+a_2(1+x)+O((1+x)^{1+\min\{\alpha_0,1\}-\epsilon})+O((1+x)^{\alpha_0+\min\{\alpha_0,1\}-\epsilon}).
\]
Then by (\ref{eq5_10}), we have estimate of $U'_{\phi}$ and $U_{\phi}(-1)$ exists and finite, and there exist some constants $b_1,b_2,b_3$ such that 
\[
\begin{split}
    U_{\phi} = & U_{\phi}(-1)+b_1(1+x)^{\alpha_0}+b_2(1+x)^{2\alpha_0}+b_3(1+x)^{1+\alpha_0} \\
  & + O((1+x)^{\alpha_0+1+\min\{\alpha_0,1\}-\epsilon})+O((1+x)^{2\alpha_0+\min\{\alpha_0,1\}-\epsilon})
\end{split}
\]
for any $\epsilon>0$. 

If $\alpha_0=1$, $U_{\theta}(-1)=0$, there exist some constants $a_1,a_2$ such that
\[
   U_{\theta}=a_1(1+x)\ln (1+x)+a_2(1+x)+O((1+x)^{1+\min\{\alpha_0,1\}-\epsilon})
\]
By (\ref{eq5_10}), $U_{\phi}(-1)$ exists and there exist some constants $b_1,b_2,b_3$ such that 
\[
    U_{\phi}=U_{\phi}(-1)+b_1(1+x)+b_2(1+x)^{2}\ln(1+x)+b_3(1+x)^{2}+O((1+x)^{2+\min\{\alpha_0,1\}-\epsilon})
 \]
for any $\epsilon>0$. 
\end{proof}

\begin{lem}\label{lem5_8}
   Let $U=(U_{\theta},U_{\phi})$ be a solution of system (\ref{eq5_2}), and $U_{\theta}\in C^1(-1,-1+\delta]$, $U_{\phi}\in C^2(-1,-1+\delta]$, for some $\delta>0$ and $x_0\in (-1,-1+\delta]$, with $U_{\theta}(-1)=2$. Then for some constants $b_1$ and $b_2$, and for any $\epsilon\in (0,1)$, either
\begin{equation}\label{eq5_9}
\begin{split}
   & U_{\theta}=2+\frac{4}{\ln(1+x)}+O((\ln (1+x))^{-2+\epsilon}),\\
   & U_{\phi}=U_{\phi}(-1)+\frac{b_1}{\ln(1+x)}+O((\ln (1+x))^{-2+\epsilon}), 
   \end{split}
\end{equation}
or 
\begin{equation}\label{eq5_9or}
\begin{split}
   & U_{\theta}=2+O((1+x)^{1-\epsilon}),\\
   & U_{\phi}=b_1\ln(1+x)+b_2+b_1O((1+x)^{1-\epsilon}).
   \end{split}
\end{equation}
\end{lem}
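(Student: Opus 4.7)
The plan is to first introduce $V := U_\theta - 2$, so that $V(-1) = 0$ by hypothesis. Combining this with Lemma \ref{lem5_4} (which gives $H(-1) = -2$), the first equation of (\ref{eq5_2}) yields the Riccati-type ODE
\begin{equation*}
(1-x^2) V' + 2(1+x) V + \tfrac{1}{2} V^2 = R(x),
\end{equation*}
where $R(x) := H(x) - H(-1) - 4(1+x)$ satisfies $R(-1) = 0$, and $R$ depends on $U_\phi$ through the triple integral $I(x)$ of (\ref{eq5_int}). Both the coefficient of $V$ and the coefficient of $V'$ vanish at $x = -1$; this is the doubly degenerate case corresponding to $\alpha_0 = 1 - U_\theta(-1)/2 = 0$, and is why the arguments used for Lemma \ref{lem5_7} (which relied on $\alpha_0 > 0$ via Lemmas A.3 and A.5) cannot be applied directly.

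To treat the degeneracy I would work with $W := 1/V$ (after first verifying that $V$ cannot vanish on a sequence approaching $-1$; otherwise $V \equiv 0$ near $-1$, giving (\ref{eq5_9or}) trivially). The ODE transforms into
\begin{equation*}
(1-x^2) W' = 2(1+x) W + \tfrac{1}{2} - R(x) W^2,
\end{equation*}
whose leading behavior near $x = -1$ is $W' \approx W + \tfrac{1}{4(1+x)} - \tfrac{R(x) W^2}{2(1+x)}$. Heuristically, the balance $W' \approx \tfrac{1}{4(1+x)}$ picks out the attractor $W \sim \tfrac{1}{4}\ln(1+x)$, i.e.\ $V \sim 4/\ln(1+x)$, which is case (\ref{eq5_9}); any solution escaping this attractor must have $|W|$ growing fast enough that $V$ decays faster than any logarithmic rate, and in that regime a linearization (dropping $\tfrac{1}{2} V^2$ and applying the integrating factor $(1-x)^{-2}$ to $V' + \tfrac{2}{1-x} V = R/(1-x^2)$) combined with the ODE asymptotic results of Section 7 yields $V = O((1+x)^{1-\epsilon})$ for every $\epsilon > 0$, which is case (\ref{eq5_9or}). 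Making this dichotomy rigorous---ruling out any intermediate decay rates---is the main analytic difficulty; the cleanest route is to substitute the ansatz $V = 4/\ln(1+x) + \tilde V$ into the Riccati equation and show via integral estimates that either $\tilde V = O((\ln(1+x))^{-2+\epsilon})$ (case (a)), or $\tilde V$ itself is of size $-4/\ln(1+x)$, so that it cancels the leading term and pushes $V$ into the power regime (case (b)).

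Once the dichotomy for $V$ is settled, the $U_\phi$ asymptotics follow from (\ref{eq5_10}): writing $U_\theta = 2 + V$, we have
\begin{equation*}
U_\phi'(x) = C\,\frac{1-x}{1+x}\,\exp\!\left(-\int_{x_0}^x \tfrac{V(s)}{1-s^2}\,ds\right).
\end{equation*}
In case (\ref{eq5_9}) the integrand is $\sim 2/[(1+s)\ln(1+s)]$, so the exponential factor behaves like $C'/(\ln(1+x))^2$ and $U_\phi'(x) \sim 2C''/[(1+x)(\ln(1+x))^2]$; integration near $-1$ then yields a finite limit $U_\phi(-1)$ together with the leading correction $b_1/\ln(1+x)$. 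In case (\ref{eq5_9or}) the integrand is $O((1+s)^{-\epsilon})$ and hence integrable, so the exponential approaches a positive constant, $U_\phi'(x) \sim 2b_1/(1+x)$, and expansion of the remainder yields $U_\phi = b_1 \ln(1+x) + b_2 + b_1 O((1+x)^{1-\epsilon})$. The principal obstacle throughout is the coupling: $R(x)$ depends on $U_\phi$ through $I(x)$, while $U_\phi$ is governed by $V$; the argument must therefore be executed as a simultaneous bootstrap on the pair $(V, U_\phi)$, using each sharpened expansion to improve the estimate on the other until the stated error terms close up.
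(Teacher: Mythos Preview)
Your overall strategy---identify the dichotomy for $V := U_\theta - 2$, then read off $U_\phi$ from (\ref{eq5_10})---is correct, and the $U_\phi$ portion of your argument matches the paper's. But the step you flag as ``the main analytic difficulty'' is in fact a genuine gap as written, and your proposed route through $W := 1/V$ is not how the paper resolves it.

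First, the claim that if $V$ vanishes along a sequence approaching $-1$ then $V \equiv 0$ nearby is unsupported: the Riccati equation $(1-x^2)V' + 2(1+x)V + \tfrac12 V^2 = R(x)$ has a nontrivial forcing term $R$, so zeros of $V$ need not propagate. The reciprocal substitution therefore cannot be justified at this stage, and your subsequent heuristic about $W' \approx W + \tfrac{1}{4(1+x)}$ having an attractor $W \sim \tfrac14\ln(1+x)$ remains just that---a heuristic. You then propose the ansatz $V = 4/\ln(1+x) + \tilde V$ and a bootstrap to force the dichotomy, but you give no mechanism for ruling out intermediate decay rates; this is exactly the hard part, and your sketch does not supply it.

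The paper's resolution is a single, well-chosen substitution: set $g(x) := (U_\theta(x) - 2)\ln(1+x)$. A direct computation from $(1-x^2)V' + \tfrac12 V^2 = \tilde h$ (where the linear term $2(1+x)V$ has been absorbed into $\tilde h = O((1+x)^{1-\epsilon})$) yields
\[
(1-x^2)\ln(1+x)\, g' - (1-x)\, g + \tfrac12 g^2 = \tilde h(x)\bigl(\ln(1+x)\bigr)^2.
\]
This is again in the form $a(x)g' + b(x)g + \tfrac12 g^2 = H(x)$, now with $a(x) = (1-x^2)\ln(1+x)$ satisfying hypothesis (ii) of Section~7 and $H(x) \to 0$ as $x \to -1^+$. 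Proposition~\ref{lemA_1} then applies directly: $g(-1)$ exists, is finite, and solves $-2g(-1) + \tfrac12 g(-1)^2 = 0$, hence $g(-1) \in \{0,4\}$. This \emph{is} the dichotomy, obtained without any reciprocal or ansatz. Writing $U_\theta = 2 + \eta/\ln(1+x) + V$ with $\eta \in \{0,4\}$ and $V = o(1/\ln(1+x))$, the remaining refinement to $V = O((\ln(1+x))^{-2+\epsilon})$ (when $\eta = 4$) or $V = O((1+x)^{1-\epsilon})$ (when $\eta = 0$) follows from Lemma~\ref{lemA_7}, which is tailored to exactly this situation where $(1+x)\ln(1+x)\, B(x)$ has a finite limit.

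In short: your $W = 1/V$ route is not rigorous as stated and would be hard to make so; the paper instead multiplies by $\ln(1+x)$ rather than dividing by $V$, which keeps the equation in the Riccati form covered by Proposition~\ref{lemA_1} and gives the dichotomy for free.
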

\begin{proof}

Let $I$ be the triple integral defined by (\ref{eq5_int}). The equation (\ref{5_1}) can be written as
\begin{equation}\label{eqtau2}
(1-x^2)(U_{\theta}-2)'+\frac{1}{2}(U_{\theta}-2)^2=\tilde{h}:=-I(x)+c_1x^2+c_2x+c_3+2-2(1+x)U_{\theta}.
\end{equation}
Since $U_{\theta}(-1)=2$, for any $\epsilon>0$,  
\[
   |U'_{\phi}|\le C(1+x)^{-\frac{2+\epsilon}{2}}, 
\]
and  $|U_{\phi}|\le C(1+x)^{-\frac{\epsilon}{2}}$ for some constant $C>0$. Thus $I(x)=I(-1)+O((1+x)^{1-\epsilon})$. So $\tilde{h}=O((1+x)^{1-\epsilon})$.

By (\ref{eqtau2}), $g:=(U_{\theta}-2)\ln(1+x)$ satisfies
\begin{equation*}
    (1-x^2)\ln(1+x)g'-(1-x)g+\frac{1}{2}g^2=\tilde{h}(x)(\ln(1+x))^2.
\end{equation*}
By Proposition \ref{lemA_1} ,  $g\in L^{\infty}(-1,-1+\frac{\delta}{2})$, $\displaystyle \lim_{x\to -1^+}g(x)$ exists and is finite, $\displaystyle \lim_{x\to -1^+}(1-x^2)\ln(1+x)g'=0$, and $-2g(1)+\frac{1}{2}g^2(1)=0$. So $g(1)=0$ or $4$

Let us write
\[
    U_{\theta}(x)=2+\frac{\eta}{\ln(1+x)}+V. 
\]
We can see that $\eta=0$ or $4$, $V(-1)=0$ and $V=o(\frac{1}{\ln(1+x)})$.

By (\ref{eqtau2}), $V$ satisfies
\begin{equation*}
   (1-x^2)V'+\frac{\eta}{\ln(1+x)}V+\frac{1}{2}V^2 =\hat{h},
\end{equation*}
where $\hat{h}:=-I(x)+c_1x^2+c_2x+c_3-\frac{\frac{1}{2}\eta^2-\eta(1-x)}{(\ln(1+x))^2}-2(1+x)V-4x-2-\frac{2\eta (1+x)}{\ln (1+x)}$. Since  $\eta=0$ or $4$,  $\hat{h}=O((1+x)^{1-\epsilon})$.

Let $B=\frac{\frac{1}{2}V+\frac{\eta}{\ln (1+x)}}{1-x^2}$, $H(x)=\frac{\hat{h}}{1-x^2}$. Then $B,H\in C(-1,-1+\delta]$ satisfy $H(x)=O((1+x)^{-\epsilon})$,  $\lim_{x\to -1^+}(1+x)\ln(1+x)B=\frac{\eta}{2}$,  $V=o(\frac{1}{\ln(1+x)})$. So we can apply Lemma \ref{lemA_7} to conclude that 
$V=O((\ln(1+x))^{-2+\epsilon})$ if $\eta=4$ and $V=O((1+x)^{1-\epsilon})$ if $\eta=0$.  We have established the estimates of $U_{\theta}$ in (\ref{eq5_9}) and (\ref{eq5_9or}). 

With estimates of  $U_{\theta}$ in (\ref{eq5_9}) and (\ref{eq5_9or}), we obtain from (\ref{eq5_10}) the estimates of $U_{\phi}$ in (\ref{eq5_9}) and (\ref{eq5_9or}). The lemma is proved.
\end{proof}

\begin{rmk} 
 This case does occur. For example, as given by Corollary \ref{cor3_1}, for all $\gamma>-1$, $(U_{\theta}, U_{\phi})=((1-x)(1+\frac{2(\gamma+1)}{(\gamma+1) \ln \frac{1+x}{2}-2}),0)$  are smooth solutions on $\mathbb{S}^2\setminus\{S\}$.  
\end{rmk}

\begin{lem}\label{lem5_8_1}
   Let $U=(U_{\theta},U_{\phi})$ be a solution of the system (\ref{eq5_2}), and $U_{\theta}\in C^1(-1,-1+\delta]$, $U_{\phi}\in C^2(-1,-1+\delta]$, for some $\delta>0$ and $x_0\in (-1,-1+\delta]$. If $2<U_{\theta}(-1)<3$,  there exist constants $ a_1,a_2$ and $b_1,b_2,b_3,b_4$ such that for any $\epsilon>0$, 
\begin{equation}\label{eqlem5_8_1}
  \begin{split}
   U_{\theta}(x) = & U_{\theta}(-1)+a_1(1+x)^{3-U_{\theta}(-1)}+a_2(1+x)+O((1+x)^{2(3-U_{\theta}(-1))-\epsilon}),\\
   U_{\phi}(x) = & b_1(1+x)^{1-\frac{U_{\theta}(-1)}{2}}+b_2+b_1b_3(1+x)^{4-\frac{3U_{\theta}(-1)}{2}}+b_1b_4(1+x)^{2-\frac{U_{\theta}(-1)}{2}}\\
 & + b_1O((1+x)^{7-\frac{5U_{\theta}(-1)}{2}-\epsilon}).
   \end{split}
\end{equation}
\end{lem}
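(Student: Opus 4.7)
The approach parallels the proof of Lemma \ref{lem5_7}, but the regime $\tau:=U_\theta(-1)\in(2,3)$ introduces two new features. First, $\alpha_0:=1-\tau/2\in(-1/2,0)$ is negative, so the homogeneous solution $(1+x)^{\alpha_0}$ of the linearized $U_\theta$--equation is unbounded at $x=-1$; since $V:=U_\theta-\tau$ tends to $0$ by Theorem \ref{thm1_5}(i), the coefficient of this homogeneous piece must vanish and $V$ is determined purely by the source. Second, $U_\phi$ itself is unbounded at $x=-1$, since (\ref{eq5_10}) forces $|U_\phi'|\sim C(1+x)^{-\tau/2}$ with $\tau/2>1$. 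With $\tau_2:=4-\tau$, the first line of (\ref{eq5_2}) takes the form
\begin{equation*}
   (1-x^2)V'+\tfrac{1}{2}V(V+\tau-\tau_2)=\tilde h(x),
\end{equation*}
where $\tilde h(x):=-I(x)+I(-1)+c_1(1+x)+c_2(1+x)^2-2(1+x)U_\theta$ and $I$ is the triple integral in (\ref{eq5_int}).

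The plan is to bootstrap. From the crude estimate $V=o(1)$ furnished by Theorem \ref{thm1_5}(i), equation (\ref{eq5_10}) yields $U_\phi(x)=b_1(1+x)^{1-\tau/2}+b_2+o((1+x)^{1-\tau/2})$ for some constants $b_1,b_2$. Substituting this into the representation (\ref{eqI_1})--(\ref{eqI_2}) of $I$ and using the change of variables $u=1+s$, $v=1+x$, the expansion $(s-x)(1-sx)/(1-s^2)^2=1/4-v^2/(4u^2)+O(u,v)$ combined with $U_\phi^2\sim b_1^2 u^{2-\tau}$ gives two contributions at scale $(1+x)^{3-\tau}$ when integrated from $u=v$ to a fixed constant; summing them produces
\begin{equation*}
   I(x)=I(-1)+C_1(1+x)^{3-\tau}+C_2(1+x)+O((1+x)^{6-2\tau-\epsilon})+O((1+x)^{2-\epsilon}),
\end{equation*}
with $C_1=-b_1^2/[2(3-\tau)(\tau-1)]$, and hence $\tilde h(x)=d_1(1+x)^{3-\tau}+d_2(1+x)+O((1+x)^{6-2\tau-\epsilon})+O((1+x)^{2-\epsilon})$ for suitable constants $d_1,d_2$.

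Next I would rewrite the $V$--equation as $V'+B(x)V=H(x)$ with $\lim_{x\to -1^+}(1+x)B(x)=(\tau-\tau_2)/4=-\alpha_0>0$, and apply Lemma \ref{lemA_5} (exactly as in the proof of Lemma \ref{lem5_7}) with $\beta=-\alpha_0$ and source exponents $3-\tau$ and $1$. Since $(3-\tau)+\beta=2-\tau/2\ne 0$ and $1+\beta=\tau/2\ne 0$, there is no resonance, and the particular solution has the form $a_1(1+x)^{3-\tau}+a_2(1+x)+O((1+x)^{2(3-\tau)-\epsilon})$; the homogeneous piece is excluded as noted, and the $O((1+x)^{2-\epsilon})$ error is absorbed because $2(3-\tau)<2$. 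This gives the claimed expansion of $U_\theta$. Feeding it back into (\ref{eq5_10}), writing $-\int U_\theta/(1-s^2)\,ds=-(\tau/2)\ln(1+x)+O(1)-\int V/(1-s^2)\,ds$ and expanding the exponential to second order in powers of $(1+x)^{3-\tau}$ and $(1+x)$, termwise integration of $U_\phi'$ produces the stated expansion of $U_\phi$, with exponents $4-3\tau/2$ and $2-\tau/2$ arising respectively from the $a_1$-- and $a_2$--terms and with $b_2$ the integration constant. The prefactor $b_1$ in the error $b_1O((1+x)^{7-5\tau/2-\epsilon})$ appears because every correction to $U_\phi'$ beyond the leading $(1+x)^{-\tau/2}$ inherits, through (\ref{eq5_10}), the overall multiplicative scale of $U_\phi'$.

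The main obstacle is the sharp computation of $I$ in the second paragraph. Although $U_\phi^2(s)/(1-s^2)^2$ is not integrable near $s=-1$, the genuine integrand in (\ref{eqI_2}) vanishes at $s=x$, and one must track both pieces of the split $1/4-v^2/(4u^2)$ carefully: individually they produce coefficients $-b_1^2/[4(3-\tau)]$ and $-b_1^2/[4(\tau-1)]$, which combine to the single sharp coefficient $C_1$. Without this sharp extraction one would at best obtain $I(x)-I(-1)=O((1+x)^{3-\tau-\epsilon})$, which is insufficient to separate the explicit $a_1$-- and $a_2$--terms in (\ref{eqlem5_8_1}) and to justify the corresponding separation in the bootstrap of $U_\phi$.
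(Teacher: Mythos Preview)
Your plan has a genuine gap at the first bootstrap step. From $V:=U_\theta-\tau=o(1)$ alone, equation (\ref{eq5_10}) only gives
\[
U_\phi'(x)=C\,(1+x)^{-\tau/2}\exp\Bigl(-\!\int^{x}\tfrac{V(s)}{1-s^2}\,ds+O(1)\Bigr),
\]
and the integral $\int^{x}\tfrac{V(s)}{1-s^2}\,ds$ need not converge (take $V(s)\sim 1/|\ln(1+s)|$). So you cannot conclude $U_\phi(x)=b_1(1+x)^{1-\tau/2}+b_2+o((1+x)^{1-\tau/2})$ at this stage; you only get $|U_\phi'|\le C(1+x)^{-\tau/2-\epsilon}$. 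The paper inserts an intermediate step here: the crude bound $I(x)-I(-1)=O((1+x)^{3-\tau-\epsilon})$ (no sharp constant needed) feeds into Lemma~\ref{lemA_4}, whose hypothesis $V e^{w}\to 0$ is automatic since $(1+x)B\to -\alpha_0>0$ makes $e^{w}\sim (1+x)^{-\alpha_0}\to 0$. This yields $V=O((1+x)^{3-\tau-\epsilon})$, which \emph{does} make $\int V/(1-s^2)\,ds$ convergent and hence gives the sharp leading behaviour $U_\phi'=C(1+x)^{-\tau/2}(1+O((1+x)^{3-\tau-\epsilon}))$.

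Two smaller points. First, the final linear step is governed by Lemma~\ref{lemA_6}, not Lemma~\ref{lemA_5}: here $(1+x)B\to -\alpha_0>0$, i.e. $\beta=\alpha_0<0$ in the paper's convention, and $H$ has a singular leading term $\gamma_1(1+x)^{b-1}$ with $b=3-\tau\in(0,1)$; Lemma~\ref{lemA_5} assumes $\beta>0$ and $H\in C[-1,-1+\delta]$. Your intuition that the homogeneous piece is excluded is exactly the hypothesis $V=o((1+x)^\beta)$ in Lemma~\ref{lemA_6}. Second, the explicit computation of $C_1$ you flag as ``the main obstacle'' is not needed: the lemma only asserts the existence of constants $a_1,a_2,b_i$, and the paper simply writes $I(x)=I(-1)+c_1'(1+x)^{3-\tau}+c_2'(1+x)+O((1+x)^{2(3-\tau)-\epsilon})$ for some $c_1',c_2'$ once the intermediate $V$--estimate is in hand.
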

\begin{proof}
  Let $\tau_2=U_{\theta}(-1)$, and $I(x)$ be the triple integral defined by (\ref{eq5_int}). Using the fact $2<U_{\theta}(-1)<3$ and (\ref{eq5_10}), for any $\epsilon>0$, there exists some constant $C_1$ such that $|U'_{\phi}(x)|\le C_1(1+x)^{-\frac{\tau_2+\epsilon}{2}}$. Then by (\ref{eq5_int}) we obtain that in the current situation  $I(x)=I(-1)+O((1+x)^{3-\tau_2-\epsilon})$.  So $U_{\theta}$ satisfies
  \[
     (1-x^2)(U_{\theta}-\tau_2)'+\frac{1}{2}(U_{\theta}-\tau_1)(U_{\theta}-\tau_2)=\tilde{h}:=-I(x)+I(-1)+c_1(1+x)+c_2(1+x)^2-2(1+x)U_{\theta}
  \]
  where $c_1,c_2$ are constants. By the estimate of $I(x)$,  $\tilde{h}=O((1+x)^{3-\tau_2-\epsilon})$.
  Let $V=U_{\theta}-\tau_2$, $B=\frac{U_{\theta}-\tau_1}{2(1-x^2)}$, $H=\frac{\tilde{h}}{1-x^2}$. Then $V\in C^1(-1,-1+\delta]$, $B,H\in C(-1,-1+\delta]$, satisfy $V'+BV=H$,  and $H(x)=O((1+x)^{2-\tau_2-\epsilon})$, $\lim_{x\to -1^+}(1+x)B=\alpha_0>0$, and $\lim_{x\to -1^+}V(x)e^{\int_{-1+\delta}^{x}B(s)ds}=0$. So we can apply Lemma \ref{lemA_4} to obtain
\[
   U_{\theta}(x)-\tau_2=O((1+x)^{3-\tau_2-\epsilon}). 
\]
With this estimate, we derive from (\ref{eq5_10}) that  $U'_{\phi}=C(1+x)^{-\frac{\tau_2}{2}}(1+O((1+x)^{3-\tau_2-\epsilon}))$. So $U_{\phi}=\frac{2}{2-\tau_2}C(1+x)^{1-\frac{\tau_2}{2}}(1+O((1+x)^{3-\tau_2-\epsilon}))$ and $I(x)=I(-1)+c'_1(1+x)^{3-\tau_2}+c'_2(1+x)+O((1+x)^{2(3-\tau_2)-\epsilon})$ for some constants $c'_1,c'_2$. Let $\bar{b}=3-\tau_2$. Then by the estimate of $I(x)$ and $U_{\theta}$, there is some constant $d_1$ such that $\tilde{h}(x)=c'_1(1+x)^{\bar{b}}+d_1(1+x)+O((1+x)^{2\bar{b}-\epsilon})$. So $H=c'_1(1+x)^{\bar{b}-1}+d_1+O((1+x)^{2\bar{b}-1-\epsilon})$.  Moreover,
\[
   (1+x)B-\alpha_0=O((1+x)^{\bar{b}-\epsilon}).
\]
So we can apply Lemma \ref{lemA_6} to obtain the first estimate of $U_{\theta}$ in (\ref{eqlem5_8_1}). Then by (\ref{eq5_10}), we have the estimate of $U_{\phi}$ in (\ref{eqlem5_8_1}), using the first estimate in (\ref{eqlem5_8_1}). 
\end{proof}

Part (iii) of Theorem \ref{thm1_5} and part (i), (ii) and (iv) of Theorem \ref{thm1_6} follow from Lemma \ref{lem5_7}-\ref{lem5_8_1}. So Theorem \ref{thm1_5} is proved. Next let us prove part (iii) of Theorem \ref{thm1_6}.

\begin{lem}\label{lem5_9}
  If $U = (U_{\theta}, U_\phi)$ is a solution of (\ref{eq5_2}) and $U_{\theta}\in C^{1}(-1,-1+\delta)$, $0<\delta<2$, $U_{\theta}(-1)\ge 3$, then $U_{\phi}$ is a constant in $(-1,-1+\delta)$.
\end{lem}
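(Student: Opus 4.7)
The plan is to argue by contradiction. Suppose $U_\phi$ is not constant on $(-1,-1+\delta)$; then by (\ref{eq5_10}), $U_\phi'(x)=C\exp(-\int_{x_0}^x U_\theta(s)/(1-s^2)\,ds)$ with some nonzero constant $C$, where $x_0\in(-1,-1+\delta)$ is any fixed point. I will show that the triple integral $I(x)$ defined in (\ref{eq5_int}) must be bounded as $x\to -1^+$ (from Theorem \ref{thm1_5} and the first equation of (\ref{eq5_2})), while at the same time its dominant piece $I_1(x)$ in the decomposition (\ref{eqI_1})-(\ref{eqI_2}) diverges to $+\infty$ when $\tau:=U_\theta(-1)\ge 3$; this forces $C=0$.

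For the boundedness, Theorem \ref{thm1_5}(i)-(ii), applied to any subinterval $(-1,-1+\delta']$, gives $(1-x^2)U_\theta'(x)\to 0$ as $x\to -1^+$, and hence
\begin{equation*}
    I(x)=c_1 x^2+c_2 x+c_3-(1-x^2)U_\theta'(x)-2xU_\theta(x)-\frac{1}{2}U_\theta^2(x)
\end{equation*}
has a finite limit as $x\to -1^+$; by (\ref{eqI_1}) so does $I_1(x)$. For the growth of $|U_\phi|$, the splitting $1/(1-s^2)=1/(2(1-s))+1/(2(1+s))$ combined with $U_\theta(u)\to\tau$ yields, for any small $\epsilon>0$ and $s$ close enough to $-1$,
\begin{equation*}
    \int_s^{x_0}\frac{U_\theta(u)}{1-u^2}\,du=-\frac{\tau}{2}\ln(1+s)+R_\epsilon(s),\qquad |R_\epsilon(s)|\le\epsilon|\ln(1+s)|+M_\epsilon.
\end{equation*}
Exponentiating gives $|U_\phi'(s)|\ge c_1|C|(1+s)^{-\tau/2+\epsilon}$; since $\tau\ge 3$ the exponent is less than $-1$ for $\epsilon<1/2$, and because $U_\phi'$ has constant sign, integrating produces $|U_\phi(x)|\ge c_2|C|(1+x)^{1-\tau/2+\epsilon}$ and thus $U_\phi^2(x)\ge c_2^2|C|^2(1+x)^{2-\tau+2\epsilon}$ near $-1$.

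For the divergence of $I_1$, (\ref{eqI_2}) gives, for $x<x_0$,
\begin{equation*}
    I_1(x)=\int_x^{x_0}\frac{U_\phi^2(s)(s-x)(1-sx)}{(1-s^2)^2}\,ds\ge 0,
\end{equation*}
since $1-sx>0$ for $s,x\in(-1,1)$. Writing $s=-1+\eta$, $x=-1+\xi$ and using $(s-x)(1-sx)\ge(\eta^2-\xi^2)/2$ and $(1-s^2)^2\le 4\eta^2$ near $s=-1$, the integrand is bounded below by $c_3|C|^2\eta^{-\tau+2\epsilon}(\eta^2-\xi^2)$, and a direct evaluation gives
\begin{equation*}
    \int_\xi^{\eta_0}\eta^{-\tau+2\epsilon}(\eta^2-\xi^2)\,d\eta=\frac{2\,\xi^{3-\tau+2\epsilon}}{(\tau-3-2\epsilon)(\tau-1-2\epsilon)}+O(1).
\end{equation*}
For $\tau>3$, choosing $\epsilon<(\tau-3)/2$ makes the prefactor positive and $3-\tau+2\epsilon<0$, so $I_1(x)\to+\infty$ as $x\to -1^+$, contradicting the boundedness of $I_1$ and giving $C=0$ in the range $\tau>3$.

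The main obstacle is the borderline case $\tau=3$: here $3-\tau+2\epsilon=2\epsilon>0$, so $\xi^{2\epsilon}\to 0$ and the simple $\epsilon$-loss estimate no longer forces divergence of $I_1$. In the sharp regime $U_\phi^2\sim|C|^2(1+s)^{-1}$ a direct computation shows $I_1(x)\sim(|C|^2/4)\ln(1/(1+x))\to+\infty$ logarithmically, so the contradiction is present in principle, but recovering this log-divergence without any a priori rate for $U_\theta-3\to 0$ requires sharper analysis. The most natural route is to iterate the argument on the ODE for $W:=U_\theta-3$ extracted from (\ref{eq5_2}) (whose linearization coefficient at $x=-1$ equals $\tau-2=1$) to promote $W\to 0$ to a definite polynomial rate, and then feed this back into the estimate of $\int U_\theta/(1-u^2)\,du$ to obtain a bound like $|U_\phi'(s)|\ge c|C|(1+s)^{-3/2}|\ln(1+s)|^{-N}$ for some fixed $N$, which is strong enough to close the divergence argument at $\tau=3$.
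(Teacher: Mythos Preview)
Your contradiction setup, the boundedness of $I$ (and hence of $I_1$) via Theorem~\ref{thm1_5}(i)--(ii), and the treatment of the case $\tau=U_\theta(-1)>3$ are correct and match the paper's argument. The genuine gap is the borderline case $\tau=3$, and the iteration you propose runs into a circularity: improving the rate of $W=U_\theta-3$ through the ODE for $W$ requires a rate on $\tilde h$, hence on $I_1(-1)-I_1(x)$; but estimating $I_1$ near $-1$ requires sharp control of $U_\phi$, which in turn depends on the rate of $W$. Without breaking this loop you cannot reach the bound $|U_\phi'(s)|\ge c(1+s)^{-3/2}|\ln(1+s)|^{-N}$ with $N$ small enough to force divergence.

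The paper avoids iteration entirely by using only the \emph{sign} information you already recorded in (\ref{eqI_3}): since $I_1'<0$ on $(-1,x_0)$, one has $I_1(-1)-I_1(x)\ge 0$, and therefore the forcing term in
\[
(1-x^2)W'+\tfrac12(U_\theta-1)W=\tilde h(x)=-2(1+x)U_\theta+Q(x)+\bigl(I_1(-1)-I_1(x)\bigr)
\]
satisfies the \emph{one-sided} bound $\tilde h(x)\ge -C(1+x)$ (the polynomial pieces are $O(1+x)$ and the $I_1$ piece is nonnegative). Feeding this into the one-sided ODE estimate Lemma~\ref{lemA_4} (with $\beta=-\tfrac12$, $b=1$) gives directly $U_\theta-3\ge -C(1+x)^{1-\epsilon}$; only this lower bound on $W$ is needed to control $\int U_\theta/(1-s^2)\,ds$ from below, yielding $U_\phi'(x)\ge c(1+x)^{-3/2}$ with no logarithmic loss. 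Then $-I'''(x)=2U_\phi U_\phi'/(1-x^2)\ge c(1+x)^{-3}$ forces $I(x)\ge c|\ln(1+x)|\to\infty$, the desired contradiction. The key idea you are missing is that the monotonicity of $I_1$ supplies exactly the one-sided input the ODE lemma needs, so no bootstrapping is required.
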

\begin{proof}
We prove it by contradiction. Assume that $U_{\phi}$ is not a constant, then (\ref{eq5_10}) holds for a nonzero constant $C$ and we may assume  that  $C$  is positive. Let $I(x)$ be given by (\ref{eq5_int}) with  $x_0=-1+\delta$. Since $U_{\theta}$ and $(1-x^2)U'_{\theta}$ are bounded according to Theorem \ref{thm1_5}, $I(x)$ is bounded in view of (\ref{eq5_2}). We divide the proof into two cases.

   \textbf{Case} 1. $U_{\theta}(-1)>3$. 
   
    If $U_{\theta}(-1)>3$,  there exist $a>3$ such that  $U_{\theta}(x)>a>3$ for $x$ close to $-1$.
   So by (\ref{eq5_10}), there exists $c>0$ such that $U'_{\phi}\ge c (1+x)^{-\frac{a}{2}}$   and  $-U_{\phi}\ge c(1+x)^{-\frac{a}{2}+1}$ for $x$ close to $-1$ .  Then, using (\ref{eq5_int}), we have $-I(x)\to +\infty \textrm{ as }x \to -1^+$, a contradiction.

  \textbf{Case} 2. $U_{\theta}(-1)=3$.
  
   Since $U_{\theta}(-1)=3$, we rewrite the first line of (\ref{eq5_2}) as 
   \begin{equation*}
   (1-x^2) (U_\theta-3)'+\frac{1}{2}(U_{\theta}-1)(U_{\theta}-3)=\tilde{h}(x):=-2(1+x)U_{\theta}+Q(x)+I_1(-1)-I_1(x), 
\end{equation*}
where  $I_1$ is given by (\ref{eqI_2}) , and $Q(x)$ is a quadratic polynomial with $Q(-1)=0$. 

By (\ref{eqI_3}),   $I_1(-1)-I_1(x)\ge 0$ in $(-1,-1+\delta)$.  Thus, using the boundedness of $U_{\theta}$ and the fact that $Q(-1)=0$, $\tilde{h}(x)\ge -C(1+x)$ in $(-1,-1+\delta)$ for some constant  $C>0$.

Let $V(x)=U_{\theta}(x)-3$, $B(x)=\frac{U_{\theta}-1}{2(1-x^2)}$ and $H(x)=\frac{\tilde{h}(x)}{1-x^2}$.  Then (\ref{eqA_3_4}), (\ref{eqA_3_5}), (\ref{eqA_4_2}) and (\ref{eqA_4_1}) hold with $b=1$, $\beta=-\frac{1}{2}$. By Lemma \ref{lemA_4}, see also Remark \ref{rmkA_4}, we have, for some positive constant $C$, and for any $\epsilon>0$,  $U_{\theta}-3 \ge -C(1+x)^{1-\epsilon}$ in $(-1,-1+\delta)$.

Next, in (\ref{eq5_10}),  apply the estimate of $U_{\theta}(x)$, in $(-1,-1+\delta)$ there is
\begin{equation*}
   U'_{\phi}(x)  \ge ce^{-\frac{3}{2}\ln (1+x)}\ge c(1+x)^{-\frac{3}{2}}, \textrm{ for }x \textrm{ close to }-1.
\end{equation*}
Then $ -U_{\phi}(x)\ge c(1+x)^{-\frac{1}{2}}$ for $x$ close to $-1$.  
\begin{equation*}
   -I'''(x)=-\frac{2 U_\phi(x) U_\phi'(x)}{1-x^2}\ge C(1+x)^{-3}. 
\end{equation*}
Thus $I\ge C|\ln (1+x)|$ is unbounded, contradiction. So  $U_{\phi}$ is a constant.
\end{proof}

\noindent \emph{Completion of the proof of Theorem \ref{thm1_2}:}
We have proved the existence part of the theorem in Section 4 for $(\tau,\sigma)\in J_1\cup J_2 \cup (J_3\cap\{2\le \tau<3\})$. Now we prove the nonexistence part of the theorem. 

For $(\tau,\sigma)\in J_3\cap\{\tau>3\}$, let $\{u^i\}$ be a sequence of solutions of (\ref{NS}) satisfying $||\sin\frac{\theta+\pi}{2}(u^i-u_{\tau,\sigma})||_{L^{\infty}(\mathbb{S}^2\setminus\{S\})}\to 0$ as $i\to \infty$. Let $U^i=\sin\theta u^i$ for all $i\in \mathbb{N}$. Recall that $U^{\mu,\gamma}=\sin\theta u_{\tau,\sigma}$ with $(\mu,\gamma)=(\frac{1}{8}\tau^2-\frac{1}{2}\tau,-2\sigma)$. We have $||U_{\theta}^i-U^{\mu,\gamma}_{\theta}||_{L^{\infty}(-1,1]}\to 0$. By Theorem \ref{thm1_5} part (a), $U^i(-1)$ must exists and is finite for every $i$. Since $U^{\mu,\gamma}(-1)>3$, $U^{i}_{\theta}(-1)>3$ for large $i$. Then by Theorem \ref{thm1_6}, $U^i_{\phi}$ must be constant for large $i$. Since $u^i\in C^{\infty}(\mathbb{S}^2\setminus\{S\})$, $U^i_{\phi}(1)=0$, so $U^i_{\phi}=0$ for large $i$. The theorem is proved.

\section{Pingpong ball on top of a fountain}\label{sec_pingpong}

As mentioned in the introduction, the pressure of Landau solutions at the center of north pole is greater than the pressure nearby. In this section, we identify all (-1) homogeneous, axisymmetric, no-swirl solutions which describe outward jets with lower pressure in the center. We tend to believe that the pressure profiles are of interest and modification of these solutions is more likely to support a pingpong ball. 

Set $\alpha:=\gamma+1$, consider below the exact form solutions in Theorem \ref{thm_smooth_north}: 

When $\mu > - \frac{1}{2}$, the solutions are expressed as
\begin{equation*}
	U_\theta(x) =  (1-x)\left(1- b - \frac{2 b(\alpha-b)}{(\alpha+b) (\frac{1+x}{2})^{-b} - \alpha + b} \right)
\end{equation*}
where $b = \sqrt{1+ 2\mu}$. Then $u_r|_{x=1} = U_\theta'(1)=  \gamma=\alpha-1$. By L'Hospital's rule, 
\begin{equation*}
	\lim_{x\rightarrow 1^-}\frac{U_\theta(x)}{1-x^2} = \lim_{x\rightarrow 1^-} \frac{U_\theta'(x)}{-2x} = -\frac{1}{2}U_\theta'(1). 
\end{equation*}
From the second line of (\ref{eq_NSwithpresure}) with $U_\phi\equiv 0$, we have 
\begin{equation*}
\begin{split}
	\lim_{x\rightarrow 1^-} p' & =  \lim_{x\rightarrow 1^-} \left( U_\theta''  -  \frac{1}{1-x^2}U_\theta U_\theta' - \frac{x}{(1-x^2)^2} U_\theta^2 \right)   =  \frac{1}{2}(\alpha+b)(\alpha-b) + \frac{1}{2} U_\theta'^2 - \frac{1}{4} U_\theta'^2 \\
	& =  \frac{1}{2}(\alpha+b)(\alpha-b) + \frac{1}{4} ( \alpha - 1)^2  = \frac{3}{4} \alpha^2 - \frac{1}{2}\alpha + \frac{1}{4} - \frac{1}{2}b^2. 
\end{split}
\end{equation*}
Since $b = \sqrt{1+2\mu}>0$, it can be proved that $u_r|_{x=1}=\alpha - 1 > 0$ and $\displaystyle \lim_{x\to 1^-}p'(x)=\frac{1}{2}(\alpha+b)(\alpha-b) +\frac{1}{4} ( \alpha - 1)^2 < 0$ if and only if $b>1$, $1<\alpha<\frac{1}{3}+\sqrt{\frac{2}{3}b^2-\frac{2}{9}}$. Notice that $b>1$, $1<\alpha<\frac{1}{3}+\sqrt{\frac{2}{3}b^2-\frac{2}{9}}$ implies
\begin{equation*}
	\mu>0, \quad 0 < \gamma <  \frac{2}{3}(\sqrt{1+3\mu} - 1). 
\end{equation*}
Therefore, under the condition $\mu>0$, $0 < \gamma < \frac{2}{3}(\sqrt{1+3\mu} -1)$, we have $u_r\mid_{x=1}>0$, $\frac{dp}{dx}\mid_{x=1}<0$. The corresponding solutions describe fluid jets with lower pressure at north pole than nearby. 

It remains to check the case when $u_r\mid_{x=1}>0$, $\frac{dp}{dx}\mid_{x=1}=0$. This condition implies 
\begin{equation*}
	b>1, \quad \alpha = \frac{1}{3} + \sqrt{\frac{2}{3}b^2 - \frac{2}{9}}, 
\end{equation*} 
or equivalently, 
\begin{equation}\label{eq_dpequal0}
	\mu>0, \quad \gamma = \frac{2}{3} \left( \sqrt{1+3\mu} - 1 \right). 
\end{equation}

Notice that $\{(\mu, \gamma) \mid \mu>0, \gamma = \frac{2}{3} \left( \sqrt{1+3\mu} - 1 \right)\} \subset I$. We substitute (\ref{eq_dpequal0}) into the the first line of (\ref{eq_temp110}) in Theorem 3.1, then use the first line of (\ref{eq_NSwithpresure}) to derive the pressure $p$. Direct computation shows that 
\begin{equation*}
	p(x) = C + f(b) (1-x)^2 + O(1) (1-x)^3, 
\end{equation*} 
where function 
$$
	f(b) = \frac{1}{432} \left(54 b^2 - 22 -  \sqrt{2(3 b^2 - 1)} (15b^2+1) \right). 
$$
It can be checked that 
$$
	f(1) = f'(1)=0; \quad \quad f'(b)<0, \forall b>1; \quad \quad f''(1)<0. 
$$
So 
$f(b)<0$ for all $b>1$. It means that when $p'\mid_{x=1}=0$, the pressure at the center of north pole is greater than the pressure nearby.

When $\mu = - \frac{1}{2}$, the solutions are expressed as
\begin{equation*}
	U_\theta(x) = (1-x)  \left( 1+ \frac{2 \alpha  }{  \alpha \ln \frac{1+x}{2}  - 2 } \right),  
\end{equation*}
and there is $\lim_{x\rightarrow 1^-} u_r =  \alpha - 1$. Similarly, by L'Hospital's rule, we get
\begin{equation*}
	\lim_{x\rightarrow 1^-} p'  =  \lim_{x\rightarrow 1^-} \left(U_\theta''  +  \frac{1}{4} U_\theta'^2 \right) = \frac{1}{2}\alpha^2 +  \frac{1}{4} (\alpha-1)^2. 
\end{equation*}
It is not hard to see that $\lim_{x\rightarrow 1^-} p' >0$ for any $\alpha\in \mathbb{R}$. 

When $\mu < - \frac{1}{2}$, the solution can be exactly expressed as
\begin{equation*}
	U_\theta(x) = (1-x) \left( 1 + \frac{ b (b\tan\frac{\beta(x)}{2} + \alpha) }{\alpha \tan\frac{\beta(x)}{2} - b } \right) ,
\end{equation*}
where $\beta(x)$ is determined by $\beta(x)= b \ln \frac{1+x}{2}$. There is $u_r|_{x=1}=  \alpha - 1$, and
\begin{equation*}
	\lim_{x\rightarrow 1^-}p'  =  U_\theta''  -  \frac{1}{1-x^2}U_\theta U_\theta' - \frac{x}{(1-x^2)^2} U_\theta^2   =  \frac{\alpha^2}{2} + \frac{b^2}{2}  +\frac{1}{4}  (\alpha-1)^2. 
\end{equation*}
It is not hard to see that $p'_x|_{x=1} >0$ for any $\alpha\in \mathbb{R}$. 

According to the above computation, if $\mu\le 0$, the fluid does not fit our pressure profile to support a pingpong ball. In particular, Landau solutions correspond to $\mu=0$, and they have greater pressure in the center. 

Define the open set $I_p\subset I$ by
\begin{equation*}
	I_p:= \{ (\mu,\gamma)\subset \mathbb{R}^2 | \mu>0,  0 < \gamma <  \frac{2}{3}(\sqrt{1+3\mu} - 1) \}. 
\end{equation*} 

\begin{thm}
	For any $(\mu,\gamma)\in I_p$, $u_r|_{x=1} > 0$, $p'|_{x=1} < 0$. For any $(\mu,\gamma)\in \mathbb{R}^2\setminus I_p$, 
	
	\noindent either
	$$
		u_r|_{x=1} \leq 0,
	$$
	or there exists $\delta>0$ such that
	$$
		p(x) < p(1), \mbox{ in } (1-\delta,1). 
	$$
\end{thm}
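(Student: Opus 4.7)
The plan is to directly assemble the case analysis that was already carried out in the discussion preceding the theorem. Throughout I will use the shorthand $\alpha := \gamma + 1$ and $b := \sqrt{|1+2\mu|}$, and the basic identity $u_r|_{x=1} = U_\theta'(1) = \gamma$, so that $u_r|_{x=1} > 0$ is equivalent to $\alpha > 1$ and $u_r|_{x=1} \le 0$ to $\alpha \le 1$.

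First I would dispose of the forward direction. For $(\mu,\gamma)\in I_p$ the inequality $\gamma>0$ is built into the definition, so $u_r|_{x=1}>0$ is immediate. Since $\mu>-\tfrac12$, the formula
\[
   p'|_{x=1} \;=\; \tfrac{1}{2}(\alpha+b)(\alpha-b) + \tfrac{1}{4}(\alpha-1)^2
\]
established in the excerpt applies; the condition $0<\gamma<\tfrac{2}{3}(\sqrt{1+3\mu}-1)$ is (as the excerpt shows) exactly the condition $b>1$, $1<\alpha<\tfrac13+\sqrt{\tfrac23 b^2-\tfrac29}$, which is in turn equivalent to $p'|_{x=1}<0$.

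For the converse direction, suppose $(\mu,\gamma)\notin I_p$ and $u_r|_{x=1}>0$, i.e.\ $\alpha>1$. I would show $p(x)<p(1)$ in some $(1-\delta,1)$ by splitting into four cases according to the sign of $\mu+\tfrac12$ and the location of $\gamma$ relative to the curve $\gamma=\tfrac23(\sqrt{1+3\mu}-1)$. When $\mu\le -\tfrac12$, the explicit formulas in the excerpt give $p'|_{x=1}>0$ strictly (the expressions $\tfrac12\alpha^2+\tfrac14(\alpha-1)^2$ and $\tfrac{\alpha^2}{2}+\tfrac{b^2}{2}+\tfrac14(\alpha-1)^2$ are manifestly positive). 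When $-\tfrac12<\mu\le 0$, we have $b\le 1<\alpha$, hence $(\alpha+b)(\alpha-b)>0$ and again $p'|_{x=1}>0$. When $\mu>0$ and $\gamma>\tfrac23(\sqrt{1+3\mu}-1)$, the equivalence used in the forward direction gives $p'|_{x=1}>0$. In all three subcases a strict positive derivative at $x=1$ forces $p(x)<p(1)$ on a left neighborhood of $1$.

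The remaining boundary case $\mu>0$, $\gamma=\tfrac23(\sqrt{1+3\mu}-1)$ is the one genuine sticking point, because there $p'|_{x=1}=0$ and a first-order argument no longer suffices. Here I would quote the higher-order expansion already computed in the excerpt,
\[
   p(x) \;=\; p(1) + f(b)(1-x)^2 + O(1)(1-x)^3,
   \qquad
   f(b)=\tfrac{1}{432}\bigl(54 b^2-22-\sqrt{2(3b^2-1)}\,(15b^2+1)\bigr),
\]
and verify that $f(b)<0$ for all $b>1$. This is the only step requiring a genuine calculation: $f(1)=f'(1)=0$ and $f''(1)<0$ are checked by direct differentiation, and $f'(b)<0$ for $b>1$ follows from the same differentiation (one shows $f'(b)$ has constant negative sign on $(1,\infty)$ by comparing the polynomial and radical terms). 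These together yield $f(b)<0$ for $b>1$, so the expansion gives $p(x)-p(1)=f(b)(1-x)^2(1+O(1-x))<0$ for $x$ sufficiently close to $1$. The main (minor) obstacle is thus this inequality $f(b)<0$ for $b>1$; everything else is bookkeeping of the computations that appear in the preceding pages.
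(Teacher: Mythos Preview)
Your proposal is correct and follows essentially the same approach as the paper: the theorem in Section~6 is stated as a summary of the preceding case-by-case computations, and your proof simply organizes those computations (the explicit formula for $p'|_{x=1}$ when $\mu>-\tfrac12$, the strict positivity of $p'|_{x=1}$ when $\mu\le -\tfrac12$, and the second-order expansion with $f(b)$ on the boundary curve) into a formal two-direction argument. The paper does not give a separate proof block; your assembly is exactly what the computations before the theorem statement establish.
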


\begin{rmk}
	We have therefore identified all (-1) homogeneous, axisymmetric, no-swirl solutions of NSE, which describe outward jets with lower pressure in the center. They are $\{u(\mu, \gamma)\mid (\mu,\gamma)\in I_p \}$.

	In particular, those solutions which can not be extended to solutions in $C^{\infty}(\mathbb{S}^2\setminus \{S\})$ are not in this set. There are also many solutions in $C^\infty(\mathbb{S}^2\setminus\{S\})$, including Landau solutions, not in this set. 
\end{rmk}

\section{Asymptotic behavior of certain type of ODE}

In Section 5, we have analyzed several equations of the following form:\\
Let $\delta>0$ and $g\in C^1(-1,-1+\delta]$ be a solution of
\begin{equation}\label{eq_g}
   a(x)g'(x)+b(x)g(x)+\frac{1}{2}g^2(x)=H(x),\quad -1<x<-1+\delta.
\end{equation}
We require $a(x), b(x)\in C(-1,-1+\delta]$ and $a(x)$ satisfy:\\
either (i) $a(x)>0$  for every $ x\in (-1,-1+\delta]$, and $\displaystyle{\lim_{x\to -1^+}\int_{x}^{-1+\delta}\frac{1}{a(x)}=+\infty}$,\\
or (ii) $a(x)<0$  for every $ x\in (-1,-1+\delta]$, and $\displaystyle{\lim_{x\to -1^+}\int_{x}^{-1+\delta}\frac{1}{a(x)}=-\infty}$.

Introduce $H^+(x)=\max\{H(x),0\}$ and $H^-(x)=\max\{-H(x),0\}$, so $H(x)=H^+(x)-H^-(x)$. This is for $b^+(x),b^-(x)$ as well.
\begin{prop}\label{lemA_1}
For $\delta>0$, let $H,a,b\in C(-1,-1+\delta]$ with $b,H^+\in L^{\infty}(-1,-1+\delta)$  and $a(x)$ satisfies (i) or (ii) above. Suppose that $g\in C^1(-1,-1+\delta]$ is a solution of (\ref{eq_g}). Then $g\in L^{\infty}(-1,-1+\delta)$.  If in addition, $\displaystyle{\lim_{x\to -1^+}H(x)}$  is assumed to exist, either finite or infinite,  and $\displaystyle{\lim_{x\to -1^+}b(x)}$ exists and is finite, then $\displaystyle{\lim_{x\to -1^+}g(x)}$ exists and is finite, 

$\displaystyle{\lim_{x\to -1^+}a(x)g'(x)=0}$.
\end{prop}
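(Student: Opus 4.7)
The argument proceeds in three parts: (1) boundedness of $g$ near $-1$, (2) existence of a finite limit, and (3) vanishing of $a g'$ at $-1$. The driving mechanism in each step is that the quadratic term $\frac{1}{2}g^2$ dominates the right side of $ag'=H-bg-\frac12 g^2$ when $|g|$ is large, combined with the divergence of $\int 1/|a|$ near $-1$.

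For boundedness, choose $M_0$ so large that $\|H^+\|_{\infty}+\|b\|_{\infty}|y|\le \frac14 y^2$ for $|y|\ge M_0$. In case (i) the equation then gives $a(x)g'(x)\le -\frac14 g(x)^2$ wherever $g\ge M_0$, equivalently $(1/g)'\ge 1/(4a)$. If $g$ ever exceeded $M_1:=\max(M_0,g(-1+\delta))+1$ at some $x_0$, a bootstrap using the sign of $g'$ at $x_0$ shows that $g>M_0$ on all of $(-1,x_0]$ and that $g$ is monotone decreasing there, so $\lim_{y\to -1^+}g(y)$ exists in $[g(x_0),+\infty]$; integrating the Bernoulli inequality from $y$ to $x_0$ and sending $y\to -1^+$ yields the contradiction $1/g(x_0)\ge \int_{-1}^{x_0}1/(4a)=+\infty$. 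Lower boundedness is simpler: if $g(x_0)\le -M_0$ then $g'(x_0)<0$, so this sign propagates forward and $g$ stays $\le -M_0$ on $[x_0,-1+\delta]$, forcing $g(x_0)\ge g(-1+\delta)$. Case (ii) is handled by mirror reasoning with $(1/g)'\le -1/(4|a|)$.

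For the limit, first note that $H(-1)$ must be finite: $H(-1)=+\infty$ is ruled out by $H^+\in L^\infty$, and $H(-1)=-\infty$ together with the equation would give $ag'\le -K$ near $-1$ for every $K$, so integration against $\int 1/|a|=\infty$ would force $g$ to be unbounded, contradicting Part~1. Next change variables by
\[
t := \int_{x}^{-1+\delta}\frac{ds}{|a(s)|},
\]
so that $t\to+\infty$ as $x\to -1^+$, and the equation becomes the asymptotically autonomous ODE
\[
\dot{\tilde g}(t)=\sigma\phi(\tilde g(t))+o(1),\qquad \sigma:=\operatorname{sgn}(a),\quad \phi(y):=\tfrac12 y^2+b(-1)y-H(-1),
\]
with the $o(1)$ uniform on compact $\tilde g$-intervals. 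Suppose $L^-:=\liminf_{t\to\infty}\tilde g<\limsup_{t\to\infty}\tilde g=:L^+$; both are finite by Part~1. Since $\phi$ is a nonzero polynomial of degree two, it has at most two real zeros, so one can choose $\alpha\in(L^-,L^+)$ with $\sigma\phi(\alpha)\ne 0$. Assume $\sigma\phi(\alpha)>0$ (the other sign is symmetric). Then for all sufficiently large $t$, $\dot{\tilde g}(t)\ge \tfrac12\sigma\phi(\alpha)>0$ whenever $\tilde g(t)=\alpha$, forbidding any downcrossing of the level $\alpha$ (defined as $t^*=\sup\{t<t_2:\tilde g(t)\ge \alpha\}$ for any $t_2$ large with $\tilde g(t_2)<\alpha$, where necessarily $\dot{\tilde g}(t^*)\le 0$). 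But the definitions of $L^\pm$ force infinitely many such downcrossings, a contradiction. Hence $L^-=L^+$, and $L:=\lim_{x\to -1^+} g(x)$ exists and is finite.

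Finally, passing to the limit in the equation gives $\lim_{x\to -1^+} a(x)g'(x)=H(-1)-b(-1)L-\tfrac12 L^2=:c$; if $c\ne 0$, then $|g'(x)|\ge |c|/(2|a(x)|)$ near $-1$, which upon integration against $\int 1/|a|=+\infty$ makes $g$ unbounded, contradicting Part~2. Hence $c=0$. The subtlest step is Part~2: ruling out sustained oscillation in the asymptotically autonomous ODE crucially uses that $\phi$ is a nonzero polynomial (so $\alpha$ with $\phi(\alpha)\ne 0$ can be picked inside any nontrivial subinterval of $[L^-,L^+]$) and that the perturbation $\epsilon(t,\tilde g)$ tends to zero uniformly on compact $\tilde g$-intervals as $t\to\infty$.
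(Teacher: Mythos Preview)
Your proof is correct and follows essentially the same route as the paper. Boundedness via quadratic dominance and the Bernoulli inequality $(1/g)'\ge 1/(4a)$ is exactly the content of the paper's Lemmas~7.1--7.4; the level-crossing argument for existence of the limit is the paper's Lemma~7.6 (there phrased directly in $x$: for every $\alpha$ strictly between $\liminf g$ and $\limsup g$ one finds crossings with $g'\le 0$ and with $g'\ge 0$, forcing $\tfrac12\alpha^2+b(-1)\alpha=H(-1)$ for all such $\alpha$); and the final step $ag'\to 0$ by integrating against the divergent $\int 1/|a|$ is identical. Your change of variables $t=\int 1/|a|$ to an asymptotically autonomous scalar ODE is a cosmetic repackaging of the same crossing idea, not a genuinely different method.
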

\begin{lem}\label{lem7_1_A}  For $\delta>0$, let $H,a,b\in C(-1,-1+\delta]$ with $a(x)>0$ for $x\in (-1,-1+\delta)$. Suppose that $g\in C^1(-1,-1+\delta]$ is a solution of (\ref{eq_g}). Then
\[
   g(x)\ge -A_1:= -\max\{4||b^+||_{L^{\infty}(-1,-1+\delta)},\sqrt{8||H^+||_{L^{\infty}(-1,-1+\delta)}}, -g(-1+\delta)\}, \forall x\in (-1,-1+\delta).
\]
\end{lem}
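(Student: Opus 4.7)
The strategy is a standard barrier/contradiction argument exploiting the dissipative quadratic term $-\tfrac{1}{2}g^2$ on the right-hand side of $a(x)g'(x) = H(x) - b(x)g(x) - \tfrac{1}{2}g^2(x)$. The constant $A_1$ is engineered precisely so that, whenever $g(x) \le -A_1$, the quadratic term dominates both $H^+$ and $|b^+ g|$, forcing $g'(x) < 0$ (recall $a(x)>0$). Since the boundary value $g(-1+\delta)$ already satisfies $g(-1+\delta) \ge -A_1$ by the definition of $A_1$, $g$ cannot drop below $-A_1$ to the left of $-1+\delta$ without contradicting this forced monotonicity.

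\textbf{Key steps.} First, I will suppose for contradiction that there exists $x_0 \in (-1,-1+\delta)$ with $g(x_0) < -A_1$, and use the bound $g(-1+\delta) \ge -A_1$ together with continuity to select
\[
   x_2 := \inf\{\, x \in [x_0, -1+\delta] : g(x) = -A_1 \,\},
\]
so that $x_2 \in (x_0, -1+\delta]$, $g(x_2) = -A_1$, and $g(x) < -A_1$ on $[x_0, x_2)$. Second, on $[x_0, x_2)$ I will estimate
\[
   a(x)g'(x) \le H^+(x) + b^+(x)|g(x)| - \tfrac{1}{2}g^2(x),
\]
using $g<0$ to pass from $-b(x)g(x)$ to $b^+(x)|g(x)|$. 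Third, I will plug in $|g(x)| > A_1 \ge 4\|b^+\|_\infty$ to obtain $b^+(x)|g(x)| \le \tfrac{1}{4}g^2(x)$, and $|g(x)| > A_1 \ge \sqrt{8\|H^+\|_\infty}$ to obtain $H^+(x) \le \tfrac{1}{8}g^2(x)$, so that
\[
   a(x)g'(x) \le -\tfrac{1}{8}g^2(x) < 0.
\]
Since $a>0$, $g$ is strictly decreasing on $[x_0,x_2)$, hence $g(x_2) \le g(x_0) < -A_1$, contradicting $g(x_2) = -A_1$.

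\textbf{Main obstacle.} There is essentially no analytic obstacle: the only care needed is bookkeeping in choosing the three constants in the maximum defining $A_1$ so that the coefficients $\tfrac{1}{4}$ and $\tfrac{1}{8}$ (from the linear and constant terms) sum to a quantity strictly less than $\tfrac{1}{2}$, which is why the thresholds are $4\|b^+\|_\infty$ and $\sqrt{8\|H^+\|_\infty}$ rather than sharper ones. The anchoring threshold $-g(-1+\delta)$ ensures the contradiction argument has a starting point on the right. Neither case (i) nor case (ii) in the hypothesis of Proposition 7.1 on the sign of $a$ is needed here — only $a(x)>0$ is used — so integrability of $1/a$ near $-1$ plays no role in this first lemma and will enter only in subsequent upper-bound lemmas.
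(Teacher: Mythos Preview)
Your proof is correct and follows essentially the same approach as the paper: both exploit the fact that whenever $g(x)<-A_1$ the quadratic term $-\tfrac12 g^2$ dominates $H^+$ and $b^+|g|$ with the same splitting $\tfrac14+\tfrac18<\tfrac12$, forcing $g'<0$ since $a>0$, which is incompatible with $g(-1+\delta)\ge -A_1$. The paper's write-up is simply terser (and adds the trivial remark that if $A_1=\infty$ there is nothing to prove), while you spell out the intermediate-value/infimum step explicitly; the arguments are otherwise identical.
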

\begin{proof}  
  If $A_1=\infty$, done. So we assume $A_1<\infty$.
  If $g(x)<-A_1$ for some $x\in (-1,-1+\delta)$, we have 
  \[
     a(x)g'(x)=H(x)-\frac{1}{2}g^2(x)-b(x)g(x)\le H(x)-\frac{1}{4}g^2(x)\le -\frac{1}{8}g^2(x)<0.
  \]
  Thus $g'(x)<0$.  This implies, given $g(-1+\delta)\ge -A_1$, that $g\ge -A_1$ on $(-1,-1+\delta)$.
\end{proof}

\begin{lem}\label{lem7_1_B} In addition to the assumption of Lemma \ref{lem7_1_A}, we assume that 
$$
	\displaystyle{\lim_{x\to -1^+}\int_{x}^{-1+\delta}\frac{1}{a(x)}}=+\infty.
$$
Then
\[
   g(x)\le A_2:=\max\{4||b^-||_{L^{\infty}(-1,-1+\delta)}, \sqrt{8||H^+||_{L^{\infty}(-1,-1+\delta)}} \},\quad \forall x\in (-1,-1+\delta).
\]
\end{lem}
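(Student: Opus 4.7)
The plan is to argue by contradiction, exploiting the fact that the quadratic term $-\tfrac{1}{2}g^2$ on the right-hand side of (\ref{eq_g}) dominates the linear and forcing terms as soon as $g$ exceeds $A_2$. First I would record the key pointwise step: if $g(x)\ge A_2$ then, using $\|b^-\|_{L^\infty}\le A_2/4\le g(x)/4$ and $\|H^+\|_{L^\infty}\le A_2^2/8\le g(x)^2/8$,
\[
a(x)g'(x)=H(x)-b(x)g(x)-\tfrac{1}{2}g(x)^2\le \tfrac{1}{8}g(x)^2+\tfrac{1}{4}g(x)^2-\tfrac{1}{2}g(x)^2=-\tfrac{1}{8}g(x)^2,
\]
which, because $a(x)>0$, gives $g'(x)<0$ at every such point.

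Next, suppose for contradiction that $g(x_0)>A_2$ for some $x_0\in(-1,-1+\delta)$. I would first rule out the possibility that $g$ dips to $A_2$ somewhere in $(-1,x_0]$: if $x_1:=\sup\{x\in(-1,x_0]\mid g(x)\le A_2\}$ exists in $(-1,x_0]$, then by continuity $g(x_1)=A_2$ and $g>A_2$ on $(x_1,x_0]$, so by the previous step $g'<0$ throughout $(x_1,x_0)$; the mean value theorem gives $g(x_0)<g(x_1)=A_2$, a contradiction. Therefore $g(x)>A_2$ for every $x\in(-1,x_0]$.

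With $g>A_2\ge 0$ on all of $(-1,x_0]$, the inequality $a(x)g'(x)\le -\tfrac{1}{8}g(x)^2$ rearranges to $\tfrac{d}{dx}(-1/g(x))\ge 1/(8a(x))$. Integrating from $x$ to $x_0$ produces
\[
\frac{1}{g(x)}\le \frac{1}{g(x_0)}-\frac{1}{8}\int_{x}^{x_0}\frac{ds}{a(s)}.
\]
Since $\int_{x_0}^{-1+\delta}ds/a(s)$ is finite (continuity of $a>0$ on the compact subinterval), the divergence hypothesis forces $\int_x^{x_0}ds/a(s)\to+\infty$ as $x\to -1^+$, whence $1/g(x)\to -\infty$. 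This is incompatible with $g(x)>A_2\ge 0$, completing the contradiction.

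The main subtlety, compared with Lemma \ref{lem7_1_A}, is that $A_2$ contains no contribution from the boundary datum $g(-1+\delta)$; one cannot obtain the upper bound from a simple sign-of-the-derivative argument alone, because $g$ could a priori be arbitrarily large at $-1+\delta$. The divergence of $\int 1/a$ at $-1$ is precisely what propagates the local Riccati-type inequality into the global bound, and I expect verifying the clean split of the argument into the ``no dip'' step and the quantitative integrated step to be the main place where care is needed; the trivial case $A_2=0$ is handled by the same contradiction, since $1/g(x)>0$ still cannot tend to $-\infty$.
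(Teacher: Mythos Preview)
Your argument is correct and follows essentially the same route as the paper: assume $g(\bar x)>A_2$, show $g>A_2$ persists on all of $(-1,\bar x)$ because $g'<0$ wherever $g>A_2$, then integrate the Riccati inequality to contradict the divergence of $\int 1/a$. One slip to fix: from $a g'\le -\tfrac18 g^2$ you get $\tfrac{d}{dx}(1/g)=-g'/g^2\ge 1/(8a)$, not $\tfrac{d}{dx}(-1/g)\ge 1/(8a)$; your integrated inequality $\tfrac{1}{g(x)}\le \tfrac{1}{g(x_0)}-\tfrac18\int_x^{x_0}\tfrac{ds}{a(s)}$ is nevertheless the correct consequence, so the proof stands.
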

\begin{proof}
  If $g(\bar{x})>A_2$ for some $\bar{x}\in (-1,-1+\delta)$, we have 
  \[
    a(\bar{x})g'(\bar{x})=H(\bar{x})-\frac{1}{2}g^2(\bar{x})-b(\bar{x})g(\bar{x})\le H(\bar{x})-\frac{1}{4}g^2(\bar{x})\le -\frac{1}{8}g^2(\bar{x})<0.
  \]
  Thus $g'(\bar{x})<0$, and therefore for some $\epsilon>0$, $g(x)>g(\bar{x})>A_2$ for $\bar{x}-\epsilon<x<\bar{x}$. It follows that $g(x)>A_2$ for all $x\in (-1,\bar{x})$. Thus as shown above, $a(x)g'(x)<-\frac{1}{8}g^2(x)$ for all $-1<x<\bar{x}$. It follows that $(g^{-1})'(x)\ge \frac{1}{8a(x)}$ and
  \[
      \frac{1}{8}\int_{x}^{\bar{x}}\frac{ds}{a(s)} \le  g^{-1}(\bar{x})-g^{-1}(x)\le g^{-1}(\bar{x})\le  \frac{1}{A_2}, \quad  \forall -1<x<\bar{x}.
  \]
  This violates $\int_{x}^{-1+\delta} \frac{ds}{a(s)}=\infty$, a contradiction.
\end{proof}

\begin{lem}\label{lem7_1_A1} For $\delta>0$, let $H,a,b\in C(-1,-1+\delta]$ with $a(x)<0$ for $x\in (-1,-1+\delta)$. Suppose that $g\in C^1(-1,-1+\delta)$ is a solution of (\ref{eq_g}). Then
\[
   g(x)\le \hat{A}_1:=\max\{4||b^-||_{L^{\infty}(-1,-1+\delta)}, \sqrt{8||H^+||_{L^{\infty}(-1,-1+\delta)}}, g(-1+\delta)\},\quad \forall x\in (-1,-1+\delta).
\]
\end{lem}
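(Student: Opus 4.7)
\medskip

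\noindent\textbf{Plan.} This lemma is the sign-reversed analogue of Lemma \ref{lem7_1_A}: the hypothesis $a>0$ is replaced by $a<0$, and correspondingly one expects an upper bound on $g$ (involving $b^{-}$) in place of the previous lower bound (which involved $b^{+}$). The plan is to mirror the proof of Lemma \ref{lem7_1_A}, with the roles of ``decreasing'' and ``increasing'' exchanged. We may assume $\hat A_1<\infty$, otherwise the conclusion is trivial.

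\smallskip

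\noindent\textbf{Key pointwise estimate.} The main computation is to show that at any $x\in(-1,-1+\delta)$ at which $g(x)>\hat A_1$, one has $g'(x)>0$. Since $\hat A_1\ge 4\|b^{-}\|_{L^\infty}$ implies $b^{-}(x)g(x)\le \tfrac14 g(x)^2$, and $\hat A_1\ge\sqrt{8\|H^{+}\|_{L^\infty}}$ implies $H(x)\le H^{+}(x)\le \tfrac18 g(x)^2$, we can rewrite (\ref{eq_g}) as
\[
a(x)g'(x)=H(x)-\tfrac12 g^2(x)-b(x)g(x)\le \tfrac18 g^2-\tfrac12 g^2+\tfrac14 g^2=-\tfrac18 g^2(x)<0,
\]
where we used $-b(x)g(x)\le b^{-}(x)g(x)$ since $g(x)>0$. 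Since $a(x)<0$, this yields $g'(x)>0$.

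\smallskip

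\noindent\textbf{Contradiction step.} Suppose for contradiction that $g(x_0)>\hat A_1$ for some $x_0\in(-1,-1+\delta)$. Since $g(-1+\delta)\le \hat A_1$ by the definition of $\hat A_1$, continuity of $g$ yields
\[
x_1:=\inf\{x\in(x_0,-1+\delta]\ :\ g(x)\le \hat A_1\}\in (x_0,-1+\delta].
\]
On $[x_0,x_1)$, $g>\hat A_1$, so by the key estimate $g'>0$ there, hence $g$ is strictly increasing on $[x_0,x_1]$. Therefore $g(x_1)\ge g(x_0)>\hat A_1$. On the other hand, by continuity and the definition of $x_1$, either $x_1<-1+\delta$ and $g(x_1)=\hat A_1$, or $x_1=-1+\delta$ and $g(x_1)\le \hat A_1$; in either case $g(x_1)\le \hat A_1$, a contradiction.

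\smallskip

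\noindent\textbf{Expected obstacles.} There are essentially none: the proof is a direct transcription of Lemma \ref{lem7_1_A}'s argument with inequalities reversed. The only mild care needed is to keep track of signs when bounding $-b(x)g(x)$ (using $g>0$ on the set of interest, which is why one bounds by $b^{-}$ rather than $b^{+}$), and to note that $\hat A_1$ includes $g(-1+\delta)$ as a possible maximizer so that the boundary inequality $g(-1+\delta)\le \hat A_1$ is automatic. Unlike Lemma \ref{lem7_1_B}, no hypothesis on the integrability of $1/a$ is required here, because we are propagating the bound rightward from the interior to the boundary point $-1+\delta$ rather than leftward toward the singular endpoint $-1$.
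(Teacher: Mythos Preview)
Your proof is correct. The paper's own proof is even shorter: it observes that the substitution $(\tilde a,\tilde b,\tilde g)=(-a,-b,-g)$ transforms (\ref{eq_g}) into the same equation with $\tilde a>0$, and then Lemma \ref{lem7_1_A} applied to $\tilde g$ gives $\tilde g\ge -\max\{4\|\tilde b^{+}\|,\sqrt{8\|H^{+}\|},-\tilde g(-1+\delta)\}$, which is exactly $g\le \hat A_1$ since $\tilde b^{+}=b^{-}$. Your direct argument unwinds this substitution and reproves the pointwise estimate by hand; the content is the same, and your contradiction step is slightly more carefully written than the paper's version of Lemma \ref{lem7_1_A}.
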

\begin{proof}
Rewriting (\ref{eq_g}) as
\begin{equation}\label{eq_g1}
   (-a)(-g)'+(-b)(-g)+\frac{1}{2}(-g)^2=H.
\end{equation}
The conclusion follows from Lemma \ref{lem7_1_A} with $a, b$ and $g$ there replaced by $-a,-b$ and $-g$.
\end{proof}

\begin{lem}\label{lem7_1_B1} In addition to the assumption of Lemma \ref{lem7_1_A1}, we assume that 
$$
\displaystyle{\lim_{x\to -1^+}\int_{x}^{-1+\delta}\frac{1}{a(x)}}=-\infty.
$$
Then
\[
   g(x)\ge -\hat{A}_2 : =-\max\{4||b^+||_{L^{\infty}(-1,-1+\delta)}, \sqrt{8||H^+||_{L^{\infty}(-1,-1+\delta)}}\},\quad \forall x\in (-1,-1+\delta).
\]
\end{lem}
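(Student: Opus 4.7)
\medskip

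\noindent\textbf{Proof proposal for Lemma \ref{lem7_1_B1}.}
The plan is to reduce the claim to the already-established Lemma \ref{lem7_1_B} by the same sign flip that was used to deduce Lemma \ref{lem7_1_A1} from Lemma \ref{lem7_1_A}. Specifically, I would set $\tilde{g}:=-g$, $\tilde{a}:=-a$, $\tilde{b}:=-b$, and observe that multiplying equation (\ref{eq_g}) through and grouping yields
\begin{equation*}
\tilde{a}(x)\tilde{g}'(x)+\tilde{b}(x)\tilde{g}(x)+\tfrac{1}{2}\tilde{g}^{2}(x)=H(x),\qquad -1<x<-1+\delta,
\end{equation*}
since $ag'=(-\tilde a)(-\tilde g')=\tilde a\tilde g'$ and $bg=(-\tilde b)(-\tilde g)=\tilde b\tilde g$, while $g^{2}=\tilde g^{2}$.

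Next I would check that $\tilde a$, $\tilde b$, $H$ satisfy the hypotheses of Lemma \ref{lem7_1_B}. The assumption $a(x)<0$ on $(-1,-1+\delta)$ gives $\tilde a(x)>0$ there; moreover,
\begin{equation*}
\lim_{x\to -1^{+}}\int_{x}^{-1+\delta}\frac{ds}{\tilde a(s)}=-\lim_{x\to -1^{+}}\int_{x}^{-1+\delta}\frac{ds}{a(s)}=+\infty,
\end{equation*}
so $\tilde a$ satisfies hypothesis (i). Also $\tilde b\in C(-1,-1+\delta]$ with $\tilde b^{-}=b^{+}$, and $H$ is unchanged.

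Applying Lemma \ref{lem7_1_B} to $\tilde g$ yields
\begin{equation*}
\tilde g(x)\le \max\bigl\{4\|\tilde b^{-}\|_{L^{\infty}(-1,-1+\delta)},\sqrt{8\|H^{+}\|_{L^{\infty}(-1,-1+\delta)}}\bigr\}=\hat A_{2},\quad\forall x\in(-1,-1+\delta),
\end{equation*}
which is exactly $g(x)\ge -\hat A_{2}$. Since the reduction is purely algebraic, I do not expect any obstacle; the content is entirely in Lemma \ref{lem7_1_B}, and the only care required is to track which of $b^{+}$ and $b^{-}$ appears after the sign flip.
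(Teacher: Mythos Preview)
Your proposal is correct and matches the paper's own proof exactly: the paper simply says the result follows from Lemma \ref{lem7_1_B} by the same sign change $(a,b,g)\mapsto(-a,-b,-g)$ used to deduce Lemma \ref{lem7_1_A1} from Lemma \ref{lem7_1_A}. Your verification that $\tilde b^{-}=b^{+}$ and that the integral hypothesis flips sign is precisely what is needed.
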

\begin{proof}
This follows from Lemma \ref{lem7_1_B} as the way Lemma \ref{lem7_1_A1} being deduced from Lemma \ref{lem7_1_A}.
\end{proof}

\begin{lem}\label{lem_M1}
  For $\delta>0$, let $b\in C^0(-1,-1+\delta]\cap L^{\infty}(-1,-1+\delta)$, $H\in C^0(-1,-1+\delta]$, and let $a\in  C^0(-1,-1+\delta]$ be either positive or negative in the interval and satisfies $\displaystyle{\lim_{x\to -1^+}\left|\int_{x}^{-1+\delta}\frac{ds}{a(s)}\right|=\infty}$. Assume that  $g\in C^1(-1,-1+\delta]$ is a solution of (\ref{eq_g}).  Then
  \[
    \lambda:= \sup_{-1<x\le -1+\delta}\left(H(x)+\frac{1}{2}(b(x))^2\right)\ge 0.
  \]
\end{lem}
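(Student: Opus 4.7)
The key algebraic observation is that the equation (\ref{eq_g}) can be rewritten by completing the square:
\begin{equation*}
   a(x) g'(x) + \tfrac{1}{2}\bigl(g(x)+b(x)\bigr)^{2} \;=\; H(x) + \tfrac{1}{2} b(x)^{2}.
\end{equation*}
Thus if we set $\Lambda(x):=H(x)+\tfrac{1}{2}b(x)^{2}$, then $\lambda=\sup_{(-1,-1+\delta]}\Lambda$, and the plan is to derive a contradiction from the assumption $\lambda<0$. Moreover, the two sign cases for $a$ are interchangeable via the substitution $a\mapsto -a$, $b\mapsto -b$, $g\mapsto -g$ (which leaves the equation (\ref{eq_g}) invariant and reverses the sign of the integral $\int_x^{-1+\delta} ds/a(s)$), so I will carry out the argument only for $a>0$, $\int_x^{-1+\delta}ds/a(s)\to +\infty$.

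Assume for contradiction that $\lambda<0$. From the rewritten equation and $(g+b)^{2}\ge 0$ we obtain $a(x)g'(x)\le \Lambda(x)\le \lambda<0$, hence $g'\le \lambda/a$. Integrating from $x$ to $-1+\delta$ and using the divergence hypothesis on $\int ds/a$ yields
\begin{equation*}
   g(-1+\delta) - g(x) \;\le\; \lambda \int_{x}^{-1+\delta}\frac{ds}{a(s)} \;\longrightarrow\; -\infty,
\end{equation*}
so $g(x)\to +\infty$ as $x\to -1^{+}$. Since $b$ is bounded, I can then fix $x_{1}\in(-1,-1+\delta]$ so small that on $(-1,x_{1}]$ one has $g\ge \max\{4\|b\|_{L^{\infty}},\, 4\sqrt{-\lambda}\}$, which gives $(g+b)^{2}\ge g^{2}/4$ and therefore $\Lambda-\tfrac{1}{2}(g+b)^{2}\le -g^{2}/16$.

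Plugging this back into the rewritten equation produces the Riccati-type differential inequality
\begin{equation*}
   a(x) g'(x) \;\le\; -\tfrac{1}{16}\, g(x)^{2},\qquad x\in(-1,x_{1}],
\end{equation*}
equivalently $\frac{d}{dx}\bigl(1/g\bigr)\ge \frac{1}{16 a(x)}$ on $(-1,x_{1}]$. Integrating from $x$ to $x_{1}$ gives
\begin{equation*}
   \frac{1}{g(x_{1})} - \frac{1}{g(x)} \;\ge\; \frac{1}{16}\int_{x}^{x_{1}}\frac{ds}{a(s)}\;\longrightarrow\;+\infty
\end{equation*}
as $x\to -1^{+}$, while the left-hand side tends to the finite value $1/g(x_{1})$. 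This is the desired contradiction, so $\lambda\ge 0$. The main obstacle is the bookkeeping in passing from the linear lower bound $g'\le \lambda/a$ (which only forces blow-up of $g$) to the quadratic inequality $ag'\le -g^{2}/16$; the point is that once $g$ is large enough to absorb both $b$ and $\lambda$, the dominant term on the right is $-\tfrac{1}{2}(g+b)^{2}$, which is what converts logarithmic-type divergence into an actually unsustainable rate of decrease of $1/g$.
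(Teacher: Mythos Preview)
Your proof is correct and follows the same route as the paper: complete the square to get $ag'=\Lambda-\tfrac12(g+b)^2$, assume $\lambda<0$, integrate $g'\le\lambda/a$ to force $g\to+\infty$, and then use the Riccati inequality $ag'\le -cg^2$ together with the divergence of $\int ds/a$ to reach a contradiction. The only cosmetic difference is that the paper packages the Riccati blow-up step as an appeal to the preceding Lemma~\ref{lem7_1_B} (which bounds $g$ from above under $H^+\in L^\infty$), whereas you reprove that step inline.
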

\begin{proof}
   We only need to treat the case that $a(x)>0$ since the other case can be converted to this case by rewriting (\ref{eq_g}) as (\ref{eq_g1}).  We prove it by contradiction. If not, then
   \[
      a(x)g'(x)=H(x)-\frac{1}{2}b(x)^2-\frac{1}{2}(g(x)+b(x))^2\le \lambda<0, \quad \forall -1<x<-1+\delta.
   \]
   It follows that 
      \[
      g(-1+\delta)-g(x)=\int_{x}^{-1+\delta}g'(s)ds\le \lambda \int_{x}^{-1+\delta} \frac{ds}{a(x)}\to -\infty \textrm{ as } x\to -1^+.
   \]
   This implies
   \begin{equation}\label{eqM_1}
       \lim_{x\to -1^+}g(x)=+\infty.
   \end{equation}
   On the other hand, $\lambda$ being negative implies that $H^+\in L^{\infty}(-1,-1+\delta)$. An application of  Lemma \ref{lem7_1_B} gives that $g^+\in L^{\infty}(-1,-1+\delta)$, violating  (\ref{eqM_1}). 
\end{proof}

\begin{lem}\label{lemA_2} 
For $\delta>0$, let $b\in C^0[-1,-1+\delta]$ and $H\in C^0(-1,-1+\delta]$ such that $\displaystyle{\lim_{x\to -1^+}H(x)}$ exists, is either finite or infinite, and let $a\in C^0(-1,-1+\delta]$ be either positive or negative in the interval. Assume that $g\in C^1(-1,-1+\delta]$ is a solution of (\ref{eq_g}). Then $\displaystyle{\lim_{x\to -1^+}g(x)}$ exists and $b(-1)g(-1)+\frac{1}{2}g(-1)^2=H(-1)$.

If in addition, $\displaystyle{\lim_{x\to -1^+}\left|\int_{x}^{-1+\delta}\frac{ds}{a(s)}\right|=\infty}$, then $\displaystyle{\lim_{x\to -1^+}g(x)}$ is finite if and only if $\displaystyle{\lim_{x\to -1^+}H(x)}$ is finite, and in this case $\displaystyle{\lim_{x\to -1^+}a(x)g'(x)=0}$.
\end{lem}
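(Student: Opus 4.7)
The plan is to establish the two paragraphs of the lemma in sequence: first show that $L := \lim_{x \to -1^+} g(x)$ exists in $[-\infty, +\infty]$ and satisfies $\tfrac12 L^2 + b(-1) L = H(-1)$, and then read off the equivalence and vanishing statements of the second paragraph. The standing hypothesis (i) or (ii) on $a$ from the opening of Section 7 is in force, giving access to Proposition \ref{lemA_1} throughout.

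Existence of $L$ is proved by an oscillation argument. Set $L_-=\liminf$ and $L_+=\limsup$ of $g$ at $-1^+$, and assume $L_-<L_+$ for contradiction. Define $\Phi_\infty(z):=\tfrac12 z^2+b(-1)z-H(-1)$, interpreted as identically $\pm\infty$ if $H(-1)=\mp\infty$. Since $\Phi_\infty$ has at most two finite zeros, the uncountable interval $(L_-,L_+)$ contains a finite $c$ with $\Phi_\infty(c)\neq 0$. By continuity of $g$ and the definitions of $L_\pm$, pick sequences $x_n,y_n\to -1^+$ with $g(x_n)<c<g(y_n)$; on each sub-interval joining consecutive $x_n,y_n$, choose the first crossing of the level $c$ on an ascending sub-interval and the last crossing on a descending sub-interval. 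This yields sequences $z_n,z_n'\to -1^+$ with $g(z_n)=g(z_n')=c$, $g'(z_n)\geq 0$, $g'(z_n')\leq 0$. From \eqref{eq_g},
\[
a(z_n)g'(z_n)=H(z_n)-b(z_n)c-\tfrac12 c^2 \longrightarrow -\Phi_\infty(c),
\]
and the analogous limit holds at $z_n'$. Since $a$ is of one sign on $(-1,-1+\delta]$, the sequences $a(z_n)g'(z_n)$ and $a(z_n')g'(z_n')$ lie in opposite closed half-lines, so the common limit $-\Phi_\infty(c)$ equals $0$, contradicting $\Phi_\infty(c)\neq 0$.

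For the quadratic relation, pass to the limit in $a(x)g'(x)=H(x)-b(x)g(x)-\tfrac12 g(x)^2$. If $L$ and $H(-1)$ are both finite, Proposition \ref{lemA_1} supplies $\lim ag'=0$ and the relation follows at once. If $L$ is finite but $H(-1)=\pm\infty$, then $a(x)g'(x)\to\pm\infty$, so $g'$ has constant nonzero sign near $-1$ with $|g'|\geq\varepsilon/|a|$ for some $\varepsilon>0$; integrating against $\int 1/|a|=\infty$, in the style of Lemmas \ref{lem7_1_B} and \ref{lem7_1_B1}, forces $|g|\to\infty$, contradicting $L$ finite. Hence $H(-1)$ finite is equivalent to $L$ finite, and when $L=\pm\infty$ both sides of the relation are $+\infty$ in the extended sense. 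The second paragraph is now immediate: under the integral hypothesis one has $L$ finite $\Leftrightarrow H(-1)$ finite, and in the finite case $\lim ag'=0$ follows from Proposition \ref{lemA_1}.

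The principal obstacle is the oscillation step: extracting $z_n,z_n'$ with the prescribed one-sided signs of $g'$ requires covering the oscillation region by alternating ascending and descending sub-intervals and selecting crossings of the level $c$ at their appropriate endpoints. Since $g\in C^1$ and $a$ has constant nonzero sign, the non-strict inequalities $g'(z_n)\geq 0$, $g'(z_n')\leq 0$ already place $ag'(z_n)$ and $ag'(z_n')$ in opposite closed half-lines, which is what drives the sign contradiction; thus the crossings need not be located at local extrema of $g$, only at the appropriate first/last points of the level set $\{g=c\}$ in each sub-interval.
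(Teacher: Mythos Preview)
Your proof is correct and follows essentially the paper's route: the same level-crossing argument for existence of the limit (the paper shows the quadratic relation would hold for every level $\alpha$ in the oscillation interval; you pick one level $c$ where it fails and derive a sign contradiction on $a g'$---equivalent packaging), followed by the finite/infinite dichotomy via integrating $1/a$. The one place to tighten is the equivalence ``$L$ finite $\Leftrightarrow H(-1)$ finite'': you argue only the forward implication explicitly, while the reverse direction (and the exclusion of $H(-1)=-\infty$) is exactly what Proposition~\ref{lemA_1} gives once $H^+\in L^\infty$, so invoke it explicitly at that step rather than only in the opening sentence.
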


\begin{proof}
As before, we will only prove it when $a>0$, since the $a<0$ case follows after rewriting (\ref{eq_g}) as (\ref{eq_g1}). We prove it by contradiction.

Assume that $\displaystyle{\lim_{x\to -1^+}g(x)}$ does not exist, then there exist $-\infty<\alpha_1<\alpha_2<\infty$ and two sequences $\{x_i\}$ and $\{y_i\}$ such that $ x_1>y_1>x_2>y_2>\cdots>-1$, $\displaystyle{\lim_{i\to\infty}x_i=\lim_{i\to\infty}y_i=-1}$, $ g(x_i)=\alpha_1$ and $g(y_i)=\alpha_2$. Then for any $\alpha \in (\alpha_1, \alpha_2)$, there exists a $x_i>z_i>y_i$  such that $g(z_i)=\alpha$ and $g(z)<\alpha, \forall x_i\ge z>z_i$. Clearly $\displaystyle{\lim_{i\to \infty}z_i=1}$ and $g'(z_i)\le 0$. This leads to, in view of (\ref{eq_g}), $b(z_i)g(z_i)+\frac{1}{2}g^2(z_i)\ge H(z_i)$. Sending $i\to\infty$, we have $b(-1)\alpha+\frac{1}{2}\alpha^2\ge \displaystyle{\lim_{x\to -1^+}}H(x)$.

Similarly, we can find $y_i>\hat{z_i}>x_{i+1}$ satisfying $g(\hat{z_i})=\alpha$ and $g'(\hat{z_i})\ge 0$, which leads to $b(-1)\alpha+\frac{1}{2}\alpha^2\le\displaystyle{\lim_{x\to -1^+}}H(x)$. 
So for any $\alpha\in (\alpha_1, \alpha_2)$, $b(-1)\alpha+\frac{1}{2}\alpha^2=\displaystyle{\lim_{x\to -1^+}}H(x)$. Contradiction. We have proved that $\displaystyle{\lim_{x\to -1^+}g(x)}$ exists, either finite or infinite.

If $\displaystyle{\lim_{x\to -1^+}}H(x)$ is finite, then, in view of  Lemma \ref{lem7_1_A} and Lemma \ref{lem7_1_B}, $\displaystyle{\lim_{x\to -1^+}g(x)}$ is finite. 

If $\displaystyle{\lim_{x\to -1^+}H(x)}$ is infinite, then, in view of Lemma \ref{lem_M1},  $\displaystyle{\lim_{x\to -1^+}H(x)}=+\infty$.  We will show by contradiction that $\displaystyle{\lim_{x\to -1^+}}g$ is infinite. Suppose that the limit is finite, then $a(x)g'(x)=H(x)-b(x)g(x)-\frac{1}{2}g^2(x)\to +\infty$ as $x\to -1^+$. It follows that there exists $0<\epsilon<\delta$, such that $ g'(x)\ge \displaystyle{\frac{1}{a(x)}}$, for $-1<x<-1+\epsilon$. It follows that 
\[
   g(-1+\epsilon)-g(x)\ge \int_{x}^{-1+\epsilon}\frac{ds}{a(s)}\to \infty \textrm{ as }x\to -1^+,
\]
a contradiction to the finiteness of $\displaystyle{\lim_{x\to -1^+}}g(x)$.

We have proved that $\displaystyle{\lim_{x\to -1^+}}g(x)$ is finite if and only if $\displaystyle{\lim_{x\to -1^+}}H(x)$ is finite.\\

If $\displaystyle{\lim_{x\to -1^+}g}$ is finite, we see by sending $x$ to $-1^+$ in (\ref{eq_g})  that $\displaystyle{\lim_{x\to -1^+}}a(x)g'=\mu$ for some $\mu\in \mathbb{R}$. Since $g$ is bounded, $\mu=0$.  Indeed, if $\mu\ne 0$, we would have
\[
   \frac{2g'(x)}{\mu}\ge \frac{1}{a(x)} 
\]
for $x$ close to $-1$, and an argument above would lead to a contradiction to the boundedness of $g$.
\end{proof} 

Proposition \ref{lemA_1} follows from Lemma \ref{lem7_1_A}-\ref{lem7_1_B1} and Lemma \ref{lemA_2}.

Next, we study asymptotic behavior of  solution $V\in C^1(-1, -1+\delta]$  of
\begin{equation}\label{eqA_3_1}
V'+BV=H\qquad \mbox{in}\ (-1, -1+\delta)
\end{equation}
under various hypothesis on $B$ and $H$.

Let $w:=\int_{-1+\delta}^{x}B(s)ds$, then $V$ can be expressed as 
\begin{equation}\label{eqA_3_2}
		V(x)=V(x_0)e^{w(x_0)-w(x)}+e^{-w(x)}\int_{x_0}^{x}e^{w(s)}H(s)ds,
\end{equation}
for every $x_0\in (-1,-1+\delta]$.

\begin{lem}\label{lemA_3}
 For $\delta>0$, $0\le b\le 1$ and $\beta\ge 0$, let $B, H\in C(-1, -1+\delta]$ satisfy
\begin{equation}\label{eqA_3_4}
\inf_{-1<x\le-1+\delta}(1+x)^{1-b}H(x)>-\infty,
\end{equation}
and
\begin{equation}\label{eqA_3_5}
\lim_{ x\to -1^+} (1+x)B(x)=-\beta.
\end{equation} 
Assume that $V\in  C^1(-1, -1+\delta]$  and satisfies (\ref{eqA_3_1}).  Then for every  $\epsilon>0$,  there exists some constant $C$, such that
\begin{equation}\label{eqA_3_6}
V(x)\le 
C(1+x)^{   \min\{ b, \beta\}  -\epsilon}, \quad \textrm{for all } -1<x\le -1+\delta.
\end{equation}
\end{lem}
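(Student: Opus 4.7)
\textbf{Proof plan for Lemma \ref{lemA_3}.} The plan is to use the integrating factor representation (\ref{eqA_3_2}) and estimate each piece via careful asymptotics of $w(x)=\int_{-1+\delta}^x B(s)\,ds$ coming from the hypothesis $(1+x)B(x)\to -\beta$. First I would fix an arbitrary $\epsilon'>0$ (to be chosen as $\epsilon/2$ at the end) and pick $x_0\in(-1,-1+\delta]$ close enough to $-1$ so that $-\beta-\epsilon' \le (1+s)B(s)\le -\beta+\epsilon'$ for all $-1<s\le x_0$. Integrating the two-sided bound on $B$ over $(x,x_0)$ and adding the value $w(x_0)$ gives the sandwich
\begin{equation*}
 w(x_0)+(\beta-\epsilon')\ln\tfrac{1+x_0}{1+x}\;\le\; w(x)\;\le\; w(x_0)+(\beta+\epsilon')\ln\tfrac{1+x_0}{1+x},
\end{equation*}
for $-1<x\le x_0$, which exponentiates to $e^{-w(x)}\le C_2(1+x)^{\beta-\epsilon'}$ and $e^{w(s)}\le C_3(1+s)^{-(\beta+\epsilon')}$ with constants depending only on $x_0,w(x_0),\beta,\epsilon'$.

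Next I would feed the lower bound $H(s)\ge -C_0(1+s)^{b-1}$ (which is just a reformulation of (\ref{eqA_3_4})) into the representation formula. Writing $H=H^+-H^-$, with $H^-(s)\le C_0(1+s)^{b-1}$, and using $x<x_0$ so the integral from $x_0$ to $x$ carries a minus sign, I obtain
\begin{equation*}
 V(x)\;\le\; V(x_0)e^{w(x_0)-w(x)} + e^{-w(x)}\int_x^{x_0} e^{w(s)}H^-(s)\,ds,
\end{equation*}
and both terms are now explicitly bounded by the asymptotics of $w$.

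The first term is at most $C|V(x_0)|(1+x)^{\beta-\epsilon'}$ directly. For the second term I use the two estimates above to reduce matters to bounding the elementary integral $\int_x^{x_0}(1+s)^{b-\beta-\epsilon'-1}\,ds$, which behaves according to the sign of $b-\beta-\epsilon'$: it is $O(1)$ when $b>\beta+\epsilon'$, is $O((1+x)^{b-\beta-\epsilon'})$ when $b<\beta+\epsilon'$, and picks up a $|\ln(1+x)|$ when equality holds. Multiplying by $C_2(1+x)^{\beta-\epsilon'}$ yields, respectively, upper bounds $C(1+x)^{\beta-\epsilon'}$, $C(1+x)^{b-2\epsilon'}$, and $C(1+x)^{\beta-\epsilon'}|\ln(1+x)|\le C(1+x)^{\beta-2\epsilon'}$. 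In every case the exponent is at least $\min\{b,\beta\}-2\epsilon'$, so $V(x)\le C(1+x)^{\min\{b,\beta\}-2\epsilon'}$ on $(-1,x_0]$. Since $V$ is continuous on $[x_0,-1+\delta]$, the bound extends there trivially by enlarging $C$, and choosing $\epsilon'=\epsilon/2$ gives the desired (\ref{eqA_3_6}).

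The only mildly delicate point is the borderline case $b=\beta+\epsilon'$ where a logarithm appears; this is absorbed into the arbitrarily small loss $\epsilon'$ in the exponent, which is why the statement must tolerate an $\epsilon>0$ loss. Everything else is bookkeeping of exponents via the asymptotic of $(1+x)B(x)$, and I do not foresee any real obstacle.
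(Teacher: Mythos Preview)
Your proposal is correct and follows essentially the same approach as the paper: both use the integrating factor representation (\ref{eqA_3_2}), derive the asymptotic of $w(x)$ from $(1+x)B(x)\to-\beta$, and bound the two terms accordingly. The paper compresses your explicit $\epsilon'$-management into the shorthand $w(x)=(-\beta+o(1))\ln(1+x)$ and takes $x_0=-1+\delta$ throughout, whereas you localize to a smaller interval first and then extend by continuity; your case split on the sign of $b-\beta-\epsilon'$ makes explicit what the paper hides in the $o(1)$, but the underlying argument is identical.
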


\begin{proof}
By (\ref{eqA_3_5})
\begin{equation}\label{eqA_3_3}
  w(x)=(-\beta+o(1))\ln (1+x),
\end{equation}
where $o(1)$ denotes some quantity which tends to $0$ as $x\to -1^+$.
  
Since $V\in  C^1(-1, -1+\delta]$  is a solution of  (\ref{eqA_3_1}), (\ref{eqA_3_2}) holds for every $x_0\in (-1,-1+\delta]$. It follows from  (\ref{eqA_3_3}), (\ref{eqA_3_4}) and (\ref{eqA_3_2}) , with $x_0=-1+\delta$, that 
\begin{equation*}
  \begin{split}
     V(x) & \le (1+x)^{\beta+o(1)}+(1+x)^{\beta+o(1)}\int_{x}^{-1+\delta}(1+s)^{-\beta+b-1+o(1)}ds\\
               & \le (1+x)^{\beta+o(1)}+(1+x)^{b+o(1)}\le C(1+x)^{\min\{b,\beta\}-\epsilon}. 
  \end{split}
\end{equation*} 
\end{proof}

\begin{rmk}
  In Lemma \ref{lemA_3}, if we replace (\ref{eqA_3_4}) by 
  \begin{equation}\label{eqA_3_7}
     \sup_{-1<x\le-1+\delta}(1+x)^{1-b}|H(x)|<\infty, 
  \end{equation}
  then we have, instead of (\ref{eqA_3_6}), for any $\epsilon>0$, 
  \[
     |V(x)|\le 
C(1+x)^{   \min\{ b, \beta\}  -\epsilon}, \quad \textrm{for all } -1<x\le -1+\delta 
  \]
  instead of (\ref{eqA_3_6})
\end{rmk}

\begin{lem}\label{lemA_4}
 For $\delta>0$, $0<b\le 1$ and $\beta<0$, let $B, H\in C(-1, -1+\delta]$ satisfy (\ref{eqA_3_4}) and  (\ref{eqA_3_5}). Assume that $V\in  C^1(-1, -1+\delta]$  and satisfies (\ref{eqA_3_1}) and 
\begin{equation}\label{eqA_4_1}
\limsup_{x\to -1^+}V(x)e^{\int_{-1+\delta}^{x}B(s)ds}\ge 0.
\end{equation}
Then for every  $\epsilon>0$,  there exists some constant $C$, such that
\begin{equation}\label{eqA_4_3}
-V(x)\le C(1+x)^{b-\epsilon}, \textrm{for all } -1<x\le -1+\delta.
\end{equation}
\end{lem}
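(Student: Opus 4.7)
The plan is to solve (\ref{eqA_3_1}) with the integrating factor $e^{w(x)}$, where $w(x) := \int_{-1+\delta}^{x} B(s)\,ds$, and combine the boundary-type hypothesis (\ref{eqA_4_1}) with the lower bound on $H$ coming from (\ref{eqA_3_4}). The identity I will use repeatedly is
$$V(x)e^{w(x)} = V(y)e^{w(y)} + \int_{y}^{x} H(s)e^{w(s)}\,ds, \qquad -1<y\le x\le -1+\delta.$$

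First I would extract from (\ref{eqA_3_5}) and $\beta<0$ the asymptotics of $w$: since $(1+s)B(s) \to -\beta > 0$, integrating yields, for every $\epsilon>0$ and $x$ sufficiently close to $-1$, the two-sided estimate
$$e^{w(x)} \le C_\epsilon (1+x)^{-\beta-\epsilon/2}, \qquad e^{-w(x)} \le C_\epsilon (1+x)^{\beta-\epsilon/2}.$$
Note $e^{w(x)} \to 0$ as $x\to -1^+$, so (\ref{eqA_4_1}) carries nontrivial information: I can select a sequence $x_n\to -1^+$ with $V(x_n)e^{w(x_n)} \ge -\eta_n$, $\eta_n\to 0^+$.

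Next, applying (\ref{eqA_3_4}) in the form $H(s)\ge -C_0 (1+s)^{b-1}$ and using the upper bound on $e^{w(s)}$, I would estimate
$$\int_{x_n}^{x} H(s) e^{w(s)}\,ds \ \ge\ -C_0 \int_{-1}^{x} (1+s)^{b-1-\beta-\epsilon/2}\,ds \ =\ -C(1+x)^{b-\beta-\epsilon/2},$$
the integrability near $-1$ being guaranteed by $b-1-\beta-\epsilon/2>-1$, a consequence of $b>0>\beta$ and $\epsilon$ small. Plugging this into the identity above, sending $n\to\infty$ (so the $V(x_n)e^{w(x_n)}$ term drops out with the right sign), and then multiplying by the upper bound on $e^{-w(x)}$ produces
$$V(x) \ \ge\ -C(1+x)^{b-\beta-\epsilon/2}(1+x)^{\beta-\epsilon/2} \ =\ -C(1+x)^{b-\epsilon},$$
which is (\ref{eqA_4_3}).

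The main point requiring some care (but not, I believe, a genuine obstacle) will be keeping the $o(1)$ error in the asymptotics of $w$ uniform on subintervals of $(-1,-1+\delta]$ and absorbing it into the $\epsilon$ in the final exponent; this is analogous to the book-keeping already carried out in the proof of Lemma \ref{lemA_3}. The conceptual reason the argument succeeds is that the decay $e^{w(x)}\sim (1+x)^{-\beta}$ and growth $e^{-w(x)}\sim (1+x)^{\beta}$ cancel exactly up to an arbitrarily small loss in the exponent, while the integrability of $(1+s)^{b-1}e^{w(s)}$ near $s=-1$ is automatic since $b-\beta>0$. Hypothesis (\ref{eqA_4_1}) is used only to justify dropping the contribution at $-1$ from the integrated identity.
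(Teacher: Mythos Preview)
Your argument is correct and is essentially the same as the paper's: both use the integrating-factor identity (\ref{eqA_3_2}), the asymptotic $w(x)=(-\beta+o(1))\ln(1+x)$ from (\ref{eqA_3_5}), the lower bound on $H$ from (\ref{eqA_3_4}), and then pass $x_0\to -1^+$ along a subsequence realizing the $\limsup$ in (\ref{eqA_4_1}) to drop the boundary term, concluding with the same integral estimate based on $b-\beta>0$. The only cosmetic difference is that the paper bundles the $o(1)$ errors into the exponent rather than splitting the $\epsilon$ as $\epsilon/2+\epsilon/2$.
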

\begin{proof}
 Estimate (\ref{eqA_3_3})  still holds by the assumption of $B$. For all $-1<x_0<x$, we obtain from (\ref{eqA_3_2}) and (\ref{eqA_3_4}) that
 \[
   V(x)\ge V(x_0)e^{w(x_0)-w(x)}-Ce^{-w(x)}\int_{x_0}^{x}e^{w(s)}(1+s)^{b-1}ds.
 \]
 Sending $x_0 \to -1$ along a subsequence in (\ref{eqA_3_2}), we have, in view of (\ref{eqA_4_1}) 
 \[
   -V(x)\le  Ce^{-w(x)}\int_{-1}^{x}e^{w(s)}(1+s)^{b-1}ds.
 \]
 By (\ref{eqA_3_3}), for every $\epsilon>0$, there exists some constant $C$, such that  
 \[
    -V(x)  \le  (1+x)^{\beta+o(1)}\int_{-1}^{x}(1+s)^{-\beta+b-1+o(1)}ds \le C(1+x)^{b-\epsilon}.
 \]
\end{proof}

\begin{rmk}\label{rmkA_4}
  In Lemma \ref{lemA_4}, if we replace (\ref{eqA_3_4}) and (\ref{eqA_4_1}) respectively by (\ref{eqA_3_7}) and 
  \begin{equation}\label{eqA_4_2}
     \lim_{x\to -1^+}V(x)e^{\int_{-1+\delta}^{x}B(s)ds}=0,
  \end{equation}
  then we have, instead of (\ref{eqA_4_3}), that for any $\epsilon>0$, 
  \[
     |V(x)|\le C(1+x)^{b-\epsilon}, \quad \textrm{for all } -1<x\le -1+\delta.
  \]
\end{rmk}

\begin{lem}\label{lemA_5}
For $\delta, \beta,c_1,c_2> 0$ , let $B\in C(-1, -1+\delta]$ and $H\in C[-1,-1+\delta]$ satisfy
\begin{equation}\label{eqA_5_1}
H(x)=H(-1)+O((1+x)^{c_1}),\quad -1< x\le -1+\delta,
\end{equation}
and
\begin{equation}\label{eqA_5_2}
           (1+x)B(x)+\beta=O((1+x)^{c_2}).
\end{equation}

Assume that $V\in  C^1(-1, -1+\delta]$  and satisfies (\ref{eqA_3_1}).  Then  there exists some constant $a_1$, such that
for  every $0<\alpha<\min\{c_2+\beta, c_2+1, c_1+1\}$, 
\[  
V(x)=a_1(1+x)^{\beta}+\left\{
    \begin{split}
        & \frac{H(-1)}{1-\beta}(1+x) \textrm{ if }\beta\ne 1\\
        & H(-1)(1+x)\ln(1+x)\textrm{ if }\beta= 1\
     \end{split}
    \right.
    +O((1+x)^{\alpha}), \quad -1<x\le -1+\delta.
\]
\end{lem}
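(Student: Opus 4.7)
The plan is to solve (\ref{eqA_3_1}) explicitly via the integrating factor $w(x) := \int_{-1+\delta}^x B(s)\,ds$, so that by (\ref{eqA_3_2}) with $x_0 = -1+\delta$,
\[
V(x) = e^{-w(x)} V(-1+\delta) + e^{-w(x)} \int_{-1+\delta}^x e^{w(s)} H(s)\,ds,
\]
and then to insert asymptotic expansions of $e^{\pm w}$, $e^{w(s)} H(s)$, and of the resulting elementary integral. The entire argument amounts to a careful application of (\ref{eqA_5_1})--(\ref{eqA_5_2}) with three independent sources of error to track.

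First I would expand $w$. From (\ref{eqA_5_2}), $B(s) = -\beta/(1+s) + O((1+s)^{c_2 - 1})$, and since $c_2 > 0$ the remainder is integrable near $s = -1$. Hence $C_\ast := \lim_{x \to -1^+}\bigl[w(x) + \beta \ln(1+x)\bigr]$ exists in $\mathbb{R}$ and
\[
w(x) = -\beta \ln(1+x) + C_\ast + O((1+x)^{c_2}).
\]
Writing $K := e^{C_\ast}$, this gives $e^{\mp w(x)} = K^{\mp 1}(1+x)^{\pm \beta}\bigl[1 + O((1+x)^{c_2})\bigr]$; combining with (\ref{eqA_5_1}) produces
\[
e^{w(s)} H(s) = K H(-1)(1+s)^{-\beta} + O\bigl((1+s)^{-\beta + c_1}\bigr) + O\bigl((1+s)^{-\beta + c_2}\bigr).
\]

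Integrating from $x$ to $-1+\delta$ then gives the answer. The leading integral $\int_x^{-1+\delta}(1+s)^{-\beta}\,ds$ equals $(\delta^{1-\beta} - (1+x)^{1-\beta})/(1-\beta)$ if $\beta \neq 1$ and $\ln\delta - \ln(1+x)$ if $\beta = 1$. For the two remainder terms, the elementary identity
\[
\int_x^{-1+\delta}(1+s)^\gamma\,ds = \begin{cases} \dfrac{\delta^{\gamma+1} - (1+x)^{\gamma+1}}{\gamma+1}, & \gamma \neq -1, \\[4pt] \ln\delta - \ln(1+x), & \gamma = -1, \end{cases}
\]
applied with $\gamma = -\beta + c_i$ shows that each such integral is a (possibly divergent) constant plus $O((1+x)^{-\beta + c_i + 1 - \epsilon})$ for arbitrarily small $\epsilon > 0$ (the $\epsilon$ only being needed in the borderline case $\gamma = -1$, to absorb the logarithm). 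Setting $F(x) := V(-1+\delta) - \int_x^{-1+\delta} e^{w(s)} H(s)\,ds$ and multiplying by $e^{-w(x)} = K^{-1}(1+x)^{\beta}[1+O((1+x)^{c_2})]$, the growing piece $(1+x)^{1-\beta}$ (resp.\ $-\ln(1+x)$) produces exactly $\frac{H(-1)}{1-\beta}(1+x)$ (resp.\ $H(-1)(1+x)\ln(1+x)$), while the constant part of $F$ contributes the required $a_1(1+x)^\beta$ for an explicit $a_1$.

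The three remaining error contributions are: the integrated remainders multiplied by $(1+x)^\beta$ give $O((1+x)^{c_1 + 1 - \epsilon})$ and $O((1+x)^{c_2 + 1 - \epsilon})$, while the $O((1+x)^{c_2})$ factor in $e^{-w(x)}$ multiplying the bounded part of $F$ produces $O((1+x)^{\beta + c_2})$; these combine exactly to $O((1+x)^\alpha)$ for any $\alpha < \min\{c_1+1, c_2+1, c_2+\beta\}$. The main technical obstacle is the bookkeeping across several regimes ($\beta < 1$, $\beta = 1$, $\beta > 1$, and the subcases where some of the remainder integrals diverge at $s = -1$); in each regime the constant ``$R_\ast$'' that gets absorbed into $a_1$ must be defined from a slightly different representation, but the bound on the error is uniform and comes directly from the explicit formula above, so no new ideas beyond careful case analysis are required.
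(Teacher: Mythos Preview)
Your proposal is correct and follows essentially the same approach as the paper: both use the explicit representation (\ref{eqA_3_2}) with the integrating factor $w$, expand $w(x) = -\beta\ln(1+x) + \text{const} + O((1+x)^{c_2})$ from (\ref{eqA_5_2}), and then substitute the expansions of $e^{\pm w}$ and $H$ to read off the leading terms and collect the three error contributions. The paper's version is terser and omits the case-by-case bookkeeping you spell out, but the argument is the same.
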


\begin{proof}
 Since $V$ is a solution of (\ref{eqA_3_1}), (\ref{eqA_3_2}) holds. By (\ref{eqA_5_2}), we have, for some $a_3\in \mathbb{R}$, 
 \begin{equation}\label{eqA_5_2a}
    w(x)=-\beta \ln(1+x)+a_3+O((1+x)^{c_2}).
 \end{equation}
 We derive from (\ref{eqA_3_2}), using (\ref{eqA_5_1}) and the above that for some constant $a_1\in\mathbb{R}$,
 \[
  \begin{split}
    V(x) & =V(x_0)e^{w(x_0)-w(x)}+e^{-w(x)}\int_{x_0}^{x}e^{w(s)}H(s)ds\\
           & =V(x_0)e^{w(x_0)}e^{-a_3}(1+x)^{\beta}(1+O((1+x)^{c_2}))\\
              &+(1+x)^{\beta}(1+O((1+x)^{c_2}))\int_{x_0}^{x}(1+s)^{-\beta}(H(-1)+O((1+s)^{\min\{c_1,c_2\}}))ds,
   \end{split}
 \]
 from which we conclude the proof.
\end{proof}

\begin{lem}\label{lemA_6}
For $\delta, c_1,c_2>0$,  $\beta<0$ , $0<b<1$ and $\gamma_1,\gamma_2\in \mathbb{R}$, let $B, H\in C(-1, -1+\delta]$ satisfy (\ref{eqA_5_2}) and 
\begin{equation}\label{eqA_6_1}
H(x)=\gamma_1 (1+x)^{b-1}+\gamma_2+O((1+x)^{b-1+c_1}),\quad -1< x\le -1+\delta.
\end{equation}

Assume that $V\in  C^1(-1, -1+\delta]$  and satisfies (\ref{eqA_3_1}) and $V(x)=o((1+x)^{\beta})$. 
Then  
\[  
V(x)=\frac{\gamma_1}{b-\beta}(1+x)^b+\frac{\gamma_2}{1-\beta}(1+x)+O((1+x)^{b+\min\{c_1,c_2\}}).
\]
\end{lem}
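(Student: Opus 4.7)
\textbf{Plan for Lemma \ref{lemA_6}.} Since $V$ solves the linear first-order ODE (\ref{eqA_3_1}), variation of parameters gives, for every $-1 < x_0 \le x \le -1+\delta$,
\[
V(x) = V(x_0)\, e^{w(x_0)-w(x)} + e^{-w(x)}\int_{x_0}^{x} e^{w(s)} H(s)\, ds,
\]
where $w(x) := \int_{-1+\delta}^x B(s)\, ds$. First I would integrate hypothesis (\ref{eqA_5_2}) to produce a constant $a_3 \in \mathbb{R}$ with
\[
w(x) = -\beta \ln(1+x) + a_3 + O((1+x)^{c_2}),
\]
and hence $e^{\pm w(x)} = e^{\pm a_3}(1+x)^{\mp \beta}\bigl(1 + O((1+x)^{c_2})\bigr)$.

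Next I would eliminate the boundary term by sending $x_0 \to -1^+$. The hypothesis $V(x_0) = o((1+x_0)^{\beta})$ combined with $e^{w(x_0)} = O((1+x_0)^{-\beta})$ yields $V(x_0)\,e^{w(x_0)} \to 0$, so
\[
V(x) = e^{-w(x)}\int_{-1}^{x} e^{w(s)} H(s)\, ds,
\]
with the integral absolutely convergent at $-1$ because $e^{w(s)} H(s) = O((1+s)^{b-1-\beta})$ and $b-1-\beta > -1$ (using $\beta < 0$, $b > 0$).

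Then I would plug (\ref{eqA_6_1}) and the expansion of $e^{w(s)}$ into the integrand to obtain
\[
e^{w(s)} H(s) = e^{a_3}\bigl[\gamma_1 (1+s)^{b-1-\beta} + \gamma_2 (1+s)^{-\beta}\bigr] + O\!\left((1+s)^{b-1-\beta+\min\{c_1,c_2\}}\right),
\]
where the remainder absorbs both $(1+s)^{-\beta}\cdot O((1+s)^{b-1+c_1})$ (coming from $H$) and $O((1+s)^{c_2})\cdot e^{a_3}[\gamma_1 (1+s)^{b-1-\beta}+\gamma_2 (1+s)^{-\beta}]$ (coming from $e^{w(s)}$); the hypothesis $b<1$ is what makes the $(1+s)^{b-1-\beta+c_2}$ term dominate the $(1+s)^{-\beta+c_2}$ term near $-1$. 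Since all exponents exceed $-1$, term-by-term integration gives
\[
\int_{-1}^{x} e^{w(s)} H(s)\, ds = e^{a_3}\!\left[\frac{\gamma_1}{b-\beta}(1+x)^{b-\beta} + \frac{\gamma_2}{1-\beta}(1+x)^{1-\beta}\right] + O\!\left((1+x)^{b-\beta+\min\{c_1,c_2\}}\right).
\]
Multiplying by $e^{-w(x)} = e^{-a_3}(1+x)^{\beta}(1+O((1+x)^{c_2}))$ and again using $b<1$ to collapse the lower-order error $(1+x)^{1+c_2}$ into $O((1+x)^{b+c_2})$ yields the stated expansion.

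\textbf{Main obstacle.} The only conceptual point is justifying that the homogeneous contribution $V(x_0)\,e^{w(x_0)-w(x)}$ vanishes in the limit $x_0\to -1^+$: this is precisely where the decay assumption $V = o((1+x)^{\beta})$ is used, and without it the conclusion can only be asserted up to an extra additive $c(1+x)^{\beta}$ term (which, since $\beta<0$, would overwhelm all the stated terms). The rest is careful bookkeeping of the multiplicative $O((1+s)^{c_2})$ perturbation from the integrating factor against the singular leading behavior $(1+s)^{b-1}$ of $H$, and verifying that every resulting remainder, after integration and multiplication by $e^{-w(x)}$, is no worse than $(1+x)^{b+\min\{c_1,c_2\}}$.
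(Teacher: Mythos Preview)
Your proposal is correct and follows essentially the same route as the paper: variation of parameters (formula (\ref{eqA_3_2})), the expansion (\ref{eqA_5_2a}) of $w$ obtained by integrating (\ref{eqA_5_2}), elimination of the boundary term via $V=o((1+x)^\beta)$ exactly as in Lemma \ref{lemA_4}, and then term-by-term integration of $e^{w(s)}H(s)$. Your write-up is in fact more explicit than the paper's about why the cross-error $(1+s)^{-\beta+c_2}$ is absorbed into $(1+s)^{b-1-\beta+c_2}$ (this is where $b<1$ is used) and why the final remainder lands in $O((1+x)^{b+\min\{c_1,c_2\}})$.
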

\begin{proof}
 Expression (\ref{eqA_3_2}) still holds.  By (\ref{eqA_5_2}), we have (\ref{eqA_5_2a}) for some $a_3\in \mathbb{R}$.  Since $V(x)=o((1+x)^{\beta})$, we obtain, by sending $x_0$ to $-1$ in (\ref{eqA_3_2}) similar to the arguments in the proof of  Lemma \ref{lemA_4},  that
 \[
   V(x)= e^{-w(x)}\int_{-1}^{x}e^{w(s)}H(s)ds.
 \]
We derive from the above using (\ref{eqA_5_2a}) and (\ref{eqA_6_1}) that 
 \[
   \begin{split}
     V(x)  & =(1+x)^{\beta}(1+O((1+x)^{c_2}))\int_{-1}^{x}(1+s)^{-\beta}(\gamma_1 (1+x)^{b-1}+\gamma_2+O((1+s)^{b-1+\min\{c_1,c_2\}}))ds\\
            & =\frac{\gamma_1}{b-\beta}(1+x)^b+\frac{\gamma_2}{1-\beta}(1+x)+O((1+x)^{b+\min\{c_1,c_2\}}).
     \end{split}
 \]
\end{proof}

%%%%%%%%%%%%%%%%%%%%%%%%

\begin{lem}\label{lemA_7}
For $\delta>0$, $0<b\le 1$ and  $\beta\le 0$, let $B, H\in C(-1, -1+\delta]$ satisfy (\ref{eqA_3_4}) and
\begin{equation}\label{eqA_7_2}
\lim_{ x\to -1^+} [(1+x)\ln(1+x)]B(x)=-\beta.
\end{equation}
Assume that $V\in  C^1(-1, -1+\delta]$  satisfies (\ref{eqA_3_1}). When $\beta=0$, we also assume (\ref{eqA_4_1}). Then for every  $\epsilon>0$,  there exists some constant $C$, such that
for all $-1<x\le -1+\delta$,
\begin{equation}\label{eqA_7_4}
\left\{
 \begin{array}{ll}
 V(x)\le C(\ln(1+x))^{\beta+\epsilon} & \textrm{ if }\beta<0,\\
 V(x) \ge -C(1+x)^{b}|\ln(1+x)|^{\epsilon} & \textrm{ if }\beta=0.
\end{array}
\right.
\end{equation}
\end{lem}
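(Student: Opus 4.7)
The plan is to run variation of parameters with the precise asymptotics of the integrating factor extracted from hypothesis (\ref{eqA_7_2}). Set $w(x):=\int_{-1+\delta}^{x}B(s)\,ds$. Since $\frac{d}{ds}\ln|\ln(1+s)|=\frac{1}{(1+s)\ln(1+s)}$ and $B(s)=\frac{-\beta+o(1)}{(1+s)\ln(1+s)}$ as $s\to -1^{+}$, a direct integration (handling the leading and perturbative parts separately) yields $w(x)=(-\beta+o(1))\ln|\ln(1+x)|$ as $x\to -1^{+}$. Consequently, for every $\eta>0$ there exists $\delta_{\eta}\in(0,\delta]$ with
\begin{equation*}
|\ln(1+x)|^{-\beta-\eta}\le e^{w(x)}\le |\ln(1+x)|^{-\beta+\eta},\qquad -1<x\le -1+\delta_{\eta}.
\end{equation*}
Multiplying (\ref{eqA_3_1}) by $e^{w}$ gives $(Ve^{w})'=He^{w}$, so for any base point $x_{0}\in(-1,-1+\delta]$,
\begin{equation*}
V(x)\,e^{w(x)}=V(x_{0})e^{w(x_{0})}+\int_{x_{0}}^{x}H(t)e^{w(t)}\,dt.
\end{equation*}

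For the case $\beta<0$ I would take $x_{0}=-1+\delta$, so $w(x_{0})=0$. Since $H(t)\ge -C(1+t)^{b-1}$ by (\ref{eqA_3_4}) and $x<x_{0}$,
\begin{equation*}
\int_{-1+\delta}^{x}H(t)e^{w(t)}\,dt\le C\int_{x}^{-1+\delta}(1+t)^{b-1}e^{w(t)}\,dt\le C'\int_{0}^{\delta}u^{b-1}|\ln u|^{-\beta+\eta}\,du,
\end{equation*}
and the last integral is finite because $b>0$. Hence $V(x)e^{w(x)}$ is bounded above and, on choosing $\eta\le \epsilon$, $V(x)\le C''e^{-w(x)}\le C''|\ln(1+x)|^{\beta+\epsilon}$.

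For the case $\beta=0$ I would instead let $x_{0}$ run along a subsequence tending to $-1^{+}$. By (\ref{eqA_4_1}), for every $\eta>0$ there is a sequence $x_{0}^{(k)}\to -1^{+}$ with $V(x_{0}^{(k)})e^{w(x_{0}^{(k)})}>-\eta$. Using $H(t)\ge -C(1+t)^{b-1}$ and the identity above,
\begin{equation*}
V(x)e^{w(x)}\ge V(x_{0}^{(k)})e^{w(x_{0}^{(k)})}-C\int_{x_{0}^{(k)}}^{x}(1+t)^{b-1}e^{w(t)}\,dt.
\end{equation*}
For $\beta=0$ the integrand is dominated by $C(1+t)^{b-1}|\ln(1+t)|^{\eta_{1}}$, integrable on $(-1,x)$ since $b>0$; passing $k\to\infty$ by dominated convergence and then $\eta\to 0$ gives $V(x)e^{w(x)}\ge -C\int_{-1}^{x}(1+t)^{b-1}e^{w(t)}\,dt$. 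A substitution $u=1+t$ followed by integration by parts yields $\int_{-1}^{x}(1+t)^{b-1}|\ln(1+t)|^{\eta_{1}}\,dt\le C(1+x)^{b}|\ln(1+x)|^{\eta_{1}}$; combining with $e^{-w(x)}\le |\ln(1+x)|^{\eta_{1}}$ and setting $\eta_{1}=\epsilon/2$ produces $V(x)\ge -C(1+x)^{b}|\ln(1+x)|^{\epsilon}$.

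The main obstacle is the $\beta=0$ case: the $o(1)$ in (\ref{eqA_7_2}) is too weak to control $e^{\pm w(x)}$ uniformly, so a single fixed base point near $-1$ does not suffice. The role of (\ref{eqA_4_1}) is precisely to supply an anchor along a subsequence of base points approaching $-1$ where $Ve^{w}$ is essentially non-negative, and the hypothesis $b>0$ is what makes the tail integrals convergent and allows the passage to the limit.
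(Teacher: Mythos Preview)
Your argument is correct and follows essentially the same route as the paper: compute $w(x)=(-\beta+o(1))\ln|\ln(1+x)|$ from (\ref{eqA_7_2}), apply the variation-of-parameters formula (\ref{eqA_3_2}) with base point $x_{0}=-1+\delta$ when $\beta<0$, and with $x_{0}\to -1^{+}$ along a subsequence furnished by (\ref{eqA_4_1}) when $\beta=0$. Your extra details (dominated convergence for the limit $x_{0}\to -1^{+}$, and the integration-by-parts estimate $\int_{0}^{v}u^{b-1}|\ln u|^{\eta_{1}}\,du\le C v^{b}|\ln v|^{\eta_{1}}$) are sound and make explicit steps the paper leaves implicit.
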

\begin{proof}
  By (\ref{eqA_7_2}) , 
\begin{equation*}
  w(x)=(-\beta+o(1))\ln(-\ln (1+x)). 
\end{equation*}
Expression (\ref{eqA_3_2}) still holds for all  $x_0\in(-1,-1+\delta]$. If $\beta<0$,  take $x_0=-1+\delta$, 
\begin{equation*}
  \begin{split}
		V(x)& =V(x_0)e^{w(x_0)-w(x)}+e^{-w(x)}\int_{x_0}^{x}e^{w(s)}H(s)ds\\
		       & \le |\ln(1+x)|^{\beta+o(1)}+|\ln(1+x)|^{\beta+o(1)}\int_{x_0}^{x}(\ln(1+s))^{-\beta+o(1)}(1+s)^{b-1+o(1)}ds\\
		       &\le |\ln(1+x)|^{\beta+o(1)}.
\end{split}
\end{equation*}
  
If $\beta=0$, $w=o(1)\ln(-\ln (1+x))$.  By (\ref{eqA_4_1}),  similar as in the proof of Lemma \ref{lemA_4}, 
sending $x_0$ to $-1$ along a subsequence in (\ref{eqA_3_2}) gives
\[
\begin{split}
   V(x) & \ge  -Ce^{-w(x)}\int_{-1}^{x}e^{w(s)}(1+x)^{b-1}ds \ge -C(|\ln(1+x)|)^{o(1)}\int_{x_0}^{x}(|\ln(1+s)|)^{o(1)}(1+s)^{b-1}ds\\
          &\ge -C(1+x)^{b}|\ln(1+x)|^{\epsilon}.
\end{split}
\]
\end{proof}

\begin{rmk}
If in Lemma \ref{lemA_7}, we replace (\ref{eqA_3_4}) and (\ref{eqA_4_1}) by (\ref{eqA_3_7})  and (\ref{eqA_4_2})  respectively,  then we have, instead of (\ref{eqA_7_4}),  that for any $\epsilon>0$,
  \[
     |V(x)|\le  \left\{
 \begin{array}{ll}
  C(\ln(1+x))^{\beta+\epsilon} & \textrm{ if }\beta<0,\\
  C(1+x)^{b}|\ln(1+x)|^{\epsilon} & \textrm{ if }\beta=0.
\end{array}
\right.
  \]
\end{rmk}

\section{Figures}\label{sec_fig}

For a given axisymmetric vector fields ($u_r$, $u_\theta$), the stream lines can be represented in the cross section plane $x_1=0$. The shape of stream lines, along with the graph of ($u_r$, $u_\theta$), depends on parameters $(\mu, \gamma)$. In this section, we choose some typical points on the $(\mu, \gamma)$ plane, whose positions are shown in the left part of Figure \ref{figure3_1}. At each parameter point, we present the graph of $u_r$, $u_\theta$, and the corresponding stream lines. In stead of presenting a full classification of all possible shapes of the stream lines, we prefer to emphasize that four border lines play important roles to determine the shape of stream lines. 

1) The line $l_1: \gamma = 0$ separates the stream lines which are upward and downward along positive $x_3$ axis near the north pole.

2) The line $l_2: \mu = 0$, ($\gamma>-2$) separates the stream lines which are inward and outward to negative $x_3$ axis near the south pole. 

3) The line $l_3: \gamma = - 1+ \sqrt{1+2\mu}$, ($- \frac{1}{2} < \mu < 0$) separates the stream lines which are upward and downward along negative $x_3$ axis near the south pole. 

4) The line $l_4: \mu = -\frac{3}{8}$ separates the stream lines by the amplitude of $u_r$ and $u_\theta$. Namely, on the left of $l_4$, $u_r$ dominates, thus the stream line near south pole is vertical. While on the right of $l_4$, $u_\theta$ dominates, thus the stream line near south pole is horizontal. 

\vspace{0.5cm}

\FloatBarrier

\begin{figure}[!htb]
	\centering
	\includegraphics[height=6cm]{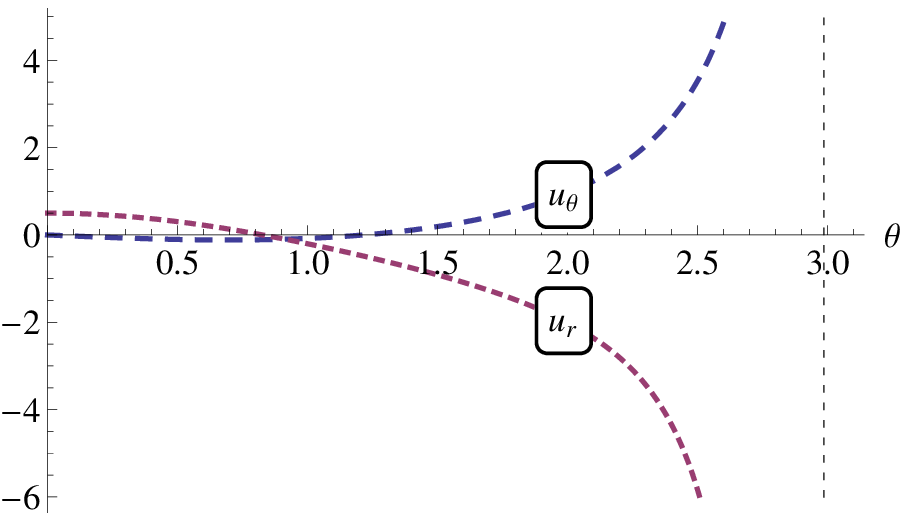}
	\includegraphics[height=6cm]{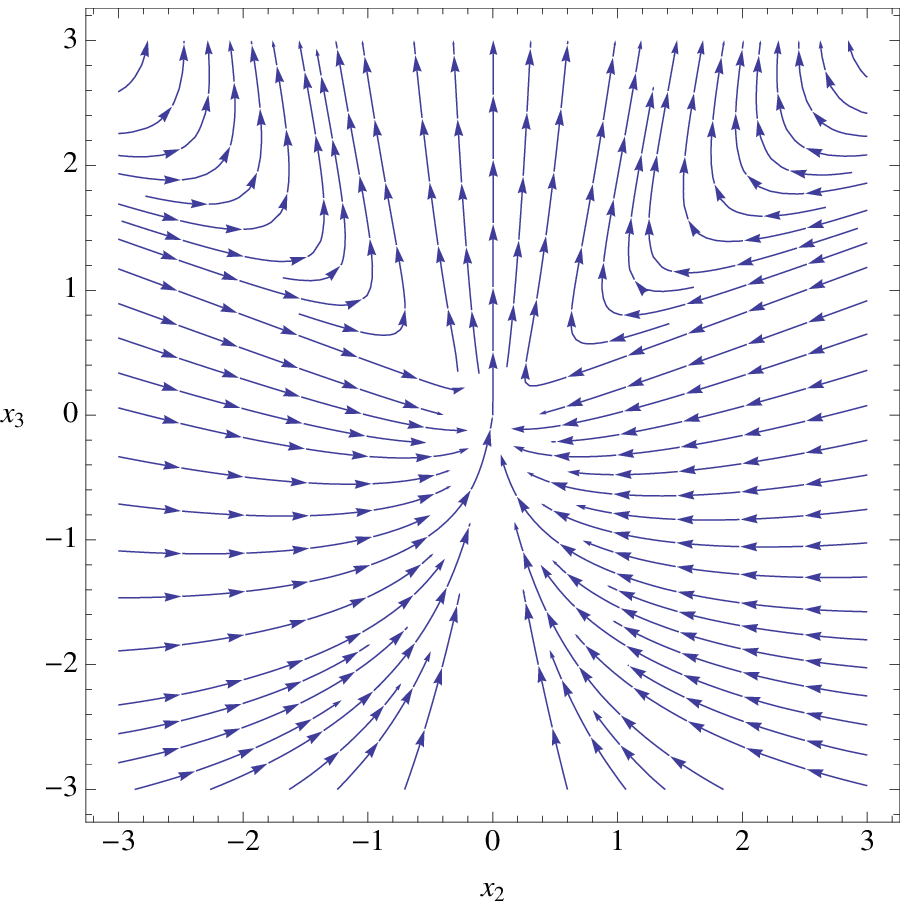}
	\label{fig_pt1}
	\caption{The graphs of $u_\theta$, $u_r$ and stream lines for $P_1$: $\mu = - 1$, $\gamma=\frac{1}{2}$.}
\end{figure}

\begin{figure}[!htb]
	\centering
	\includegraphics[height=6cm]{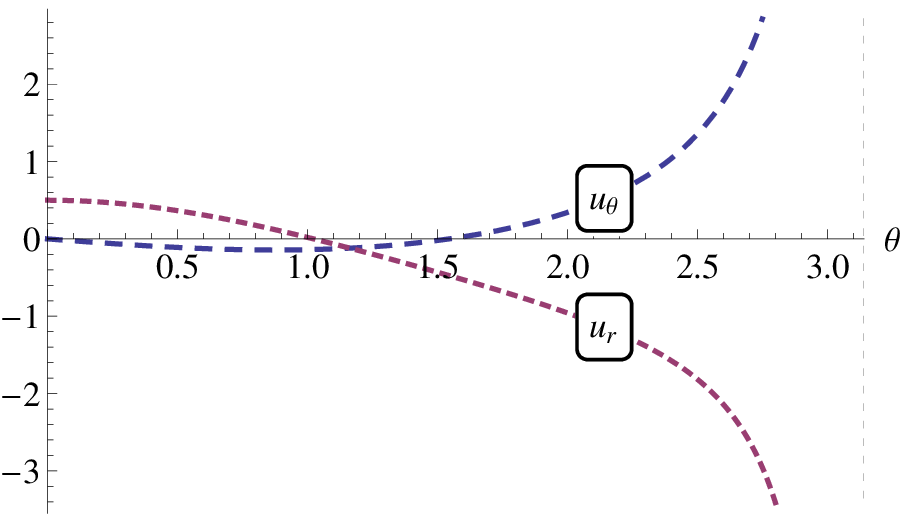}
	\includegraphics[height=6cm]{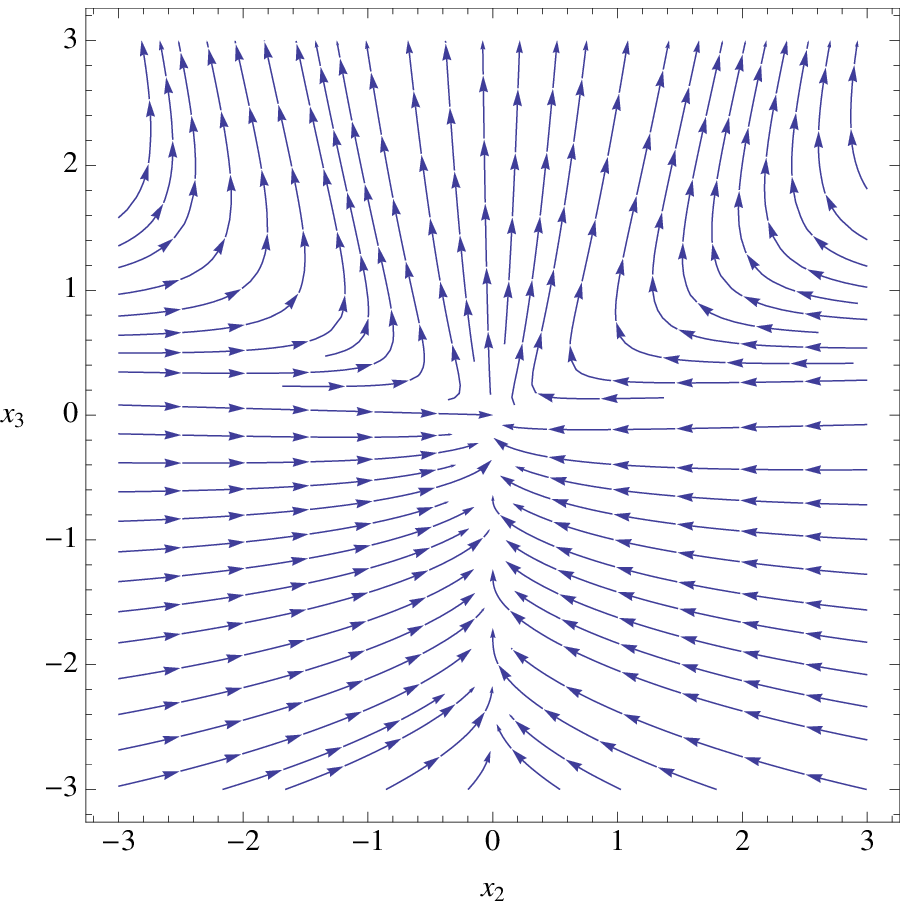}
	\label{fig_pt2}
	\caption{The graphs of $u_\theta$, $u_r$ and stream lines for $P_2$: $\mu = - \frac{1}{2}$, $\gamma=\frac{1}{2}$.}
\end{figure}

\begin{figure}[!htb]
	\centering
	\includegraphics[height=6cm]{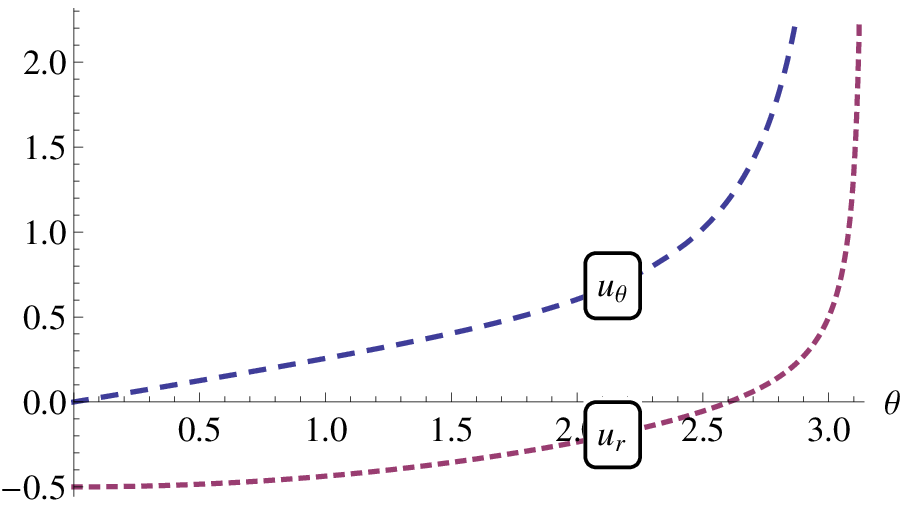}
	\includegraphics[height=6cm]{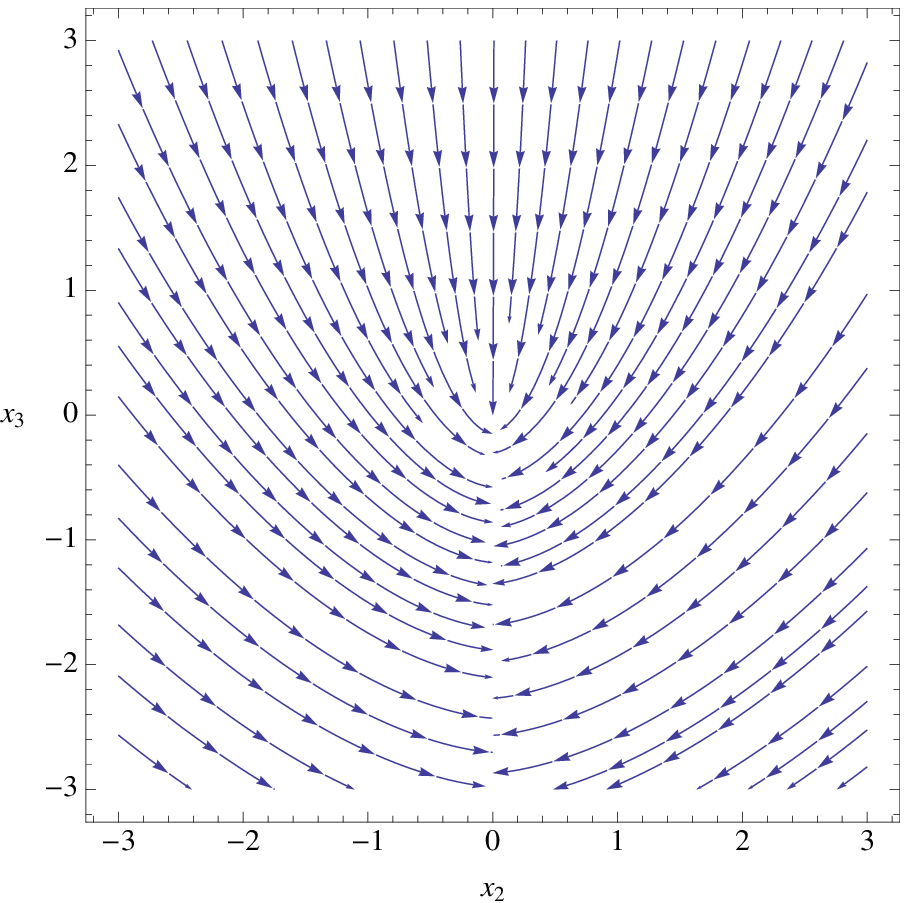}
	\label{fig_pt3}
	\caption{The graphs of $u_\theta$, $u_r$ and stream lines for $P_3$: $\mu = -\frac{1}{4}$, $\gamma=-\frac{1}{2}$.}
\end{figure}

\begin{figure}[!htb]
	\centering
	\includegraphics[height=6cm]{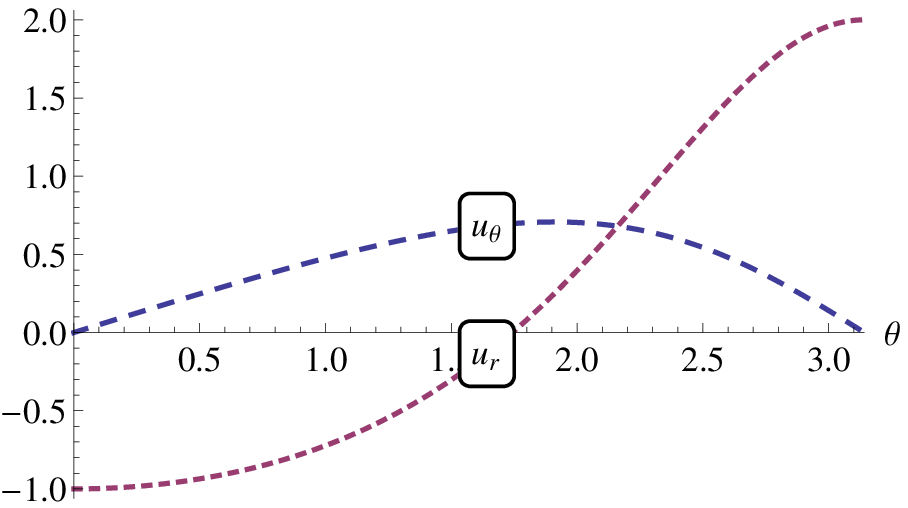}
	\includegraphics[height=6cm]{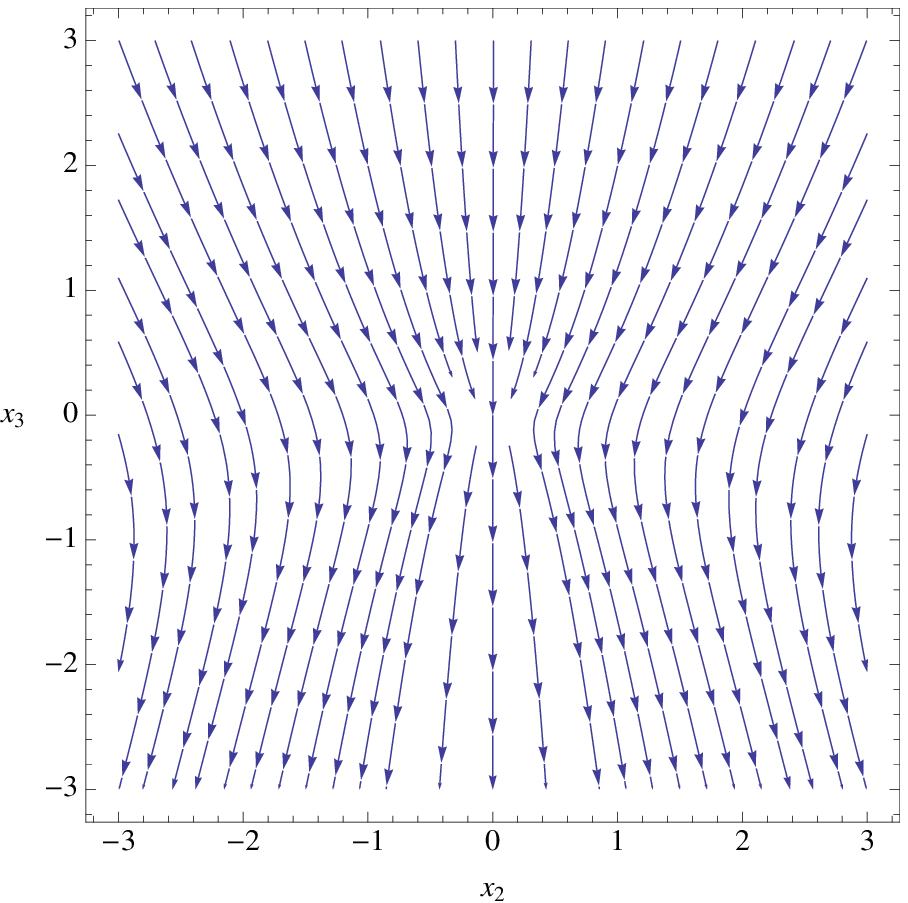}
	\label{fig_pt4}
	\caption{The graphs of $u_\theta$, $u_r$ and stream lines for $P_4$: $\mu = 0$, $\gamma=-1$.}
\end{figure}

\begin{figure}[!htb]
	\centering
	\includegraphics[height=6cm]{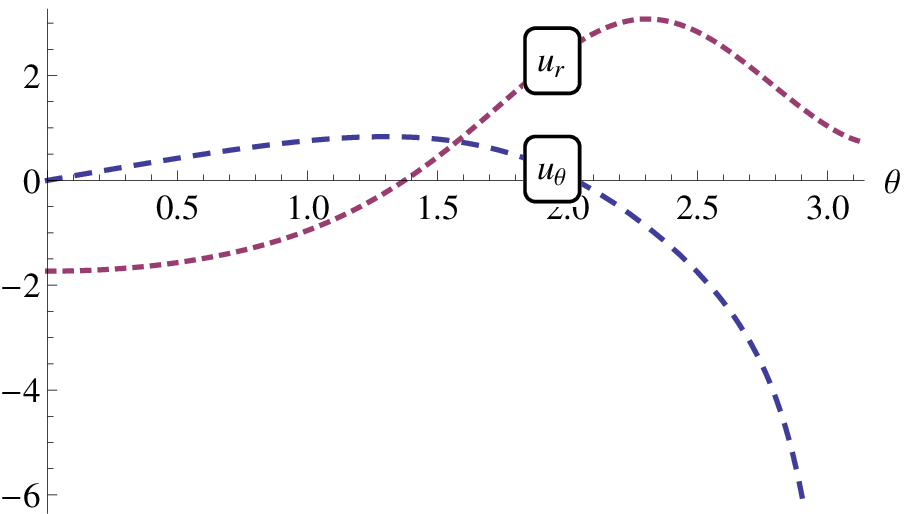}
	\includegraphics[height=6cm]{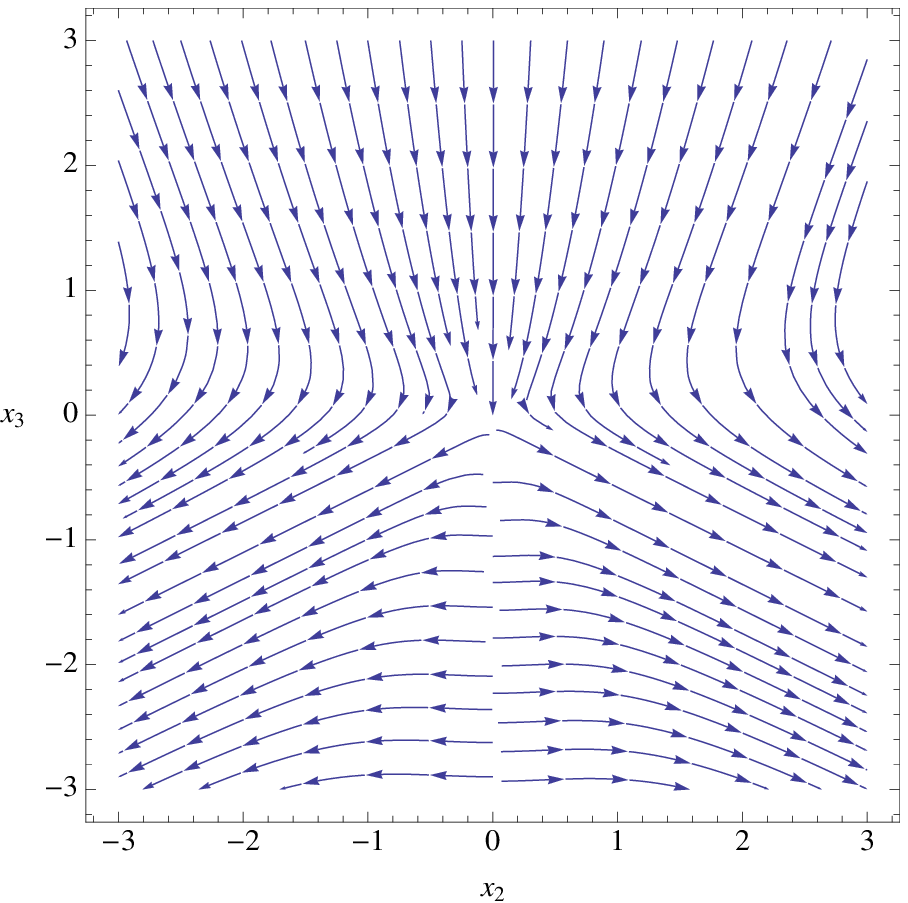}
	\label{fig_pt5}
	\caption{The graphs of $u_\theta$, $u_r$ and stream lines for $P_5$: $\mu = 1$, $\gamma=-\sqrt{3}$.}
\end{figure}

\begin{figure}[t]
	\centering
	\includegraphics[height=6cm]{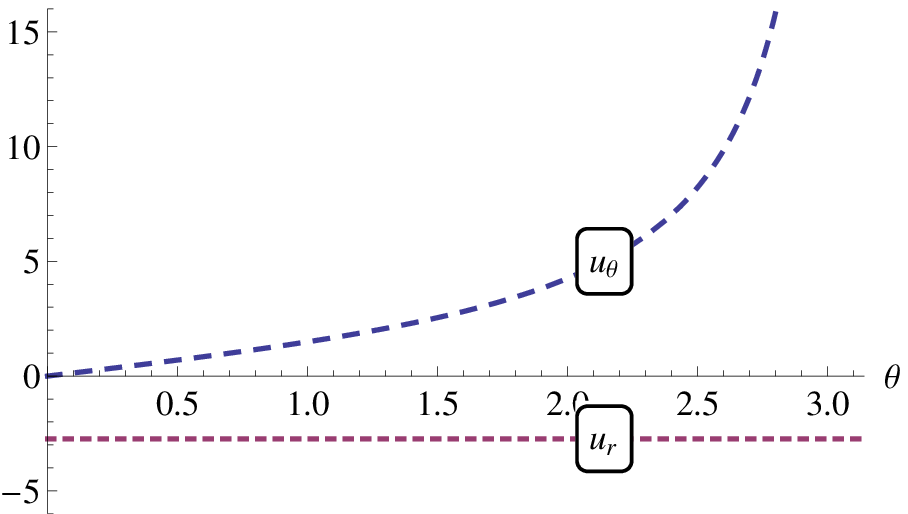}
	\includegraphics[height=6cm]{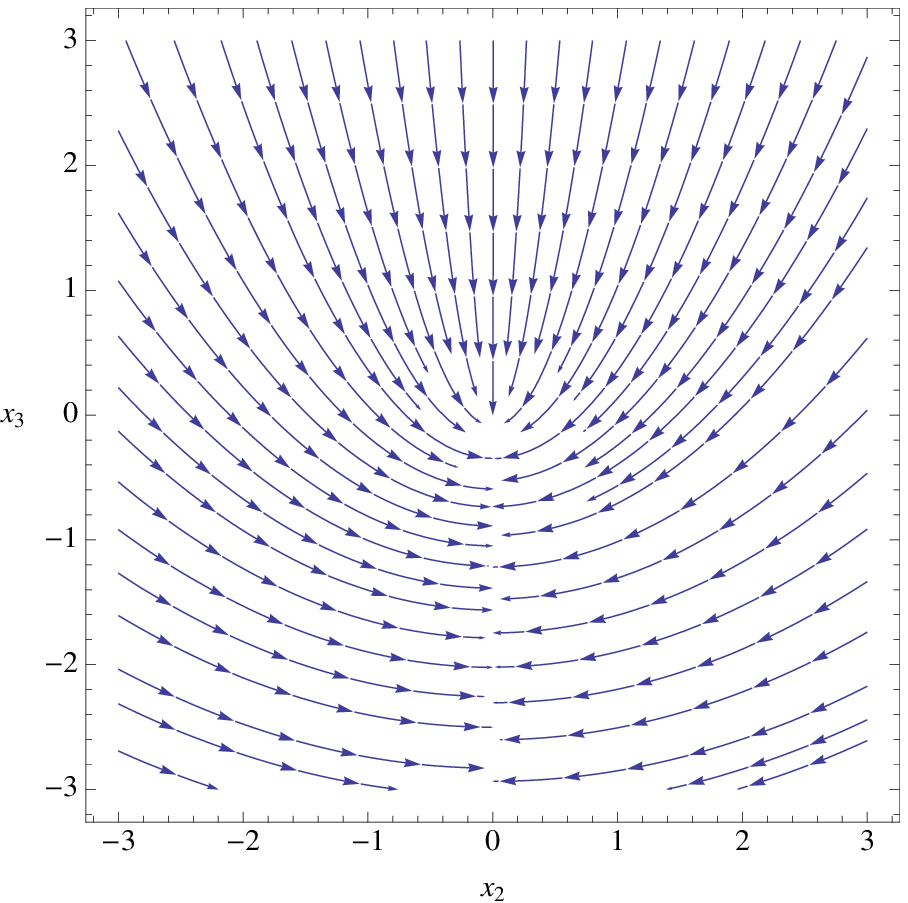}
	\label{fig_pt6}
	\caption{The graphs of $u_\theta$, $u_r$ and stream lines for $P_6$: $\mu = 1$, $\gamma=-1-\sqrt{3}$.}
\end{figure}

\begin{figure}[!htbp]
	\centering
	\includegraphics[height=6cm]{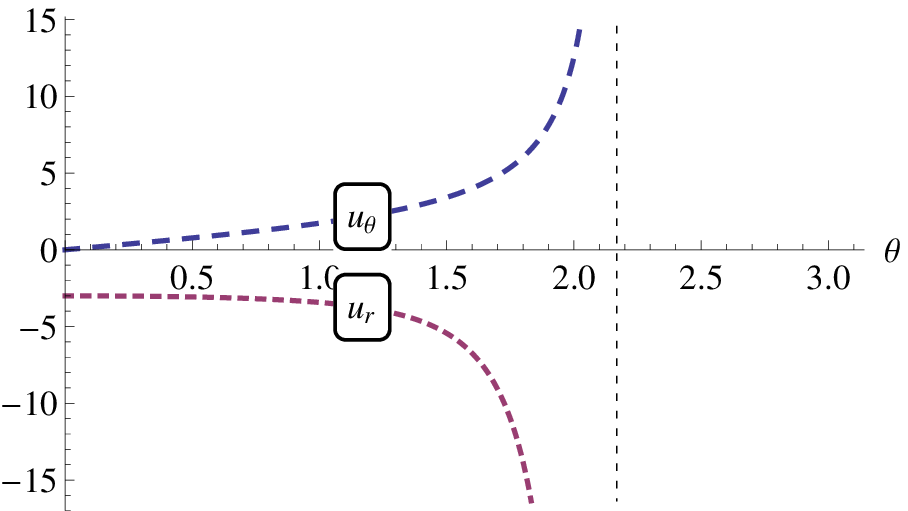}     
	\includegraphics[height=6cm]{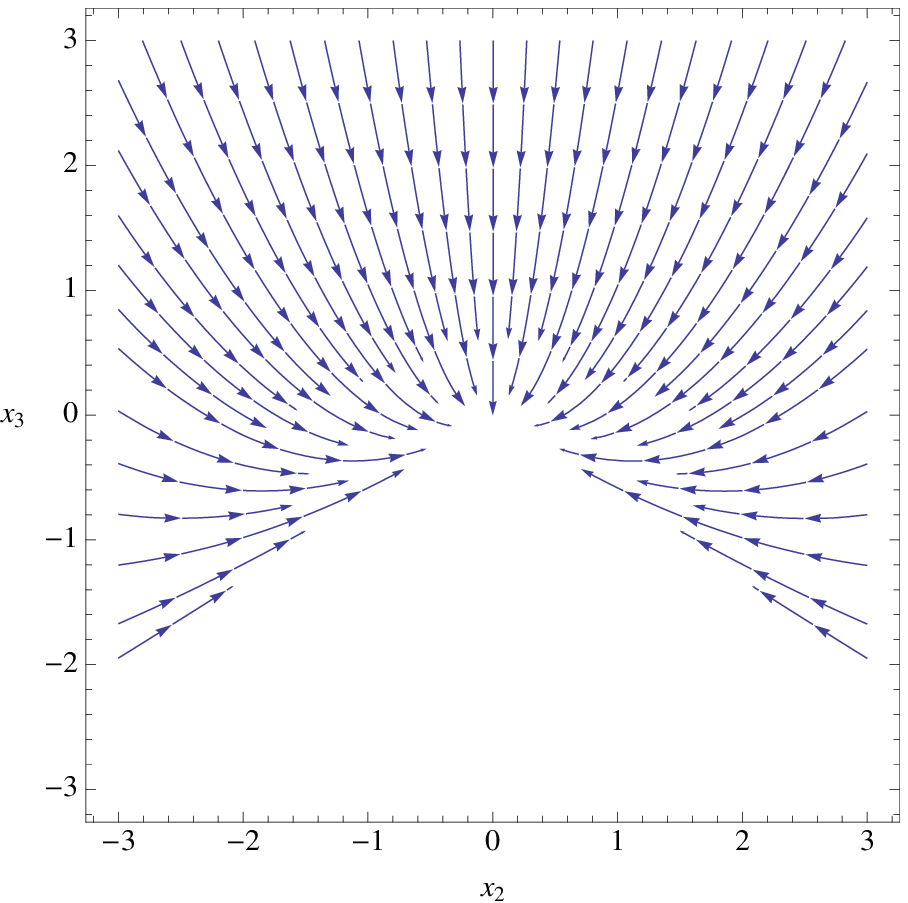}
	\label{fig_pt7}
	\caption{The graphs of $u_\theta$, $u_r$ and stream lines for $P_7$: $\mu = 1$, $\gamma=-3$.}
\end{figure}

\FloatBarrier

\end{document}